\renewcommand{\Re}{\operatorname{Re}}
\renewcommand{\Im}{\operatorname{Im}}
\newcommand{\defeq}{\stackrel{\rm{def}}{=}}
\newcommand{\ds}{\displaystyle}
\newtheorem{theorem}{Theorem}[section]
\newtheorem{definition}[theorem]{Definition}
\newtheorem{proposition}{Proposition}[section]
\newtheorem{lemma}[proposition]{Lemma}
\newtheorem{corollary}[proposition]{Corollary}
\newtheorem{conjecture}{Conjecture}
\theoremstyle{remark}
\newtheorem{remark}[proposition]{Remark}
\numberwithin{equation}{section}
\title[Blow-up in {\lowercase{g}}Hartree equation]
{Stable blow-up dynamics in the $L^2$-critical and\\ $L^2$-supercritical generalized Hartree equation}
\author[K. Yang]{Kai Yang}
\address{Department of Mathematics  \& Statistics\\Florida International University,  Miami, FL, USA}
\curraddr{}
\email{yangk@fiu.edu}
\author[S. Roudenko]{Svetlana Roudenko}
\address{Department of Mathematics \& Statistics\\Florida International University,  Miami, FL, USA}
\curraddr{}
\email{sroudenko@fiu.edu}
\author[Y. Zhao]{Yanxiang Zhao}
\address{Department of Mathematics\\George Washington University,  Washington, DC, USA}
\curraddr{}
\email{yxzhao@gwu.edu}
\subjclass[2010]{35Q55, 35Q40, 65M70, 65N35} 
\keywords{Hartree equation, Choquard equation, nonlocal potential, convolution nonlinearity, dynamic rescaling, log-log blow-up, multi-bump profile, adiabatic regime}
\begin{document}
\begin{abstract}
We study stable blow-up dynamics in the generalized Hartree equation with radial symmetry, which is a Schr\"odinger-type equation with a nonlocal, convolution-type nonlinearity: $iu_t+\Delta u +\left(|x|^{-(d-2)} \ast |u|^{p} \right) |u|^{p-2}u = 0$, 
$x \in \mathbb{R}^d$. 
First, we consider the $L^2$-critical case in dimensions $d=3, 4, 5, 6, 7$ and obtain that a generic blow-up has a self-similar structure and exhibits not only the square root blowup rate $\| \nabla u(t)\|_{L^2} \sim (T-t)^{-\frac{1}{2}}$, but also the {\it log-log} correction (via asymptotic analysis and functional fitting), thus, behaving similarly to the stable blow-up regime in the $L^2$-critical nonlinear Schr\"{o}dinger (NLS) equation. 
In this setting we also study blow-up profiles and show that generic blow-up solutions converge to the rescaled $Q$, a ground state solution of the elliptic equation $-\Delta Q+Q- \left(|x|^{-(d-2)} \ast |Q|^p \right) |Q|^{p-2} Q =0$. 

We also consider the $L^2$-supercritical case in dimensions $d=3, 4$.  
We derive the profile equation for the self-similar blow-up and establish the existence and local uniqueness of its solutions. As in the NLS $L^2$-supercritical regime, the profile equation exhibits branches of non-oscillating, polynomially decaying (multi-bump) solutions.  
A numerical scheme of putting constraints into solving the corresponding ODE is applied during the process of finding the multi-bump solutions. Direct numerical simulation of solutions to the generalized Hartree equation by the dynamic rescaling method indicates that the $Q_{1,0}$  is the profile for the stable blow-up. In this supercritical case, we obtain the 
blow-up rate without any correction. This blow-up happens at the focusing level $10^{-5}$, and thus, numerically observable (unlike the $L^2$-critical case). 

In summary, we find that the results are similar to the behavior of stable self-similar blowup solutions in the corresponding settings for the nonlinear Schr\"odinger equation. Consequently, one may expect that the form of the nonlinearity in the Schr\"odinger-type equations is not essential in the stable formation of singularities.  
\end{abstract}

\maketitle
\tableofcontents
\section{Introduction}
We consider the Cauchy problem of the generalized Hartree (gHartree) equation:
\begin{align}\label{gHartree0}
\begin{cases}
iu_t + \Delta u +\left( \dfrac{1}{|x|^{b}} \ast |u|^p \right) |u|^{p-2} u =0, \quad (t,x) \in \mathbb{R}\times \mathbb{R}^d,  \\
u_0=u(x,0) \,  \in H^1(\mathbb{R}^d).
\end{cases}
\end{align}
Here, the $\ast$ represents the convolution in $\mathbb{R}^d$ with the convolution power $0<b<d$ and the nonlinearity power typically $p\geq 2$, though we will also consider cases with $p>1$ (details below).
When $p=2$, the equation \eqref{gHartree0} is the well-known Hartree equation
\begin{equation}\label{E2}
iu_t+\Delta u +\left( \dfrac{1}{|x|^b}*|u|^2\right)u=0,
\end{equation}
which arises, for example, in the description of dynamics in Bose-Einstein condensates (BEC) with long-range attractive interaction, proportional to $1/|x|^b$ and arbitrary angular dependence, for example, see \cite{Lushnikov2010}, \cite{Le2009}. It appears as the mean field limit of quantum Bose gases \cite{FL2003}, and is also used to describe a certain type of a trapped electron \cite{Li1977}, see also \cite{GV1979aa}, \cite{GV2000}, \cite{Li1980}, \cite{Li2003}. 
Within the pseudo-relativistic setting, if the Laplacian in \eqref{gHartree0} is replaced by $\sqrt{m^2 - \Delta}$ and $b=1$, then the equation 
\begin{equation}\label{E3}
i u_t+ \sqrt{m^2 -\Delta} \, u +\left( \dfrac{1}{|x|}*|u|^2\right)u=0, \quad x \in \mathbb{R}^3,
\end{equation}
appears in the description of boson stars, see \cite{FJL2007}. 


The well-posedness theory of the equation \eqref{E2}
is obtained by Ginibre and Velo in \cite{GV1979aa} (see also \cite{Ca2003}). 
For a general nonlinearity $p \geq 2$, the $H^1$ well-posedness is obtained in \cite{KR2019}, for $\dot{H}^s$ well-posedness, see \cite{AR2019b}. 
Let $(T_-, T_+)$ denote the maximal time interval of existence of solutions to (\ref{gHartree0}), that is, for given initial data $u_0 \in H^1(\mathbb{R}^d)$, one has $u(t) \in (\mathbb{C}(T_-,T_+), H^1(\mathbb{R}^d))$. Without loss of generality, we consider the solutions in forward time $T>0$. We say that the solution to the equation  (\ref{gHartree0}) is locally well-posed if $T<\infty$, and it is globally well-posed if $T=\infty$. If $T<\infty$, then we say that the solution blows up in finite time. In the energy-subcritical cases, this means $\lim_{t \nearrow T}\|u(x,t)\|_{H^1_x}=\infty$. We discuss other cases later in the paper.

During their lifespan, solutions of (\ref{gHartree0}) conserve mass and energy (Hamiltonian):
\begin{align*}
& M[u(t)]:=\int_{\mathbb{R}^d} |u(x,t)|^2dx=M[u_0], \quad&( \mathrm{Mass})\\
& E[u(t)]:=\frac{1}{2} \int_{\mathbb{R}^d} |\nabla u|^2 dx-\frac{1}{2p}\int_{\mathbb{R}^d} \left(\frac{1}{|x|^b} \ast |u|^{p}\right)|u|^p dx=E[u_0]. \quad & (\mathrm{Energy})
\end{align*}
Since we only consider radial solutions, we omit conservation of momentum.

The equation (\ref{gHartree0}) has scaling invariance similar to the nonlinear Schr\"{o}dinger (NLS) equation.  Let $u(x,t)$ be the solution to (\ref{gHartree0}), then one can see that $u_{\lambda}(x,t)=\lambda^{\frac{d-b+2}{2(p-1)}}u(\lambda x, \lambda^2 t)$ is also a solution to (\ref{gHartree0}). 

The criticality comes from the scaling invariance of $\dot{H}^s$ norm, i.e., 
$\|u(x,t)\|_{\dot{H}^s_x}=\|u_{\lambda}(x,t)\|_{\dot{H}^s_x}$. The direct calculation leads us to
\begin{equation}\label{criticality}
s=\frac{d}{2}-\frac{d-b+2}{2(p-1)}.
\end{equation}

If $s=0$, the equation \eqref{gHartree0} is referred to as the $L^2$-critical (or mass-critical as it preserves the mass, $L^2$-norm). If $s=1$, the equation is $\dot{H}^1$-critical (or energy-critical as it preserves the energy). If $0<s< 1$, the equation is mass-supercritical and energy-subcritical (or inter-critical), and finally, it is energy-supercritical if $s>1$.

When $s \geq 0$, solutions can blow-up in finite time (for example, if initial data has negative energy and finite initial variance $\mathcal{V}(0) < \infty$, where $\mathcal{V}(t)= \int_{\mathbb{R}^d}|x|^2|u(x,t)|^2dx$), while there are globally-existing-in-time solutions as well (i.e., global well-posedness holds for some set of solutions, see \cite{GV1979aa}, \cite{Ca2003}, \cite{KR2019}, \cite{AR2019b}).

In this paper, we restrict our attention to the power $b=d-2$ and the dimensions $d > 2$. There are two reasons for that. The first one is that this is exactly the case when the convolution is the fundamental solutions of the Poisson equation, and thus, the nonlocal term can be written as 
$$
\frac{1}{|x|^{d-2}} \ast |u|^p=\alpha(d)(-\Delta)^{-1}|u|^p,
$$
where $\alpha(d)$ is the dimensional constant. In this case, the criticality \eqref{criticality} becomes
\begin{equation}\label{criticality2}
s=\frac{d}{2}-\frac{2}{p-1}.
\end{equation}
The dimensional constant $\alpha(d)$ can be removed by scaling, thus, the equation (\ref{gHartree0}) is reduced to
\begin{align}\label{gHartree1}
\begin{cases}
iu_t + \Delta u +\left( (-\Delta)^{-1} |u|^p \right) |u|^{p-2} u =0, \, (t,x) \in[0,T)\times \mathbb{R}^d,  \\
u_0=u(x,0)\in H^1(\mathbb{R}^d).
\end{cases}
\end{align}

The second reason for choosing $b=d-2$, is the solitary wave solutions to \eqref{gHartree1}. 
Similar to the NLS equation, when $b=d-2$ and $p<1+\frac4{d-2}$ ($s<1$), we consider standing wave solutions to \eqref{gHartree1} of the form $u(x,t) = e^{it}Q(x)$ with $Q$ being the positive-vanishing-at-infinity solution of 
\begin{equation}\label{GS-conv}
-\Delta Q+Q- \left( \frac{1}{|x|^{d-2}} \ast |Q|^p \right) |Q|^{p-2} Q =0, \end{equation}
or equivalently,
\begin{equation}\label{GS}
-\Delta Q+Q- \left( (-\Delta)^{-1} |Q|^p \right) |Q|^{p-2} Q =0.
\end{equation}

The existence and uniqueness of the real, positive, vanishing at infinity solution to \eqref{GS-conv}, or \eqref{GS}, are obtained for $p=2$ in \cite{Li1977} ($d=3$), \cite{KLR2009} ($d=4$), \cite{KR2019} ($2<d<6$); for $p=2+\epsilon$ in \cite{Xiang2016},
otherwise, it is not known; the existence with decay and other properties in a general case is investigated in \cite{MS2013}, see also an excellent review in \cite{MS2017}. 

This solution is known as the ground state solution, which we also denote by $Q$. Note that the ground state solution is radially symmetric $Q=Q(r)$ and is exponentially decaying at infinity for $p \geq 2$, see for example, \cite{MS2013}.
While there is no explicit formula for the ground state solution $Q$, we can obtain the profiles numerically (e.g., via the renormalization method similar to the NLS in \cite[Chapter 28]{F2015}, see Appendix).

In this paper, we are interested in studying stable blow-up dynamics of solutions to the equation \eqref{gHartree1} in the $L^2$-critical case ($p=1+ \frac{4}{d}$) and in the $L^2$-supercritical case ($p > 1+ \frac{4}{d}$). As in the NLS equation, in the $L^2$-critical case, some blow-up solutions (of minimal mass) can be obtained via the pseudo-conformal transformation. However, these blow-up solutions are unstable. We are interested in stable blow-up solutions of \eqref{gHartree1}, at least in those solutions, which can be observed numerically from a generic initial data (such as Gaussian initial conditions). 
The scaling invariance is the underlying mechanism for the dynamic rescaling method that we use to simulate the blow-up solutions (this is in the spirit of \cite{LPSS1988}, \cite{SS1999}, also \cite{YRZ2018}, \cite{YRZ2019}, see Section \ref{Section-DR} for details).
In particular, we will investigate the blow-up rate and blowup profiles of singular solutions to the gHartree equation \eqref{gHartree1} in the critical and supercritical settings. 

We first recall the definition of the blow-up rate (e.g. from \cite{FMR2006}, \cite{FGW2007}, or \cite{F2015}), which is used in the standard NLS equation.
\begin{definition}\label{D: blowup-rate}
The blow-up rate is the function $f(t)$ (e.g., $f(t)=(T-t)^{-\frac{1}{2}}$) such that 
\begin{align}
\lim_{t\nearrow T} \dfrac{ \| \nabla u(t)\|_{L^2_x}}{f(t)} =C,
\end{align}
where $C$ is a constant.
\end{definition}

The above definition 
uses the $\dot{H}^{1}$ norm, note that due to scaling invariance when $s_c=1$, the norm $\|u(t)\|_{\dot{H}^1}$ becomes constant, and when $s_c>1$, then $\|u(t)\|_{\dot{H}^1}$ decreases to zero (see \eqref{blowup rate L} with $L(t) \to 0$). On the other hand, in the numerical simulations we observe that the solution is concentrating to a point with its amplitude growing to infinity in finite time.
Thus, instead of tracking the $\dot{H}^1$ norm, one can also study the blow-up rate in terms of the $L^{\infty}$ norm.
\begin{definition}\label{D: blowup-rate2}
The blow-up rate is the function $f(t)$ (e.g., $f(t)=(T-t)^{-\frac{1}{2}}$) such that 
\begin{align}
\lim_{t\nearrow T} \dfrac{ \| u(t)\|_{L^{\infty}_x}}{f(t)} =C,
\end{align}
where $C$ is a constant.
\end{definition}
For the $L^2$-critical NLS equation, it is known that Definitions \ref{D: blowup-rate} and \ref{D: blowup-rate2} are equivalent, see \cite{Ca2003}, \cite{FGW2005}. The study of blow-up rates go back to 1970's, mainly in the two-dimensional cubic NLS ($L^2$-critical) and, in part, for the 3d cubic NLS ($L^2$-supercritical) equations, see \cite{BZS1975}, \cite{GRH1980}, \cite{SSP1984}. From scaling and local well-posedness it follows that the lower bound on the blow-up rate is $(T-t)^{-\frac{1}{2}}$. In 1986, McLaughlin, Papanicolaou, C. Sulem and P. Sulem in \cite{MPSS1986} introduced the dynamic rescaling method to track the blow-up profile and the rate, and suggested that there should be a correction terms to the rate $(T-t)^{-\frac{1}{2}}$. Previously, Talanov (1978), Wood (1984) and Rypdal and Rasmussen (1986) suggested the rate $(|\ln(T-t)|/(T-t))^{\frac{1}{2}}$ from a different approach (see \cite{VPT1978}, \cite{Wood1984}, \cite{RR1986}). Using the far asymptotics of the ground state, and considering a slightly supercritical equation by treating the dimension $d$ as a continuous parameter, Landman, Papanicolaou, C. Sulem, and P. Sulem in \cite{LPSS1988}, and also LeMesurier, Papanicolaou, C. Sulem and P. Sulem in \cite{LePSS1988} (see also an earlier work of Fraiman \cite{Fr1985}) concluded that the rate of the stable blow-up is of the form $\left( \ln|\ln(T-t)|/(T-t) \right)^{\frac{1}{2}}$, now commonly referred to as the {\it log-log} law, see also \cite{LLePSS1989}, \cite{DNPZ1992} and books \cite{SS1999}, \cite{F2015}. We note that numerically it is not possible (at least with the current computational power) to observe such a double log correction, however, the asymptotic analysis (e.g., as in \cite{SS1999}) produces such a correction; numerically, it is only possible to do the functional fitting and examine stabilization properties of the convergence (see \cite{ADKM2003} and \cite{YRZ2018}, also Section 4.2 below). This {\it log-log} rate holds extremely close to the blow-up time, and before the singularity formation gets into the {\it log-log} regime, it goes through the adiabatic phase, which has been described by Malkin or Fibich adiabatic laws (see \cite{Ma1993} and \cite{FP1998}); the rate in that penultimate regime is proportional to $(|\ln(T-t)|^\gamma/(T-t))^{\frac{1}{2}}$, see \cite{ADKM2003} for the 2d cubic NLS or our work \cite{YRZ2018} for various other dimensions in the $L^2$-critical setting.

Theoretical studies of stable self-similar blow-up dynamics, including rates, in the $L^2$-critical NLS-type equations have been going on since 2000's (starting with Galina Perelman's work for the 1d quintic NLS \cite{Pe2001}, followed by a series of works by Merle \& Raphael \cite{MR2005a}-\cite{MR2005}.
Various perturbations of nonlinearity have been studied as well, tracking the blow-up rates for various singular solutions, for example, see \cite{LeCoMR2016}, \cite{MR2015}, though most of those works are not stable blow-up solutions as breaking radial symmetry or other perturbations will break the set-up and force to blow-up in the {\it log-log} regime, provided enough mass is available. 
While various perturbations of nonlinearities have been considered in the literature (for example, in the $L^2$-critical setting), it is far from being understood how blow-up dynamics depends on the form of the nonlinearity (for example, if a nonlinearity has a significant influence on the stable blow-up rate). This work is a step in that direction. We study how a nonlocal nonlinearity affects the stable blow-up dynamics. This is also important in connection with understanding gravitational collapse of \eqref{E3}, where currently only the existence of blow-up is known, see \cite{FL2003} and also \cite{KLR2009}.

In this paper we investigate the following conjectures: 
\begin{conjecture}[$L^2$-critical gHartree] \label{C:1}
A stable blow-up solution to the $L^2$-critical gHartree equation has a self-similar structure and comes with the rate
$$ 
\lim_{t \rightarrow T}\|\nabla u(\cdot,t) \|_{L^2_x} =\left( \frac{\ln|\ln(T-t)|}{2\pi(T-t)} \right)^{\frac{1}{2}} \quad \mbox{as} \quad {t \to T},
$$
known as the {\it log-log} rate. Thus, the solution blows up in a self-similar regime with profile converging to a rescaled profile $Q$, which is a ground state solution of \eqref{GS-conv}, namely,
$$ 
u(x,t) \sim \dfrac{1}{L(t)^{\frac{d}{2}}} Q\left(\frac{x-x(t)}{L(t)}\right) e^{i\gamma(t)} 
$$
for some parameter $\gamma(t)$. The stable blow-up dynamics in the $L^2$-critical gHartree equation is similar to the stable blow-up dynamics in the $L^2$-critical NLS equation. 
\end{conjecture}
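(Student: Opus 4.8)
The plan is to follow the modulation-theoretic framework developed for the $L^2$-critical NLS (Perelman, Merle--Raphael), adapting each ingredient to the nonlocal nonlinearity, and to corroborate it with the formal asymptotics and functional fitting of Section \ref{Section-DR}. First I would pass to self-similar variables via the dynamic rescaling of Section \ref{Section-DR}: write $u(x,t) = L(t)^{-d/2} v(\xi,\tau) e^{i\gamma(t)}$ with $\xi = (x-x(t))/L(t)$ and renormalized time $d\tau/dt = L^{-2}$, and set the self-similar parameter $b \defeq -L\,\partial_\tau L > 0$. Because $p = 1 + \frac{4}{d}$ at $L^2$-criticality (so the scaling exponent $\frac{2}{p-1}$ of \eqref{criticality2} equals $\frac{d}{2}$), one checks that $(-\Delta)^{-1}$ carries exactly the two powers of $L$ needed to cancel those from $\Delta$ and from $|u|^{p-2}u$, so the rescaled equation contains no explicit $L$ and reads, schematically,
\[
i v_\tau + \Delta v - v + i b \Bigl( \tfrac{d}{2} + \xi\cdot\nabla \Bigr) v + \bigl( (-\Delta)^{-1}|v|^p \bigr) |v|^{p-2} v = 0 ,
\]
up to the modulation terms proportional to $\gamma_\tau - 1$ and $\dot x/L$. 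For $b=0$ this is precisely the ground-state equation \eqref{GS}.

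Next I would introduce the family of approximate self-similar profiles $Q_b$ solving the above elliptic problem (with a suitable localization of the slowly decaying tail), expand $v = Q_b + \epsilon$, and linearize. Differentiating $\bigl((-\Delta)^{-1}|v|^p\bigr)|v|^{p-2}v$ produces, in addition to the usual multiplication potentials, a genuinely nonlocal contribution of the form $\bigl((-\Delta)^{-1}(|Q|^{p-2}Q\,\Re\epsilon)\bigr)|Q|^{p-2}Q$, so the linearized operator $\mathcal L = (L_+, L_-)$ is integro-differential. The two structural facts to re-establish are: (i) the generalized null space of $\mathcal L$ is spanned exactly by the symmetry directions (phase, scaling, translation, Galilean), so that $Q$ is nondegenerate; and (ii) $\mathcal L$ is coercive on the orthogonal complement under the natural orthogonality conditions on $\epsilon$. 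The adaptation is plausible because $Q$ decays exponentially for $p\ge 2$ (as noted after \eqref{GS-conv}) and appears as a multiplicative factor in every potential, so the effective potentials in $L_\pm$ remain exponentially localized even though $(-\Delta)^{-1}$ is global.

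With coercivity in hand, the \emph{log-log} law comes from the slow leakage of mass into the outer region. The drift term $i b(\tfrac d2 + \xi\cdot\nabla)$ converts the far field of $Q_b$ into a linear, oscillatory (outgoing-wave) problem whose amplitude is exponentially small in $1/b$; matching this outer solution to the inner profile, together with the Pohozaev-type solvability conditions obtained by projecting the equation for $\epsilon$ onto the generalized kernel, yields the modulation ODEs and in particular
\[
\frac{db}{d\tau} \approx -c\, e^{-\pi/b}.
\]
Integrating gives $b(\tau) \sim \pi/\ln\tau$; since $\tau \sim |\ln(T-t)|$ and $\partial_\tau L \sim -b/L$, one obtains $b \sim \pi/\ln|\ln(T-t)|$ and $L(t)^2 \sim 2\pi (T-t)/\ln|\ln(T-t)|$. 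Because $\|\nabla u(t)\|_{L^2} \sim \|\nabla Q\|_{L^2}/L(t)$, this reproduces the claimed rate $\bigl(\ln|\ln(T-t)|/(2\pi(T-t))\bigr)^{1/2}$, with the universal constant $2\pi$ inherited from the radiation computation and the chosen normalization of $Q$; a Lyapunov functional built from the linearized energy and the conservation of mass and energy then controls $\epsilon$ along a bootstrap, upgrading the formal law to the asserted convergence $u \sim L^{-d/2} Q((x-x(t))/L)\, e^{i\gamma}$.

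The hardest part will be the spectral and radiation analysis of the nonlocal operator $\mathcal L$. Unlike the local NLS case, $\mathcal L$ is integro-differential, so the ODE shooting and Sturm--Liouville arguments normally used to compute the generalized kernel and to solve the far-field eigenvalue problem do not apply directly; one must track how $(-\Delta)^{-1}$ couples the inner and outer regions, where the potential $(-\Delta)^{-1}|Q|^p$ itself carries a long-range Newtonian tail $\sim |x|^{-(d-2)}$. Even the dimension of the generalized null space and the precise coercivity constant rest on the nondegeneracy of $Q$, which is only known in special cases (as noted after \eqref{GS}). Confirming that the radiation term is exactly $e^{-\pi/b}$ — and that the long-range tail does not shift the constant away from $2\pi$ — is the crux, and it is precisely here that the present paper substitutes formal asymptotics and numerical functional fitting for a rigorous argument, which is why the statement is posed as a conjecture.
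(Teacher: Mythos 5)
You should first be clear that the paper does not prove this statement: it is posed as Conjecture~\ref{C:1}, and the paper's support for it consists of (a) direct simulation by dynamic rescaling (Sections~\ref{Section-DR} and \ref{Section-Critical}), (b) functional fitting of the correction term against candidates $F(s)=(\ln s^{-1})^\gamma$ versus $\ln\ln s^{-1}$ (Section~\ref{Fitting}), and (c) formal matched asymptotics in the Sulem--Sulem / LeMesurier--Papanicolaou--Sulem--Sulem style (Section~\ref{AA}), where the dimension is continued to $d(a)$, the profile is written as $Q=e^{-ia\xi^2/4}P$, Proposition~\ref{P: a decay} shows $d(a)-\frac{2}{\sigma}$ vanishes to all orders at $a=0$, and the genuine correction is the exponentially small radiation $\nu(a)\approx \frac{\nu_0^2}{\|R\|_{L^2}^2}e^{-\pi/a}$, which forces $a_\tau\sim -a^{-1}e^{-\pi/a}$, $a\approx\pi/\ln\tau$, and hence the \emph{log-log} law \eqref{L loglog}. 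Your route is instead the rigorous modulation-theoretic one (Perelman, Merle--Raphael): localized profiles $Q_b$, decomposition $v=Q_b+\epsilon$, coercivity of the linearized operator under orthogonality conditions, and a Lyapunov/bootstrap argument. The two share the same engine --- the $e^{-\pi/b}$ radiation law and the resulting modulation ODE --- and your scaling check that the nonlocal term carries no explicit power of $L$ precisely when $p=1+\frac{4}{d}$ is correct; your derivation of $b\sim\pi/\ln|\ln(T-t)|$ and $L^2\sim 2\pi(T-t)/\ln|\ln(T-t)|$ matches the paper's. What your approach would buy is an actual theorem; what the paper's approach buys is concrete evidence in a setting where the required spectral inputs are unavailable. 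You correctly name the obstruction: the coercivity and the generalized kernel of the integro-differential linearization rest on nondegeneracy and uniqueness of $Q$, known for gHartree only in special cases, and the constant $\pi$ in the radiation exponent must be recomputed in the presence of the Newtonian tail of $(-\Delta)^{-1}|Q|^p$. Since you do not supply these steps, your text is a program rather than a proof --- consistent with the statement being a conjecture. One small correction: in the paper's asymptotics the prefactor $2\pi$ comes purely from the WKB exponent $e^{-\pi/a}$ together with $a=-LL_t\approx q^2/2$ where $L=(T-t)^{1/2}q(t)$; the normalization $\|R\|_{L^2}$ enters only the constant in the law for $a_\tau$, which does not affect the leading-order rate, so the $2\pi$ is not ``inherited from the chosen normalization of $Q$'' as you suggest.
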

 
\begin{conjecture}[$L^2$-supercritical gHartree] \label{C:2}
A stable blow-up solution for the $L^2$-supercritical gHartree equation is of the self-similar form
\begin{align}\label{self-similar sol}
u(x,t) \sim \dfrac{1}{L(t)^{\frac{2}{p-1}}} Q\left(\frac{x-x(t)}{L(t)}\right) \exp \left({i \theta + \frac{i}{2a}\log \frac{T}{T-t}} \right),
\end{align}
where the blow-up profile  $Q$ is the $Q_{1,0}$ solution of the profile equation \eqref{Q eqn}, with the specific constant $a$ and rate $L(t)=(2a(T-t))^{1/2}$ (see Section \ref{Section-Existence} for the notation and details). Consequently,
$$
\| \nabla u(\cdot, t)\|_{L_x^2} \sim \left(2a(T-t) \right)^{-\frac{1}{2}(1-s_c)} \quad \mbox{as} \quad t\rightarrow T
$$
by a direct calculation. This dynamics is similar to the stable blow-up dynamics in the $L^2$-supercritical NLS equation.
\end{conjecture}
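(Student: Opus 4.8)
The plan is to establish the self-similar structure asserted in \eqref{self-similar sol} in three stages, keeping the rigorous content separate from the dynamical selection statement that makes this a conjecture rather than a theorem. First I would derive the profile equation \eqref{Q eqn} together with the laws for $L(t)$ and the phase by a formal dynamic-rescaling substitution; second, read off the blow-up rate directly from the scaling of the $\dot H^1$ norm; and third, combine the existence and local uniqueness result of Section \ref{Section-Existence} with the direct simulations to argue that the single-bump branch $Q_{1,0}$ is the profile selected by generic data.

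For the first stage I would substitute the ansatz $u(x,t) = L(t)^{-2/(p-1)} Q(\xi)\, e^{i\gamma(t)}$ with $\xi = (x - x(t))/L(t)$ into \eqref{gHartree1}. The key simplification is that the nonlocal term scales cleanly: since $(-\Delta_x)^{-1}$ applied to a profile of $\xi$ carries a factor $L^2$, one finds $\big((-\Delta)^{-1}|u|^p\big)|u|^{p-2}u = L^{-2p/(p-1)}\big((-\Delta_\xi)^{-1}|Q|^p\big)|Q|^{p-2}Q\, e^{i\gamma}$, matching exactly the $L$-weight of $\Delta u$. Multiplying through by $L^{2p/(p-1)} e^{-i\gamma}$ then produces an equation whose only $t$-dependence sits in the coefficients $L L_t$ and $L^2 \gamma_t$; demanding that the profile be stationary forces $L L_t = -a$ and $L^2\gamma_t = \mathrm{const}$ for a constant $a>0$. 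Integrating the first gives $L(t) = (2a(T-t))^{1/2}$, integrating the second gives the $\frac{1}{2a}\log\frac{T}{T-t}$ phase, and the remaining autonomous equation is precisely \eqref{Q eqn},
\[
\Delta Q - Q + ia\Big(\frac{2}{p-1}Q + \xi\cdot\nabla Q\Big) + \big((-\Delta)^{-1}|Q|^p\big)|Q|^{p-2}Q = 0.
\]

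The rate is then immediate. From the ansatz, $\|\nabla u(t)\|_{L^2} = L^{(d-2)/2 - 2/(p-1)}\|\nabla Q\|_{L^2} = L^{-(1-s_c)}\|\nabla Q\|_{L^2}$ by \eqref{criticality2}, so substituting $L = (2a(T-t))^{1/2}$ yields $\|\nabla u(t)\|_{L^2}\sim (2a(T-t))^{-\frac12(1-s_c)}$, as claimed. For existence and local uniqueness of solutions to \eqref{Q eqn} I would invoke the construction of Section \ref{Section-Existence}, and to single out $Q_{1,0}$ among the polynomially decaying multi-bump branches I would appeal to the constrained ODE shooting there, characterizing $Q_{1,0}$ as the single-bump, nodeless admissible profile on the lowest branch.

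The main obstacle — and the reason this is a conjecture — is the dynamical selection: proving that a generic (e.g. Gaussian) datum actually focuses into $Q_{1,0}$ with the above $L(t)$, i.e. that this self-similar solution is an attractor. This is open even for the $L^2$-supercritical NLS, so my plan is to support it numerically via the dynamic rescaling simulations of Section \ref{Section-DR}: track the rescaled solution, verify that it stabilizes to $Q_{1,0}$, that the rescaling speed $-LL_t$ settles to the constant $a$ predicted by \eqref{Q eqn}, and that $L(t)\propto (T-t)^{1/2}$ with no logarithmic correction (in contrast to the critical case). Consistency of the measured profile, the rate constant $a$, and the absence of a correction with the formal derivation is the evidence I would present for the conjecture.
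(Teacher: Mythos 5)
Your proposal is correct and follows essentially the same route as the paper: a formal self-similar (dynamic-rescaling) reduction yielding the profile equation \eqref{Q eqn} with $L L_t=-a$, hence $L(t)=(2a(T-t))^{1/2}$ and the logarithmic phase; the rate read off from the scaling identity $\|\nabla u\|_{L^2}=L^{-(1-s_c)}\|\nabla Q\|_{L^2}$ (your exponent is the consistent one); the existence/local-uniqueness and admissibility (non-oscillating, $\beta=0$) analysis of Section \ref{Section-Existence}; and numerical selection of $Q_{1,0}$ plus dynamic-rescaling simulations, with the attractor property correctly identified as the genuinely conjectural part. The only cosmetic difference is that the paper first rescales to \eqref{RgHartree} with a time-dependent $a(\tau)$ and then separates variables assuming $a(\tau)\to a$, whereas you substitute the full self-similar ansatz at once; these are formally equivalent.
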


We prove existence of profiles $Q$ to \eqref{Q eqn} abd find their decay before we numerically investigate the above Conjectures. We give numerical confirmation to Conjecture 1 in dimensions $d=3,4,5,6,7$ and to Conjecture 2 in dimensions $d=3,4$. In particular, we show that the rates in the stable blow-up dynamics do not depend on the local or non-local type of nonlinearity in the NLS-type equation, at least in the radial case. The profile in the $L^2$-critical case is a ground state solution of \eqref{GS} and in the $L^2$-supercritical regime, the profile equation \eqref{Q eqn} exhibits branches of slowly oscillating multi-bump solutions. 

To study the blow-up solutions, we adapt the dynamic rescaling method to the generalized Hartree equation and use it in both critical and supercritical cases. For the $L^2$-critical case, we find that generic blow-up happens with the rate $\left( \ln|\ln(T-t)|/(T-t) \right)^{\frac{1}{2}}$, which we also refer to as the {\it log-log} blow-up rate, with the self-similar blow-up profiles converging to $Q$ up to rescaling, that is, $|u(x,t)| \sim | {L^{-\frac{d}{2}}(t)} Q({x}/{L(t)}) |$, where $L(t) \approx \left( \ln|\ln(T-t)|/(T-t) \right)^{\frac{1}{2}} $. 
For the $L^2$-supercritical case, we obtain that the blow-up rate is {$\| \nabla u(t)\|_{L_x^2} \sim (T-t)^{-\frac{1}{2}(1-s_c)}$}, and we observe that it also blows up with the self-similar profile $Q$, which is different from a ground state solution of \eqref{GS-conv}. We show the existence and the ``local uniqueness" of such self-similar profile $Q$ for the case $0<s_c<2$. Numerically, we find that such $Q$ can have multiple slowly decaying solutions. Similar to the NLS $L^2$-supercritical case \cite{SS1999}, the existence of complex solutions of the rescaled static states $Q$ and the slow decay (not in $L^2$) makes it challenging to analyze the supercritical blow-up dynamics. Nevertheless, we do find the blow-up profile and the blow-up rate in this case, see Section \ref{Section-Supercritical}.

This paper is organized as follows. In Section \ref{S:profiles} we discuss existence and decay of profiles. In Section \ref{Section-DR}, we describe the dynamic rescaling method for the gHartree equation. In Section \ref{Section-Critical}, we discuss the $L^2$-critical case, obtaining the square root blow-up rate and the {\it log-log} correction. Besides numerical and asymptotic investigations, we also discuss the adiabatic regime occurring prior to the {\it log-log} regime. We also observe that blow-up profiles converge to the rescaled ground state $Q$ in our numerical simulations. In Section \ref{Section-Supercritical}, we discuss the $L^2$-supercritical cases (including $s_c>1$). 
Numerically, we obtain the profile $Q_{1,0}$, and justify that the blow-up solutions do converge to that blow-up profiles. We also obtain the blow-up rates, with the precision of $10^{-5}$ to the predicted blow-up rates. We finish with the Appendix discussing the computation of $Q$ via the renormalization method. 

\textbf{Acknowledgments}.
KY and YZ would like to acknowledge Yongyong Cai, who hosted their visit to the Computational Science Research Center (CSRC) in Beijing during Summer 2017. KY would thank Anudeep Kumar Arora for his help on the background of the gHartree equation and further discussions and clarifications.
S.R. was partially supported by the NSF CAREER grant DMS-1151618/1929029 and DMS-1815873/1927258 
as well as part of the KY's research and travel support to work on this project came from the above grants. SR would also like to thank the AROOO (`A Room of Ones's Own')
initiative for focused research time for this project.
YZ was partially supported by the Simons Foundation Grant No. 357963.

\section{Preliminaries on ground states and profiles}\label{S:profiles}

We start with applying the scaling invariance property to finite time existing solutions of (\ref{gHartree1}), which makes solutions of the rescaled equation exist globally in time. For consistency with literature we write the power $p=2\sigma+1$ and set (here, $r = |x|$)
\begin{equation}\label{rescaled-variables}
u(r,t)=\frac{1}{L^{1/\sigma}(t)} v(\xi, \tau), \qquad \xi=\frac{r}{L(t)}, \quad \mbox{and} \quad \tau=\int_0^t\frac{ds}{L^2(s)}.
\end{equation}
The direct calculation of this substitution into \eqref{gHartree1} yields
\begin{equation}\label{RgHartree}
iv_{\tau}+ia(\tau)\left(\frac{v}{\sigma}+\xi v_{\xi}\right)+\Delta v + \left((-\Delta)^{-1}|v|^{2\sigma+1} \right)|v|^{2\sigma-1} v=0, 
\end{equation}
where
\begin{equation}\label{a form}
a(\tau)=-L \frac{dL}{dt}=-\frac{d \ln L}{d\tau}.
\end{equation}
As in the NLS case, studying the parameter $L(t)$ will clarify the blow-up rate of the solutions, which differs for the $L^2$-critical vs. the $L^2$-supercritical cases, exactly because of the asymptotic behavior of the parameter $a(\tau)$ (we show that in the gHartree equation it will tend to zero in the $L^2$-critical case and to a nonzero constant in the supercritical case). Therefore, we study those cases separately.

Before that, we discuss some preliminaries on the profile equation and suitable solutions for the blow-up profiles. For that we assume that $a(\tau) \to a$, some specific constant, which we will obtain later numerically. 

We note that  the behavior of solutions as $t\rightarrow T$ in the original equation (\ref{gHartree1}) can be reconstructed from those to the rescaled equation (\ref{RgHartree}) as $\tau \rightarrow \infty$.

\subsection{Profile equation} \label{Section-Existence}
We separate variables $v(\xi,\tau)=e^{i\tau}Q(\xi)$ in \eqref{RgHartree} and obtain 
\begin{equation}\label{Q eqn-tau}
\Delta_{\xi} Q -Q+ia(\tau)\left( \dfrac{Q}{\sigma}+\xi Q_{\xi} \right) +((-\Delta)^{-1}|Q|^{2\sigma+1})|Q|^{2\sigma-1}Q=0,
\end{equation}
here, $\Delta_{\xi}:=\partial_{\xi \xi}+\frac{d-1}{\xi} \partial_{\xi}$ denotes the Laplacian with radial symmetry. Assuming that $a(\tau)$ converges to a constant $a$, instead of \eqref{Q eqn-tau} in this section we study the following problem
\begin{align}\label{Q eqn}
\begin{cases}
\Delta_{\xi} Q -Q+ia\left( \dfrac{Q}{\sigma}+\xi Q_{\xi} \right) +((-\Delta)^{-1}|Q|^{2\sigma+1})|Q|^{2\sigma-1}Q=0,\\
Q_{\xi}(0)=0,\qquad Q(0)=\mathrm{real}, \qquad Q(\infty)=0.
\end{cases}
\end{align}
The first condition for $Q$ indicates that the local maximum is at zero. The second condition on $Q$ shows that we fix the phase of the solutions, since the equation is phase invariant; the last condition means that $Q(\xi) \to 0$ as $\xi \to \infty$. Moreover, we will seek for solutions, which have $|Q(\xi)|$ decreasing monotonically with $\xi$, without oscillations as $\xi \to \infty$.  

Understanding solutions of the stationary equation in \eqref{Q eqn} leads to a set of possible profiles, one of which corresponds to the profile of stable blow-up. For the $L^2$-critical case this equation is simplified (due to $a$ being zero), however, we still ought to investigate the $L^2$-supercritical case (with nonzero $a$ but asymptotically approaching zero), since the correction in the blow-up rate comes exactly from that. 
We refer to the above equation as the {\it profile equation} and discuss the existence and local uniqueness theory of its solutions.

\subsection{Existence theory for profile solutions} 
Several properties of solutions to \eqref{Q eqn} are established in the following lemmas. We mention that while the statements are similar to the ones in the NLS case (see \cite{YRZ2019}), the calculations differ and often have extra terms and assumptions, compared to the pure power case.  
\begin{lemma}\label{L:1}
Let $s_c=\frac{d}{2}-\frac{1}{\sigma}$. Assume $d> 2$ and $\sigma \geq \frac{1}{2}$. If $Q(\xi)$ is the solution of the equation \eqref{Q eqn}, then
\begin{align}\label{Q identity 1}
\dfrac{\xi^{d-2}}{2} \bigg|\xi Q_{\xi}+ \dfrac{Q}{\sigma}\bigg|^2+ & \dfrac{\xi^d}{2} |Q|^2 \left( \left(\frac{1}{2\sigma+1}(-\Delta)^{-1}|Q|^{2\sigma+1}\right)|Q|^{2\sigma-1} -\frac{1}{\sigma^2 \xi^2}\right) \nonumber\\
&\qquad\qquad\qquad\qquad + \dfrac{2-s_c}{2\sigma+1}\int_0^{\xi} V(Q)\xi^{d-1} 
= (1-s_c)\int_0^{\xi} |Q_{\xi}|^2\xi^{d-1}, 
\end{align}
and
\begin{align}\label{Q identity 2}
2\Im(\xi Q_{\xi}\bar{Q})+2(d-2)\Im \int_0^{\xi}Q_{s}\bar{Q}+2a\left( \dfrac{1}{\sigma}-1 \right)\int_0^{\xi}s|Q|^2+a|\xi|^2|Q|^2=0,
\end{align}
where 
\begin{align}\label{E: V(Q)}
V(Q)\defeq ((-\Delta)^{-1}|Q|^{2\sigma+1})|Q|^{2\sigma+1}.
\end{align}
\end{lemma}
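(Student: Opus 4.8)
The plan is to treat both identities as Pohozaev/virial-type relations obtained by pairing the profile equation \eqref{Q eqn} with a well-chosen multiplier and integrating by parts on the interval $[0,\xi]$, retaining the boundary contribution at the endpoint. It is convenient first to record the radial equation in self-adjoint (flux) form $(\xi^{d-1}Q_\xi)_\xi-\xi^{d-1}Q+ia\,\xi^{d-1}\Lambda Q+\xi^{d-1}N|Q|^{2\sigma-1}Q=0$, where $\Lambda Q:=\tfrac{Q}{\sigma}+\xi Q_\xi$ is the generator of the scaling underlying the $ia$-term and $N:=(-\Delta)^{-1}|Q|^{2\sigma+1}$ is real and satisfies the radial Poisson relation $(\xi^{d-1}N_\xi)_\xi=-\xi^{d-1}|Q|^{2\sigma+1}$. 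Throughout, the boundary terms at $\xi=0$ drop out because of the normalization $Q_\xi(0)=0$ and the regularity of $Q$ and $N$ at the origin, so only the endpoint survives, making both relations local in $\xi$.

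For \eqref{Q identity 2} I would multiply the radial equation by $\xi\bar Q$, integrate in $\xi$ over $[0,\xi]$, and take imaginary parts. The terms $-\xi|Q|^2$ and $\xi N|Q|^{2\sigma+1}$ are real, hence vanish under $\Im$; this is precisely why the nonlocal nonlinearity leaves no trace in \eqref{Q identity 2}. Integrating the Laplacian piece by parts yields the boundary term $\Im(\xi Q_\xi\bar Q)$ and, after combining the two interior contributions ($-1$ from the $\xi Q_{\xi\xi}$ part and $d-1$ from the $\tfrac{d-1}{\xi}Q_\xi$ part), the term $(d-2)\,\Im\int_0^\xi Q_s\bar Q$. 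The dilation term $ia\Lambda Q$ contributes, via $\Im(iz)=\Re z$ and the identity $\Re(Q_\xi\bar Q)=\tfrac12(|Q|^2)_\xi$ followed by a further integration by parts, exactly the pieces with coefficients $a(\tfrac1\sigma-1)$ and $a|\xi|^2$. Collecting these and multiplying by $2$ gives \eqref{Q identity 2}; this is the easier of the two.

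For \eqref{Q identity 1} I would pair the equation with the dilation generator itself, i.e. multiply by the weighted multiplier $\xi^{d-1}\overline{\Lambda Q}$ and take real parts. The structural simplification is that the dissipative term contributes $\Re(ia\,\xi^{d-1}|\Lambda Q|^2)=0$, which explains why no explicit $a$ appears in \eqref{Q identity 1}. The Laplacian is handled by the standard Pohozaev integrations by parts, and after recombining $|Q_\xi|^2$, $\Re(Q_\xi\bar Q)$, and the $\tfrac{1}{\sigma^2\xi^2}|Q|^2$ term one recognizes the boundary combination $\tfrac{\xi^{d-2}}{2}|\Lambda Q|^2$; the weights $\tfrac d2$ and $\tfrac1\sigma$ from the two integrations by parts assemble into the coefficient $(1-s_c)=1-\tfrac d2+\tfrac1\sigma$ multiplying $\int_0^\xi|Q_\xi|^2 s^{d-1}$.

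The main obstacle is the nonlocal term $\int_0^\xi \xi^{d-1}N|Q|^{2\sigma-1}Q\,\overline{\Lambda Q}$. Unlike the pure-power NLS case, it cannot be reduced to a single local potential: after using $|Q|^{2\sigma-1}\Re(Q_\xi\bar Q)=\tfrac{1}{2\sigma+1}(|Q|^{2\sigma+1})_\xi$ and integrating by parts, one is forced to invoke the Poisson relation $(\xi^{d-1}N_\xi)_\xi=-\xi^{d-1}|Q|^{2\sigma+1}$ to control the resulting $N_\xi$-terms, and it is this step that produces the factor $\tfrac{1}{2\sigma+1}$ in the boundary piece and the coefficient $\tfrac{2-s_c}{2\sigma+1}$ in $\int_0^\xi V(Q)s^{d-1}$. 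The delicate bookkeeping is that the $\overline{\Lambda Q}$-pairing alone also generates auxiliary mass contributions of the form $\int_0^\xi s^{d-1}|Q|^2$; these must be removed by combining with the companion relation obtained by pairing \eqref{Q eqn} against $\xi^{d-1}\bar Q$ (and, where an $a$-term reappears, against \eqref{Q identity 2}), which is where the calculation genuinely departs from the local case and yields the ``extra terms'' anticipated in the text. I would confirm the final grouping by differentiating the claimed identity in $\xi$ and checking that the result collapses to $0$ upon substituting $Q_{\xi\xi}$ from \eqref{Q eqn} and using the Poisson relation, and I would separately verify the decay and regularity of $Q$ and $N$ that make all endpoint terms (and, for the $\xi\to\infty$ global versions, the vanishing boundary terms) well defined.
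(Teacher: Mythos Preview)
Your derivation of \eqref{Q identity 2} matches the paper's exactly: multiply \eqref{Q eqn} by $2\xi\bar Q$, integrate on $[0,\xi]$, and take the imaginary part.

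For \eqref{Q identity 1} you take a genuinely different route. You pair the equation with the dilation generator $\xi^{d-1}\overline{\Lambda Q}$ and take the real part, so that the $ia|\Lambda Q|^2$ contribution vanishes; you then anticipate having to combine with the auxiliary relation obtained from the multiplier $\xi^{d-1}\bar Q$ (and possibly with \eqref{Q identity 2}) to kill the mass terms $\int_0^\xi s^{d-1}|Q|^2$ that your pairing produces. The paper instead multiplies by $\Delta\bar Q\,\xi^{d-1}$ and takes the \emph{imaginary} part. That choice makes $|\Delta Q|^2$ drop immediately, and --- crucially --- when one substitutes the equation back for $\Delta\bar Q$ in the nonlinear term, every surviving piece carries a uniform factor of $a$, which cancels and delivers \eqref{Q identity 1} directly, with no auxiliary mass integrals to eliminate. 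Your approach is the classical Pohozaev/virial route and will work once the bookkeeping you describe is carried out, but it is more laborious here; the paper's $\Delta\bar Q$ multiplier buys a cleaner derivation precisely because the nonlocal nonlinearity, after the substitution, reduces to $a\,\Re\!\int N|Q|^{2\sigma-1}Q\,\overline{\Lambda Q}$, which is what you would have computed anyway but without the detour through extra identities.
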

\begin{proof}
Multiply (\ref{Q eqn}) by $\Delta \bar{Q} \xi^{d-1}$, take the imaginary part and integrate from $0$ to $\xi$. This gives 
\begin{align}\label{Q identity 10}
a \Re \int_0^{\xi} \left(\xi Q_{\xi}+\dfrac{Q}{\sigma}\right) \Delta \bar{Q} \xi^{d-1}+ \Im  \int_0^{\xi} ((-\Delta)^{-1}|Q|^{2\sigma+1})|Q|^{2\sigma-1}Q \Delta \bar{Q} \xi^{d-1} =0.
\end{align} 
The first part is equivalent to
\begin{align}\label{Q identity 11}
a\left( \xi^{d-2}\Re\left(\xi \bar{Q}_{\xi} \dfrac{Q}{\sigma}\right) +\dfrac{\xi^d}{2}|Q_{\xi}|^2 +\left(\frac{d}{2}-\frac{1}{\sigma}-1 \right) \int_0^{\xi} |Q_{\xi}|^2 \xi^{d-1}   \right),
\end{align}
and, by using \eqref{Q eqn} to express $\Delta \bar{Q}$, the second part of \eqref{Q identity 10} yields
\begin{align}\label{Q identity 12}
a\left( \dfrac{\xi^d}{2(2\sigma+1)} V(Q)-\frac{1}{2\sigma+1} \left( \frac{d}{2}-\frac{1}{\sigma}-2 \right) \int_0^{\xi} V(Q) \, \xi^{d-1} \right).
\end{align}
Putting together these two parts, gives the identity (\ref{Q identity 1}).

The second identity \eqref{Q identity 2} is obtained by multiplying $2\xi \bar{Q}$, integrating from $0$ to $\xi$, and then taking the imaginary part.
\end{proof}

\begin{lemma}\label{Q boundedness}
Suppose $Q(\xi)$ is the $C^2[0,\infty)$ solution of the equation \eqref{Q eqn} for $d>2$ and $\sigma \geq \frac12$. If $0<s_c \leq 2$\footnote{The reason for the restriction $s_c<2$ is to keep the third term in \eqref{Q identity 1} positive.}, then $|Q(\xi)|$ and $|Q_{\xi}(\xi)|$ are bounded.
\end{lemma}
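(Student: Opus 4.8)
The plan is to read everything off the Pohozaev-type identity \eqref{Q identity 1} of Lemma \ref{L:1}; the hypotheses $d>2$ and $\sigma\ge\frac12$ are what make its terms meaningful, and the restriction $s_c\le2$ is precisely what fixes the sign of its third term (the footnoted point). Since $Q\in C^2[0,\infty)$, both $|Q|$ and $|Q_\xi|$ are automatically bounded on every compact interval, so the whole content is a uniform bound as $\xi\to\infty$. I note in passing that $|Q|$ is in fact bounded for free from the boundary condition $Q(\infty)=0$ in \eqref{Q eqn} together with continuity; nonetheless I would re-derive it from \eqref{Q identity 1}, since the same computation is what controls the genuinely delicate quantity $Q_\xi$ against the antidamping term $ia\,\xi Q_\xi$ in the profile equation.

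The key algebraic reduction comes first. Expanding $\big|\xi Q_\xi+\frac{Q}{\sigma}\big|^2=\xi^2|Q_\xi|^2+\frac{2\xi}{\sigma}\Re(Q_\xi\bar Q)+\frac{1}{\sigma^2}|Q|^2$, the resulting $\frac{\xi^{d-2}}{2\sigma^2}|Q|^2$ cancels exactly against the $-\frac{1}{\sigma^2\xi^2}$ contribution inside the second bracket, while $\frac{\xi^d}{2(2\sigma+1)}|Q|^{2\sigma+1}(-\Delta)^{-1}|Q|^{2\sigma+1}=\frac{\xi^d}{2(2\sigma+1)}V(Q)$ by \eqref{E: V(Q)}. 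Using $\Re(Q_\xi\bar Q)=\frac12\frac{d}{d\xi}|Q|^2$, the identity \eqref{Q identity 1} becomes the differential relation
\begin{equation*}
\frac{\xi^{d-1}}{2\sigma}\frac{d}{d\xi}|Q|^2=(1-s_c)\!\int_0^\xi|Q_s|^2 s^{d-1}\,ds-\frac{\xi^d}{2}|Q_\xi|^2-\frac{\xi^d}{2(2\sigma+1)}V(Q)-\frac{2-s_c}{2\sigma+1}\!\int_0^\xi V(Q)\,s^{d-1}\,ds .
\end{equation*}
The sign structure of the right-hand side is decisive: the Newtonian potential $(-\Delta)^{-1}$ has a strictly positive kernel for $d>2$, so $(-\Delta)^{-1}|Q|^{2\sigma+1}\ge0$ and hence $V(Q)\ge0$; together with $s_c\le2$ this makes all three subtracted terms nonnegative.

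For $1\le s_c\le2$ the term $(1-s_c)\int_0^\xi|Q_s|^2 s^{d-1}\,ds$ is also nonpositive, whence $\frac{d}{d\xi}|Q|^2\le0$: the modulus $|Q|$ is nonincreasing, so $|Q(\xi)|\le|Q(0)|$. Feeding this back, the nonnegative terms force $\frac{\xi^d}{2}|Q_\xi|^2\le\frac{\xi^{d-1}}{2\sigma}\big|\frac{d}{d\xi}|Q|^2\big|\le\frac{\xi^{d-1}}{\sigma}|Q_\xi|\,|Q|$, i.e. $|Q_\xi|\le\frac{2}{\sigma\xi}|Q|$, which is bounded (indeed decaying) for large $\xi$ and continuous near $\xi=0$, closing the bound on $Q_\xi$.

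The main obstacle is the complementary range $0<s_c<1$, where $(1-s_c)\int_0^\xi|Q_s|^2 s^{d-1}\,ds$ is positive and destroys the monotonicity of $|Q|$. Here I would use $Q(\infty)=0$ to bound $|Q|$, and then control $P(\xi):=\int_0^\xi|Q_s|^2 s^{d-1}\,ds$ by a Gronwall/bootstrap argument: applying a weighted Cauchy–Schwarz bound $\frac{\xi^{d-1}}{\sigma}|Q_\xi|\,|Q|\le\frac14\xi^d|Q_\xi|^2+\frac{C}{\sigma^2}\xi^{d-2}|Q|^2$ to absorb the cross term back into $\frac{\xi^d}{2}|Q_\xi|^2=\frac{\xi}{2}P'(\xi)$ yields a closed first-order inequality for $P$, whose integration bounds $P$ and hence $|Q_\xi|^2=P'(\xi)/\xi^{d-1}$. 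Making this bootstrap actually close—rather than merely bounding $P$ by a growing power of $\xi$—is the delicate step, and is exactly where the far-field decay of $Q$ and the positivity of the retained nonlocal potential terms must both be exploited.
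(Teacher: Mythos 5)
Your reduction of \eqref{Q identity 1} to the differential relation $\frac{\xi^{d-1}}{2\sigma}\frac{d}{d\xi}|Q|^2=(1-s_c)\int_0^\xi|Q_s|^2s^{d-1}ds-\frac{\xi^d}{2}|Q_\xi|^2-\frac{\xi^d}{2(2\sigma+1)}V(Q)-\frac{2-s_c}{2\sigma+1}\int_0^\xi V(Q)s^{d-1}ds$ is correct (the $\frac{\xi^{d-2}}{2\sigma^2}|Q|^2$ cancellation and the identification with $V(Q)$ both check out), and your treatment of the range $1\le s_c\le 2$ is complete and genuinely different from the paper's: you obtain monotonicity of $|Q|$ and then the pointwise bound $|Q_\xi|\le\frac{2}{\sigma\xi}|Q|$ directly, whereas the paper proves both bounds by contradiction. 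That part would be a clean alternative proof.

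The genuine gap is exactly where you flag it: the range $0<s_c<1$, which is the heart of the paper's argument, is left open. Two issues. First, bounding $|Q|$ by invoking $Q(\infty)=0$ is admissible for the lemma as literally stated, but it undercuts the lemma's purpose: Lemma \ref{Q boundedness} is what the existence theorem uses to extend the local Volterra solution to all of $[0,\infty)$, at which stage no decay at infinity is yet known; this is why the paper derives the bound on $|Q|$ from \eqref{Q identity 1} alone (via the claim that boundedness of $|Q_\xi|$ forces boundedness of $|Q|$). Second, and more seriously, your bootstrap for $P(\xi)=\int_0^\xi|Q_s|^2s^{d-1}ds$ does not close as set up: with the absorption $\frac{\xi^{d-1}}{\sigma}|Q_\xi||Q|\le\frac14\xi^d|Q_\xi|^2+\frac{C}{\sigma^2}\xi^{d-2}|Q|^2$ you get $P'\le\frac{4(1-s_c)}{\xi}P+C'\xi^{d-3}$, hence by Gronwall $P(\xi)\lesssim\xi^{\max(4(1-s_c),\,d-2)}$ and $|Q_\xi|^2=P'/\xi^{d-1}\lesssim\xi^{4(1-s_c)-d}+\xi^{-2}$, which is unbounded whenever $4(1-s_c)>d$ --- for instance $d=3$ with $s_c<\frac14$ (i.e.\ $\sigma\in(\frac23,\frac45)$), squarely inside the lemma's hypotheses. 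The paper closes this case differently: assuming $|Q_\xi|$ unbounded, it picks an increasing sequence $\xi_j$ along which $|Q_\xi(\xi_j)|=\max_{[0,\xi_j]}|Q_\xi|\to\infty$, so the right side of \eqref{Q identity 1} is at most $\frac{1-s_c}{d}|Q_\xi(\xi_j)|^2\xi_j^d$ while the left side exceeds $\frac{1-\delta}{2}|Q_\xi(\xi_j)|^2\xi_j^d$ for large $j$, and since $d>2$ allows a choice of $\delta$ with $\frac{1-\delta}{2}>\frac{1-s_c}{d}$, this is a contradiction. (Alternatively, your own scheme can be repaired: absorbing with $\epsilon\xi^d|Q_\xi|^2$ for small $\epsilon$ improves the Gronwall exponent to $\frac{1-s_c}{1/2-\epsilon}\to2(1-s_c)<2<d$, after which $|Q_\xi|^2\lesssim\xi^{2(1-s_c)+\delta-d}+\xi^{-2}$ is bounded; but this still presupposes a bound on $|Q|$ obtained without the boundary condition at infinity.)
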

\begin{proof}
Since the equation \eqref{Q eqn} has two derivatives, both $Q(\xi)$ and $Q_{\xi}(\xi)$ are continuous. Thus, both $Q(\xi)$ and $Q_{\xi}(\xi)$ are bounded in the interval $\xi \in [0,M]$ for any $M>0$. Then, it suffices to consider the case when $\xi \rightarrow \infty$.

For that, from (\ref{Q identity 1}), we claim that if $|Q_{\xi}|$ is bounded, so is $|Q|$. To the contrary, suppose that $|Q_{\xi}|$ is bounded but $|Q|$ is not bounded as $\xi \rightarrow \infty$. We consider two cases: $s_c \geq 1$ and $s_c<1$.


For $s_c \geq 1$, the RHS of \eqref{Q identity 1} is not positive, while the LHS of  \eqref{Q identity 1} is strictly positive for sufficiently large $\xi$. We reach a contradiction immediately. 

Now we consider $s_c<1$. By dropping the first and third terms (which are positive) in \eqref{Q identity 1}, for sufficiently large $\xi$, we have
\begin{align}
0\leq \dfrac{\xi^d}{2} |Q|^2 \left( \dfrac{1}{2\sigma+1}((-\Delta)^{-1}|Q|^{2\sigma+1})|Q|^{2\sigma-1} -\dfrac{1}{\sigma^2 \xi^2}\right) \leq \textrm{LHS of (\ref{Q identity 1})} = \textrm{RHS of (\ref{Q identity 1})} \leq c \, \xi^d. 
\end{align}
This implies that $|Q|$ must be bounded, contrary to our assumption.

Now we show that $|Q_{\xi}|$ is bounded when $\xi \rightarrow \infty$. We prove the boundedness of $|Q_{\xi}|$ by contradiction based on the argument from \cite{BCR1999} and \cite{YRZ2019}. Suppose $|Q_{\xi}|$ is not bounded, i.e., $\limsup_{\xi\rightarrow \infty}Q(\xi)=\infty$. Then, there exists a monotonically increasing sequence $\lbrace \xi_j\rbrace_0^{\infty}$ for both $\xi_j$ and $Q(\xi_j)$ such that $|Q_{\xi}(\xi_j)|\rightarrow \infty$ as $\xi_j \rightarrow \infty$, and $|Q_{\xi}(\xi_j)|>|Q_{\xi}(\xi_k)|$ for $j>k$. For any large fixed number $M>0$, there exists an index $j$ such that $\xi_j>M$. 

Now we again split the cases for $s_c \geq 1$ and $s_c<1$. When $s_c \geq 1$, the RHS of \eqref{Q identity 1} is nonpositive, while the LHS of \eqref{Q identity 1} will be strictly positive for $\xi_j$ with $j$ sufficiently large, leading to a contradiction.

Now we consider the case $s_c<1$. From the RHS of the identity (\ref{Q identity 1}), we have 
\begin{align}\label{Q bound 1}
(1-s_c)\int_0^{\xi_j} |Q_{\xi}|^2\xi^{d-1} d\xi\leq \dfrac{1-s_c}{d}|Q_{\xi}(\xi_j)|^2 \xi_j^d.
\end{align}
We choose $\delta > 0$ such that $0<\delta<1-\frac{2-2s_c}{d}$, or equivalently, $\frac{1-\delta}{2}>\frac{1-s_c}{d}$. Then  
\begin{align}\label{Q bound 2}
\dfrac{1-\delta}{2} |Q_{\xi}(\xi_j)|^2 \xi_j^d < \textrm{LHS of (\ref{Q identity 1})} \leq  \dfrac{1-s_c}{d}|Q_{\xi}(\xi_j)|^2 \xi_j^d.
\end{align}
We reach the contradiction in \eqref{Q bound 2} for $\xi_j$ sufficiently large, since $\frac{1-\delta}{2}>\frac{1-s_c}{d}$. Therefore, we conclude that $|Q_{\xi}|$ is bounded and so is $|Q|$.
\end{proof}

We next discuss the existence theory for
\eqref{Q eqn}.
\begin{theorem}[Existence of $Q$]
Define $s_c=\frac{d}{2}-\frac{1}{\sigma}$. If $0< s_c \leq 2$, $d>2$ and $\sigma\geq \frac{1}{2}$, for any given initial value $Q(0) \in \mathbb{R}$ and constant $a>0$, the equation (\ref{Q eqn}) has a unique solution in ${C}^2[0,\infty)$.
\end{theorem}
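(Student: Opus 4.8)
The plan is to remove the nonlocal term by promoting the Newtonian potential to an auxiliary unknown, and then to treat \eqref{Q eqn} as an initial value problem for a first-order ODE system with a regular singular point at $\xi=0$. Concretely, set $W=(-\Delta)^{-1}|Q|^{2\sigma+1}$, so that $\Delta_\xi W=-|Q|^{2\sigma+1}$, and write $Q=Q_1+iQ_2$. Introducing $P=Q_\xi$ and $V=W_\xi$ turns \eqref{Q eqn} into a closed first-order system for $(Q,P,W,V)$ in which the only singular coefficients are the $\frac{d-1}{\xi}$ terms coming from $\Delta_\xi$. The data are $Q(0)$ real (given), together with $P(0)=0$ and $V(0)=0$ forced by radial regularity; the remaining datum $W(0)$ is pinned down by the normalization $W(\infty)=0$ that identifies $W$ with $(-\Delta)^{-1}|Q|^{2\sigma+1}$.

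First I would establish local existence and uniqueness on a small interval $[0,\delta]$. The clean way to handle the regular singular point is to integrate against the natural weight: any bounded continuous solution of $\Delta_\xi g=F$ with $g_\xi(0)=0$ satisfies $g_\xi(\xi)=\xi^{1-d}\int_0^\xi \eta^{d-1}F(\eta)\,d\eta$ and $g(\xi)=g(0)+\int_0^\xi s^{1-d}\int_0^s\eta^{d-1}F(\eta)\,d\eta\,ds$. Since $\xi^{1-d}\int_0^\xi\eta^{d-1}\,d\eta=\xi/d$, the operator $F\mapsto \xi^{1-d}\int_0^\xi\eta^{d-1}F\,d\eta$ has norm at most $\delta/d$ on $C[0,\delta]$ and is therefore a contraction once $\delta$ is small. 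Feeding the linear, nonlinear, and potential terms of \eqref{Q eqn} into this integral formulation and applying the Banach fixed point theorem on $C([0,\delta];\C^2\times\R^2)$ would yield a unique $C^2$ solution realizing the prescribed $Q(0)$; the local Lipschitz continuity of $Q\mapsto |Q|^{2\sigma-1}Q$ and of the potential coupling gives uniqueness.

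Second, I would upgrade the local solution to a global one by continuation. The maximal solution exists on some $[0,\Xi)$, and by the standard blow-up alternative, if $\Xi<\infty$ then $|Q|+|Q_\xi|$ must become unbounded as $\xi\to\Xi$. This is exactly what Lemma \ref{Q boundedness} forbids: under the hypotheses $0<s_c\le 2$, $d>2$, $\sigma\ge\frac12$, the identity \eqref{Q identity 1} forces $|Q(\xi)|$ and $|Q_\xi(\xi)|$ to stay bounded. Boundedness of $Q$ then gives boundedness of $W$ and $V$ through their integral representations, so no component escapes at finite $\xi$ and we conclude $\Xi=\infty$. Uniqueness propagates from the local statement by the usual connectedness argument.

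The hard part will be the genuinely nonlocal nature of $(-\Delta)^{-1}|Q|^{2\sigma+1}$: its value at $\xi$ depends on $Q$ on all of $[0,\infty)$ through the tail $\int_\xi^\infty s^{1-d}\int_0^s\eta^{d-1}|Q|^{2\sigma+1}\,d\eta\,ds$, so the problem is not a pure forward initial value problem, and this is where the argument departs from the local NLS case in \cite{BCR1999} and \cite{YRZ2019}. The augmentation sidesteps this by treating $W(0)$ as a shooting parameter: the forward system admits a solution for any choice of $W(0)$, and one must show that the decay requirement $W(\infty)=0$ selects a unique admissible value. I expect the delicate point to be establishing, from the a priori bounds together with the decay of $Q$ at infinity, that the defining integral for $W(0)$ converges and matches the forward solution, so that the local IVP solution genuinely coincides with $(-\Delta)^{-1}|Q|^{2\sigma+1}$ rather than solving the augmented ODE for some spurious $W(0)$. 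Carrying this extra potential variable through the identities of Lemma \ref{L:1} and Lemma \ref{Q boundedness} is precisely the source of the ``extra terms and assumptions'' flagged before the lemmas.
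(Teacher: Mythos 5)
Your route is genuinely different from the paper's: the paper folds \eqref{Q eqn} into a single Volterra integral equation \eqref{Q int} and invokes standard Volterra theory for local uniqueness, then uses Lemma \ref{Q boundedness} to extend to $[0,\infty)$; you instead promote the potential to an unknown $W$ with $\Delta_\xi W=-|Q|^{2\sigma+1}$, run a contraction argument near the regular singular point, and continue globally. Both arguments ultimately rest on the same a priori bounds, and your explicit identification of the nonlocality of $(-\Delta)^{-1}$ is more candid than the paper, which writes the nonlocal coefficient into the Volterra kernel as if it were a continuous function of $(s,\xi,Q(s))$ alone.

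However, the step you defer is not a technicality but the crux, and as written the argument does not close. First, the datum $W(0)=\frac{1}{d-2}\int_0^\infty s\,|Q(s)|^{2\sigma+1}\,ds$ is determined only by the \emph{global} solution, so your forward system is underdetermined until the shooting map $W(0)\mapsto W(\infty)$ is shown to have a unique zero; without that, neither existence nor the uniqueness asserted in the theorem (which is uniqueness given only $Q(0)$ and $a$) is established. Second, and more seriously, the globalization step is circular for the augmented system: a trajectory launched from a ``wrong'' $W(0)$ has $W=(-\Delta)^{-1}|Q|^{2\sigma+1}+c$ for some constant $c\neq 0$ (the regular radial harmonic functions in $d>2$ being constants), so $Q$ then solves \eqref{Q eqn} with an extra local term $c\,|Q|^{2\sigma-1}Q$. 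The identities of Lemma \ref{L:1}, and hence the bounds of Lemma \ref{Q boundedness}, are derived for the genuine equation and do not survive this perturbation, so you cannot invoke them to rule out finite-$\xi$ blow-up uniformly over the shooting parameter --- which is exactly what a shooting argument needs. To repair this you would either have to re-derive \eqref{Q identity 1} with the extra $c$-term and still extract bounds, or abandon shooting and set the whole problem up as a fixed point for the map $Q\mapsto(-\Delta)^{-1}|Q|^{2\sigma+1}\mapsto Q$ on a global function space, which is in effect what the paper's Volterra formulation gestures at without spelling out.
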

\begin{proof}
The problem is equivalent to the Volterra integral equation:
\begin{align}\label{Q int}
Q(\xi)& = Q(0)-ia\int_0^{\xi} sQ(s)ds +\dfrac{1}{d-2} \times \nonumber \\
& \int_0^{\xi} \left[ 1+ia\left(d-\frac{1}{\sigma}\right)-((-\Delta)^{-1}|Q(s)|^{2\sigma+1})|Q(s)|^{2\sigma-1}\right]Q(s)\left(s-\frac{s^{d-1}}{\xi^{d-2}} \right)ds, ~ Q(\infty)=0.
\end{align}
The equation (\ref{Q int}) is of the form 
\begin{align}\label{Q form}
Q(\xi)=Q(0)+\int_0^{\xi} g(s,\xi,Q(s))ds, \quad Q(\infty)=0.
\end{align}
From the theory of Volterra integral equation (see exact statements in \cite{YRZ2019} as well as the application in the NLS case, which is following \cite[Theorem 3.2.2]{Bu1983}), the equation \eqref{Q form} has a unique solution on the interval $\xi \in [0,M]$ for some fixed $M>0$, since $g(s,\xi,Q(s))$ is continuous. This result can be extended to $M=\infty$, since $|Q(\xi)|$ is bounded (see \cite[Theorem 2.1]{YRZ2019} and \cite[Theorem 3.3.6]{Bu1983}). From Lemma \ref{Q boundedness}, it follows that the integral equation \eqref{Q int} has a unique solution. We next note that $Q$ is the solution not only to the equation \eqref{Q int} or \eqref{Q form}, but also to the differential equation \eqref{Q eqn}, and thus, differentiating $Q$ twice classically, it gives $Q \in \mathbb{C}^2[0,\infty)$, finishing the proof.
\end{proof}

\begin{remark}
If $s_c=0$ (thus, $a=0$) the equation in \eqref{Q int} reduces to 
$$
Q(\xi) = Q(0) +\frac1{d-2} \int_0^{\xi} \left[ 1-((-\Delta)^{-1}|Q(s)|^{2\sigma+1})|Q(s)|^{2\sigma-1}\right]Q(s)\left(s-\frac{s^{d-1}}{\xi^{d-2}} \right)ds,
$$ 
and given the initial value of $Q(0)$, the uniqueness holds from a similar argument due to Volterra integral theory. The values of $Q(0)$ are unknown {\it a priori} in the $L^2$-critical case, nevertheless, our numerical solver converges to the same $Q$ regardless of initial condition, see Appendix. 
\end{remark}

\begin{corollary}\label{C: Q decay}
For $d > 2$ and $s_c>0$, if $\sigma=1$, then $|Q(\xi)| \lesssim \xi^{-1}$ for $\xi$ large enough (recall that $\xi$ is radial variable here, and thus, nonnegative).
\end{corollary}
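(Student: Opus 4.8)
The plan is to treat \eqref{Q eqn} with $\sigma=1$ as a perturbed linear second order ODE and to extract the decay of $Q$ from the two fundamental solutions of its linear part, in the spirit of the NLS analysis in \cite{YRZ2019} and \cite{BCR1999}. Setting $\Phi(\xi):=(-\Delta)^{-1}|Q|^3$ and $W(\xi):=\Phi(\xi)\,|Q(\xi)|$, the profile equation reads
\begin{equation*}
Q_{\xi\xi}+\Big(\frac{d-1}{\xi}+ia\,\xi\Big)Q_{\xi}+\big(ia-1+W(\xi)\big)Q=0 .
\end{equation*}
A WKB/eikonal computation ($Q=\exp\int m\,d\xi$, balancing the dominant terms as $\xi\to\infty$) exhibits two branches: a slowly decaying one with $m\sim -\tfrac1\xi-\tfrac{i(1-W)}{a\xi}$, whose modulus behaves like $\xi^{-1}=\xi^{-1/\sigma}$, and a rapidly oscillating one with $m\sim -ia\,\xi$ and amplitude $\sim\xi^{-(d-1)}$. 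The key structural observation is that $W$ is real and bounded, so it perturbs only the imaginary part (the phase) of $m$ and leaves $\Re\,m\sim -1/\xi$ untouched; hence the amplitude rate $\xi^{-1}$ is robust. Since $d>2$ gives $d-1>1$, both branches are $O(\xi^{-1})$, so every solution of the linear equation is $\lesssim\xi^{-1}$, which is exactly the assertion.

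To turn this into a proof I would start from the boundedness of $|Q|$ and $|Q_\xi|$ in Lemma \ref{Q boundedness} and then control the nonlocal coefficient $W$. For $s_c\ge1$ this is immediate from \eqref{Q identity 1}: its right-hand side is nonpositive, while the first and third terms on the left are nonnegative, forcing the bracket $\tfrac13\Phi|Q|-\xi^{-2}\le0$, that is, $W(\xi)=\Phi|Q|\le 3\,\xi^{-2}=o(\xi^{-1})$. With $W$ thereby an integrable (indeed $O(\xi^{-2})$) perturbation of the linear coefficients, a Levinson-type asymptotic theorem identifies the actual solutions with the two WKB branches above, and the bound $|Q|\lesssim\xi^{-1}$ follows. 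The phase can be tracked cleanly through the Madelung substitution $Q=Ae^{iS}$: the imaginary part of \eqref{Q eqn} integrates to \eqref{Q identity 2}, and a companion first integral with weight $\xi^{d-1}$ reads $\xi^{d-1}A^2S_{\xi}+\tfrac{a}{2}\xi^{d}A^2=a\,s_c\int_0^{\xi}s^{d-1}A^2\,ds$, which expresses $S_\xi$ directly through $A$ and reduces \eqref{Q eqn} to a single real amplitude equation for $A$.

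The main obstacle is the remaining case $s_c<1$ (which for integer $d>2$ means $d=3$) together with the nonlocal nature of $\Phi$. When $s_c<1$ the right-hand side of \eqref{Q identity 1} is positive, and the identity only yields that $V(Q)=\Phi|Q|^3$ is bounded, not that $W$ is small; moreover $\Phi=(-\Delta)^{-1}|Q|^3$ depends on $Q$ globally, so the decay of $W$ cannot be read off pointwise and must be obtained self-consistently with the very decay of $Q$ one is trying to prove. I would close this by a bootstrap: writing $\Phi$ through the radial identity $\Phi_\xi(\xi)=-\xi^{1-d}\int_0^\xi s^{d-1}|Q|^3\,ds$, a crude decay input for $|Q|$ improves the growth estimate for $\int_0^\xi s^{d-1}|Q|^3\,ds$, hence the decay of $\Phi$ and of $W$, which in turn sharpens the amplitude rate via the linear asymptotics; iterating drives the rate to the fixed point $\xi^{-1}$ (with a harmless logarithm at $d=3$, where $s^{d-1}|Q|^3\sim s^{-1}$ is only borderline integrable). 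The restriction $\sigma=1$ is precisely what keeps this feedback tractable, since it makes $|Q|^{2\sigma-1}=|Q|$ and renders the cancellation of the $|Q|^2$ terms in \eqref{Q identity 1} exact, so the nonlinear contribution stays strictly below the linear decay rate throughout.
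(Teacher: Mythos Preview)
Your approach is different from the paper's and misses the key simplification. The paper's proof is essentially one line: when $\sigma=1$, the term $2a\bigl(\tfrac{1}{\sigma}-1\bigr)\int_0^{\xi}s|Q|^2\,ds$ in identity \eqref{Q identity 2} vanishes, and what remains,
\[
2\,\Im(\xi Q_\xi\bar Q)+2(d-2)\,\Im\!\int_0^\xi Q_s\bar Q\,ds + a\,\xi^2|Q|^2=0,
\]
no longer sees the nonlinearity at all (the potential term $((-\Delta)^{-1}|Q|^{3})|Q|Q\bar Q$ is real and drops out when one takes the imaginary part). From here the argument is verbatim that of \cite[Theorem~2.2]{BCR1999} and \cite[Corollary~2.7]{YRZ2019} for the NLS. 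The entire role of the hypothesis $\sigma=1$ is this cancellation in \eqref{Q identity 2}; your final sentence locates the mechanism in \eqref{Q identity 1} instead, which is a misreading.

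By contrast your WKB/Levinson route keeps the nonlocal coefficient $W=\Phi|Q|$ in the equation and must show it is an integrable perturbation. For $s_c\ge1$ your use of \eqref{Q identity 1} to force $W\le3\xi^{-2}$ is correct and the program could plausibly be completed there. The genuine gap is at $s_c<1$, which for $\sigma=1$ and integer $d>2$ is exactly the case $d=3$, $s_c=\tfrac12$: your bootstrap has no starting input, since Lemma~\ref{Q boundedness} supplies only boundedness of $|Q|$, so a priori $\int_0^\xi s^{d-1}|Q|^3\,ds$ may grow like $\xi^d$, hence $\Phi$ like $\xi^2$ and $W$ like $\xi^2$ as well. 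The perturbation is therefore \emph{large}, not small, and the linear asymptotics cannot be invoked until some quantitative decay of $|Q|$ is already in hand --- which is precisely what you are trying to prove. The paper's route via \eqref{Q identity 2} avoids this circularity because, after the $\sigma=1$ cancellation, that identity is decoupled from the nonlocal term and yields the decay of $|Q|$ directly.
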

\begin{proof}
When $\sigma=1$, the term $2a\left(\frac{1}{\sigma}-1 \right) \int_0^{\xi}s|Q|^2ds$ in \eqref{Q identity 2} cancels. Then, the rest of the proof is the same as in \cite[Theorem 2.2]{BCR1999} and \cite[Corollary 2.7]{YRZ2019}.
\end{proof}
\begin{remark}
For other values of $\sigma$, one would obtain the decay of $Q$ as $|\xi|^{-1/\sigma}$, which can be proved in various ways: as in the NLS (see \cite[Theorem 2.2]{YRZ2019}), or by examining the asymptotic (large distance) behavior as in \cite[Section 3.1]{LePSS1988}, which we will do in the next subsection. 
\end{remark}
\begin{remark}
The reason for the lower bound $s_c>0$ is indeed necessary, since the equation \eqref{Q eqn} does not have ``admissible" solutions as we prove below in Proposition \ref{$a=0$}. 
\end{remark}

\subsection{Asymptotic behavior of the $L^2$-supercritical profile}
We further investigate the large distance behavior of profile solutions following \cite[Prop.7.1]{SS1999}. 
\begin{proposition} \label{P: Q1 Q2}
As $\xi \to \infty$ solutions of \eqref{Q eqn} behave asymptotically as  $Q=\alpha Q_1+\beta Q_2$, where
\begin{equation}\label{E:Q1-Q2}
Q_1(\xi) \approx |\xi|^{-\frac{i}{a}-\frac{1}{\sigma}}, \qquad 
Q_2(\xi) \approx e^{-\frac{ia\xi^2}{2}}|\xi|^{-\frac{i}{a}-d+\frac{1}{\sigma}}, \quad \alpha, \beta \in \mathbb C.
\end{equation}
\end{proposition}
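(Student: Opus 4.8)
The plan is to treat \eqref{Q eqn} at infinity as a perturbed second-order linear ODE and read off its two fundamental asymptotic behaviors. First I would rewrite the equation in the form
\[
Q_{\xi\xi}+\left(\frac{d-1}{\xi}+ia\xi\right)Q_{\xi}+\left(-1+\frac{ia}{\sigma}\right)Q=-\,\Phi\,|Q|^{2\sigma-1}Q,\qquad \Phi:=(-\Delta)^{-1}|Q|^{2\sigma+1},
\]
isolating the linear part on the left and the nonlocal nonlinearity on the right. The first step is to show that the right-hand side is negligible relative to the linear terms as $\xi\to\infty$. Using the a priori decay $|Q(\xi)|\lesssim \xi^{-1/\sigma}$ (Corollary \ref{C: Q decay} for $\sigma=1$, and the analogous bound indicated in the subsequent remark for general $\sigma$) together with the boundedness from Lemma \ref{Q boundedness}, I would estimate the Hartree potential $\Phi$ by solving $-\Delta\Phi=|Q|^{2\sigma+1}$ radially: depending on whether $|Q|^{2\sigma+1}\xi^{d-1}$ is integrable, $\Phi$ either behaves like the Coulomb tail $\xi^{-(d-2)}$ or like $\xi^{-1/\sigma}$, and in either regime $\Phi\,|Q|^{2\sigma-1}Q$ decays strictly faster than the linear terms $Q\sim\xi^{-1/\sigma}$ and $ia\xi Q_{\xi}\sim\xi^{-1/\sigma}$ (this uses $\sigma\ge \tfrac12$ and $s_c>0$).

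Next I would determine the two behaviors of the homogeneous linear equation by a WKB/Liouville--Green ansatz $Q\sim e^{S(\xi)}$. The eikonal balance $(S')^2+ia\xi S'\approx 0$ has two roots, $S'\to 0$ and $S'\sim -ia\xi$, producing a slowly varying (algebraic) branch and a fast (Gaussian-phase) branch. For the slow branch I substitute the power ansatz $Q_1\sim \xi^{\gamma}$; matching the $O(\xi^{\gamma})$ terms gives the indicial relation $ia\gamma+(-1+ia/\sigma)=0$, which fixes the exponent of $Q_1$ in \eqref{E:Q1-Q2}. For the fast branch I set $Q_2=e^{-ia\xi^2/2}R$, derive the reduced equation for $R$ (the $\xi^2$ terms cancel identically), and match the leading power $R\sim\xi^{\delta}$ against the dominant balance $-ia\xi R'+(\cdots)R=0$, which yields the algebraic exponent accompanying the Gaussian phase in \eqref{E:Q1-Q2}.

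Finally I would upgrade this formal computation to a rigorous statement. Writing the equation as a first-order system $Y'=(A(\xi)+B(\xi))Y$ with $Y=(Q,Q_{\xi})^{\mathsf T}$, the matrix $A$ is chosen so that its (frozen) eigenvalues reproduce the two WKB behaviors, while $B$ collects the subleading linear corrections together with the nonlocal term estimated in the first step. Verifying that $B$ satisfies the admissibility/integrability hypotheses of Levinson's fundamental theorem of asymptotic integration then guarantees a fundamental system asymptotic to $Q_1,Q_2$, so that every solution of \eqref{Q eqn} has the form $\alpha Q_1+\beta Q_2$ to leading order, as claimed. The main obstacle is exactly the nonlocal factor $\Phi=(-\Delta)^{-1}|Q|^{2\sigma+1}$: unlike the local NLS case, its decay is not pointwise determined by $Q$ alone and changes character across the admissible $(d,\sigma)$ ranges (integrable versus non-integrable source), so the delicate part is to obtain decay of $\Phi$ that is uniform enough to place $B$ in the perturbative regime required by the asymptotic-integration theorem.
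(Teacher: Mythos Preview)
Your approach is correct and rests on the same mechanism as the paper's (WKB/Liouville--Green analysis of the linearized equation at infinity, after arguing the Hartree nonlinearity is subdominant), but the execution differs in two ways worth noting. First, the paper begins with the Liouville substitution $Q(\xi)=e^{-ia\xi^2/4}\xi^{(1-d)/2}Z(\xi)$ to kill the first-order term, then sets $Z=e^{w}$ and reads off the two branches from the Riccati-type balance $(w')^2+\tfrac{a^2}{4}\xi^2+\cdots\approx0$; you instead attack the two branches directly on the original equation via the power ansatz (slow branch) and a Gaussian-phase factorization (fast branch). Both routes produce the same exponents, and yours makes the indicial relation $ia\gamma+(-1+ia/\sigma)=0$ for $Q_1$ immediately visible. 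Second, the paper is explicitly formal: for $s_c\ge\tfrac12$ it simply asserts that the nonlinear term may be dropped, with only a self-consistency check on the size of the right-hand side of the $w$-equation. Your plan to estimate $\Phi=(-\Delta)^{-1}|Q|^{2\sigma+1}$ explicitly and then invoke Levinson's asymptotic-integration theorem is a genuine upgrade in rigor. One caution: the a priori decay $|Q|\lesssim\xi^{-1/\sigma}$ you feed into the potential estimate is proved in the paper only for $\sigma=1$ (Corollary~\ref{C: Q decay}); for general $\sigma$ the subsequent remark points back to exactly the asymptotic analysis you are establishing, so a fully rigorous version would need either to restrict to $\sigma=1$, to supply an independent decay bound, or to accept the argument as formal in the spirit of the paper.
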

\begin{proof}
Substituting $Q(\xi) = e^{-ia\xi^2/4} \xi^{(1-d)/2} \, Z(\xi)$ into \eqref{Q eqn}, we obtain
\begin{align*}
-Z''  +  \left(-\frac{a^2}4 \xi^2 +1-ia \, s_c  +\frac{(d-1)(d-3)}{4 \xi^2} 
  - ((-\Delta)^{-1}|\xi|^{\frac{(1-d)}2(2\sigma+1)}|Z|^{2\sigma+1})|\xi|^{\frac{(1-d)}2(2\sigma-1)}|Z|^{2\sigma-1}\right)Z = 0.
\end{align*}
Writing $Z(\xi) = e^{w(\xi)}$, yields
\begin{equation} \label{E:w}
w''+(w')^2 + \frac{a^2}4 \xi^2 -1 +ia \, s_c -\frac{(d-1)(d-3)}{4} {\xi^{-2}} = O(e^{2\sigma w} |\xi|^{-2\sigma(d-1)+2}).
\end{equation}
Now, for $s_c \geq \frac12$, we can drop
the nonlinear term $((-\Delta)^{-1}|Q(s)|^{2\sigma+1})|Q(s)|^{2\sigma-1}Q$ to compute the asymptotics, which gives two linear independent solutions 
\begin{align*}
w_1 &\sim i a \frac{\xi^2}4 - \frac{i}{a} \ln |\xi| - \bigg(\frac12-s_c\bigg) \, \ln |\xi|,\\
w_2 &\sim - i a \frac{\xi^2}4 + \frac{i}{a} \ln |\xi| + \bigg(\frac12-s_c\bigg) \, \ln |\xi|.
\end{align*}
Returning back to the notation of $Q$, we get \eqref{E:Q1-Q2}.

We note that if $s_c<\frac12$, then the term with $\xi^{-2}$ in \eqref{E:w} is not dominant compared with the right side. 
 For conciseness we only consider $s_c \geq \frac12$, it is also the setting we use in our numerical study below.
\end{proof}

We note that $Q_2$ is the fast oscillating solution as $\xi \to \infty$, which we should exclude from $Q$ (or require that $\beta = 0$), since we are interested in complex-valued solutions $Q$, which have monotonically decreasing amplitude $|Q|$, of the form $\alpha Q_1$. Such solutions are typically referred to as ``admissible solutions". More importantly, excluding the span of $Q_2$ gives us solutions with finite Hamiltonian.

\begin{proposition}\label{P: finite energy}
If $Q$ is a solution of \eqref{Q eqn} with $Q_{\xi} \in L^2(\mathbb{R}^d)$ and $Q\in L^{\frac{2d(2\sigma+1)}{(d+2)}}(\mathbb{R}^d)$, and $s_c \neq 0$, its Hamiltonian is a non-zero constant, i.e.,
\begin{align}\label{E: Q Hamiltonian}
\int \left( |Q_{\xi}|^2 -\frac{1}{2\sigma+1} V(Q) \right) \xi^{d-1} d\xi=const,
\end{align}
where $V(Q)$ is defined in \eqref{E: V(Q)}. 

Equivalently, taking $P=e^{ia\xi^2/4}Q$ yields
\begin{align}\label{E: P Hamiltonian}
\int \left( |P_{\xi}|^2 -\frac{1}{2\sigma+1} V(Q) +a \Im(\xi P \bar{P}_{\xi}) +\frac{a^2\xi^2}{4}|P|^2 \right) \xi^{d-1} d\xi=const.
\end{align}
\end{proposition}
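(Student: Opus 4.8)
The plan is to establish two facts: that both integrals in \eqref{E: Q Hamiltonian} converge, so that $H:=\int_0^\infty\big(|Q_\xi|^2-\tfrac{1}{2\sigma+1}V(Q)\big)\xi^{d-1}\,d\xi$ is a well-defined finite number, and that this number is strictly positive (hence nonzero). For finiteness, set $A:=\int_0^\infty|Q_\xi|^2\xi^{d-1}\,d\xi$ and $B:=\int_0^\infty V(Q)\xi^{d-1}\,d\xi$. The hypothesis $Q_\xi\in L^2(\mathbb{R}^d)$ gives $A<\infty$ at once. For $B$, I would write the Newtonian potential $(-\Delta)^{-1}|Q|^{2\sigma+1}=c_d\,|x|^{-(d-2)}\ast|Q|^{2\sigma+1}$ and apply the Hardy--Littlewood--Sobolev inequality with $\lambda=d-2$: the conjugate exponent is $\tfrac{2d}{d+2}$, so the assumption $Q\in L^{\frac{2d(2\sigma+1)}{d+2}}$ is precisely the integrability that forces $B<\infty$. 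Thus $H=A-\tfrac{1}{2\sigma+1}B$ is finite.

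To see $H\neq 0$, I would pass to the limit $\xi\to\infty$ in the pointwise identity \eqref{Q identity 1} of Lemma \ref{L:1}. As $\xi\to\infty$ its third left-hand term converges to $\tfrac{2-s_c}{2\sigma+1}B$ and its right side to $(1-s_c)A$; the crux is that the two remaining boundary terms vanish. Here I invoke Proposition \ref{P: Q1 Q2}: discarding the fast-oscillating $Q_2$, an admissible profile satisfies $Q\sim\alpha Q_1\sim|\xi|^{-i/a-1/\sigma}$, so $\xi Q_\xi+\tfrac{Q}{\sigma}\sim-\tfrac{i}{a}|\xi|^{-i/a-1/\sigma}$ and the first term $\tfrac{\xi^{d-2}}{2}|\xi Q_\xi+\tfrac{Q}{\sigma}|^2$ is $O(\xi^{\,d-2-2/\sigma})=O(\xi^{\,2(s_c-1)})$. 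Since $Q_\xi\in L^2$ already forces $s_c<1$ (with this decay $|Q_\xi|^2\xi^{d-1}\sim\xi^{2s_c-3}$ is integrable at infinity exactly when $s_c<1$), this tends to $0$. For the second term I would estimate the nonlocal factor: from $|Q|^{2\sigma+1}\sim|\xi|^{-2-1/\sigma}$, Newtonian-potential estimates give $(-\Delta)^{-1}|Q|^{2\sigma+1}\sim c\,|\xi|^{-1/\sigma}$, so both $\tfrac{1}{2\sigma+1}(-\Delta)^{-1}|Q|^{2\sigma+1}|Q|^{2\sigma-1}$ and $\tfrac{1}{\sigma^2\xi^2}$ are $O(\xi^{-2})$ and the whole term is again $O(\xi^{2(s_c-1)})\to 0$. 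The limit of \eqref{Q identity 1} is therefore $\tfrac{2-s_c}{2\sigma+1}B=(1-s_c)A$, i.e. $\tfrac{1}{2\sigma+1}B=\tfrac{1-s_c}{2-s_c}A$; substituting into $H$ gives $H=A-\tfrac{1-s_c}{2-s_c}A=\tfrac{A}{2-s_c}$. As $Q$ is non-constant, $A>0$, and $2-s_c>0$, so $H>0$.

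For the equivalent form \eqref{E: P Hamiltonian} I would substitute $P=e^{ia\xi^2/4}Q$ and expand pointwise. One finds $|P_\xi|^2=|Q_\xi|^2+\tfrac{a^2\xi^2}{4}|Q|^2-a\xi\,\Im(Q\bar Q_\xi)$, $a\,\Im(\xi P\bar P_\xi)=a\xi\,\Im(Q\bar Q_\xi)-\tfrac{a^2\xi^2}{2}|Q|^2$, and $\tfrac{a^2\xi^2}{4}|P|^2=\tfrac{a^2\xi^2}{4}|Q|^2$; adding these, the $\Im$-terms cancel and the $a^2\xi^2|Q|^2$-terms sum to zero, leaving exactly $|Q_\xi|^2$. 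Hence the integrand of \eqref{E: P Hamiltonian} agrees with that of \eqref{E: Q Hamiltonian} pointwise, and the two integrals coincide.

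The main obstacle will be the rigorous control of the nonlocal term at infinity, namely converting the pointwise far-field decay of $Q$ into decay of its Newtonian potential and tracking the delicate cancellation against the $\tfrac{1}{\sigma^2\xi^2}$ contribution in the second boundary term; by comparison, the finiteness via Hardy--Littlewood--Sobolev and the $P$-reformulation are routine.
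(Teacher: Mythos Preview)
Your approach is essentially the same as the paper's: both send $\xi\to\infty$ in the Pohozaev-type identity of Lemma~\ref{L:1} (the paper reconstructs it directly from \eqref{Q identity 11}--\eqref{Q identity 12}, you cite \eqref{Q identity 1}) and invoke Hardy--Littlewood--Sobolev to make the potential integral finite. The paper simply asserts the limiting identity \eqref{E: H(Q)00} and reads off that $H$ is a finite constant once $B$ is; you go further by verifying the boundary terms via the asymptotics of Proposition~\ref{P: Q1 Q2}, noting that $Q_\xi\in L^2$ already forces $s_c<1$ for admissible profiles, and extracting the explicit value $H=A/(2-s_c)>0$. Your decay estimate for $(-\Delta)^{-1}|Q|^{2\sigma+1}$ is stated a bit loosely (the precise power depends on where $2+\tfrac1\sigma$ sits relative to $d$), but since the whole second boundary term is dominated by $\xi^{2(s_c-1)}$ regardless, this does not affect the conclusion.
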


\begin{proof}
As in the proof of Lemma \eqref{L:1}, multiply \eqref{Q eqn} by $\Delta Q$ and apply 
\eqref{Q identity 11}, \eqref{Q identity 12} with $\xi \rightarrow \infty$. Note that $Q \in L^{\frac{2d(2\sigma+1)}{(d+2)}}(\mathbb{R}^d)$, since $Q(\xi) \sim \xi^{-1/\sigma}$, and the 
Hardy-Littlewood inequality implies $V(Q) \in L^1_{rad}(\mathbb{R}^d)$. Since 
\begin{align}\label{E: H(Q)00}
a\left[ \left( \frac{d}{2}-\frac{1}{\sigma}-1 \right) \left( \int \Big( |Q_{\xi}|^2 -\frac{1}{2\sigma+1} V(Q) \Big) \xi^{d-1} d\xi \right)- \frac{1}{2\sigma+1} \int V(Q) \xi^{d-1}d\xi    \right]=0
\end{align}
with the last term being a constant, we obtain that the first term is also a constant, provided $s_c \neq 1$, completing the proof. 
\end{proof}

\begin{remark}\label{Nonzero-EQ}
From the identity \eqref{E: H(Q)00}, we notice that the energy of $Q$ is not necessarily zero when $0<s_c<1$. Our numerical calculations show that, for example, in the 3d gHartree case with $\sigma=1$ ($p=3$) we get $E[Q] \approx 0.96$. This is different from the NLS case. However, we will show that this does not affect obtaining the \textit{log-log} blow-up rate in the $L^2$-critical case (see Section 4).
\end{remark}

\subsubsection{Admissible solutions to \eqref{RgHartree}}
To discuss what happens with admissible solutions in the case when $s_c = 0$ (more precisely, $s_c \searrow 0$), we allow  flexibility by letting the dimension $d$ vary continuously (as in \cite{SS1999}) so that the equation \eqref{Q eqn} becomes slightly $L^2$-supercritical. 
The reason for this flexibility is to investigate existence of solutions to \eqref{Q eqn} when $s_c=0$; in particular, if we stay rigid in this case with the nonlinearity $\sigma=2/d$, then the equation \eqref{Q eqn} does not have reasonable solutions when $a \neq 0$. 

\begin{proposition}\label{$a=0$}
The equation (\ref{Q eqn}) with $\sigma = \frac2{d}$ ($s_c=0$) has no admissible solutions when $a \neq 0$ and $a$ is finite.
\end{proposition}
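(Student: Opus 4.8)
The plan is to argue by contradiction using the integral identity \eqref{Q identity 2} together with the decay rate of admissible solutions established in Corollary \ref{C: Q decay} and Proposition \ref{P: Q1 Q2}. Recall that with $\sigma = \frac2d$ we have $s_c=0$, so $\frac1\sigma = \frac d2$, and an admissible solution is one in the span of $Q_1$, i.e. $Q(\xi) \approx \alpha\, |\xi|^{-\frac{i}{a}-\frac1\sigma} = \alpha\,|\xi|^{-\frac{i}{a}-\frac d2}$ as $\xi\to\infty$. The key consequence is that $|Q(\xi)| \sim |\alpha|\,\xi^{-d/2}$, so that $\xi^{d-1}|Q|^2 \sim |\alpha|^2 \xi^{-1}$, which is \emph{not} integrable at infinity — the borderline logarithmic divergence is exactly what I expect to drive the contradiction.

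First I would write down \eqref{Q identity 2} with $\sigma = \frac2d$ and examine the four terms as $\xi\to\infty$. The term $a\,|\xi|^2|Q|^2$ behaves like $|\alpha|^2 a\,\xi^{2-d}\to 0$ for $d>2$, so it is harmless. The term $2a\left(\frac1\sigma-1\right)\int_0^\xi s|Q|^2\,ds$ does \emph{not} vanish here (unlike in Corollary \ref{C: Q decay} where $\sigma=1$ made it drop out); with $\frac1\sigma - 1 = \frac d2 - 1 > 0$ its integrand $s|Q|^2 \sim |\alpha|^2 s^{1-d}$ is integrable, so this term converges to a finite nonzero multiple of $a$. Next I would handle the two imaginary parts: using the asymptotics $Q\approx\alpha\,\xi^{-i/a-d/2}$ one computes $\xi Q_\xi\bar Q$ and its imaginary part, and similarly control $\Im\int_0^\xi Q_s\bar Q\,ds$. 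The plan is to show that, after substituting the admissible asymptotics, \eqref{Q identity 2} forces a relation that cannot hold for any finite nonzero $a$ — most cleanly, the phase $\xi^{-i/a}$ makes $\Im(\xi Q_\xi\bar Q)$ oscillate or vanish to leading order, leaving the nonzero convergent term $2a(\frac d2-1)\int_0^\infty s|Q|^2\,ds$ unbalanced, hence $a=0$.

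The main obstacle, I expect, is controlling the term $2(d-2)\Im\int_0^\xi Q_s\bar Q\,ds$ rigorously rather than merely formally. Writing $Q=|Q|e^{i\phi}$, one has $\Im(Q_s\bar Q)=|Q|^2\phi'$, and from the admissible asymptotics $\phi(\xi)\approx -\frac1a\ln\xi$, so $\phi'\approx -\frac1{a\xi}$ and the integrand behaves like $-\frac1a|\alpha|^2\xi^{-d}\cdot\xi^{-1}\cdot\xi$ — I must track these powers carefully because a logarithmic divergence here, matched against $a\xi^2|Q|^2$, is precisely the delicate balance. I would therefore isolate the leading-order behavior of each term, show that exactly one term survives with a definite sign as $\xi\to\infty$ while the others vanish or oscillate, and conclude that the surviving term can be zero only if $a=0$, contradicting $a\neq 0$. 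A cleaner alternative, which I would try first, is to integrate \eqref{Q identity 2} differently or combine it with \eqref{Q identity 1} at $s_c=0$: setting $s_c=0$ in \eqref{Q identity 1} gives a balance between $\int_0^\xi V(Q)\xi^{d-1}$ and $\int_0^\xi|Q_\xi|^2\xi^{d-1}$ whose growth rates, under the $\xi^{-d/2}$ decay, are incompatible with the boundedness of the pointwise terms on the left unless $a=0$; if this route avoids the oscillatory phase bookkeeping, it would be the preferable argument.
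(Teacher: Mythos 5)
Your route is genuinely different from the paper's, and as outlined it does not close. The paper's proof is pointwise, not integral: writing $Q=We^{i\theta}$, the imaginary part of \eqref{Q eqn} yields the phase equation \eqref{theta}, whose second term carries the factor $\frac{\sigma d-2}{\sigma}$; exactly at $\sigma=\frac{2}{d}$ this term vanishes, so $\xi^{2/\sigma-1}W^2\left(\theta_{\xi}+\frac{a}{2}\xi\right)$ is constant, hence zero, forcing $\theta_{\xi}=-\frac{a}{2}\xi$, i.e.\ an exactly quadratic phase. That is incompatible with admissibility, since by Proposition \ref{P: Q1 Q2} an admissible solution lies in the span of $Q_1$ and has the slowly varying phase $-\frac{1}{a}\ln\xi$ (the quadratic phase is the signature of $Q_2$). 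This exact pointwise determination of the phase is the idea your proposal is missing.

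The integrated identity \eqref{Q identity 2} cannot replace it. Under the admissible asymptotics $Q\approx\alpha\,\xi^{-i/a-d/2}$, every term of \eqref{Q identity 2} either tends to zero or converges to a finite limit as $\xi\to\infty$: one has $a\xi^2|Q|^2\sim\xi^{2-d}\to0$; $\Im(\xi Q_{\xi}\bar Q)=-\frac{1}{a}|Q|^2\sim-\frac{|\alpha|^2}{a}\xi^{-d}\to0$; $\int_0^{\xi}s|Q|^2\,ds$ converges since the integrand is $\sim s^{1-d}$ (the weight is $s$, not $s^{d-1}$, so the borderline logarithmic divergence you are counting on is not there); and $\Im\int_0^{\xi}Q_s\bar Q\,ds=\int_0^{\xi}W^2\theta_s\,ds$ converges as well, with integrand $\sim -\frac{|\alpha|^2}{a}s^{-d-1}$. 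Hence the $\xi\to\infty$ limit of \eqref{Q identity 2} is the single scalar relation $\int_0^{\infty}W^2\left(2\theta_s+as\right)ds=0$, in which the term you hoped would ``oscillate or vanish'' is instead a finite number with no sign control for bounded $s$; nothing is left unbalanced, and no contradiction follows. (Indeed, for the actual solutions, whose phase is $\theta_s=-\frac{a}{2}s$, this relation holds identically, which shows it carries no obstruction.) The fallback via \eqref{Q identity 1} fares no better: $a$ has already been divided out of that identity, and at $s_c=0$ its $\xi\to\infty$ limit is merely the Pohozaev-type balance $\frac{2}{2\sigma+1}\int V(Q)\xi^{d-1}=\int|Q_{\xi}|^2\xi^{d-1}$, independent of $a$. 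To repair the argument you need the differential (pointwise) form of the phase equation, not its integral consequences.
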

\begin{proof}
We split $Q$ into the real amplitude and phase by writing $Q=We^{i\theta}$. The equation (\ref{Q eqn}) produces the following system for functions $W(\xi)$ and $\theta(\xi)$:
\begin{align}
&\Delta W -W + \left((-\Delta)^{-1}|W|^{2 \sigma +1} \right)|W|^{2 \sigma -1} W - \theta_{\xi}(a\xi+\theta_{\xi}) =0, \label{W} \\
&\dfrac{\partial}{\partial \xi}\left( \xi^{\frac{2}{\sigma}-1}W^2(\theta_{\xi}+\frac{a}{2} \xi)  \right)+\dfrac{\sigma d-2}{\sigma}\xi^{2/ \sigma-2}\theta_{\xi}W^2=0. \label{theta}
\end{align}
Note that the nonlinearity only shows up in the first equation, while the existence of admissible solutions comes from examining the second equation, where the second term vanishes when $\sigma=2/d$, giving $\theta(\xi)= -a \xi^2/4$. Now using the large distance behavior from \eqref{E:Q1-Q2}, and giving the same argument as in the NLS case \cite[Section 8.1.1]{SS1999}, the conclusion that there are no admissible solutions for $a\neq 0$ in the $L^2$-critical case $\sigma=\frac{2}{d}$ (or when dimension $d=\frac2{\sigma}$) follows. 
\end{proof}

\begin{remark}
This seems to be the feature for any $L^2$-critical NLS-type equation with {\it any} nonlinear term (as long as $\sigma  = \frac2{d}$). 
\end{remark}

\begin{remark}
If $a=0$, then \eqref{Q eqn} becomes $\Delta Q - Q + ((-\Delta)^{-1}|Q|^{2\sigma+1})|Q|^{2\sigma-1}Q = 0$, which is exactly \eqref{GS}. Thus, the solutions of \eqref{RgHartree} convergence in some sense to ground state solutions of \eqref{RgHartree}. 
\end{remark}

We are now ready to investigate the behavior of blow-up solutions, and in particular, behavior of the parameter $a(\tau)$. We start with the description of the dynamic rescaling method needed for the gHartree equation.

\section{The dynamic rescaling method} \label{Section-DR}

The dynamic rescaling method, which was first introduced in \cite{MPSS1986} in 1986, has proven to be an efficient way to simulate the blow-up phenomena for the NLS equation. Since the generalized Hartree has scaling symmetry, we apply a similar approach and study \eqref{Q eqn}, in particular, 
we recall the parameter $L(t)$ from \eqref{a form}.
We note that the proper choice for representing $L(t)$ will provide the global existence of the rescaled equation (\ref{RgHartree}) on $\tau$. 
Recall that the blow-up rate is defined, for example, as $\|\nabla u(t)\|_2 \sim f(t)$ for some function $f(t)$. Direct calculation by the chain rule from (\ref{rescaled-variables}) shows
\begin{align}\label{blowup rate L}
\|\nabla u(t)\|_2 =\dfrac{1}{L(t)^{\frac{1}{2}(1-s_c)}}\|\nabla v(\tau)\|_2,
\end{align}
and thus, the behavior of $L(t)$ describes the rate of the blow-up. As we discussed in \cite{YRZ2018}, one intuitive choice for $L(t)$ is to restrict the norm $\|\nabla v\|_2$ to be constant in time, i.e.,
\begin{equation*}
L(t)=\left(\frac{\|\nabla v_0\|^2_2}{\|\nabla u(t) \|_2^2}\right)^{\beta}.
\end{equation*}
The direct calculation leads to
\begin{equation*}
\beta=\frac{1}{2/\sigma+2-d},
\end{equation*}
and
\begin{equation}\label{a1}
a(\tau)=-\frac{2 \beta}{\|\nabla v_0\|_{2}^2} \mathrm{Im} \left( \int_0^{\infty} \left((-\Delta)^{-1}|v|^{2\sigma+1} \right)|v|^{2\sigma-1} \bar{v} \Delta v \xi^{d-1} d \xi \right).
\end{equation}

An alternative choice for $L(t)$ (from Definition \ref{D: blowup-rate2}) is to restrict the $L^{\infty}$ norm of the solution to the rescaled equation $v(\tau)$ to be constant, say $\|v(\tau)\|_{L^{\infty}}=1$ (as we mentioned in the introdution, the blow-up rate in the $L^\infty$ norm is equivalent to the blow-up in the $\dot{H}^1$ norm, see also \cite{Ca2003},\cite{FGW2007}). By setting
\begin{align}
L(t)=\left(\frac1{ \|u(t)\|_{\infty}}\right)^{\sigma},
\end{align}
one has
\begin{align}\label{a inf}
a(\tau)=-\sigma \, \Im (\bar{v}\Delta v)(0,\tau).
\end{align}

In this work we fix $\|v(\tau)\|_{\infty}\equiv 1$ instead of $\| \nabla v \|_2$, since computing the last norm involves the integral $\int_0^{\infty} \cdots \xi^{d-1} d\xi$ for $d>1$ (we will consider $d=3,4,5,6,7$); when the dimension $d$ becomes higher, say $d=7$, the values of the term $\int_0^{\infty} \cdots \xi^{d-1} d\xi$ in (\ref{a1}) become very large. 
If we fix $\|v\|_{L^{\infty}}$, then there will be no influence on $a(\tau)$ from the dimension $d$. Actually, both options lead to the same results in the cases $d=3$ and $d=4$ (lower dimensions). For the $L^2$-supercritical case, which we consider in Section 4, we choose to fix the value $\|v(\tau)\|_{\infty}$ to be constant; this is in part because when $s_c>1$, Definition \ref{D: blowup-rate} has to be replaced with the blow-up rate defined with respect to the $\dot{H}^s$ norm for $s>s_c$, i.e.,
$$\lim_{t\nearrow T} \dfrac{\|u(t)\|_{\dot{H}^s}}{f(t)}=C$$
for some function $f(t)$. 

We return to the equation (\ref{RgHartree}), which is of the form
\begin{align} \label{DRNLSF}
iv_{\tau} +\Delta v +\mathcal{N}(v)=0,
\end{align}
where $\mathcal{N}(v)=ia(\tau)\left( \xi v_{\xi} +\frac{v}{\sigma} \right)+  \left((-\Delta)^{-1}|v|^{2\sigma +1} \right)|v|^{2\sigma-1} v$.

The equation (\ref{DRNLSF}) is of the same form as the one we studied in \cite{YRZ2018} and \cite{YRZ2019}. It is given on the whole space $\xi \in [0,\infty)$, and for numerical purposes,  we ought to map the spatial domain $[0, \infty)$ onto some finite interval, for example, onto $[-1,1)$. For that, we choose the mapping from \cite{MPSS1986} by setting $\xi = l\, \frac{1+z}{1-z}$. Here, $l$ is a constant indicating the half number of the collocation points assigned on the interval $[0,l]$ and $z$ is the Chebyshev-Gauss-Lobatto collocation points from $[-1,1]$ (see \cite{STL2011}). We impose the homogeneous Dirichlet boundary condition, $v(\infty)=0$, on the right, and thus, we remove the last Chebyshev point, and, consequently, delete the last row and the last column of the matrix $\mathbf{M}$ in \eqref{E: AB solve} { below}. 
The Laplacian operator can be discretized from the Chebyshev-Gauss-Lobatto differentiation matrix (refer to \cite{STL2011} and \cite{Tr2000} for details). We denote the discretized Laplacian with $N+1$ collocation points by the matrix $\Delta_N$.
The non-local operator $(-\Delta)^{-1}$ can now be approximated by the matrix $(-\Delta_N)^{-1}$, which is the inverse of the matrix $-\Delta_N$ with the first row replaced by the first row of the Chebyshev differential matrix because of the Neumann homogeneous boundary condition, $\varphi_{\xi}(0)=0$, for the equation $ -\Delta \varphi =|v|^{2\sigma +1}$ for the non-local term. This also avoids the singularity of the Laplacian at $\xi=0$. The matrix $(-\Delta)^{-1}_N$ needs to be calculated only once by numerically taking the inverse of the matrix $-\Delta_N$ and then storing it to be used later to calculate the time evolution.


To discuss the time evolution, we use the following notation for $v$ as the semi-discretization in time variable $\tau$: let $v^{(m)}{\approx v(\xi, m\cdot\Delta \tau)}$ {be the approximation of $v$ at the time $m\cdot\Delta \tau$}, where $\Delta \tau$ is the time step and $m$ is the number of iterations. The time evolution of \eqref{DRNLSF} can be approximated by the second order Crank-Nicolson-Adam-Bashforth method, i.e.,
\begin{align}\label{AB}
 i\frac{v^{(m+1)}-v^{(m)}}{\Delta \tau}+\frac{1}{2}\left({\Delta} v^{(m+1)}+{\Delta} v^{(m)} \right)+\frac{1}{2}\left( 3\mathcal{N}(v^{(m)})-\mathcal{N}(v^{(m-1)})\right)=0.
\end{align}
We rewrite (\ref{AB}) as
\begin{align}\label{AB_simplified}
\left(\frac{i}{\Delta \tau}+\frac{1}{2} {\Delta}\right)v^{(m+1)}=\left( \frac{i}{\Delta \tau}-\frac{1}{2} {\Delta} \right)v^{(m)}- \frac{1}{2}\left( 3\mathcal{N}(v^{(m)})-\mathcal{N}(v^{(m-1)})\right).
\end{align}
With the Laplacian operator $\Delta$ replaced by the matrix {$\Delta_N$}, and also the term ${\frac{i}{\Delta \tau}}$ replaced by the diagonal matrix $\mathbf{diag}({\frac{i}{\Delta \tau})}$, the equation \eqref{AB_simplified} is equivalent to the following linear system:
\begin{align}\label{E: AB solve}
\mathbf{M}v^{(m+1)}=\mathbf{F}(v^{(m)},v^{(m-1)}).
\end{align}
Therefore, each time step is updated by
\begin{align*}
v^{(m+1)}=\mathbf{M}^{-1}\mathbf{F}(v^{(m)},v^{(m-1)}).
\end{align*}
Again, the inverse of the matrix $\mathbf{M}$ can be calculated and stored only once in the beginning, since $\mathbf{M}=\left(\mathbf{diag}(\frac{i}{\Delta\tau})+\frac{1}{2} \Delta_N \right)$ stays the same. 

The boundary conditions are imposed similar to \cite{MPSS1986}, \cite{STL2011}, \cite{Tr2000} and \cite{YRZ2018} as follows: For the homogeneous Neumann boundary condition on the left, $v(0) = 0$, we substitute the first row of the matrix $\mathbf{M}$ by the first row of the first order Chebyshev differential matrix, and change the first element of the vector $\mathbf{F}$ to 0. Because of the homogeneous Dirichlet boundary condition $v(\infty) = 0$ on the right, we delete the last row and column of $\mathbf{M}$ as well as the last element of the vector $\mathbf{F}$.

This discretization gives us spectral accuracy in space. Figure \ref{Coe} shows that the coefficients reach the machines accuracy ($10^{-16}$) within 200 grid points. To utilize the Fast Fourier Transform ({FFT}) efficiently, we use $N=256$ grid points (instead of $200$).
\begin{figure}[ht]
\includegraphics[width=0.45\textwidth]{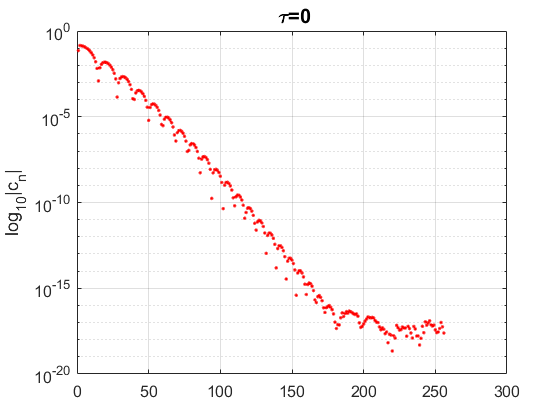}
\includegraphics[width=0.45\textwidth]{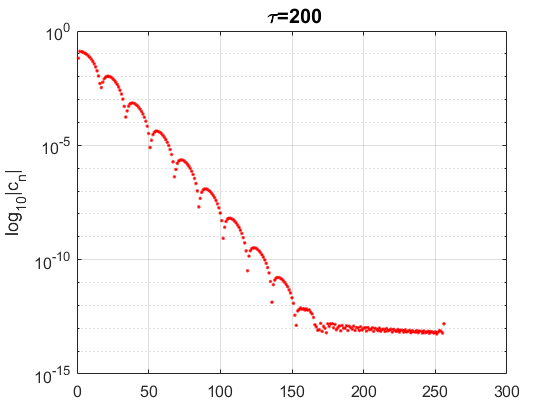}
\caption{The 4d case: the Chebyshev coefficients for the solution $v(\xi,\tau)$ at $\tau=0$ (left) and $\tau=200$ (right).}
\label{Coe}
\end{figure}

After each $v^{(m+1)}$ is obtained, the terms $a^{(m+1)}$ and $\ln L(\tau_{m+1})$ can be updated by the {trapezoidal} rule: 
\begin{align}\label{Hartree ln L}
\ln L(\tau_{m+1})=\ln L(\tau_m) + \frac{{\Delta \tau}}{2}(a^{(m+1)}+a^{(m)}).
\end{align}

To determine the blow-up rate, we track the quantity $T-t$ (and then compute $\ln(T-t)$) in a similar way as we did in \cite{YRZ2018} and \cite{YRZ2019}. The right-hand side of \eqref{Hartree ln L} produces from the $m$th step the value $\ln L(\tau_{m+1})$ on the left-hand side; exponentiating it, we get $\exp(\ln L(\tau_{m+1}))$, which is the value of $L(\tau_{m+1})$. Now, denoting $\Delta t_{m+1}:=t_{m+1}-t_m$, we obtain this difference from the last equation of \eqref{rescaled-variables}
\begin{align}
\Delta t_{m+1}= {\Delta} \tau L^2(\tau_{m+1}).
\end{align}
Hence, starting from $t_0=0$, the mapping for the rescaled time $\tau$ back to the real time $t$ is calculated as 
\begin{align}
t(\tau_{m+1}) = \sum_{j=1}^{m+1} \Delta t_j = {\Delta} \tau \sum_{j=1}^{m+1}  L(\tau_j)^2.
\end{align}
Note that as time evolves, the time difference $T - t(\tau_n)$ will become smaller and smaller, and eventually reach saturation level (with little change), therefore, we treat the stopping time $t(\tau_{\text{end}}) = t(\tau_{M})$ as the blow-up time $T$, where $M$ is the total number of iterations when reaching the stopping condition ($L<10^{-24}$). Then, we can take
\begin{align}
T = t(\tau_{\text{end}})= {\Delta} \tau \sum_{j=1}^M  L(\tau_j)^2.
\end{align}
Consequently, for any $t_i$, we calculate $T-t_i$ as 
\begin{align}\label{T-t}
T-t_i=\sum_{j=i+1}^M \Delta t_j = {\Delta} \tau \sum_{j=i+1}^M  L(\tau_j)^2.
\end{align}
This indicates that instead of recording the cumulative time $t_i$, we only need to record the elapsed time between the two recorded data points, i.e., $\Delta t_{i+1} =t_{i+1}-t_{i}$. By doing so, we avoid the loss of significance when adding a small number onto a large one. 

Since the mapped-Chebyshev collocation method may suffer from the under-resolution issue (when the solution is far away from the origin), we also use the finite difference method with the uniform mesh size on a bounded domain. This involves constructing the artificial boundary conditions to approximate $v(\infty)=0$ as well as the nonlocal term $\left((-\Delta)^{-1}|v|^{2\sigma+1} \right)|v|^{2\sigma-1} v$ at $\xi=\infty$. Similar to the argument in \cite{SS1999}, we know that the terms $\Delta v$ and $\left((-\Delta)^{-1}|v|^{2\sigma+1} \right)|v|^{2\sigma-1} v$ are of the higher order compared with the remaining linear terms in \eqref{RgHartree}. When $\xi \gg 1$, these two terms can be negligible and the equation  \eqref{RgHartree} reduces to 
\begin{align}\label{DRgHartree_L}
v_{\tau}+a(\tau)\left( \alpha v +\xi v_{\xi}\right)=0 ~~\mbox{at} ~~ \xi=K,
\end{align}
where $K$ is our computational domain length taken to be large enough. The equation \eqref{DRgHartree_L} can be solved exactly 
\begin{align}
v(\xi,\tau)= v_0\bigg(\xi \frac{L(\tau)}{L(0)}\bigg) \left( \frac{L(\tau)}{L(0)}\right)^{\frac{1}{\sigma}}.
\end{align} 
This suggests that at $\xi=K$, we have
\begin{align}
v(K,\tau_{m+1})=v\bigg(K \frac{L(\tau_{m+1})}{L(\tau_m)}\bigg) \left( \frac{L(\tau_{m+1})}{L(\tau_m)}\right)^{\frac{1}{\sigma}}.
\end{align}
The $L(\tau_{m+1})$ can be approximated by the second order central difference 
$$
L(\tau_{m+1})=L(\tau_{m-1})+2{\Delta} \tau L_{\tau}(\tau_m).
$$
Note that $a^{(m)}=-\frac{L_{\tau}(\tau_m)}{L(\tau_m)}$, and $\frac{L(\tau_{m})}{L(\tau_{m-1})}$ can be approximated by $\frac{L(\tau_{m})}{L(\tau_{m-1})}=e^{-\frac{{\Delta} \tau}{2}(a^{(m-1)}+a^{(m)})} +O({\Delta} \tau^3)$. Therefore, the right side boundary condition is approximated with second order accuracy 
\begin{align}\label{abc_RgHartree}
v(K,\tau_{m+1})=v\left(K (e^{{+}\frac{{\Delta} \tau}{2}(a^{(m-1)}+a^{(m)})}-2{\Delta} \tau a^{(m)})\right) \left( (e^{{+}\frac{{\Delta} \tau}{2}(a^{(m-1)}+a^{(m)})}-2{\Delta} \tau a^{(m)})\right)^{\frac{1}{\sigma}}.
\end{align}
We also discretize the equation \eqref{DRNLSF} by a uniform mesh with the boundary condition \eqref{abc_RgHartree}. Let $v(\xi_j,\tau) \approx v(jh,\tau)$ to be the semi-discretization in space, where $h=\xi_{j+1}-\xi_{j}$ is the { spatial grid} size, the derivatives are approximated by the sixth order central difference:
\begin{align*}
v_{\xi}(jh,\tau) & \approx D^{(1)}_6 v_j  = \frac{1}{60h}[-v_{j-3} + 9 v_{j-2} - 45 v_{j-1} + 45 v_{j+1} - 9 v_{j+2} + v_{j+3}],\\
v_{\xi \xi}(jh,\tau) &\approx D^{(2)}_6 v_j =  \frac{1}{180h^2}[2v_{j-3} - 27 v_{j-2} + 270 v_{j-1} -490 v_j + 270 v_{j+1} - 27 v_{j+2} + 2v_{j+3}], 
\end{align*}
and the Laplacian operator is approximated by
\begin{align}
\Delta v(jh, \tau) \approx \Delta_h v_j = v_{\xi \xi}(jh,\tau)+\dfrac{d-1}{jh} v_{\xi}(jh,\tau).
\end{align}
In fact, we also tested our approach with the second order and fourth order central difference method, and obtained the { consistent} result. The reported results are obtained by the sixth order central difference method.

When the grid points beyond the right side computational domain are needed, we set up the fictitious points obtained by extrapolation 
$$
v_{N+2}=8v_{N+1}-28v_N+56v_{N-1}-70v_{N-2}+56v_{N-3}-28v_{N-4}+8v_{N-5}-v_{N-6}.
$$
For the grid points beyond the left side computational domain, note that $v(\xi)$ is radially symmetric, and thus, we use the fictitious points $v_{-j}=v_{j}$. The singularity at $\xi=0$ in the Laplacian term $\Delta_h$ is eliminated by the L'Hospital's rule
$$
\lim_{\xi \rightarrow 0} \dfrac{d-1}{\xi} v_{\xi} = (d-1)v_{\xi \xi}.
$$

As in \cite{YRZ2018} for the NLS equation, we have an alternative way to approximate the time evolution by introducing a {\it predictor-corrector} scheme (see also \cite{F2015}):
\begin{align}\label{predictor}
i\frac{v^{(m+1)}_{\mathrm{pred}{,j}}-v^{(m)}_{j}}{{\Delta} \tau}+\frac{1}{2}\left({\Delta}_N v^{(m+1)}_{\mathrm{pred}{,j}}+ {\Delta}_N v^{(m)}_{j} \right)+\frac{1}{2}\left( 3\mathcal{N}(v^{(m)}_{j})-\mathcal{N}(v^{(m-1)}_{j})\right)=0, ~~ (\mathrm{P})
\end{align}
\begin{align}\label{corrector}
i\frac{v^{(m+1)}_{j}-v^{(m)}_{j}}{{\Delta} \tau}+\frac{1}{2}\left( {\Delta}_N v^{(m+1)}_{j}+ {\Delta}_N v^{(m)}_{j} \right)+\frac{1}{2}\left( \mathcal{N}(v^{(m+1)}_{\mathrm{pred}{,j}})+\mathcal{N}(v^{(m-1)}_{j})\right)=0. ~~ (\mathrm{C})
\end{align}

Both approaches \eqref{AB} and \eqref{predictor}--\eqref{corrector} lead to similar results. Numerical tests suggest that (\ref{predictor})--(\ref{corrector}) is slightly more accurate than the scheme (\ref{AB}), though it is still a second order scheme in time and it doubles the computational time, therefore, we mainly use the {\it predictor-corrector} scheme (\ref{predictor})--(\ref{corrector}) in our simulation.

We next remark about the term $|v|^{p-2}$. The power $p-2$ may become negative in the $L^2$-critical case when $d\geq 5$ (since $p=1+\frac{4}{d} < 2$). Numerically, this may cause problems as the singularities may occur if $v(\xi_0,\tau)=0$ at certain points $\xi_0$. To avoid this issue, we write $v=|v|e^{i\theta}$, hence, the outside nonlinear term becomes
\begin{align}\label{Nonlinearity}
|v|^{p-2}v = |v|^{p-2} |v| e^{i \theta} = |v|^{p-1} e^{i \theta}.
\end{align}
Note that with (\ref{Nonlinearity}), we can deal with $p \geq 1$, in particular, $p=1+\frac{4}{d}$. Furthermore, we can also use $v$ when it is zero, by defining
\begin{align}
\left((-\Delta)^{-1}|v|^{p} \right)|v|^{p-2} v = \begin{cases}
\left((-\Delta)^{-1}|v|^{p} \right)|v|^{p-2} v & \text{if $|v|>0$} \\
0 & \text{if $|v|=0$,} \end{cases}
\end{align}
since zeros occurring on the term $|v|^{p-2}v$ are of the higher order term compared with the zeros occurring on $v$.

We set the rescaled initial value $\|v_0\|_{\infty}=1$. We choose $N=256$ collocation points, the mapping parameter $l=256 $ and ${\Delta} \tau=2\times10^{-3}$, if we apply the mapped collocation spectral method to work on the entire space. Alternatively, we choose $h=0.1$, $K=120$ and ${\Delta} \tau=10^{-4}$, if we use the finite difference method and apply the artificial boundary condition \eqref{abc_RgHartree}. Again, these two discretizations lead to similar results. Initially, we only have $v^{(0)}=v_0$. The next time step $v^{(1)}$ can be obtained by the standard second order explicit Runge-Kutta method (RK2).

\section{The $L^2$-critical case} \label{Section-Critical}

In this section, we only consider the $L^2$-critical case, i.e., $\sigma=2/d$ and $$i u_t + \Delta u + \left((-\Delta)^{-1}|u|^{1+\frac4{d}} \right)|u|^{\frac4{d}-1} u=0, \quad d \geq 3.$$

\subsection{Preliminary investigation of rates and profile}
\subsubsection{Initial data}
Similar to the NLS equation in \cite{YRZ2018} and \cite{MPSS1986}, we use the Gaussian-type initial data $u_0(r)=Ae^{-r^2}$, which leads to the self-similar blow-up solutions concentrated at the origin. 
As the amplitude $A_0$ can become very large in higher dimensions, we normalize the exponent and work with data $u_0=A_0e^{-\frac{r^2}{d}}$, since the normalization term $\frac{r^2}{d}$ keeps $A_0$ reasonably small. Table \ref{Initial} lists the mass of the ground state $Q$, the mass of $e^{-\frac{r^2}{d}}$, and we also list the threshold of the amplitude $\tilde{A}$ for the finite time blow-up solutions vs. globally existing solutions. The amplitude $A_0$ is one of the examples from our simulation (one could choose any $A_0>\tilde{A}$). We remark that we drop the dimensional constant $\alpha(d)$ in our calculations.
{\small \begin{table}
\begin{tabular}{|c|l|l|l|l|}
\hline
$d$&$A_0$&$\tilde{A}$ (threshold)&$\|e^{-\frac{r^2}{d}}\|_2^2$&$\|Q\|_2^2$\\
\hline
$3$&$4$&$1.9878$&$0.81405$&$3.2167$\\
\hline
$4$&$5$&$2.774$&$2$&$15.3898$\\
\hline
$5$&$6$&$3.7019$&$6.5683$&$90.0122$\\
\hline
$6$&$7$&$4.8631$&$27$&$638.5311$\\
\hline
$7$&$8$&$6.3399$&$133.2859$&$5357.3174$\\
\hline
\end{tabular}
\linebreak
\linebreak
\caption{Various values for the initial condition $u_0=A_0e^{-\frac{r^2}{d}}$ depending on the dimension $d$. Here, $\|Q\|_2^2$ is the mass of the ground state, the value $\tilde{A}$ gives  the threshold for the finite time blow-up vs. globally-existing solutions, $A_0$ is an example of the amplitude used. For reference, the $L^2$-norm of $e^{-\frac{r^2}{d}}$ is also listed. Note that $\tilde{A}^2\cdot \| e^{-\frac{r^2}{d}} \|_2^2 \approx \|Q\|_2^2$. (All of the $L^2$-norms are calculated without the dimensional constant $\alpha(d)$.)}
\label{Initial}
\end{table}
}
	
To demonstrate the precision of our calculations, we check the following quantity $\|v(\tau) \|_{L^{\infty}_{\xi}}$, which is supposed to be conserved in time $\tau$. Table \ref{Consistency} shows how this quantity $\|v\|_{L^{\infty}_{\xi}}$ varies in the rescaled time $\tau$, i.e., $\mathcal{E}=\displaystyle\max_{\tau} (\|v(\tau)\|_{L^{\infty}_{\xi}})-\displaystyle\min_{\tau} (\|v(\tau)\|_{L^{\infty}_{\xi}}),$ which is at least on the order of $10^{-7}$.
\begin{table}[ht]
\begin{tabular}{|c|c|c|c|c|c|}
\hline
$d$&$3$&$4$&$5$&$6$&$7$\\
\hline
$\mathcal{E}$&$8e-7$&$7e-9$&$4e-9$&$2e-9$&$2e-9$\\
\hline
\end{tabular}
\linebreak
\linebreak
\caption{The error for the conserved quantity $\|v(\tau)\|_{\infty}$ in $\tau$ by using the predictor-corrector method with $\delta \tau=2\times 10^{-3}$ with respect to the dimension $d$.}
\label{Consistency}
\end{table}


\subsubsection{Blow-up rate} In this part we investigate behavior of $L(t)$. We plot the slope of $\ln L$ vs. $\ln(T-t)$ and dependence of $a(\tau)$ on $\tau$ in dimensions $d=3,\cdots,7$ in Figures \ref{3dL}--\ref{7dL}. The subplots on the left show that the slope is approximately $\frac{1}{2}$ (for example, the slope of $\ln(T-t)$ vs. $\ln L$ is $0.50171$ in 3d). 
The subplots on the right show a (slow) decay of the coefficient $a(\tau)$, recall this coefficient from \eqref{RgHartree} and \eqref{a form}. Note that $a(\tau)$ decays very slowly in $\tau$, this is similar to the decay of the corresponding $a(\tau)$ in the $L^2$-critical NLS equation in \cite{MPSS1986} and \cite{YRZ2018}. We also plot the dependence of $a(\tau)$ vs. $1/(\ln(\tau)+3 \ln \ln \tau)$, and observe that it fits the straight line very well (see Figure \ref{a fit}), here we are using the same expression in the denominator for the consistency with the NLS computations and fittings (see more discussion on this below). 

Because of the second term in the above fitting (in Figure \ref{a fit}), one might expect that the correction term in the blow-up rate should be the {\it log-log} correction. We may also expect that the self-similar blow-up solution converges to the ground state profile $Q$ (up to scaling) as $a(\tau) \rightarrow 0$ from the slow decay of $a(\tau)  \sim 1/(\ln(\tau)+3 \ln \ln \tau)$. In the next two subsections, we confirm these implications, i.e., the convergence of blow-up profiles to the ground state $Q$, and also provide justifications to the {\it log-log} correction. 
\begin{figure}[ht]
\includegraphics[width=0.4\textwidth]{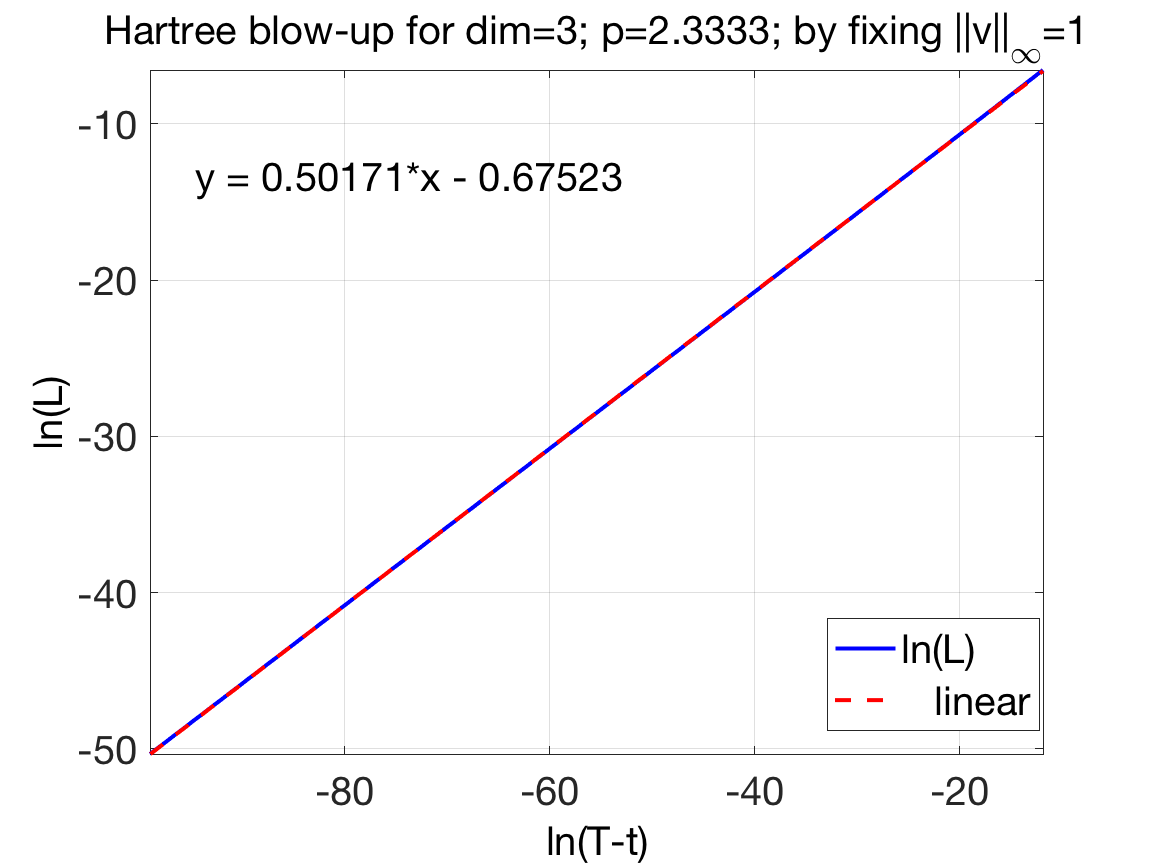}
\includegraphics[width=0.4\textwidth]{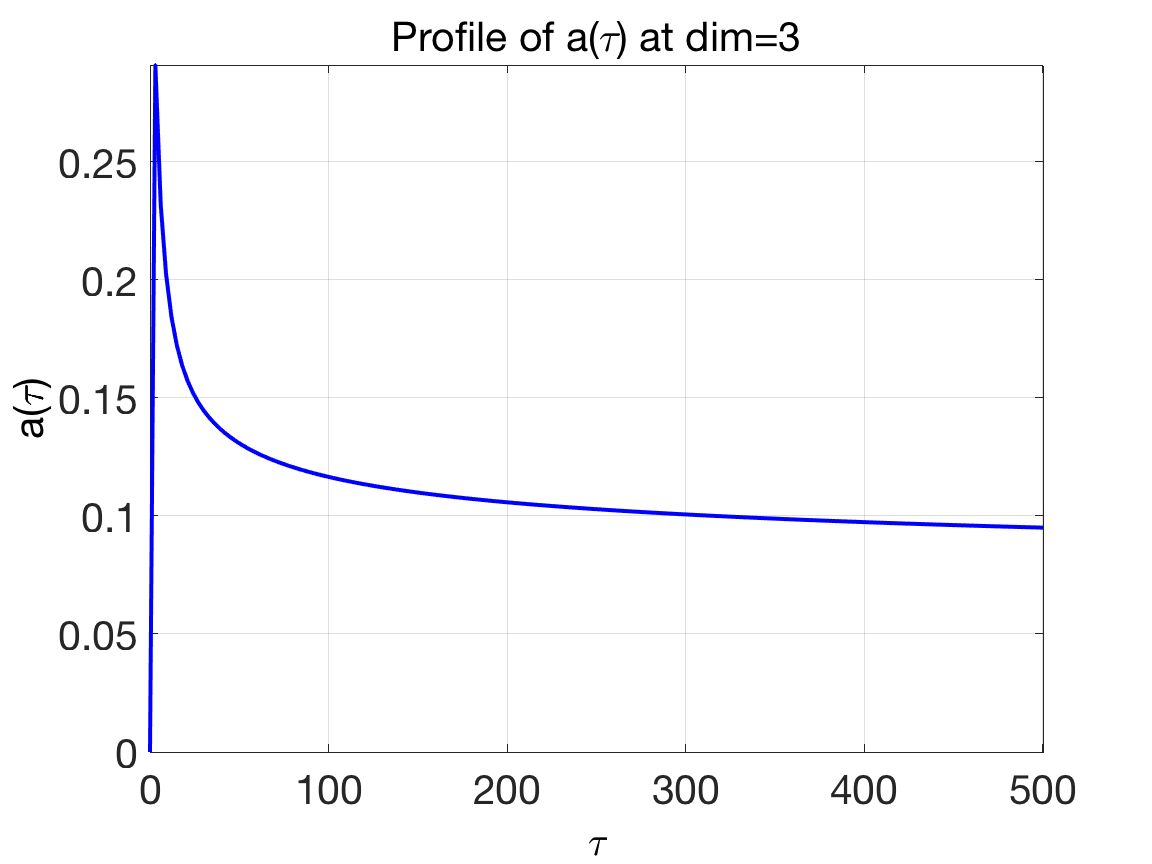}
\caption{3d ($p=\frac73$): the slope of $L(t)$ vs. $T-t$ on a {\it log} scale, which shows the slope of $\frac12$ (left); the behavior of $a(\tau)$ - very slow decay (right).}
\label{3dL}
\end{figure}

\begin{figure}[ht]
\includegraphics[width=0.38\textwidth]{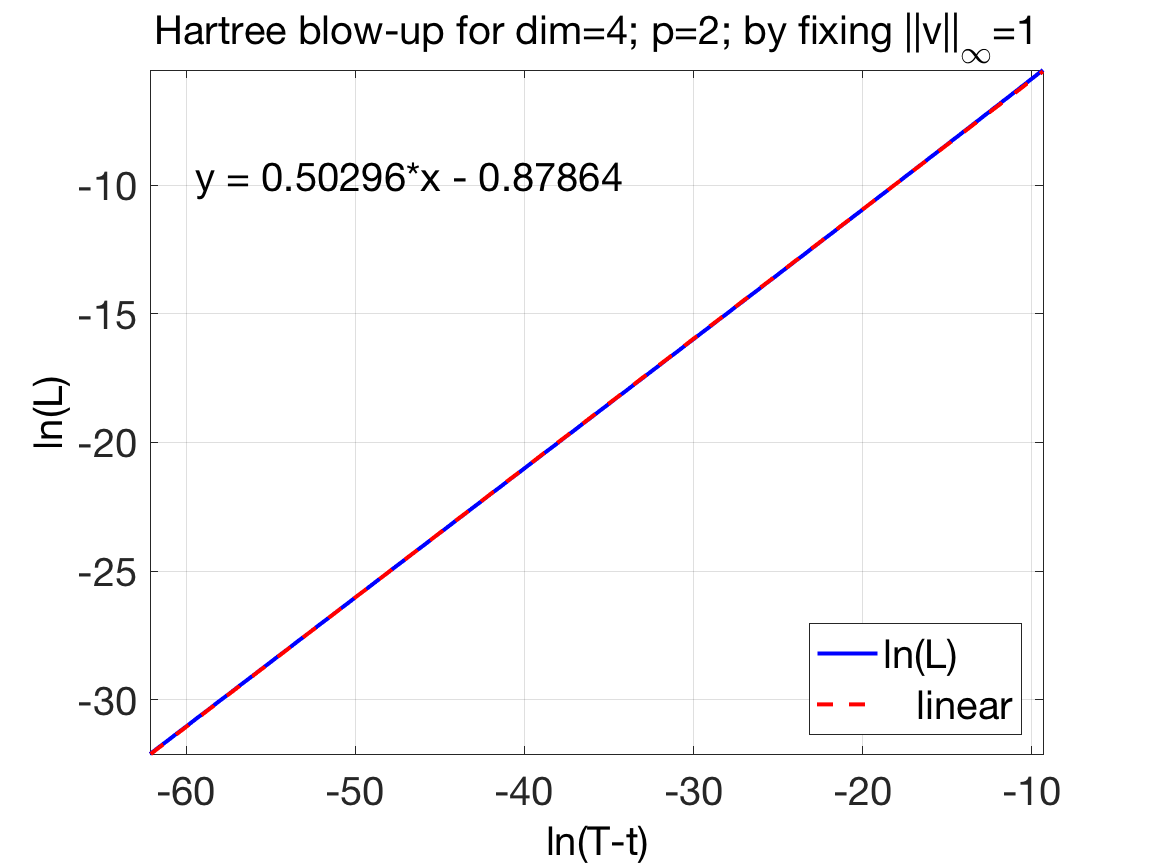}
\includegraphics[width=0.38\textwidth]{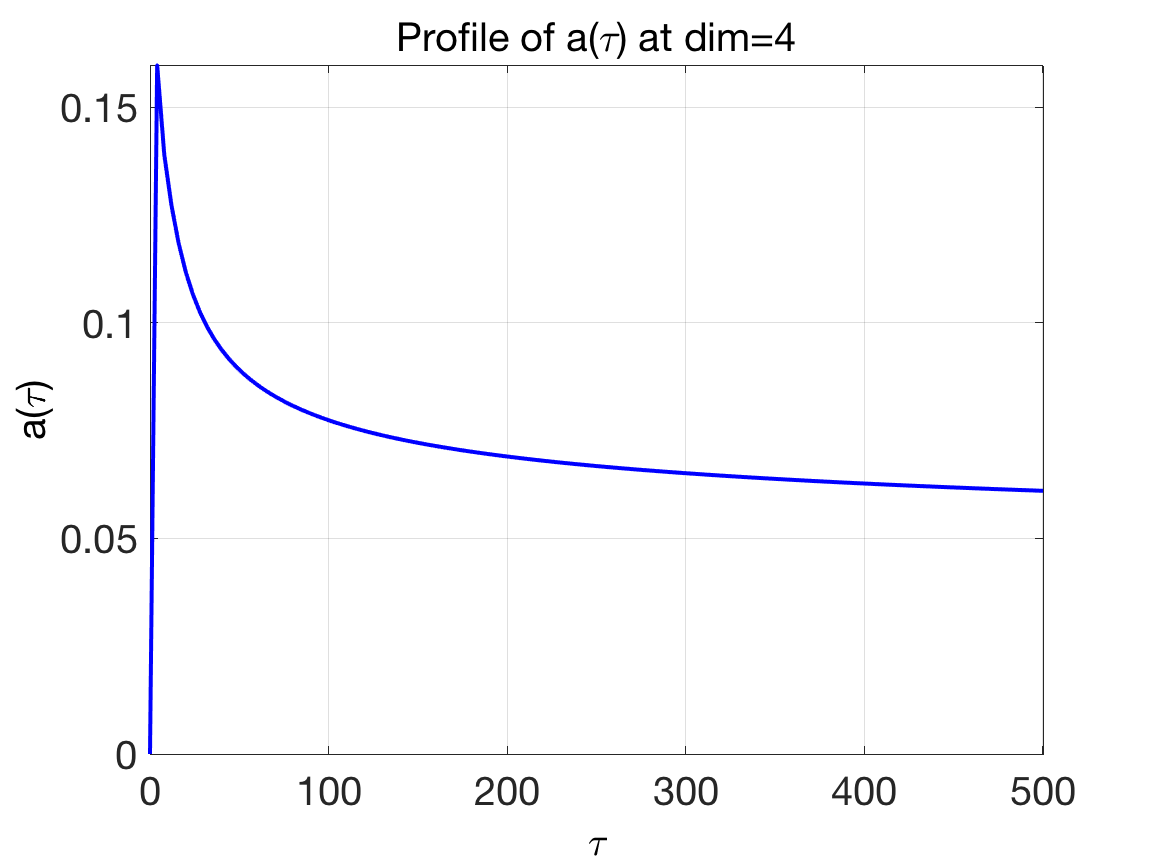}
\includegraphics[width=0.38\textwidth]{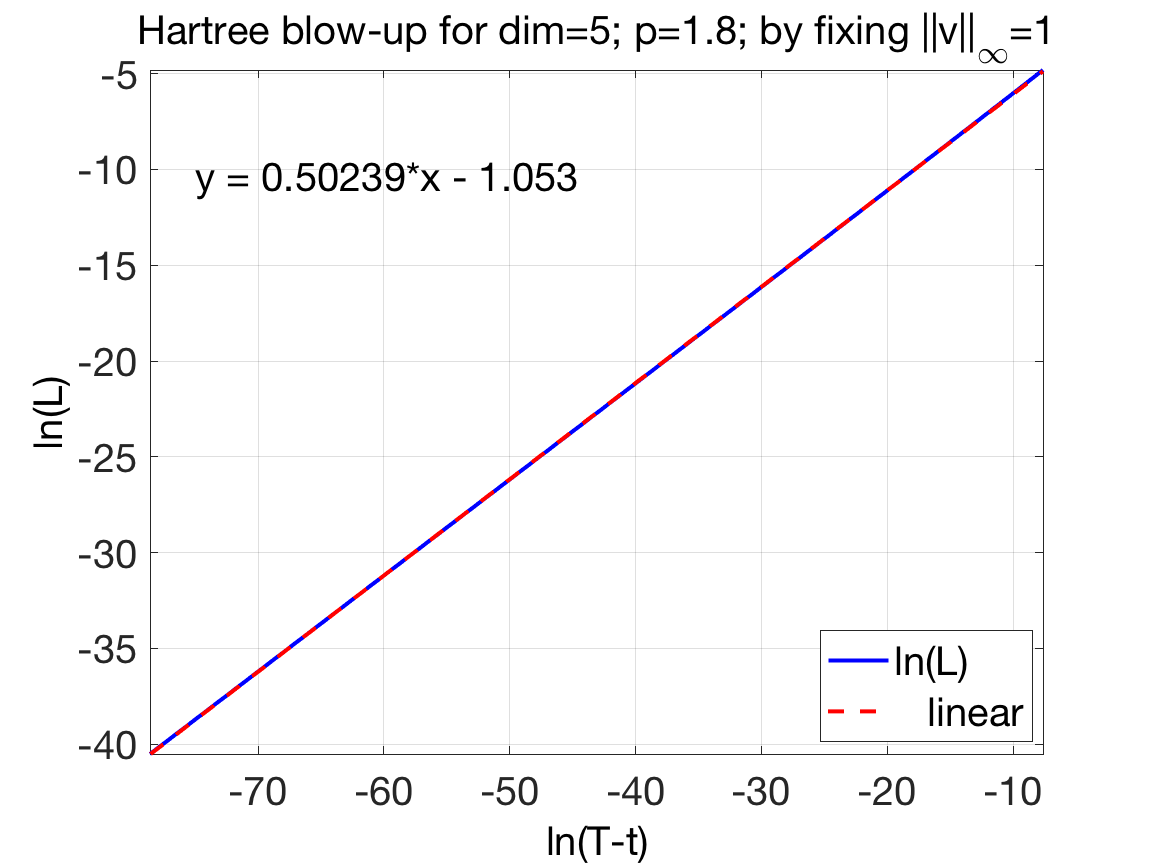}
\includegraphics[width=0.38\textwidth]{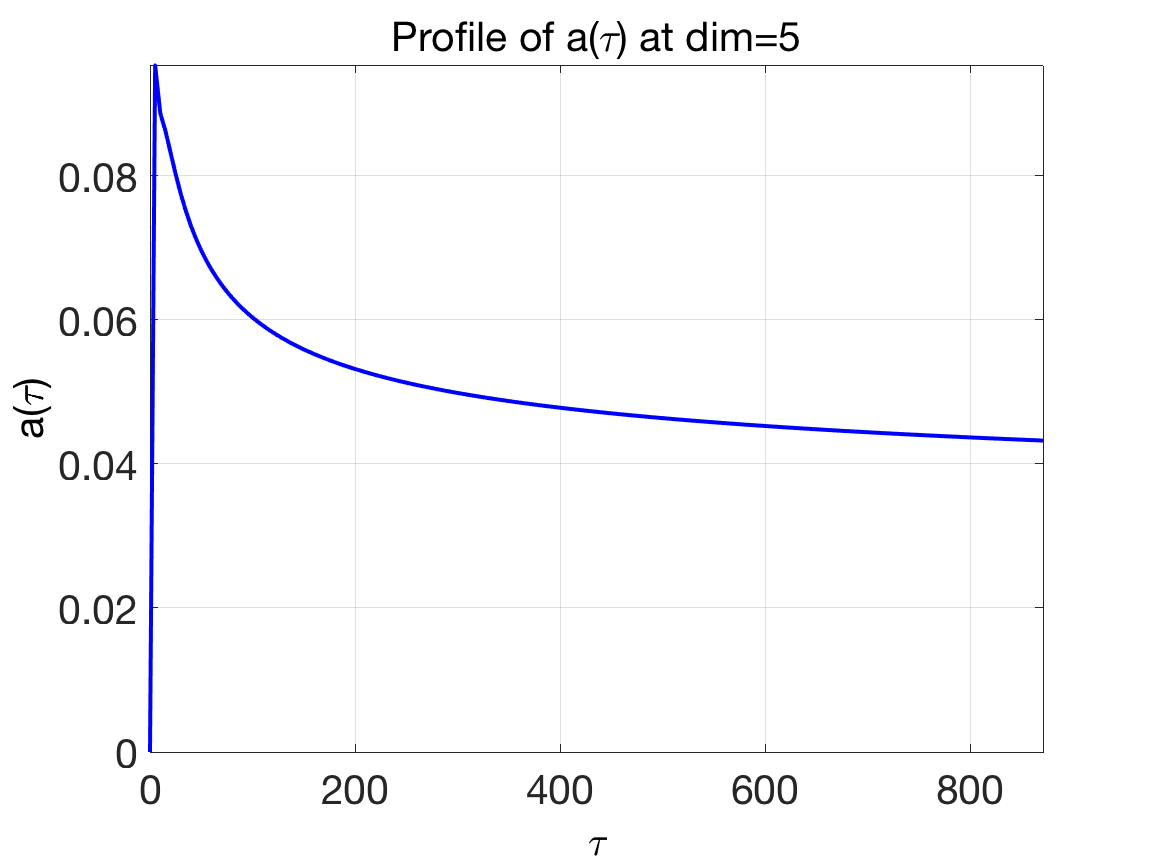}
\includegraphics[width=0.38\textwidth]{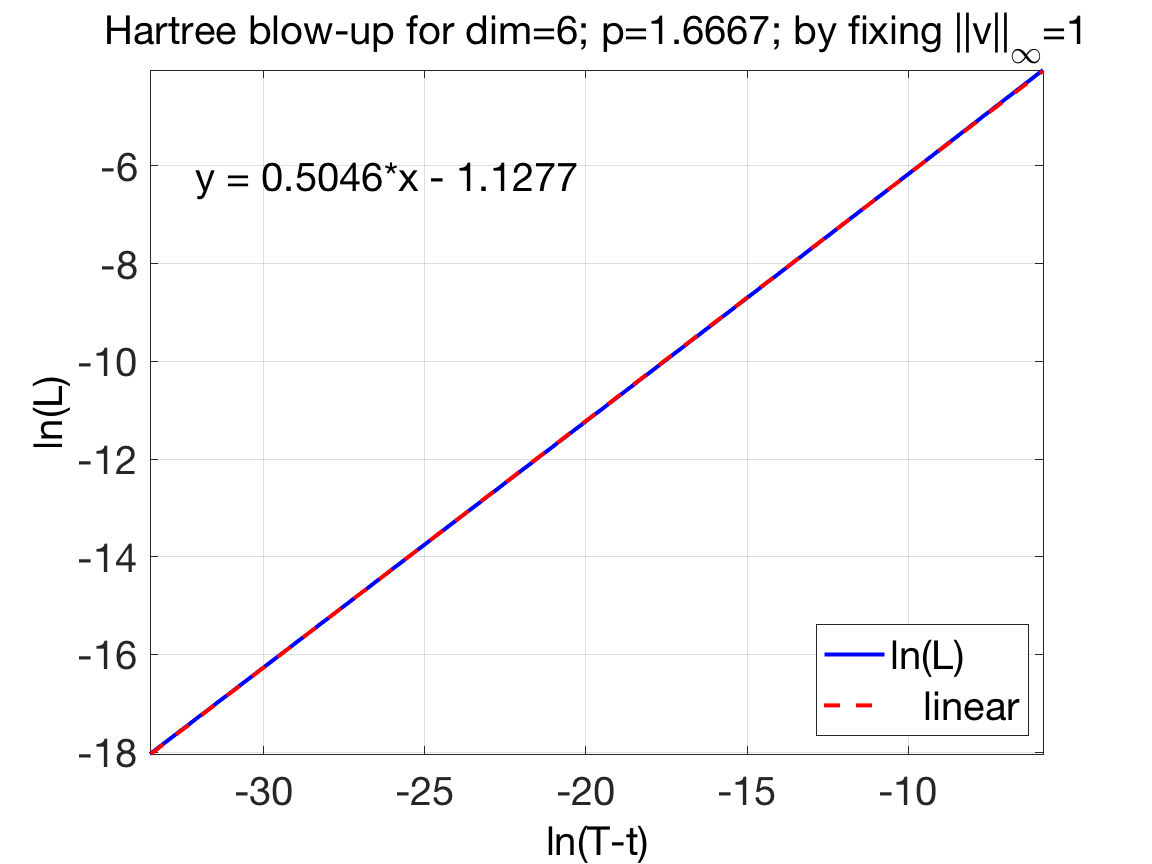}
\includegraphics[width=0.38\textwidth]{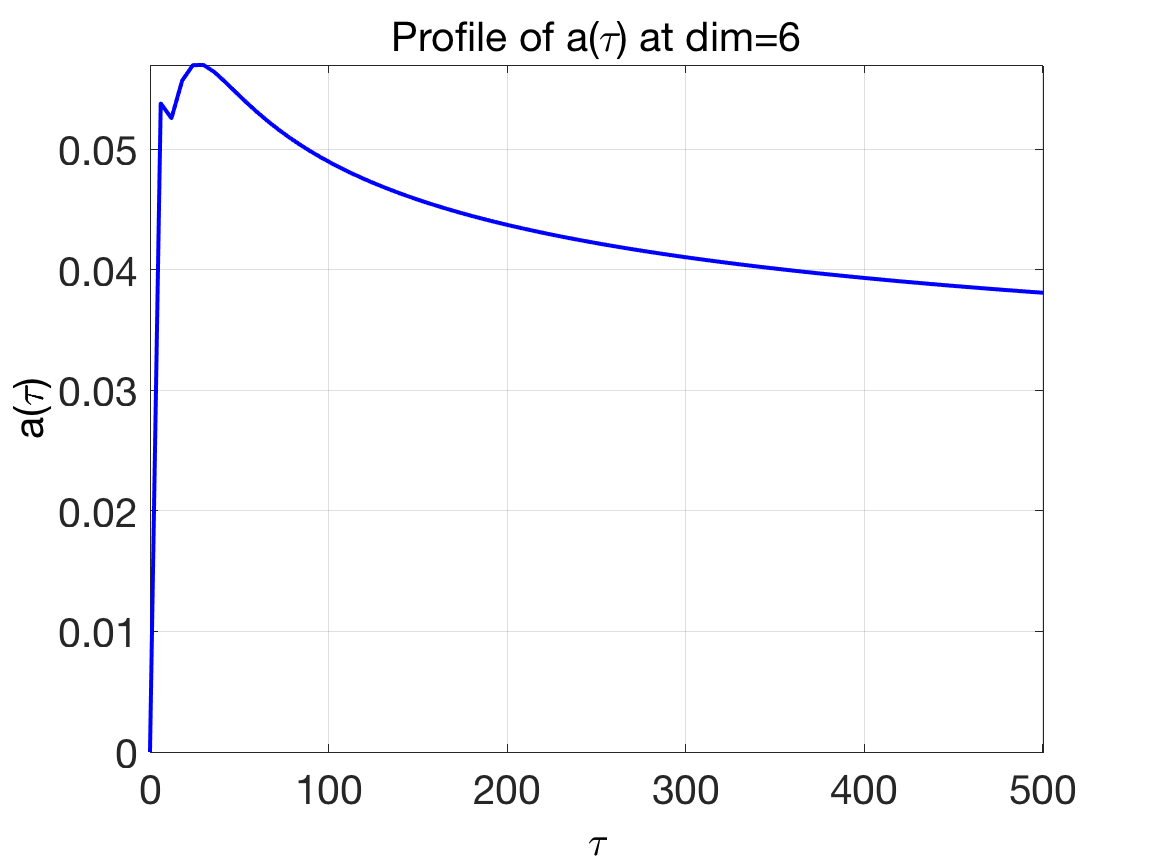}
\includegraphics[width=0.38\textwidth]{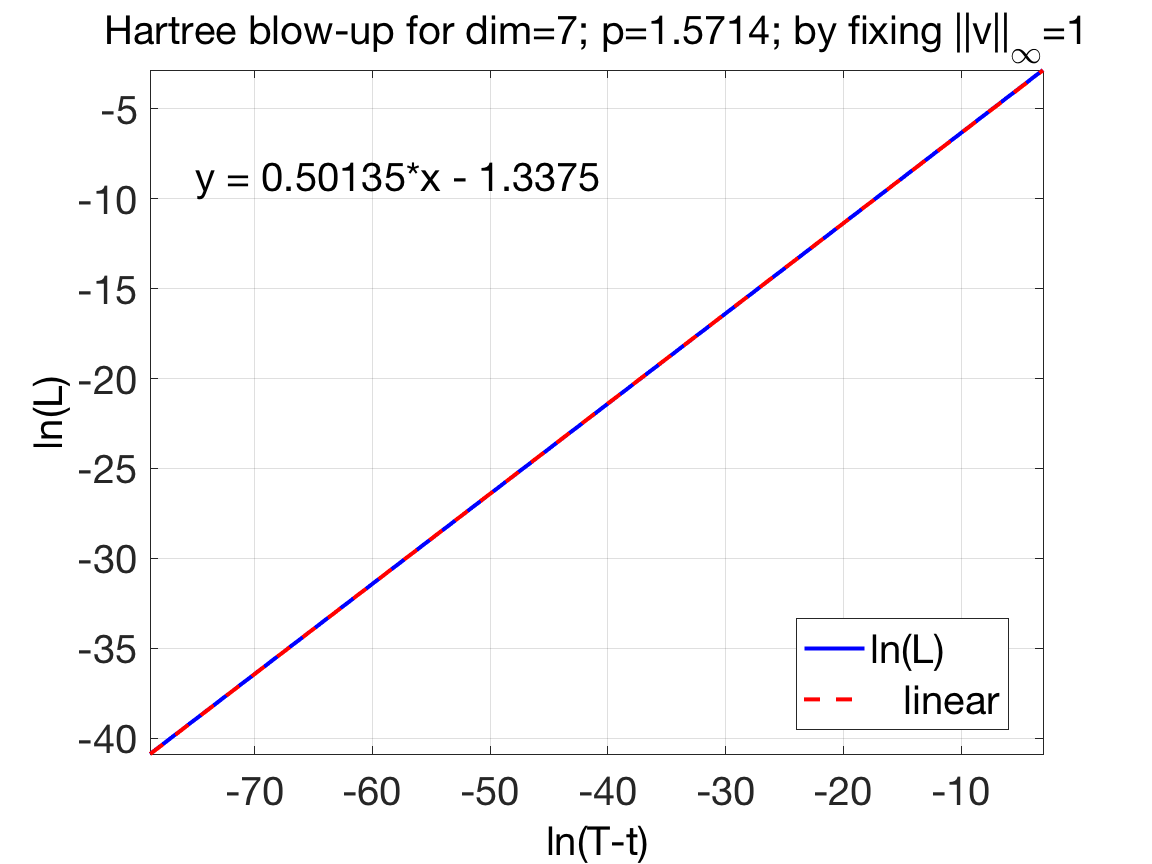}
\includegraphics[width=0.38\textwidth]{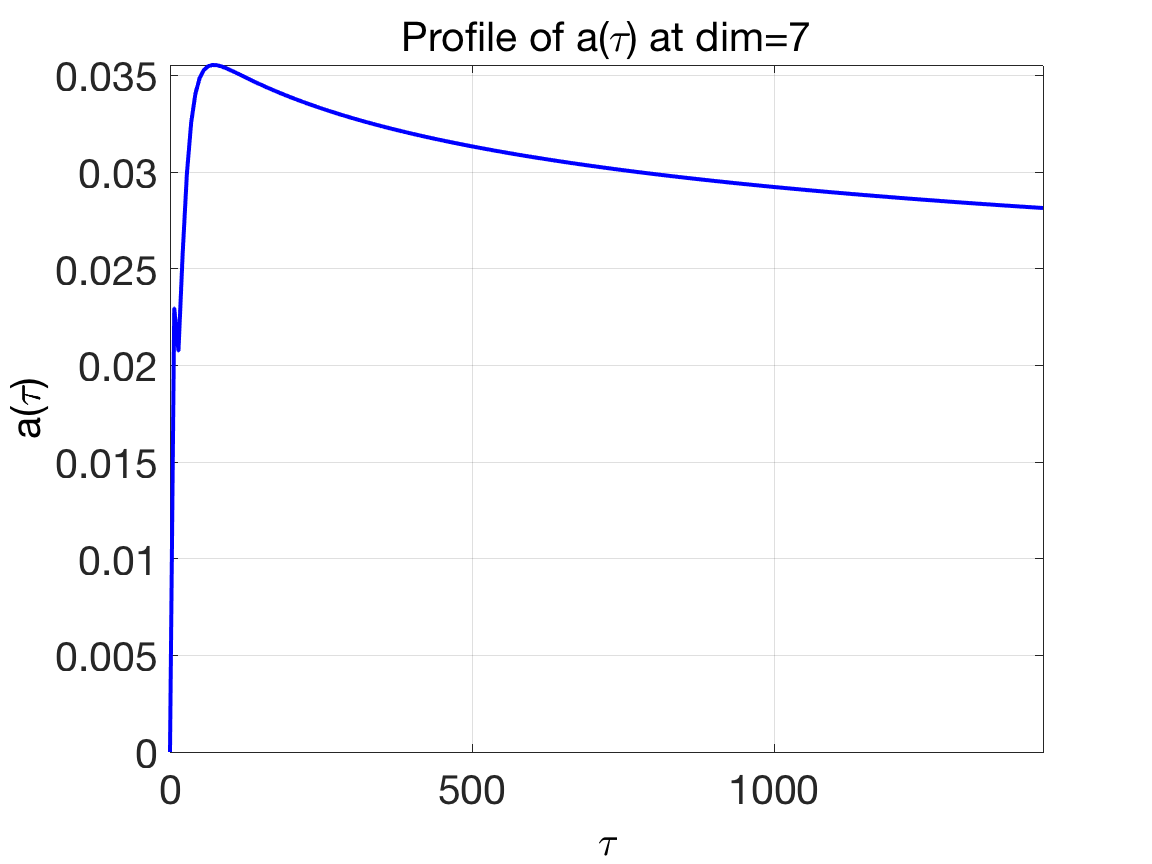}
\caption{The slope of $L(t)$ vs. $T-t$ on a {\it log} scale (left); the slow decay of $a(\tau)$ (right). 
Top: 4d ($p=2$); Top middle: 5d ($p=\frac95$); Bottom middle: 6d ($p=\frac53$); Bottom: 7d ($p=\frac{11}7$). }
\label{7dL}
\end{figure}

\begin{figure}[ht]
\includegraphics[width=0.39\textwidth]{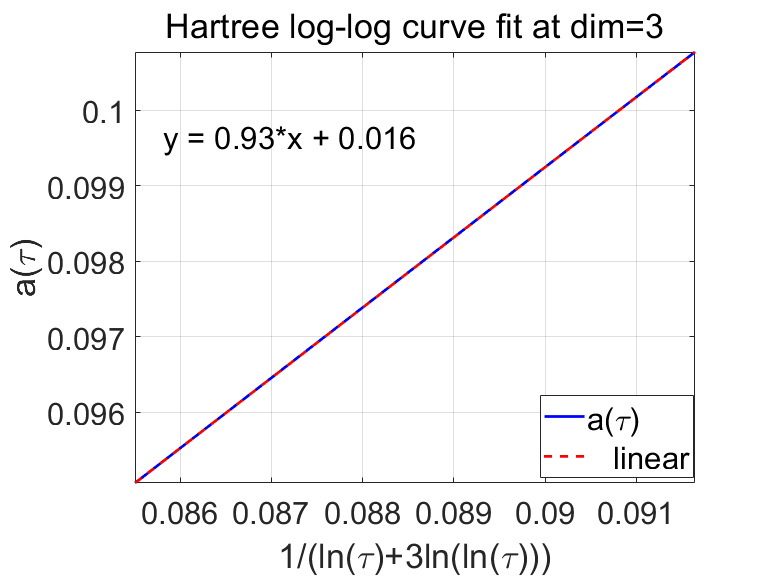}
\includegraphics[width=0.39\textwidth]{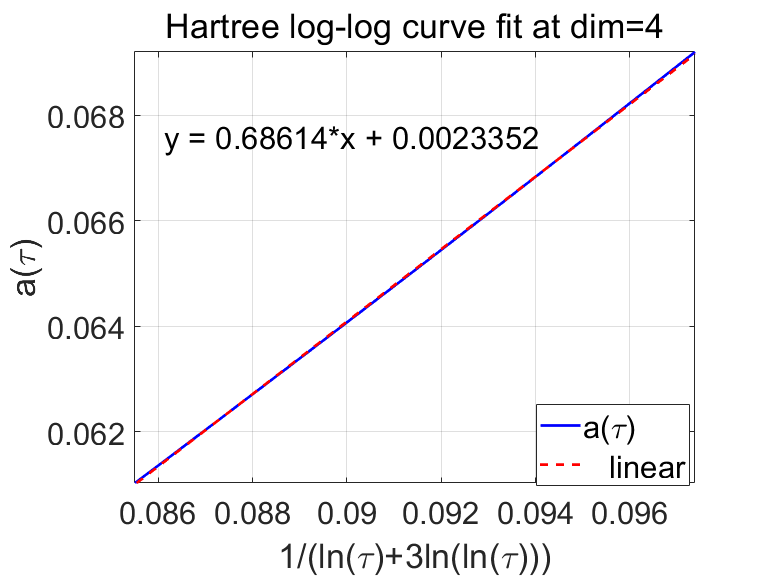}
\includegraphics[width=0.3\textwidth]{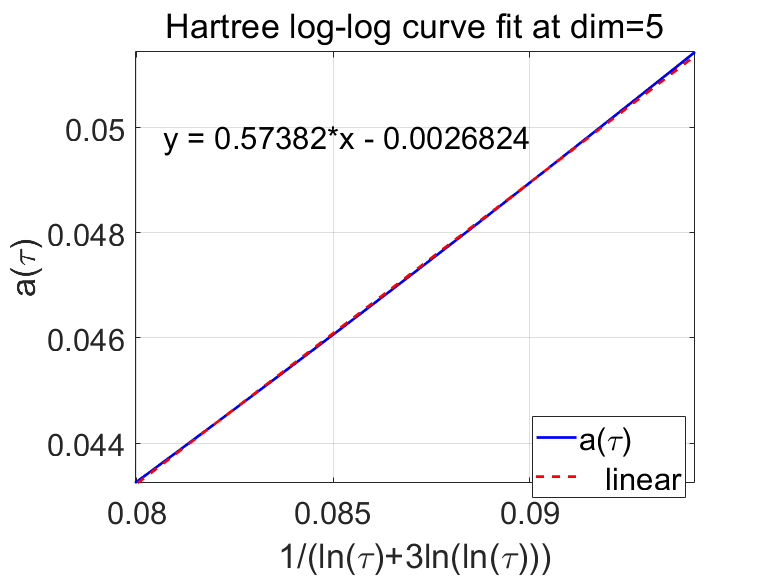}
\includegraphics[width=0.3\textwidth]{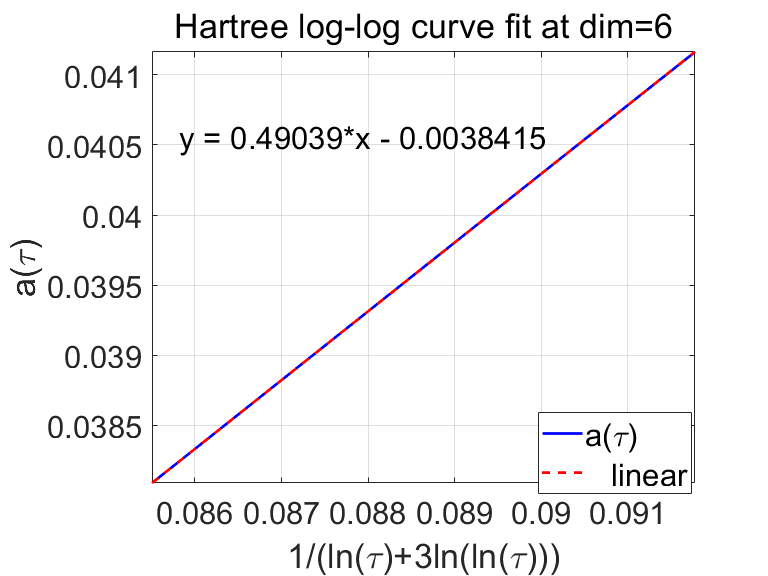}
\includegraphics[width=0.3\textwidth]{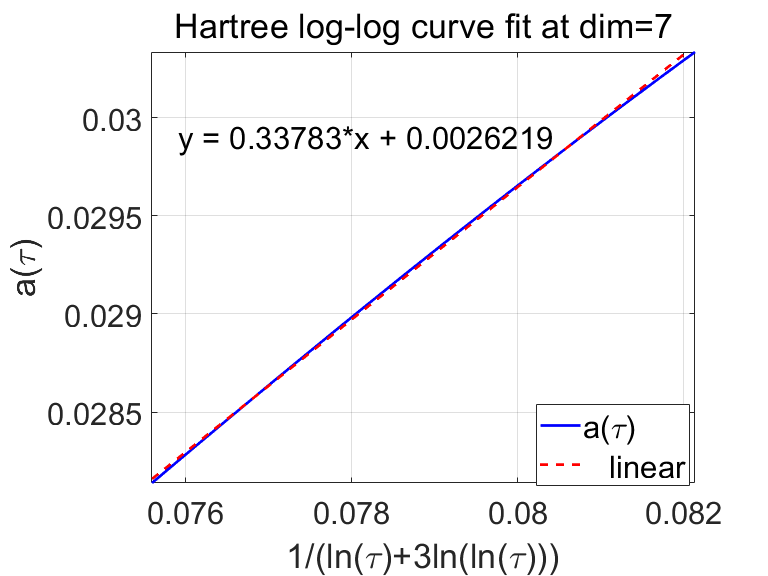}
\caption{Fitting for $a(\tau)$, which suggests $a(\tau) \sim 1/(\ln\tau+3\ln \ln(\tau))$.}
\label{a fit}
\end{figure}

\begin{remark}
In the following subsections, we study the decay rate of $a(\tau)$ via the asymptotic analysis and at the first leading order, as $\tau \rightarrow \infty$, $a(\tau)$ decays at the rate $\frac{\pi}{\ln \tau}$, i.e., slower than any polynomial rate. When more corrective terms are retained, then we have $a(\tau) \sim \frac{\pi}{\ln\tau +c \cdot \ln \ln \tau}$, and from $a_{\tau} \sim -a^{-1}e^{-\pi/a}$ one concludes that $c=3$. This is why we only show the figures $a(\tau)$ vs. $1/\left( \ln \tau +3\ln \ln \tau  \right)$ in this paper. However, we observe that when we change the constant $c$ dependence in the second term of $1/\left( \ln \tau + c\ln \ln \tau  \right)$ with different values of $c$, including zero and large constants (we tried with $c=0,1,3,100,1000$), we find that the slope does not change, which only confirms that such a correction is difficult to track numerically as it only happens at the very high focusing levels. 

We next note that the slope of the line is not $\frac{1}{\sqrt{2\pi}}$ as expected from the asymptotic analysis, where the correction term for $a(\tau)$ is given by $q(t) \sim ( (2\pi)/\ln\ln(\frac{1}{T-t}) )^{1/2}$. This is because at the time we { terminate} our simulations, which we are forced to do as the maximal current numerical precision is reached, the values of $a(\tau)$ are still far from $0$, similar to the computations for the NLS equation in \cite{LPSS1988}, \cite{LePSS1988} and \cite{YRZ2018}. These facts indicate that the {\it log-log} regime occurs only when the amplitude is very large (say $\gg 10^{200}$).
\end{remark}

\subsubsection{Blow-up profiles} 
In this part we investigate the profiles of blow-up solutions at the time $\tau=2, 40, 200, 400$ in dimension $d=3,\cdots,7$. Figures \ref{p3d}--\ref{p7d} show plots of $|v(\xi, \tau)|$ and $|u(r,t)|$ next to each other as we plot in pairs  different times $\tau$ and { the corresponding} $t$, 
recalling that $v$ is the solution to the rescaled equation \eqref{rescaled-variables} 
and $u$ is the solution to the gHartree equation \eqref{gHartree1} reconstructed from  $v$. These figures demonstrate that the blow-up solutions $v=v(\xi,\tau)$ converge to the rescaled ground state profile $Q$ from \eqref{GS}, or \eqref{GS-conv}, in all dimensions we simulated ($d=3,\cdots,7$). (Appendix A explains the computation of $Q$ via the renormalization method.) Table \ref{Converge} shows that $\| \, v(\tau)-Q\|_{\infty} \rightarrow 0$ as $\tau \rightarrow \infty$, but very slowly, which matches our hypothesis about slow decay of $a(\tau)$. This confirms that the blow-up profile $u(x,t)$ converges to $Q$, i.e., $\| \, u(t)-Q\|_{\infty} \rightarrow 0$ as $t\rightarrow T$, up to scaling.
{\small 
\begin{table}[ht]
\begin{tabular}{|c|c|c|c|c|c|c|}
\hline
$d$&$\tau=0$&$\tau=10$&$\tau=50$&$\tau=100$&$\tau=200$&$\tau=400$\\
\hline
$3$&$0.3267$&$0.0325$&$0.0136$&$0.0106$&$0.0086$&$0.0072$\\
\hline
$4$&$0.2402$&$0.03331$&$0.0122$&$0.0089$&$0.0070$&$0.0057$\\
\hline
$5$&$0.1878$&$0.0290$&$0.0120$&$0.0085$&$0.0064$&$0.0051$\\
\hline
$6$&$0.1497$&$0.0218$&$0.0119$&$0.0087$&$0.0065$&$0.0051$\\
\hline
$7$&$0.1188$&$0.0118$&$0.0102$&$0.0082$&$0.0065$&$0.0052$\\
\hline
\end{tabular}
\linebreak
\linebreak
\caption{The values of $\| \, v(\tau)- Q\|_{\infty}$, where $v(\tau)$ is the solution to the rescaled equation \eqref{RgHartree}. 
Note that the values are decreasing as $\tau \rightarrow \infty$, or equivalently, $t \rightarrow T$.}
\label{Converge}
\end{table}
}

\begin{figure}
\includegraphics[width=0.45\textwidth]{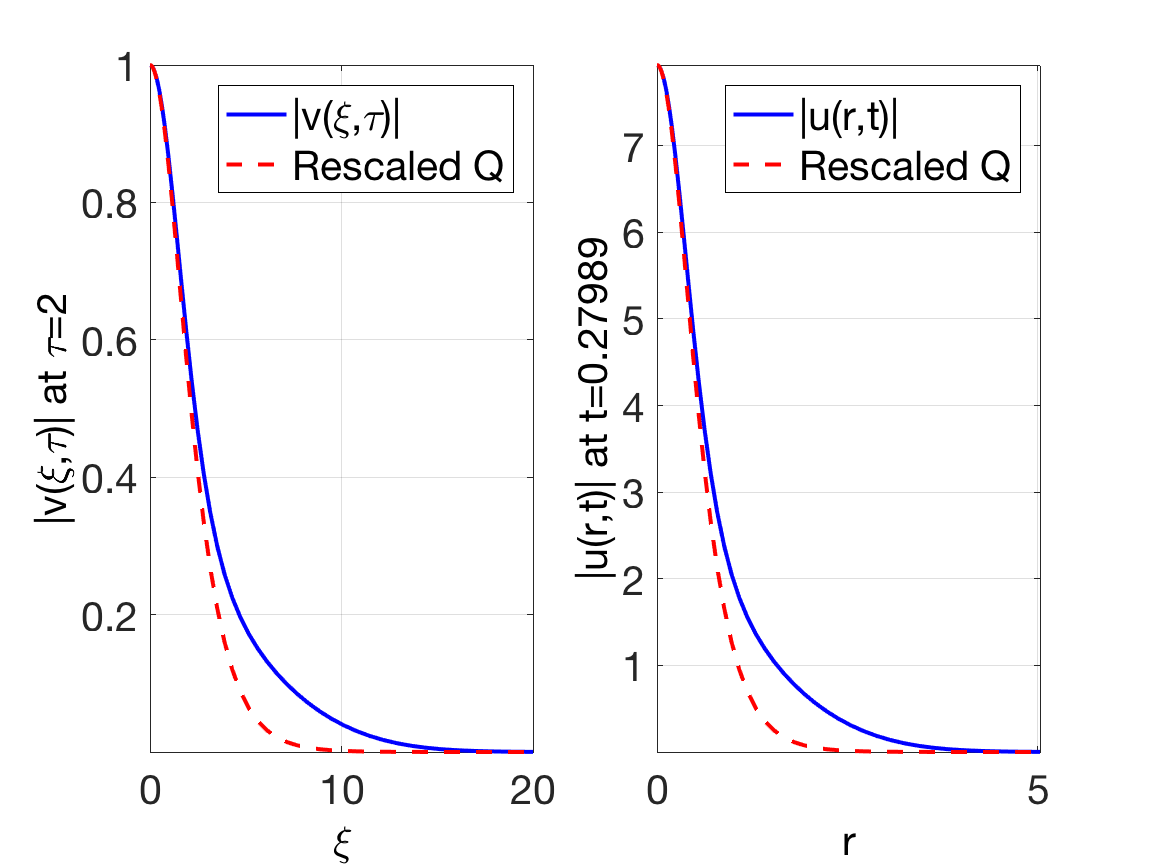}
\includegraphics[width=0.45\textwidth]{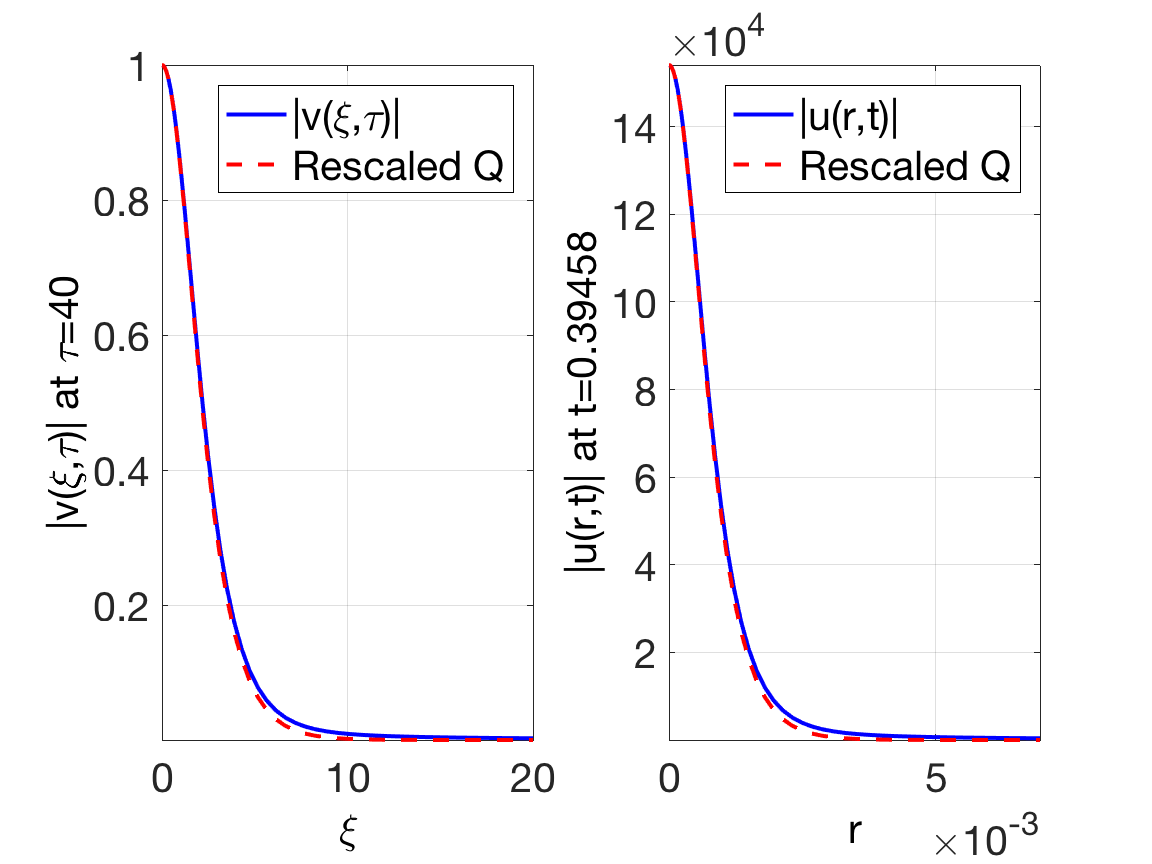}
\includegraphics[width=0.45\textwidth]{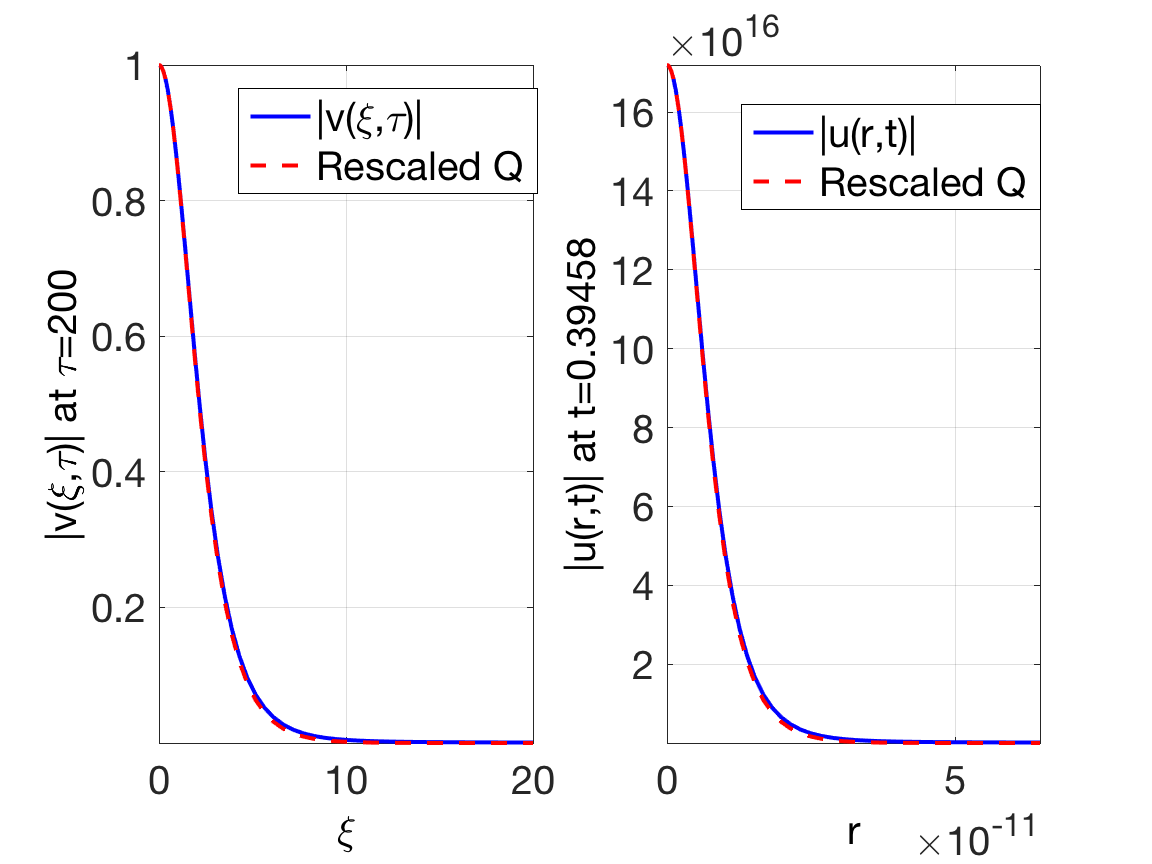}
\includegraphics[width=0.45\textwidth]{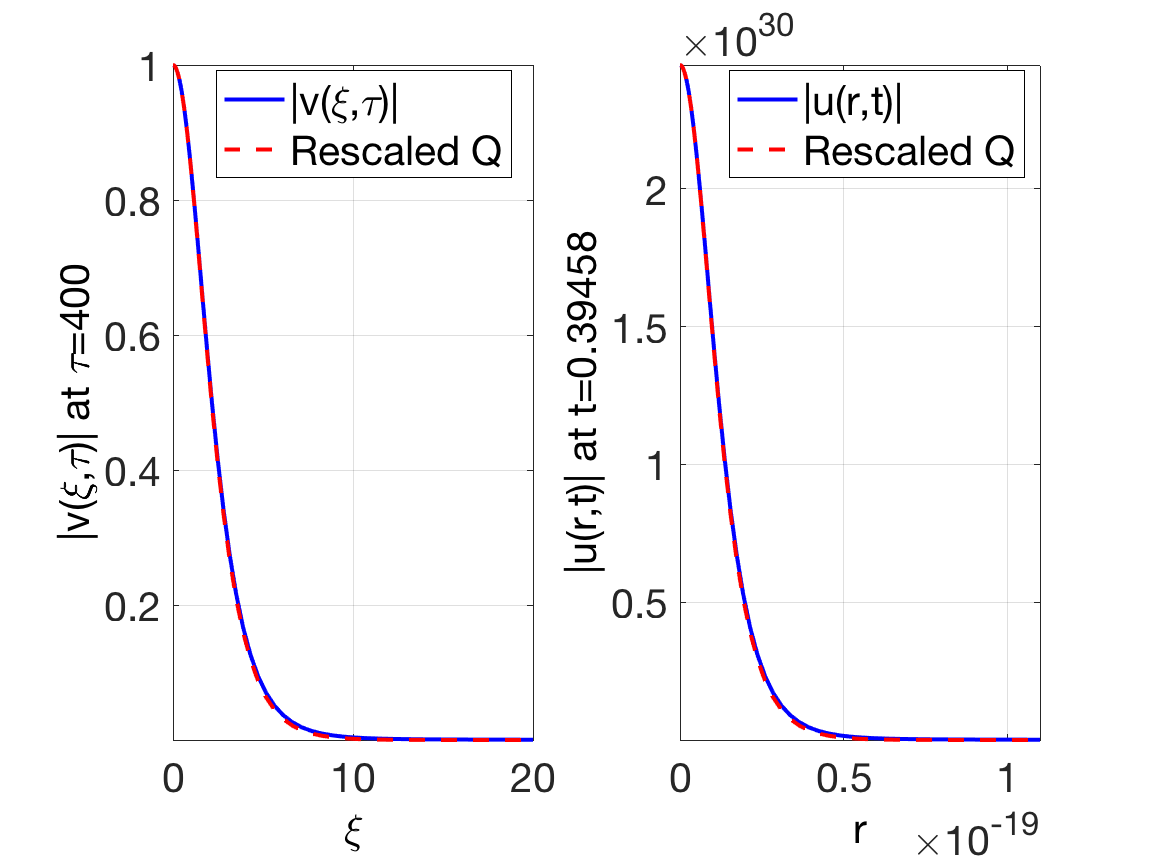}
\caption{Convergence of blow-up profile in the dimension $d=3$. Next to each other we plot the rescaled profile $|v|$ in the rescaled time $\tau$ and the original solution $|u|$ in the actual time $t$. As $\tau \rightarrow \infty$ (here, $\tau=2, 40, 200, 400$), the profile (blue dots) approaches the rescaled ground state $Q$, i.e., $Q_{rescaled}=\frac{1}{L^{\alpha}(t)} Q(\frac{r}{L(t)})$.}
\label{p3d}
\end{figure}

\begin{figure}
\includegraphics[width=0.32\textwidth]{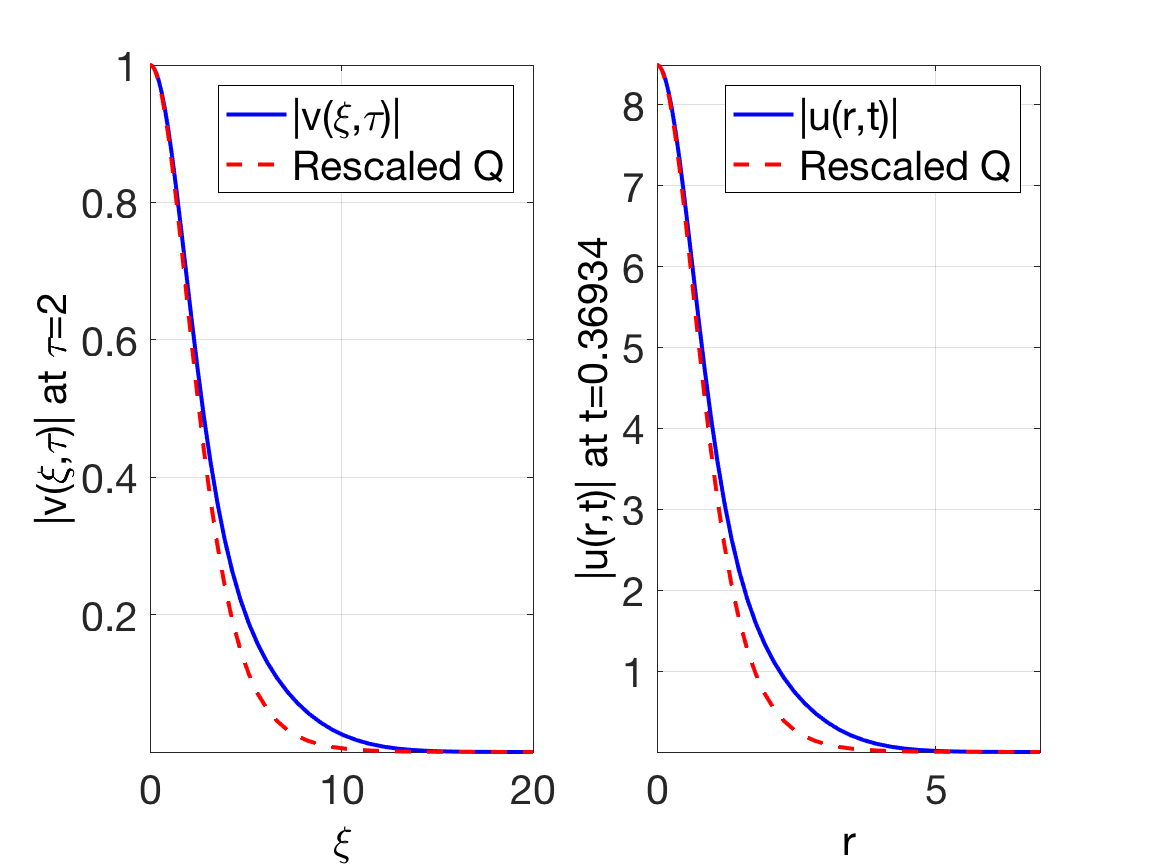}
\includegraphics[width=0.32\textwidth]{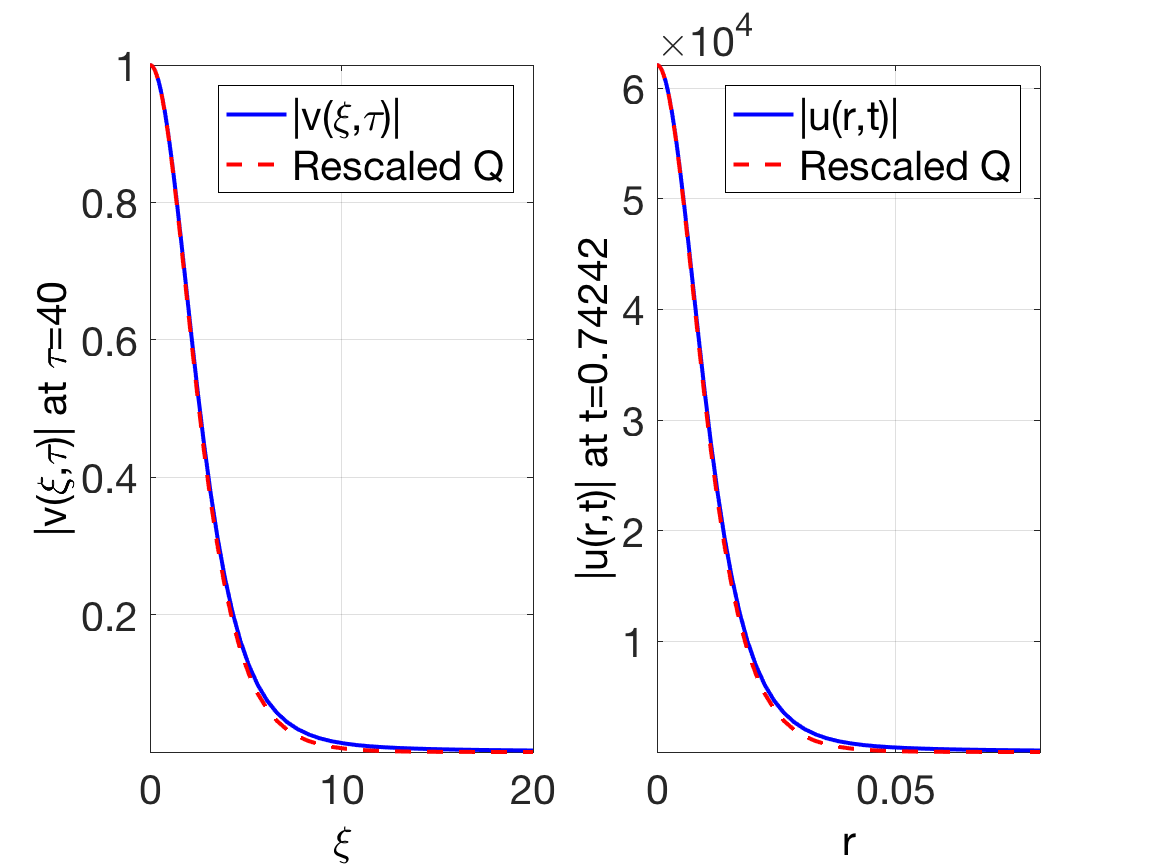}
\includegraphics[width=0.32\textwidth]{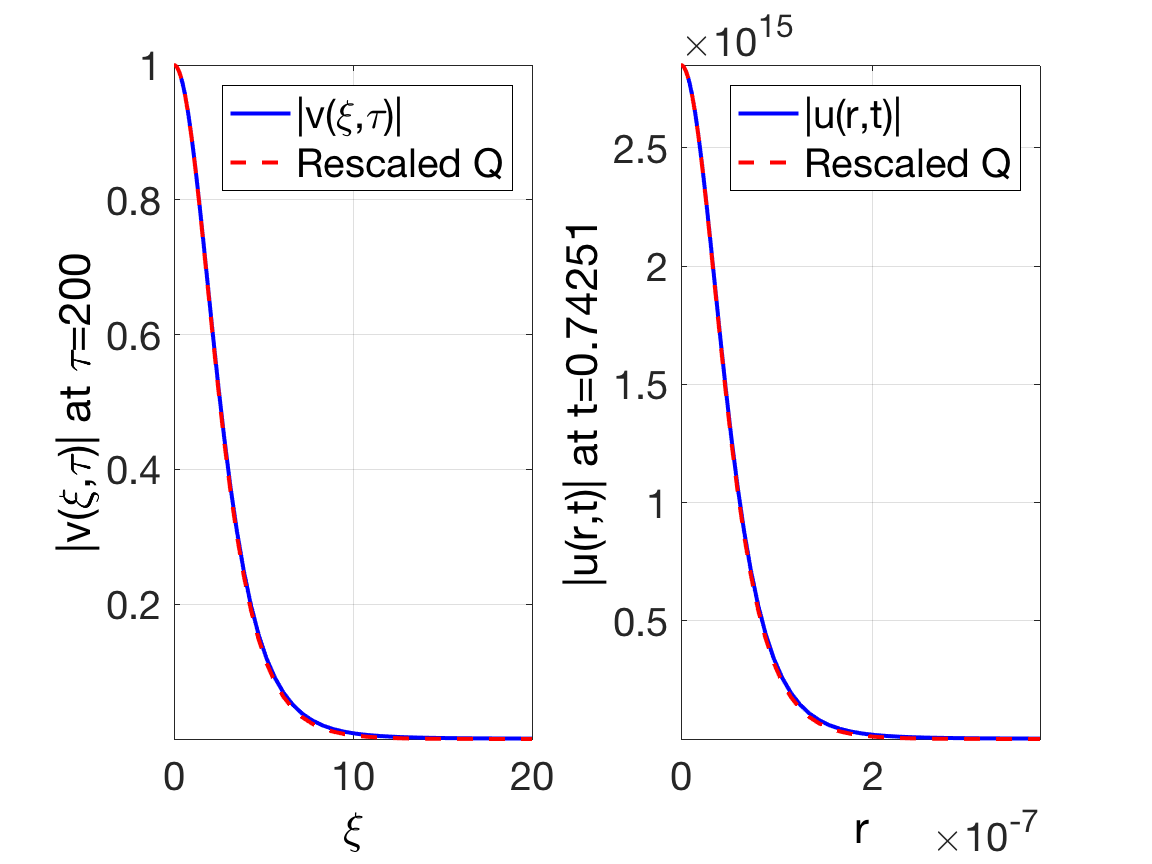}
\includegraphics[width=0.32\textwidth]{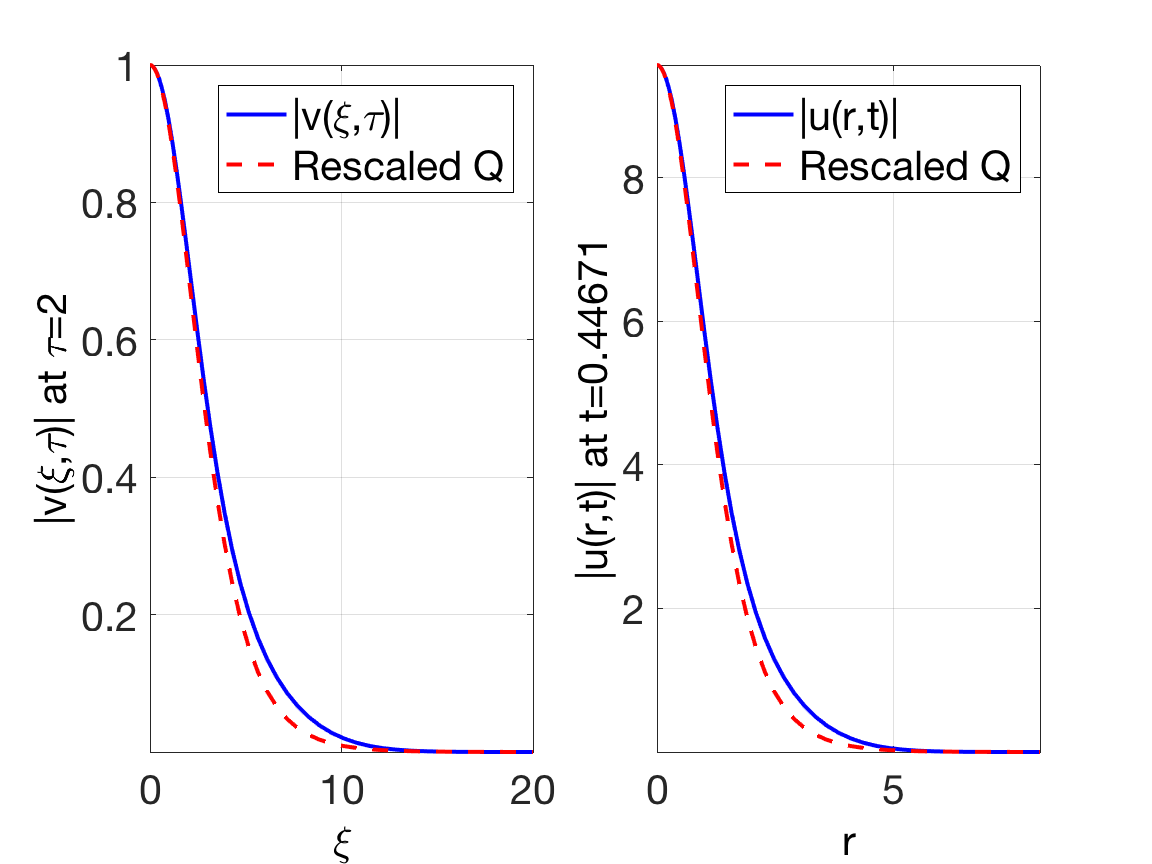}
\includegraphics[width=0.32\textwidth]{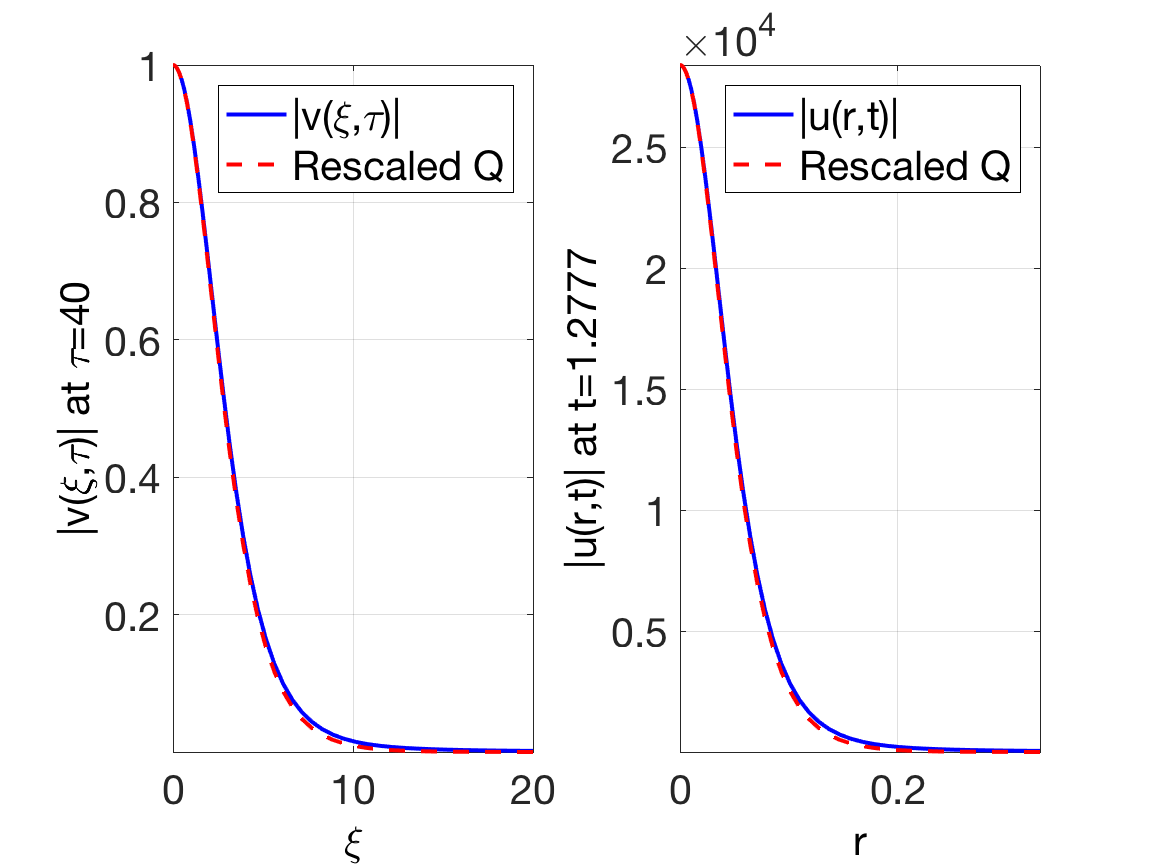}
\includegraphics[width=0.32\textwidth]{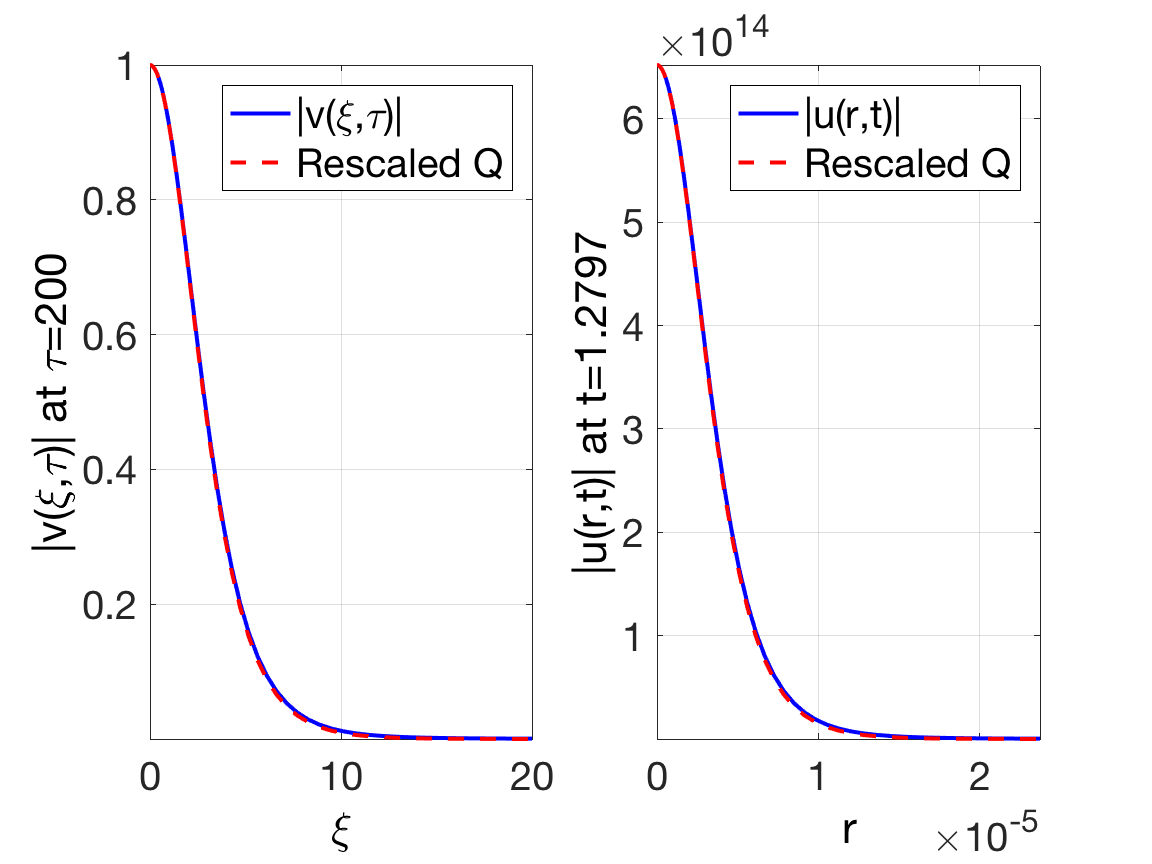}
\includegraphics[width=0.32\textwidth]{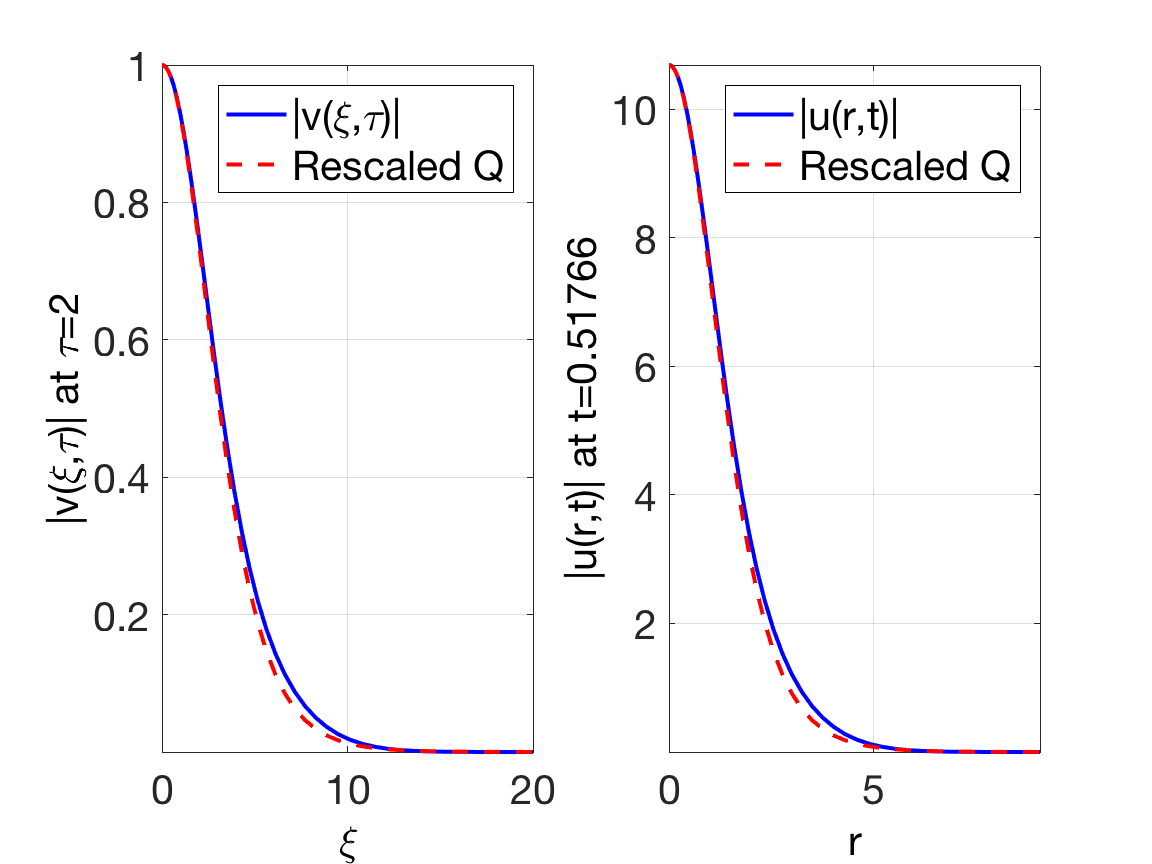}
\includegraphics[width=0.32\textwidth]{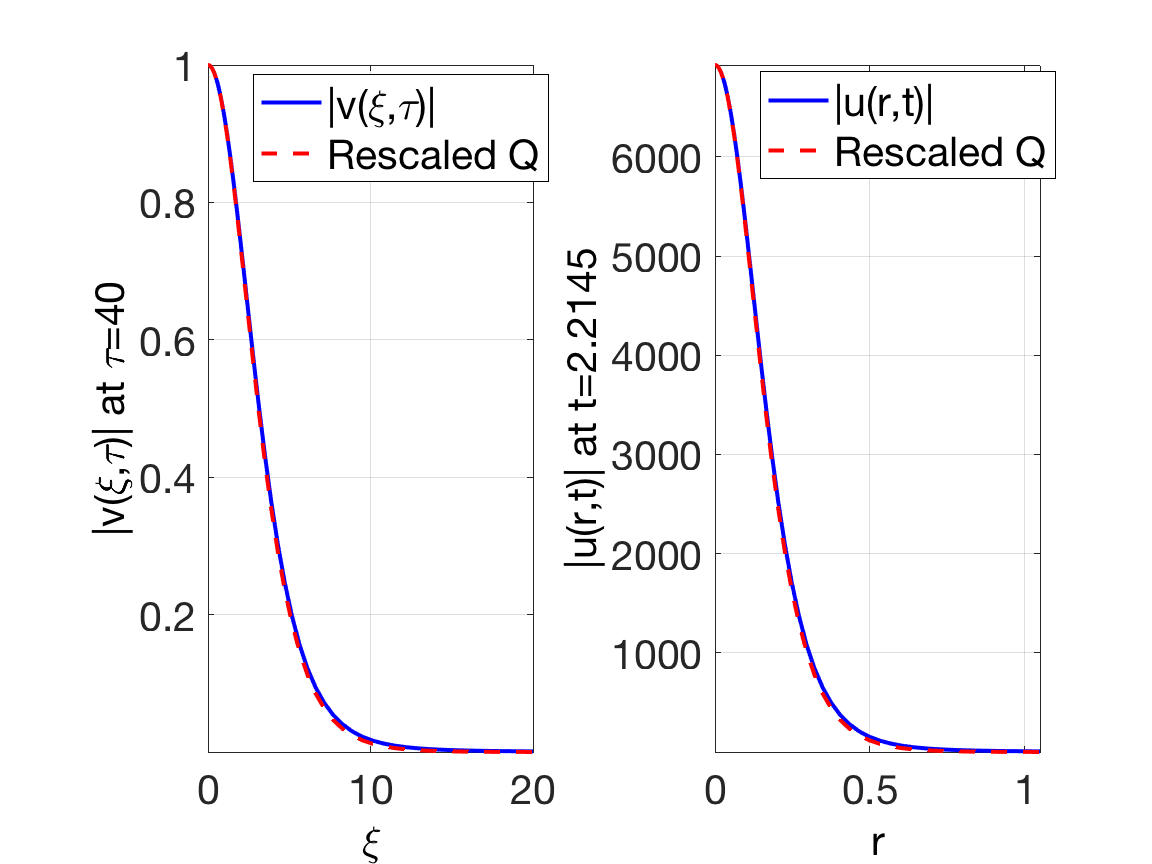}
\includegraphics[width=0.32\textwidth]{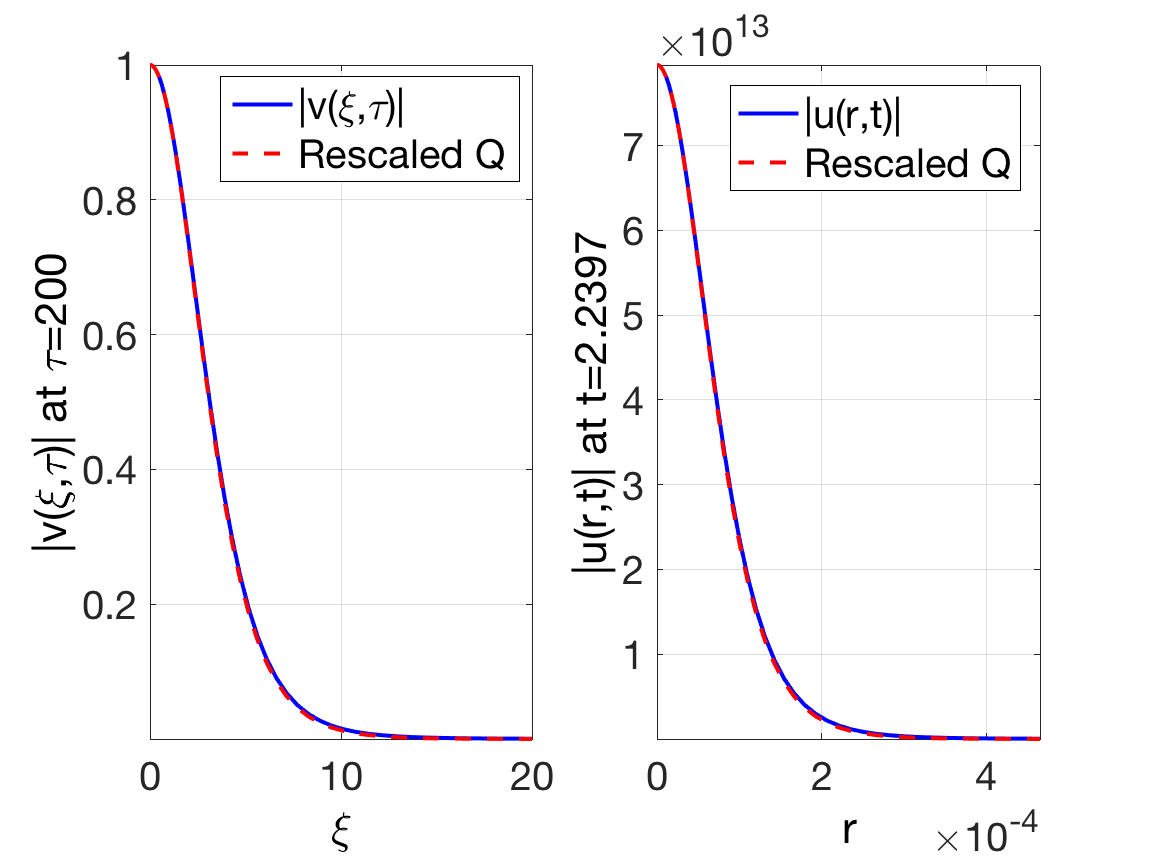}
\includegraphics[width=0.32\textwidth]{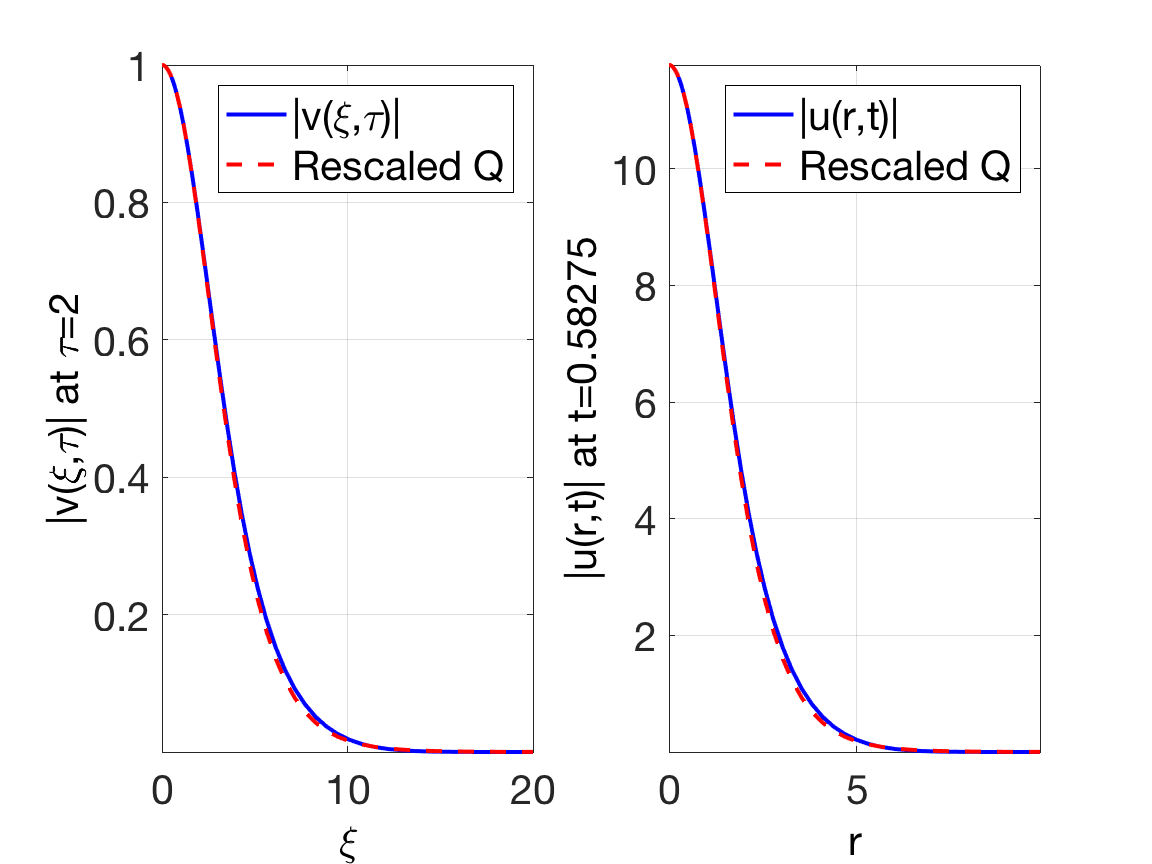}
\includegraphics[width=0.32\textwidth]{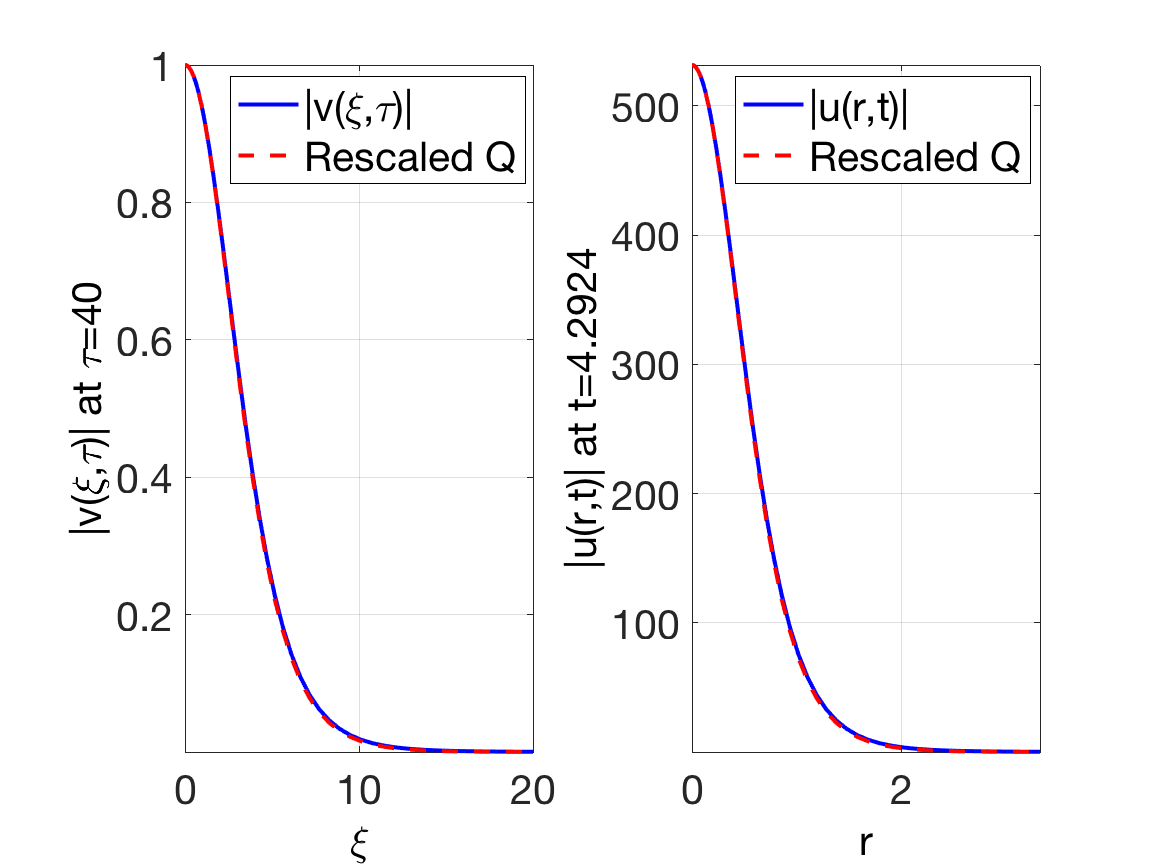}
\includegraphics[width=0.32\textwidth]{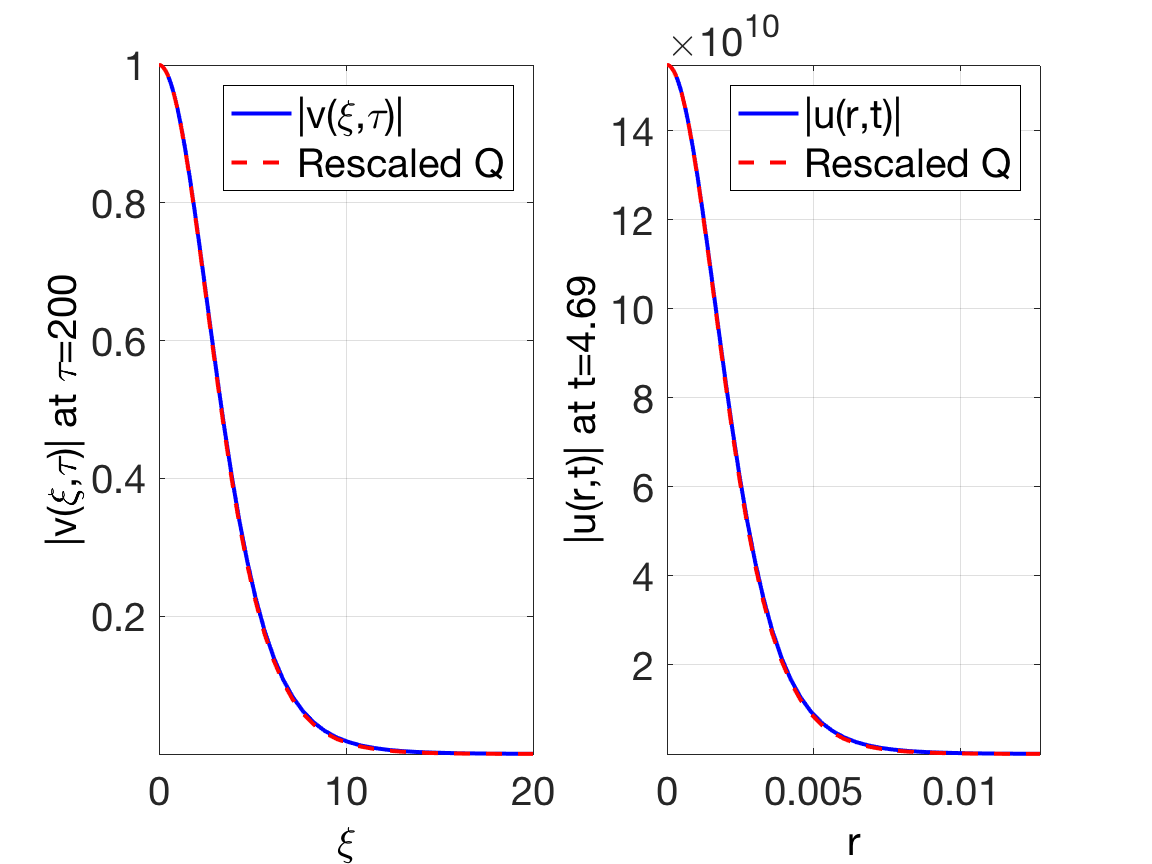}
\caption{Convergence of blow-up profile in $d=4$ (top), $d=5$ (middle top), $d=6$ (middle bottom), $d=7$ (bottom). We plot next to each other the rescaled profile $|v|$ in the rescaled time $\tau$ and the original solution $|u|$ in the actual time $t$. As $\tau \rightarrow \infty$ (here, $\tau=2, 40, 200$),
the profile (blue dots) approaches the rescaled ground state $Q$ ($Q_{rescaled}=\frac{1}{L^{\alpha}(t)} Q(\frac{r}{L(t)})$).}
\label{p7d}
\end{figure}


\subsection{First attempt to obtain the correction term in the blow-up rate}\label{Fitting}
In the NLS equation, the {\it log-log} regime is reached when the amplitude of the solution is extremely large ($\gg 10^{200}$), which is currently impossible to observe numerically. In \cite{ADKM2003} the functional form testing was suggested and the authors succeeded in showing that among all tested functional forms, the {\it log-log} form minimizes the errors in the fitting the best. This method has also been proven to be efficient in  checking the {\it log-log} correction for the NLS equation in higher dimensions, see \cite{YRZ2018}. In this paper, we also use this approach for the correction term in the blow-up rate in the gHartree equation. We write the rate as 
\begin{align}\label{FL}
\dfrac{1}{L(t)} \sim \left( \dfrac{F(T-t)}{T-t} \right)^{\frac{1}{2}},
\end{align}
where $F(s)= \left( \ln(s^{-1}) \right)^{\gamma}$ (for example, we consider $\gamma=1,0.6,0.5,0.4,0$) or $F(s)=\ln \ln s^{-1}$. 
We compute $\frac{1}{L(t_i)}$ at each time $t_i$, and also we check the following approximation parameter
\begin{align}\label{rho}
\rho_i=\dfrac{L(t_i)}{L(t_{i+1})} \Big/ \ln \left( \dfrac{F_{i+1}/(T-t_{i+1})}{F_{i}/(T-t_{i})} \right), ~~ \mathrm{where} ~~ F_{i} =F(T-t_i). 
\end{align}
Due to the leading square root decay, $\rho$ is expected to be $\frac12$, and thus, we check how fast the parameter $\rho_i$ converges to $\frac{1}{2}$ and which choice of $F(s)$
gives the best approximation. In \cite{ADKM2003} and \cite{YRZ2018} it was shown that $F(s)=\ln \ln s^{-1}$ provides the fastest convergence to $\frac12$ as well as the best parameter $\rho_i$ stabilization. Furthermore, $F(s)=\ln \ln s^{-1}$ gave the optimal quantity in the standard deviation $\epsilon$: for computational purposes we define it on each subinterval of values of $1/L(t)$ (denoting by $I_j$ the range of values, see for example, Table \ref{diviation 3d})
\begin{align}\label{derivation}
\epsilon=\left(\frac1{\#|I_i|} \sum_{j \in I_i}
\left(\frac{1}{2}-\rho_j\right)^2 \right)^{\frac12}.
\end{align}

We provide the results from our computations for the best fitting $\rho_i$ and standard deviation $\epsilon_i$ in Tables \ref{diviation 3d} - \ref{diviation 7d} for dimensions $d = 3, ..., 7$. One can notice that the {\it log-log} correction does the best minimization of the error in the fitting. We also find the optimal value of $\gamma$, denoted by $\gamma^{\star}$, such that $F(s)=(\ln s^{-1})^{\gamma^{\star}}$ gives the rate $\rho_i$ to be exactly $\frac{1}{2}$. The parameter $\gamma^{\star}$ was introduced in \cite{ADKM2003}, we also used it in \cite{YRZ2018} and found that even this parameter is decreasing, which also indicates that the correction should be weaker than $(\ln s^{-1})^{\gamma}$ for any $\gamma$. 
To compute $\gamma^\star$ we directly calculate $\rho_i$ from \eqref{rho}, which gives 
\begin{align}\label{gamma star}
\dfrac{1}{\rho_i(\gamma)}=\dfrac{\gamma}{\rho_i(1)}+\dfrac{1-\gamma}{\rho_i(0)},
\end{align}
together with 
\begin{align}\label{rho 0.5}
\rho_i(\gamma)=\dfrac{1}{2}.
\end{align}
Then we obtain the optimal $\gamma^{\star}$. Observe that the $\gamma^{\star}$ is decreasing as the magnitude of the range $I_i$ is increasing, this is similar to the behavior and results in the $L^2$-critical NLS equation. 

Some of the results are recorded in Tables \ref{diviation 3d}-\ref{diviation 7d}, where we tested $F(s)=1$, $F(s)=\left(\ln s^{-1} \right)^{\gamma}$, $\gamma=1, 0.5, 0.25$, and $F(s)=\ln \ln s^{-1}$; we also list the values of the optimal $\gamma^{\star}$ at the increasing magnitude range. As we mentioned above, the $\gamma^{\star}$ decreases as the magnitude increases (and as the solution approaches the blow-up time $T$). This indicates that none of $(\ln s^{-1})^{\gamma}$ corrections are good choice. Therefore, a weaker correction than $(\ln s^{-1})^{\gamma}$ is needed, giving more support to the {\it log-log} correction. 
Besides the forms of the functional fitting corrections already discussed, we also include the results $F_{mal}$ for the ``Malkin adiabatic" law $L(t) \approx (2\sqrt{b}(T-t))^{1/2}$ for a comparison (details about the adiabatic laws are in subsection \ref{S:Adiabatic}).

{\small
\begin{table}[ht]
\begin{tabular}{|c|c|c|c|c|c|c|c|c|}
\hline
\multicolumn{9}{|c|}{The fitting power $\rho_i$ from different corrections}\\
\hline
$i$ &$\frac{1}{L(t)}$ range 
& $F(s)=1$ 
& $F(s)=\ln \frac{1}{s}$ 
& $F(s)=\left( \ln \frac{1}{s} \right)^{0.5}$ 
& $F(s)=\left( \ln \frac{1}{s} \right)^{1/4}$ 
& $F(s)=\ln \ln \frac{1}{s}$  
& $\gamma^{\star}$
& $F_{mal}$ \\
\hline
$0$&$5e8 \sim 4e9 $&$0.5028$&$0.4908$&$0.4967$&$0.4997$&$0.4995$&$0.2291$&$0.5007$ \\
\hline
$1$&$4e9 \sim 3e10 $&$0.5024$&$0.4916$&$0.4969$&$0.4997$&$0.4995$&$0.2198$&$0.5006$ \\
\hline
$2$&$3e10 \sim 3e11 $&$0.5021$&$0.4922$&$0.4971$&$0.4996$&$0.4996$&$0.2120$&$0.5006$ \\
\hline
$3$&$3e11 \sim 2e12 $&$0.5019$&$0.4927$&$0.4973$&$0.4996$&$0.4996$&$0.2055$&$0.5004$ \\
\hline
$4$&$2e12 \sim 2e13 $&$0.5017$&$0.4932$&$0.4974$&$0.4996$&$0.4996$&$0.1998$&$0.5003$ \\
\hline
$5$&$2e13 \sim 1e15 $&$0.5016$&$0.4936$&$0.4975$&$0.4996$&$0.4996$&$0.1948$&$0.5004$ \\
\hline
$6$&$1e15 \sim 7e15 $&$0.5014$&$0.4939$&$0.4977$&$0.4995$&$0.4996$&$0.1904$&$0.5004$ \\
\hline
$7$&$7e15 \sim 5e16 $&$0.5013$&$0.4942$&$0.4978$&$0.4995$&$0.4997$&$0.1865$&$0.5003$ \\
\hline
\end{tabular}
\linebreak
\linebreak
\begin{tabular}{|c|c|c|c|c|c|c|c|}
\hline
\multicolumn{8}{|c|}{The $\epsilon_i$ from different corrections}\\
\hline
$i$ &$\frac{1}{L(t)}$ range 
& $F(s)=1$ 
& $F(s)=\ln \frac{1}{s}$ 
& $F(s)= \left( \ln \frac{1}{s}\right)^{0.5}$ 
& $F(s)= \left( \ln \frac{1}{s}\right)^{1/4}$ 
& $F(s)=\ln \ln \frac{1}{s}$
& $F_{mal}$ \\
\hline
$0$&$5e8 \sim 4e9 $&$0.0028$&$0.0092$&$0.0033$&$2.53e-4$&$4.87e-4$&$7.12e-4$ \\
\hline
$1$&$4e9 \sim 3e10 $&$0.0026$&$0.0088$&$0.0032$&$2.95e-4$&$4.77e-4$&$6.70e-4$ \\
\hline
$2$&$3e10 \sim 3e11 $&$0.0025$&$0.0085$&$0.0031$&$3.27e-4$&$4.67e-4$&$6.46e-4$ \\
\hline
$3$&$3e11 \sim 2e12 $&$0.0023$&$0.0082$&$0.0030$&$3.50e-4$&$4.57e-4$&$5.98e-4$ \\
\hline
$4$&$2e12 \sim 2e13 $&$0.0022$&$0.0079$&$0.0029$&$3.67e-4$&$4.47e-4$&$5.56e-4$ \\
\hline
$5$&$2e13 \sim 1e15 $&$0.0021$&$0.0077$&$0.0028$&$3.83e-4$&$4.37e-4$&$5.32e-4$ \\
\hline
$6$&$1e15 \sim 7e15 $&$0.0021$&$0.0075$&$0.0028$&$3.93e-4$&$4.28e-4$&$5.14e-4$\\
\hline
$7$&$7e15 \sim 5e16 $&$0.0020$&$0.0073$&$0.0027$&$4.01e-4$&$4.19e-4$&$4.91e-4$ \\
\hline
\end{tabular}
\linebreak
\linebreak
\caption{3d case. Top table: comparison of curve fitting for various choices of correction terms $F(s)$. Here, ``$\frac{1}{L(t)}$ range" means values are in the range $\frac{1}{L(t_i)} \sim \frac{1}{L(t_{i+1})}$. 
Bottom table:
standard deviation $\epsilon_i$ for different corrections $F(s)$ from the top table. 
The {\it log-log} correction produces the minimal error in $\epsilon_i$.}
\label{diviation 3d}
\end{table}
}

{\small
\begin{table}[ht]
\begin{tabular}{|c|c|c|c|c|c|c|c|c|}
\hline
\multicolumn{9}{|c|}{The fitting power $\rho_i$ from different corrections}\\
\hline
$i$ &$\frac{1}{L(t)}$ range 
& $F(s)=1$ 
& $F(s)=\ln \frac{1}{s}$ 
& $F(s)=\left( \ln \frac{1}{s} \right)^{0.5}$ 
& $F(s)=\left( \ln \frac{1}{s} \right)^{1/4}$ 
& $F(s)=\ln \ln \frac{1}{s}$  
& $\gamma^{\star}$ 
& $F_{mal}$ \\
\hline
$0$&$4e8 \sim 3e9 $&$0.5029$&$0.4906$&$0.4966$&$0.4997$&$0.4995$&$0.2281$&$0.5005$ \\
\hline
$1$&$3e9 \sim 3e10 $&$0.5025$&$0.4913$&$0.4968$&$0.4996$&$0.4995$&$0.2179$&$0.5000$ \\
\hline
$2$&$3e10 \sim 2e11 $&$0.5022$&$0.4920$&$0.4970$&$0.4996$&$0.4995$&$0.2095$&$0.5013$ \\
\hline
$3$&$2e11 \sim 2e12 $&$0.5019$&$0.4925$&$0.4972$&$0.4995$&$0.4995$&$0.2024$&$0.4997$ \\
\hline
$4$&$2e12 \sim 1e13 $&$0.5017$&$0.4930$&$0.4973$&$0.4995$&$0.4995$&$0.1963$&$0.5008$ \\
\hline
$5$&$1e13 \sim 1e14 $&$0.5016$&$0.4934$&$0.4975$&$0.4995$&$0.4996$&$0.1910$&$0.5004$ \\
\hline
$6$&$1e14 \sim 7e14 $&$0.5014$&$0.4938$&$0.4976$&$0.4995$&$0.4996$&$0.1863$&$0.5000$ \\
\hline
$7$&$7e14 \sim 5e15 $&$0.5013$&$0.4941$&$0.4977$&$0.4995$&$0.4996$&$0.1821$&$0.5005$ \\
\hline
\end{tabular}
\linebreak
\linebreak
\begin{tabular}{|c|c|c|c|c|c|c|c|}
\hline
\multicolumn{8}{|c|}{The $\epsilon_i$ from different corrections}\\
\hline
$i$ &$\frac{1}{L(t)}$ range 
& $F(s)=1$ 
& $F(s)=\ln \frac{1}{s}$ 
& $F(s)= \left( \ln \frac{1}{s}\right)^{0.5}$ 
& $F(s)= \left( \ln \frac{1}{s}\right)^{1/4}$ 
& $F(s)=\ln \ln \frac{1}{s}$ 
& $F_{mal}$ \\
\hline
$0$&$2e7 \sim 4e8 $&$0.0050$&$0.0125$&$0.0039$&$6.55e-4$&$5.00e-4$&$4.79e-4$ \\
\hline
$1$&$4e8 \sim 7e9 $&$0.0046$&$0.0118$&$0.0038$&$5.86e-4$&$5.09e-4$&$3.38e-4$ \\
\hline
$2$&$7e9 \sim 1e11 $&$0.0042$&$0.0112$&$0.0037$&$5.53e-4$&$5.09e-4$&$7.87e-4$ \\
\hline
$3$&$1e11 \sim 2e12 $&$0.0039$&$0.0107$&$0.0035$&$5.36e-4$&$5.04e-4$&$6.99e-4$ \\
\hline
$4$&$2e12 \sim 2e13 $&$0.0037$&$0.0102$&$0.0034$&$5.28e-4$&$4.97e-4$&$7.16e-4$ \\
\hline
$5$&$2e13 \sim 3e14 $&$0.0035$&$0.0098$&$0.0033$&$5.23e-4$&$4.88e-4$&$6.72e-4$ \\
\hline
$6$&$3e14 \sim 5e15 $&$0.0033$&$0.0095$&$0.0032$&$5.20e-4$&$4.79e-4$&$6.22e-4$ \\
\hline
$7$&$5e15 \sim 7e16 $&$0.0032$&$0.0092$&$0.0031$&$5.17e-4$&$4.70e-4$&$6.04e-4$ \\
\hline
\end{tabular}
\linebreak
\linebreak
\caption{4d case. Top table: comparison of curve fitting for different choices of correction terms $F(s)$.  
Bottom table: The standard deviation $\epsilon_i$ for different corrections $F(s)$ from the above table. Similar to the 3d case, the {\it log-log} correction produces the minimal error in $\epsilon_i$.}
\label{diviation 4d} 
\end{table}
}

{\small
\begin{table}[ht]
\begin{tabular}{|c|c|c|c|c|c|c|c|c|}
\hline
\multicolumn{9}{|c|}{The fitting power $\rho_i$ from different corrections}\\
\hline
$i$ &$\frac{1}{L(t)}$ range 
& $F(s)=1$ 
& $F(s)=\ln \frac{1}{s}$ 
& $F(s)=\left( \ln \frac{1}{s} \right)^{0.5}$ 
& $F(s)=\left( \ln \frac{1}{s} \right)^{1/4}$ 
& $F(s)=\ln \ln \frac{1}{s}$  
& $\gamma^{\star}$ 
& $F_{mal}$ \\
\hline
$0$&$1e10 \sim 4e10 $&$0.5037$&$0.4931$&$0.4983$&$0.5010$&$0.5009$&$0.3453$&$0.5018$ \\
\hline
$1$&$4e10 \sim 2e11 $&$0.5034$&$0.4934$&$0.4983$&$0.5009$&$0.5008$&$0.3359$&$0.5018$ \\
\hline
$2$&$2e11 \sim 7e11 $&$0.5032$&$0.4936$&$0.4983$&$0.5007$&$0.5007$&$0.3275$&$0.5013$ \\
\hline
$3$&$7e11 \sim 3e12 $&$0.5029$&$0.4939$&$0.4984$&$0.5006$&$0.5006$&$0.3198$&$0.5014$ \\
\hline
$4$&$3e12 \sim 1e13 $&$0.5027$&$0.4941$&$0.4984$&$0.5005$&$0.5006$&$0.3066$&$0.5013$ \\
\hline
$5$&$1e13 \sim 5e13 $&$0.5025$&$0.4943$&$0.4984$&$0.5005$&$0.5005$&$0.3009$&$0.5012$ \\
\hline
$6$&$5e13 \sim 2e14 $&$0.5024$&$0.4945$&$0.4984$&$0.5004$&$0.5005$&$0.2956$&$0.5015$ \\
\hline
$7$&$2e14 \sim 7e14 $&$0.5023$&$0.4947$&$0.4985$&$0.5003$&$0.5004$&$0.2906$&$0.5018$ \\
\hline
\end{tabular}
\linebreak
\linebreak
\begin{tabular}{|c|c|c|c|c|c|c|c|}
\hline
\multicolumn{8}{|c|}{The $\epsilon_i$ from different corrections}\\
\hline
$i$ &$\frac{1}{L(t)}$ range 
& $F(s)=1$ 
& $F(s)=\ln \frac{1}{s}$ 
& $F(s)= \left( \ln \frac{1}{s}\right)^{0.5}$ 
& $F(s)= \left( \ln \frac{1}{s}\right)^{1/4}$ 
& $F(s)=\ln \ln \frac{1}{s}$ 
& $F_{mal}$ \\
\hline
$0$&$1e10 \sim 4e10 $&$0.0037$&$0.0069$&$0.0017$&$0.0010$&$9.09e-4$&$0.0018$ \\
\hline
$1$&$4e10 \sim 2e11 $&$0.0036$&$0.0068$&$0.0017$&$9.54e-4$&$8.57e-4$&$0.0017$ \\
\hline
$2$&$2e11 \sim 7e11 $&$0.0035$&$0.0065$&$0.0017$&$8.89e-4$&$8.11e-4$&$0.0016$ \\
\hline
$3$&$7e11 \sim 3e12 $&$0.0033$&$0.0064$&$0.0017$&$8.33e-4$&$7.71e-4$&$0.0016$ \\
\hline
$4$&$3e12 \sim 1e13 $&$0.0032$&$0.0063$&$0.0016$&$7.84e-4$&$7.36e-4$&$0.0015$ \\
\hline
$5$&$1e13 \sim 5e13 $&$0.0031$&$0.0062$&$0.0016$&$7.41e-4$&$7.04e-4$&$0.0015$ \\
\hline
$6$&$5e13 \sim 2e14 $&$0.0030$&$0.0061$&$0.0016$&$7.03e-4$&$6.76e-4$&$0.0015$ \\
\hline
$7$&$2e14 \sim 7e14 $&$0.0029$&$0.0060$&$0.0016$&$56.69e-4$&$6.50e-4$&$0.0015$ \\
\hline
\end{tabular}
\linebreak
\linebreak
\caption{5d case. Top table: comparison of curve fitting for various choices of $F(s)$. 
Bottom table: standard deviation $\epsilon_i$ for different $F(s)$ from the above table. 
As in 3d, 4d, the {\it log-log} correction produces the minimal error in 
$\epsilon_i$.}
\label{diviation 5d} 
\end{table}
}

{\small
\begin{table}[ht]
\begin{tabular}{|c|c|c|c|c|c|c|c|c|}
\hline
\multicolumn{9}{|c|}{The fitting power $\rho_i$ from different corrections}\\
\hline
$i$ &$\frac{1}{L(t)}$ range 
& $F(s)=1$ 
& $F(s)=\ln \frac{1}{s}$ 
& $F(s)=\left( \ln \frac{1}{s} \right)^{0.5}$ 
& $F(s)=\left( \ln \frac{1}{s} \right)^{0.25}$ 
& $F(s)=\ln \ln \frac{1}{s}$  
& $\gamma^{\star}$ 
& $F_{mal}$ \\
\hline
$0$&$1e10 \sim 4e10 $&$0.5036$&$0.4929$&$0.4982$&$0.5009$&$0.5008$&$0.3322$&$0.5028$ \\
\hline
$1$&$4e10 \sim 2e11 $&$0.5033$&$0.4932$&$0.4982$&$0.5008$&$0.5007$&$0.3250$&$0.5044$ \\
\hline
$2$&$2e11 \sim 7e11 $&$0.5031$&$0.4935$&$0.4983$&$0.5007$&$0.5006$&$0.3184$&$0.4951$ \\
\hline
$3$&$7e11 \sim 3e12 $&$0.5029$&$0.4938$&$0.4983$&$0.5006$&$0.5006$&$0.3124$&$0.5084$ \\
\hline
$4$&$3e12 \sim 1e13 $&$0.5027$&$0.4940$&$0.4983$&$0.5005$&$0.5005$&$0.3069$&$0.4972$ \\
\hline
$5$&$1e13 \sim 4e13 $&$0.5025$&$0.4942$&$0.4983$&$0.5004$&$0.5005$&$0.3018$&$0.5013$ \\
\hline
$6$&$4e13 \sim 1e14 $&$0.5024$&$0.4946$&$0.4984$&$0.5004$&$0.5004$&$0.2970$&$0.5046$ \\
\hline
$7$&$1e14 \sim 5e14 $&$0.5023$&$0.4948$&$0.4984$&$0.5003$&$0.5004$&$0.2926$&$0.4981$ \\
\hline
\end{tabular}
\linebreak
\linebreak
\begin{tabular}{|c|c|c|c|c|c|c|c|}
\hline
\multicolumn{8}{|c|}{The $\epsilon_i$ from different corrections}\\
\hline
$i$ &$\frac{1}{L(t)}$ range 
& $F(s)=1$ 
& $F(s)=\ln \frac{1}{s}$ 
& $F(s)= \left( \ln \frac{1}{s}\right)^{0.5}$ 
& $F(s)= \left( \ln \frac{1}{s}\right)^{0.25}$ 
& $F(s)=\ln \ln \frac{1}{s}$ 
& $F_{mal}$ \\
\hline
$0$&$1e10 \sim 4e10 $&$0.0036$&$0.0071$&$0.0018$&$9.16e-4$&$7.68e-4$&$0.0028$ \\
\hline
$1$&$4e10 \sim 2e11 $&$0.0035$&$0.0069$&$0.0018$&$8.89e-4$&$7.30e-4$&$0.0037$ \\
\hline
$2$&$2e11 \sim 7e11 $&$0.0034$&$0.0068$&$0.0018$&$8.29e-4$&$6.97e-4$&$0.0041$ \\
\hline
$3$&$7e11 \sim 3e12 $&$0.0032$&$0.0067$&$0.0018$&$7.78e-4$&$6.67e-4$&$0.0055$ \\
\hline
$4$&$3e12 \sim 1e13 $&$0.0031$&$0.0065$&$0.0017$&$7.32e-4$&$6.41e-4$&$0.0051$ \\
\hline
$5$&$1e13 \sim 4e13 $&$0.0030$&$0.0064$&$0.0017$&$6.92e-4$&$6.16e-4$&$0.0047$ \\
\hline
$6$&$4e13 \sim 1e14 $&$0.0030$&$0.0063$&$0.0017$&$6.56e-4$&$5.94e-4$&$0.0046$ \\
\hline
$7$&$1e14 \sim 7e14 $&$0.0029$&$0.0062$&$0.0017$&$6.24e-4$&$5.74e-4$&$0.0045$ \\
\hline
\end{tabular}
\linebreak
\linebreak
\caption{6d case. Top table: comparison of the curve fitting for different choices of $F(s)$.
Bottom table: standard deviation $\epsilon_i$ for different $F(s)$ from the above table. 
The {\it log-log} correction produces the minimal error in $\epsilon_i$.}
\label{diviation 6d} 
\end{table}
}

{\small
\begin{table}[ht]
\begin{tabular}{|c|c|c|c|c|c|c|c|c|}
\hline
\multicolumn{9}{|c|}{The fitting power $\rho_i$ from different corrections}\\
\hline
$i$ &$\frac{1}{L(t)}$ range 
& $F(s)=1$ 
& $F(s)=\ln \frac{1}{s}$ 
& $F(s)=\left( \ln \frac{1}{s} \right)^{0.5}$ 
& $F(s)=\left( \ln \frac{1}{s} \right)^{1/4}$ 
& $F(s)=\ln \ln \frac{1}{s}$  
& $\gamma^{\star}$ 
& $F_{mal}$ \\
\hline
$0$&$6e9 \sim 4e10 $&$0.5033$&$0.4924$&$0.4978$&$0.5005$&$0.5004$&$0.2983$&$0.5127$ \\
\hline
$1$&$4e10 \sim 2e11 $&$0.5030$&$0.4929$&$0.4979$&$0.5004$&$0.5004$&$0.2930$&$0.4924$ \\
\hline
$2$&$2e11 \sim 1e12 $&$0.5028$&$0.4932$&$0.4980$&$0.5004$&$0.5003$&$0.2880$&$0.4432$ \\
\hline
$3$&$1e12 \sim 8e12 $&$0.5025$&$0.4937$&$0.4981$&$0.5003$&$0.5003$&$0.2834$&$0.3996$ \\
\hline
$4$&$8e12 \sim 4e13 $&$0.5024$&$0.4940$&$0.4981$&$0.5002$&$0.5003$&$0.2792$&$0.6070$ \\
\hline
$5$&$4e13 \sim 2e14 $&$0.5022$&$0.4943$&$0.4982$&$0.5002$&$0.5003$&$0.2752$&$0.5287$ \\
\hline
$6$&$2e14 \sim 1e15 $&$0.5021$&$0.4946$&$0.4983$&$0.5002$&$0.5002$&$0.2715$&$0.5070$ \\
\hline
$7$&$1e15 \sim 7e15 $&$0.5019$&$0.4948$&$0.4983$&$0.5001$&$0.5002$&$0.2680$&$0.4918$ \\
\hline
\end{tabular}
\linebreak
\linebreak
\begin{tabular}{|c|c|c|c|c|c|c|c|}
\hline
\multicolumn{8}{|c|}{The $\epsilon_i$ from different corrections}\\
\hline
$i$ &$\frac{1}{L(t)}$ range 
& $F(s)=1$ 
& $F(s)=\ln \frac{1}{s}$ 
& $F(s)= \left( \ln \frac{1}{s}\right)^{0.5}$ 
& $F(s)= \left( \ln \frac{1}{s}\right)^{1/4}$ 
& $F(s)=\ln \ln \frac{1}{s}$ 
& $F_{mal}$ \\
\hline
$0$&$6e9 \sim 4e10 $&$0.0033$&$0.0076$&$0.0022$&$5.32e-4$&$3.94e-4$&$0.0127$ \\
\hline
$1$&$4e10 \sim 2e11 $&$0.0032$&$0.0074$&$0.0022$&$4.88e-4$&$3.79e-4$&$0.0105$ \\
\hline
$2$&$2e11 \sim 1e12 $&$0.0030$&$0.0071$&$0.0021$&$4.50e-4$&$3.65e-4$&$0.0339$ \\
\hline
$3$&$1e12 \sim 8e12 $&$0.0029$&$0.0070$&$0.0021$&$4.17e-4$&$3.52e-4$&$0.1035$ \\
\hline
$4$&$8e12 \sim 4e13 $&$0.0028$&$0.0068$&$0.0020$&$3.89e-4$&$3.40e-4$&$0.1042$ \\
\hline
$5$&$4e13 \sim 2e14 $&$0.0027$&$0.0066$&$0.0020$&$3.65e-4$&$3.29e-4$&$0.0959$ \\
\hline
$6$&$2e14 \sim 1e15 $&$0.0026$&$0.0065$&$0.0020$&$3.43e-4$&$3.18e-4$&$0.0888$ \\
\hline
$7$&$1e15 \sim 7e15 $&$0.0026$&$0.0063$&$0.0019$&$3.24e-4$&$3.09e-4$&$0.0831$ \\
\hline
\end{tabular}
\linebreak
\linebreak
\caption{7d case. Top table: comparison of curve fitting for different choices of $F(s)$. 
Bottom table: standard deviation $\epsilon_i$ for different $F(s)$ from the above table 
The {\it log-log} correction produces the minimal error in $\epsilon_i$. 
}
\label{diviation 7d} 
\end{table}
}

\subsection{Second attempt: asymptotic analysis for the correction term}\label{AA}
We follow the argument for the $L^2$-critical NLS equation in \cite[Chapter 8]{SS1999}.
From asymptotic considerations we will confirm the hypothesis that $a(\tau) \sim 1/(\ln(\tau)+3 \ln \ln \tau)$, which leads to the {\it log-log} correction term on the blow-up rate for the $L^2$-critical gHartree equation, i.e., we show that
\begin{equation}\label{blowup rate}
L(t) \approx \left( \frac{2 \pi (T-t)}{\ln \ln(\frac{1}{T-t})}\right)^{\frac{1}{2}}.
\end{equation}
We note that while numerically we saw no difference in the blow-up regime for different dimensions, results in this section are conditional for dimensions $d \geq 5$, since 
the local well-posedness is not yet available in gHartree when $p<2$ (or $\sigma<\frac{1}{2}$).

\subsubsection{Slow decay of $a(\tau)$}
Recalling the proof of Proposition \ref{$a=0$}, we note that the blow-up solutions have the quadratic phase 
\begin{align}
\theta(\xi)=-a\xi^2/4.
\end{align}
Writing $Q=e^{-ia\xi^2/4}P$, we have $P$ satisfy
\begin{align}\label{P eqn}
\begin{cases}
\Delta P -P +\dfrac{a^2\xi^2}{4}P-ia\dfrac{d\sigma-2}{2\sigma}P+ \left((-\Delta)^{-1}|P|^{2 \sigma +1} \right)|P|^{2 \sigma -1} P=0, \\
 P_{\xi}(0)=0, \quad P(\xi)=0 \:\: \mathrm{as}\:\: \xi \rightarrow \infty, \quad P(0) \:\:\textrm{is  real}.
 \end{cases}
\end{align}
Note that when $a=0$, \eqref{P eqn} reduces to the ($L^2$-critical) ground state (to distinguish it here, 
we write $R$ instead of $Q$):
\begin{align}\label{R}
\Delta R-R+ \left((-\Delta)^{-1}|R|^{2 \sigma +1} \right)|R|^{2 \sigma -1} R=0.
\end{align}
This suggests that the blow-up profiles converge to the ground state $R$ as $a\rightarrow 0$, which matches our numerical observations shown in Figures \ref{p3d}-\ref{p7d}. 
The following proposition shows that $a(\tau)$ decays to zero slower than any polynomial rate. We show later (similar to the NLS case in \cite[Section 8.1.4]{SS1999}), the diminishing criticality $s_c \to 0$, or dimension $d \to \frac{2}{\sigma}$, which involves the complex term in \eqref{P eqn} with
\begin{equation}\label{E:nu}
\nu(a) \defeq i a\,\frac{\sigma d -2}{2\sigma} \longrightarrow 0,
\end{equation}
is responsible for the {\it log-log} correction in the blow-up rate.  

\begin{proposition}\label{P: a decay}
If $Q$ is a solution of \eqref{Q eqn} with finite Hamiltonian, the function $d(a)$ is differentiable to all orders at $a=0$ and
\begin{align}\label{a slow}
\dfrac{d^p}{da^p}\bigg( d(a)-\frac{2}{\sigma }\bigg)\vert_{a=0}=0 \quad \mbox{for}~\mbox{all}~~p=0,1,2,\cdots.
\end{align}
\end{proposition}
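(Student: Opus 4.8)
The plan is to exploit the substitution $Q=e^{-ia\xi^2/4}P$ leading to \eqref{P eqn}, which collects the only genuinely complex coefficient of the profile equation into the single term $\nu(a)=ia\,\frac{\sigma d-2}{2\sigma}$ from \eqref{E:nu}. Since $\nu(a)=\frac{i}{2}\,a\,\big(d-\frac{2}{\sigma}\big)$, establishing \eqref{a slow} is equivalent to showing that $\nu(a)$, and hence $d(a)-\frac{2}{\sigma}$, vanishes to all orders at $a=0$. When $d=\frac{2}{\sigma}$ exactly we have $\nu=0$, so \eqref{P eqn} reduces to the real radial equation \eqref{R} with ground state $R$, and the admissibility requirement (finite Hamiltonian, i.e.\ $\beta=0$ in the asymptotic splitting of Proposition \ref{P: Q1 Q2}) is precisely what selects the branch $d=d(a)$.

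The core step is to construct the formal Taylor expansion of this admissible branch. I would posit $d(a)=\frac{2}{\sigma}+\sum_{n\ge1}\delta_n a^n$ and $P(\xi,a)=R(\xi)+\sum_{n\ge1}a^n P_n(\xi)$, substitute into \eqref{P eqn}, and collect powers of $a$; order $a^0$ recovers \eqref{R}, while at each higher order one obtains $L_+P_n=F_n$, where $L_+$ is the radial linearization of $\Delta P-P+((-\Delta)^{-1}|P|^{2\sigma+1})|P|^{2\sigma-1}P$ about $R$, and $F_n$ is assembled from $\frac{\xi^2}{4}P_{n-2}$, lower iterates, and the detuning coefficients $\delta_k$ with $k<n$. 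The key claim is that one may take $\delta_n=0$ at every order and still solve $L_+P_n=F_n$ within the class of real, radial, decaying functions: since the radial kernel of $L_+$ is trivial (nondegeneracy of $R$ modulo the non-radial translation modes), $L_+$ is invertible there, so no solvability obstruction arises, and with $\delta\equiv0$ the forcing $F_n$ stays real and radial at every step. Thus the formal admissible branch has $\delta_n=0$ for all $n$, carrying no oscillating $Q_2$ tail at any finite order.

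To upgrade this to the stated result I would argue that $a\mapsto d(a)$ is $C^\infty$ near $a=0$ — using the smooth (indeed analytic) dependence of solutions of the Volterra formulation \eqref{Q int} on the parameters $a$ and $d$ granted by the existence theory above, together with the implicit function theorem applied to the admissibility condition $\beta(a,d)=0$ — so that the Taylor coefficients of $d(a)-\frac{2}{\sigma}$ coincide with the formal $\delta_n$, all of which vanish. Conceptually, the true branch departs from $d=\frac{2}{\sigma}$ only by the amount needed to cancel the oscillating tail of Proposition \ref{P: Q1 Q2}; by Proposition \ref{$a=0$} this amount is nonzero but beyond all orders (exponentially small, consistent with the $a_\tau\sim -a^{-1}e^{-\pi/a}$ heuristic), hence invisible to the Taylor series. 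This gives \eqref{a slow}.

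The main obstacle is the nonlocal operator $L_+$: proving that its radial kernel is trivial, i.e.\ nondegeneracy of the Hartree ground state $R$ of \eqref{R}, is genuinely harder than in the local NLS setting of \cite{SS1999}, since even uniqueness of $R$ is known only for restricted $p$. A secondary difficulty is controlling the decay of the iterates $P_n$, since the forcing $\frac{\xi^2}{4}P_{n-2}$ competes with the possibly only polynomial decay of $R$ when $p<2$, and the convolution $(-\Delta)^{-1}$ must be estimated at each order via Hardy--Littlewood--Sobolev bounds, as already needed in Proposition \ref{P: finite energy}. Finally, passing rigorously from formal flatness to genuine $C^\infty$ flatness ultimately rests on the exponential-asymptotics mechanism, which in the NLS case of \cite{SS1999} is controlled only at the formal/asymptotic level.
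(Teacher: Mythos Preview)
Your route is genuinely different from the paper's. You argue structurally: with $\delta_n=0$ the hierarchy $L_+P_n=F_n$ stays real, and radial nondegeneracy of $R$ lets you solve at every order, so the formal admissible branch is flat. The paper instead never constructs a formal series for $P$ at all; it works directly with the \emph{Hamiltonian constraint} \eqref{E: P Hamiltonian}, which is the scalar equation encoding admissibility. Differentiating that identity in $a$ and evaluating at $a=0$ produces relation \eqref{E: P proposition}, and the Pohozaev-type identities of Lemma~\ref{L: lemma sec4} (in particular \eqref{E: R 3}) then force $d'(0)\cdot\tfrac{1}{2}\|R\|_{L^2}^2=0$, hence $d'(0)=0$; higher derivatives are handled inductively in the same way, following \cite[Prop.~8.1]{SS1999} with the nonlinear term replaced as indicated. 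So the paper's mechanism is ``differentiate the selection condition and reduce via explicit integral identities,'' whereas yours is ``invertibility of $L_+$ on radial functions removes all obstructions.''

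What each buys: the paper's computation is explicit and connects directly to the quantities ($\|R\|_{L^2}^2$, the momentum $M$) that later feed into the law \eqref{btau}; it also makes transparent why the \emph{constant} (rather than zero, cf.\ Remark~\ref{Nonzero-EQ}) Hamiltonian still yields the same conclusion as in NLS. Your argument is cleaner conceptually but rests squarely on radial nondegeneracy of the Hartree ground state, which you rightly identify as the main obstacle and which the paper does not claim to prove. Note, though, that the paper's Lemma~\ref{L: lemma sec4} also presupposes the existence of $\rho$ and $g$ solving $L_+\rho=-\tfrac{1}{4}\xi^2R$ and $L_+g=-R_\xi/\xi$, so some solvability of $L_+$ is implicitly used there as well; the difference is that the paper extracts the vanishing of $d'(0)$ from the Hamiltonian identity rather than from the absence of a solvability obstruction.
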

The proof for Proposition \ref{P: a decay} is similar to the one of Proposition 8.1 in \cite[Chapter 8]{SS1999} with the appropriate modifications of the identities involving the potential term, and the fact that the non-zero constant Hamiltonian of $Q$ gives the same conclusion as the zero Hamiltonian of $Q$ in the NLS case. We only show the differences.
\begin{lemma}\label{L: lemma sec4} For R in \eqref{R}, we have
\begin{equation}\label{E: R 1} 
\int \left( |R_{\xi}|^2 -\frac{1}{2\sigma+1} V(R) \right)\xi^{d-1} d\xi=0.
\end{equation}
For $\rho$ satisfying
\begin{equation}\label{E: rho}
\Delta \rho -\rho +2\sigma \left((-\Delta)^{-1} R^{2\sigma+1} \right)R^{2\sigma-1}\rho+(2\sigma+1)  \left((-\Delta)^{-1} R^{2\sigma}\rho \right)R^{2\sigma}=-\frac{1}{4}\xi^2R,
\end{equation}
we have 
\begin{equation}\label{E: R 2}
\int \left( R\rho -\frac{1}{8}\xi^2 R^2 \right)\xi^{d-1} d\xi=0.
\end{equation}
For $g$ solving 
\begin{equation}\label{E:g}
\Delta g -g +2\sigma \left((-\Delta)^{-1} R^{2\sigma+1} \right)R^{2\sigma-1}g+(2\sigma+1)  \left((-\Delta)^{-1} R^{2\sigma}g \right)R^{2\sigma}=-\frac{R_{\xi}}{\xi},
\end{equation}
we have
\begin{equation}\label{E: R 3}
-2\int Rg\xi^{d-1}d\xi+\int \left( R_{\xi}^2-\frac{1}{2\sigma+1} V(R) \right)\xi^{d-1}\ln \xi d\xi=\frac{1}{2}\int R^2\xi^{d-1}d\xi.
\end{equation}
\end{lemma}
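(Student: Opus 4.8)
The plan is to read \eqref{E: R 1}--\eqref{E: R 3} as three weighted integral identities for the $L^2$-critical ground state $R$ of \eqref{R}, following the scheme of \cite[Chapter 8]{SS1999} but tracking the extra terms produced by the nonlocal factor $(-\Delta)^{-1}$. Write $\mathcal{L}$ for the self-adjoint operator on the left of \eqref{E: rho} and \eqref{E:g},
\begin{equation*}
\mathcal{L}h \defeq \Delta h - h + 2\sigma\big((-\Delta)^{-1}R^{2\sigma+1}\big)R^{2\sigma-1}h + (2\sigma+1)\big((-\Delta)^{-1}(R^{2\sigma}h)\big)R^{2\sigma},
\end{equation*}
so that \eqref{E: rho} is $\mathcal{L}\rho=-\tfrac14\xi^2R$ and \eqref{E:g} is $\mathcal{L}g=-R_\xi/\xi$. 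Self-adjointness with respect to the radial measure $\xi^{d-1}d\xi$ is clear for the multiplication term and follows for the nonlocal term from the symmetry of $(-\Delta)^{-1}$. The single most useful fact is that the $L^2$-critical scaling generator $\psi\defeq\frac{1}{2\sigma}R+\frac12\xi R_\xi$ solves $\mathcal{L}\psi=R$; this comes from differentiating at $\mu=1$ the family $R_\mu(\xi)=\mu^{1/(2\sigma)}R(\sqrt\mu\,\xi)$, which solves $\Delta R_\mu-\mu R_\mu+((-\Delta)^{-1}R_\mu^{2\sigma+1})R_\mu^{2\sigma}=0$ (here the $\lambda^2$ scaling of $(-\Delta)^{-1}$ makes the family consistent).

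For \eqref{E: R 1} I would multiply \eqref{R} by $R\,\xi^{d-1}$ and separately by $\xi R_\xi\,\xi^{d-1}$, integrate over $(0,\infty)$, and integrate by parts, obtaining two linear relations among $A\defeq\int R_\xi^2\xi^{d-1}$, $B\defeq\int R^2\xi^{d-1}$ and $C\defeq\int V(R)\xi^{d-1}$ (with $V$ as in \eqref{E: V(Q)}). The one delicate point is the dilation behaviour of the nonlocal term: $(-\Delta)^{-1}$ contributes an extra factor $\lambda^2$, so $\int V(R(\cdot/\lambda))=\lambda^{d+2}C$. Eliminating $B$ yields $A=\frac{d\sigma-1}{2\sigma+1}C$ for general $(d,\sigma)$, and specializing to the critical power $\sigma=2/d$ (so $d\sigma=2$) collapses this to $A=\frac{1}{2\sigma+1}C$, which is exactly \eqref{E: R 1}.

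For \eqref{E: R 2} I would use self-adjointness together with $\mathcal{L}\psi=R$:
\begin{equation*}
\int R\rho\,\xi^{d-1}=\int(\mathcal{L}\psi)\rho\,\xi^{d-1}=\int\psi(\mathcal{L}\rho)\,\xi^{d-1}=-\tfrac14\int\psi\,\xi^2R\,\xi^{d-1}.
\end{equation*}
Substituting $\psi=\frac{1}{2\sigma}R+\frac12\xi R_\xi$ and integrating $\int\xi^{d+2}RR_\xi$ by parts (boundary terms vanishing by the decay of $R$, cf. Corollary \ref{C: Q decay}) reduces everything to the single moment $\int\xi^2R^2\xi^{d-1}$ with coefficient $-\frac14\big(\frac{1}{2\sigma}-\frac{d+2}{4}\big)$, which at $\sigma=2/d$ equals $\frac18$; this is \eqref{E: R 2}.

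The identity \eqref{E: R 3} is the hard part, and I expect the inverse Laplacian to be the main obstacle. The $\psi$-trick again handles the first integral, $\int Rg\,\xi^{d-1}=-\int\psi R_\xi\,\xi^{d-2}$, reducing the claim to evaluating the logarithmically weighted Hamiltonian $\int(R_\xi^2-\frac{1}{2\sigma+1}V(R))\xi^{d-1}\ln\xi$. Following \cite[Chapter 8]{SS1999}, I would generate this weighted integral by differentiating the family of Pohozaev relations $A(d)=\frac{d\sigma-1}{2\sigma+1}C(d)$ in the dimension $d$, treated as a continuous parameter with $\sigma$ held fixed so the equation becomes slightly supercritical: differentiating the measure $\xi^{d-1}$ produces the $\ln\xi$ weight, while differentiating the radial Laplacian produces precisely the source $-R_\xi/\xi$ defining $g$ in \eqref{E:g}. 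The genuine difficulty, absent in the pure-power NLS case, is that $(-\Delta)^{-1}$ itself depends on $d$: differentiating $-\Delta W=R^{2\sigma+1}$ with $W=(-\Delta)^{-1}R^{2\sigma+1}$ gives $-\Delta(\partial_d W)=(2\sigma+1)R^{2\sigma}\,\partial_d R+W_\xi/\xi$, so the dimension-derivative $\partial_d R$ solves $\mathcal{L}(\partial_d R)=-R_\xi/\xi-\big((-\Delta)^{-1}(W_\xi/\xi)\big)R^{2\sigma}$ and hence differs from the abstract solution $g$ of \eqref{E:g} by a nonlocal correction, which also enters $\partial_d C$. The crux of the proof is to show that these correction terms cancel—or assemble into the stated right-hand side $\frac12\int R^2\xi^{d-1}$—once the weighted Hamiltonian and the $\int Rg$ term are combined, using the critical relation \eqref{E: R 1} and the observation (Remark \ref{Nonzero-EQ}) that the nonzero constant Hamiltonian of $Q$ plays here the same structural role that the vanishing Hamiltonian does for NLS.
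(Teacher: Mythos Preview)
Your arguments for \eqref{E: R 1} and \eqref{E: R 2} are correct. For \eqref{E: R 1} you are doing exactly what the paper does, namely the Pohozaev identity adapted to the Hartree potential. For \eqref{E: R 2} you take a different route: the paper expands the modified ground state $P(\cdot,a)$ of \eqref{E: P} as $P^{(n)}=R+\tfrac{a^2}{2}P_2+\cdots$, uses the finite-Hamiltonian identity \eqref{E: HPn} to force the $a^2$-coefficient to vanish, and reads off \eqref{E: R 2} from that coefficient. Your self-adjointness argument via $\mathcal{L}\psi=R$ with $\psi=\tfrac{1}{2\sigma}R+\tfrac12\xi R_\xi$ is shorter and avoids the expansion altogether; either way works.

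For \eqref{E: R 3} there is a genuine gap. You try to generate the $\ln\xi$ weight by differentiating the Pohozaev relations in the continuous dimension $d$, and you correctly observe that in the Hartree setting $(-\Delta)^{-1}$ itself depends on $d$, so $\partial_dR$ does not solve $\mathcal{L}(\partial_dR)=-R_\xi/\xi$ but rather an equation with an additional nonlocal source $((-\Delta)^{-1}(W_\xi/\xi))R^{2\sigma}$. You then hope these extra terms cancel, but you do not show it, and in fact the bookkeeping is delicate because the correction also contaminates $\partial_d\!\int V(R)$. The paper sidesteps this entirely: it never differentiates in $d$ for \eqref{E: R 3}. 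Instead it writes down four weighted identities obtained by multiplying \eqref{R} and \eqref{E:g} by $R$, by $g$, and by $\xi R_\xi$ against the measures $\xi^{d-1}d\xi$ and $\xi^{d-1}\ln\xi\,d\xi$, integrating by parts, and using the self-adjointness of $(-\Delta)^{-1}$ to close the nonlocal terms as $V(R)$. Combining those four identities (together with \eqref{E: R 1} at criticality) gives \eqref{E: R 3} directly. This is the simpler path here: the $\ln\xi$ weight is introduced by hand rather than via $\partial_d$, so the $d$-dependence of the inverse Laplacian never enters.
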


\begin{proof}
The identity \eqref{E: R 1} is simply the Pohozaev identity. 
To get \eqref{E: R 2}, we consider  
\begin{align}\label{E: P}
\Delta P-P +\frac{a^2\xi^2}{4}P+V(P)=0,
\end{align}
which is the perturbation of the ground state equation \eqref{R}. When $s_c=0$ (critical case), let
\begin{align}\label{E: P expansion}
P^{(n)}=R+\frac{a^2}{2}P_2+\cdots+\frac{a^{2n}}{(2n)!}P_n
\end{align}
be a sequence of approximations of \eqref{E: P} with monotonic profiles $P^{(n)}$ that obey 
\begin{align}\label{E: Pn}
\Delta P^{(n)} -P^{(n)}+\frac{a^2\xi^2}{4}P^{(n)}+V(P^{(n)})=O(a^{2n+2}).
\end{align}
For a fixed $a$, $P(0;a) \equiv P(0)$ is real, so is $P^{(n)}(0;a)$, and consequently, $P^{(n)}(\xi;a)$ is real for any $n$. The estimate for the Hamiltonian of $P^{(n)}$ is 
\begin{align}\label{E: HPn}
H(P^{(n)})=\int \left( |P^{(n)}_{\xi}|^2-\frac{1}{2\sigma+1}V(P^{(n)})+\frac{a^2\xi^2}{4}|P^{(n)}|^2 \right)\xi^{d-1}d\xi=O(a^{2n+2}).
\end{align}
Putting \eqref{E: P expansion} in \eqref{E: HPn}, the coefficients of $P^{(2)}$ produce the identity \eqref{E: R 2}. 

To prove \eqref{E: R 3}, we first obtain the following list of identities:
\begin{align}\label{E:RR}
-2\sigma\int g \left((-\Delta)^{-1}R^{2\sigma+1} \right)R^{2\sigma} \xi^{d-1}d\xi=\int R\frac{R_{\xi}}{\xi} \xi^{d-1}d\xi\,,\\
-2\int g \Delta R \xi^{d-1}d\xi=\int R^2_{\xi} \xi^{d-1}d\xi\,,\\
-\int \left( R_{\xi}^2+R^2-V(R)  \right) \ln \xi \xi^{d-1}d\xi=\int R\frac{R_{\xi}}{\xi} \xi^{d-1}d\xi\,,\\
\int \left( (1-\sigma)R_{\xi}^2+R^2-\frac{1}{2\sigma+1}V(R) \right) \ln \xi \xi^{d-1}d\xi=(1-\sigma/2) \int R^2 \xi^{d-1}d\xi.
\end{align}
The proof of the above four identities comes from Pohozaev identities and is similar to the proof in \cite[Chapter 8, Lemma 8.3]{SS1999}, replacing $R^{2\sigma+1}$ with $\left((-\Delta)^{-1}R^{2\sigma+1} \right) R^{2\sigma}$ and such. 
\end{proof}
\begin{proof}[Proof of Proposition \ref{P: a decay}]
We again follow \cite[Chapter 8, Proposition 8.1]{SS1999} using Lemma \ref{L: lemma sec4} and
recalling 
\eqref{E: P Hamiltonian}.
We only note the modifications needed in this case, the rest follows the NLS case. 
Considering $d=d(a)$ as a function of $a$, differentiating \eqref{E: P Hamiltonian} with respect to $a$, and evaluating at $a=0$ ($d(0)=2/\sigma$) gives
\begin{align}\label{E: P proposition}
\int \left( -\Delta R- \left((-\Delta)^{-1}R^{2\sigma+1} \right)R^{2\sigma}\right) \Re(p_1)\xi^{d-1}d\xi+d'(0)\int \left( R_{\xi}^2 -\frac{1}{2\sigma+1}V(R) \right)\ln \xi \xi^{d-1}d\xi=0,
\end{align}
where $\ds p_1=\frac{dP}{da}\Big|_{a=0}$ solves
$$
\Delta p_1 -p_1 +2\sigma \left((-\Delta)^{-1} R^{2\sigma+1} \right)R^{2\sigma-1}p_1+(2\sigma+1)  \left((-\Delta)^{-1} R^{2\sigma}p_1 \right)R^{2\sigma}=-d'(0)\frac{R_{\xi}}{\xi}.
$$
For the rest of the proof, replace the term $R^{2\sigma+1}$ with $\left((-\Delta)^{-1}R^{2\sigma+1} \right)R^{2\sigma}$, then the term $R^{2\sigma+2}$ with $\left((-\Delta)^{-1}R^{2\sigma+1} \right)R^{2\sigma+1}$ and the linearization term $(2\sigma+1)R^{2\sigma}g$ with $2\sigma \left((-\Delta)^{-1} R^{2\sigma+1} \right)R^{2\sigma-1}g+(2\sigma+1)  \left((-\Delta)^{-1} R^{2\sigma}g \right)R^{2\sigma}$.
\end{proof}

\subsubsection{Convergence of profiles as $a \to 0$, or a non-uniform limit} We are now ready to conclude the convergence of profiles $Q$ in the slightly $L^2$-supercritical case to the profiles $R$ of the $L^2$-critical case as $a \to 0$, recalling that the 
ground state $H^1$ solutions $R$ have zero Hamiltonian and could be obtained via minimization as described in \cite{KR2019}, in particular, the value $\|R\|_{L^2}$ is uniquely defined. 
The proof of the following statement is verbatum (with modifications in the potential term as above) of \cite[Prop.8.4]{SS1999}

\begin{proposition}
When $a \to 0^+$, and hence, $d(a) \searrow d(0)=\frac2{\sigma}$, admissible solutions 
$Q$ of \eqref{Q eqn} satisfy
\begin{itemize}
\item[(i)] For $a \xi \ll 1$, $Q\approx R(\xi)e^{-ia\xi^2/4}$; equivalently, the solution $P$ of \eqref{P eqn} approaches the ground state $R$.

\item[(ii)] For $a \xi \gg 1$, $Q \approx \mu \, \xi^{-\frac1{\sigma}-\frac{i}{a}}$ with $\mu^2=d-\frac{2}{\sigma} \|R\|^2_{L^2}$.
\item[(iii)] The asymptotic behavior of $d(a)$, or correspondingly, $\nu(a) = a \left(\frac{d}2-\frac1{\sigma}\right)$, is given by
\begin{align}\label{as d}
\qquad d(a) \approx \frac2{\sigma} + \dfrac{2\, \nu_0^2}{a \|R\|^2_{L^2}} \, e^{-\pi /a},
\quad \mbox{or} ~~ \nu(a) \approx \frac{\nu_0^2}{\|R\|^2_{L^2}} e^{-\pi/a}, 
\quad \mbox{with} \quad \nu_0=\lim_{\xi \rightarrow \infty} \xi^{\frac{d-1}{2}}e^{\xi}R(\xi).
\end{align}
\end{itemize}
\end{proposition}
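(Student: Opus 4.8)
The plan is to prove the three claims by matched asymptotic expansions in the small parameter $\nu(a)$ from \eqref{E:nu}, dividing $[0,\infty)$ into an inner region $a\xi \ll 1$ and an outer region $a\xi \gg 1$, joined across the turning point $\xi = 2/a$, following \cite[Prop.~8.4]{SS1999} with the nonlocal potential replacing the pure power nonlinearity. Part (i) comes first and is the easiest: on the inner scale $a\xi \ll 1$ both the quadratic potential $\frac{a^2\xi^2}{4}$ and the term $\nu(a)=ia\,s_c$ in \eqref{P eqn} become negligible as $a\to 0$, and the remaining equation is exactly the $L^2$-critical ground state equation \eqref{R}. Using the monotone approximating sequence $P^{(n)}=R+\frac{a^2}{2}P_2+\cdots$ already set up in \eqref{E: P expansion}--\eqref{E: HPn}, together with its error control, one concludes $P\to R$ in this region, and undoing $Q=e^{-ia\xi^2/4}P$ gives $Q\approx R(\xi)e^{-ia\xi^2/4}$.

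For part (ii) I would work in the outer region, where the first task is to verify that the Hartree term is genuinely lower order. Using $|Q|\sim \xi^{-1/\sigma}$ together with Hardy--Littlewood--Sobolev (or the explicit radial Green's function of $(-\Delta)^{-1}$), one checks that $((-\Delta)^{-1}|Q|^{2\sigma+1})|Q|^{2\sigma-1}Q=O(\xi^{-1/\sigma-2})$, i.e. two orders below the linear terms, exactly as in Corollary \ref{C: Q decay} and Proposition \ref{P: Q1 Q2}. Hence \eqref{Q eqn} reduces to its linear part, whose two independent solutions are the $Q_1,Q_2$ of \eqref{E:Q1-Q2}; the requirement of monotone amplitude forces $\beta=0$, leaving the admissible branch $Q\approx \mu\,\xi^{-1/\sigma-i/a}$.

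Part (iii) carries the real content and splits into two independent determinations of $\mu^2$. For relation (A), I would integrate the imaginary-part conservation law \eqref{Q identity 2} (equivalently, multiply \eqref{Q eqn} by $\xi^{d-1}\bar Q$, take imaginary parts, and integrate): the nonlocal term contributes nothing, since $((-\Delta)^{-1}|Q|^{2\sigma+1})|Q|^{2\sigma+1}$ is real, and one obtains a flux identity whose bulk is carried by the inner mass $\|R\|^2_{L^2}$ while its tail is governed by $\mu\,\xi^{-1/\sigma-i/a}$; matching the non-growing parts yields $\mu^2=(d-\tfrac{2}{\sigma})\|R\|^2_{L^2}$. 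For relation (B) one must connect the inner solution, which to leading order decays like $\nu_0\,\xi^{-(d-1)/2}e^{-\xi}$ (this is what defines $\nu_0$ in \eqref{as d}), across the turning point $\xi=2/a$ of the Weber-type equation $\Delta P-P+\frac{a^2\xi^2}{4}P=\nu(a)P$ into the oscillatory outer branch. The WKB/parabolic-cylinder connection formula produces the tunneling factor $e^{-\int_0^{2/a}\sqrt{1-a^2\xi^2/4}\,d\xi}=e^{-\pi/(2a)}$ in amplitude, i.e. $\mu^2\approx \frac{2\nu_0^2}{a}e^{-\pi/a}$. Equating (A) and (B) then gives $d(a)-\frac{2}{\sigma}\approx \frac{2\nu_0^2}{a\|R\|^2_{L^2}}e^{-\pi/a}$, which is the asymptotics \eqref{as d} for $d(a)$ and $\nu(a)$.

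I expect relation (B) to be the main obstacle: rigorously justifying the connection formula --- controlling the subdominant exponential and bounding the error from the neglected nonlocal term uniformly through the turning-point layer --- is exactly what produces the $e^{-\pi/a}$ decay, slower than any polynomial, in agreement with Proposition \ref{P: a decay}. The only genuine departures from \cite[Prop.~8.4]{SS1999} are the Hardy--Littlewood--Sobolev estimates needed to keep the Riesz potential of the slowly decaying tail lower order, and the bookkeeping that the Hamiltonian of $Q$ is a (possibly nonzero) constant rather than zero; both leave the matched-asymptotic skeleton intact.
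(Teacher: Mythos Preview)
Your proposal is correct and follows exactly the approach the paper indicates: the paper does not give an independent proof here but states that the argument is verbatim from \cite[Prop.~8.4]{SS1999} with the potential term modified, and your outline faithfully reproduces that matched-asymptotics skeleton (inner $P\to R$, outer admissible branch $\alpha Q_1$, and the two determinations of $\mu^2$ via the flux identity and the turning-point connection yielding $e^{-\pi/a}$). You have also correctly identified the only genuine departures from the NLS case, namely the Hardy--Littlewood--Sobolev control of the Riesz potential in the tail and the fact that the Hamiltonian of $Q$ is a nonzero constant, which are precisely the ``modifications in the potential term'' the paper alludes to.
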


Finally, we describe the construction of the {\it log-log} blow-up solutions.

\subsubsection{Construction of asymptotic solutions}
We return to the equation \eqref{RgHartree} and follow the argument in \cite[Chapter 8.2]{SS1999}. The change of variables with the quadratic phase $v=e^{i\tau-ia\xi^2/4}w$ gives
\begin{align}\label{R b}
iw_{\tau}+\Delta w -w +\dfrac{1}{4}b(\tau) \xi^2 w + \left((-\Delta)^{-1}|w|^{2 \sigma +1} \right)|w|^{2 \sigma -1} w=0
\end{align}
where
\begin{align}\label{b}
b(\tau)=a^2+a_{\tau}=-L^3L_{tt},
\end{align}
with the parameter $L$ defined in Section \ref{S:profiles} ($a = -L  L_t$).
We would like to obtain explicit dependence of $b$ on $\tau$ as $\tau \to \infty$. 
Writing
$$
w(\xi,\tau)=P(\xi,b(\tau))+W(\xi,\tau) \quad \mbox{with} \quad  W \ll P,
$$ 
we have $P$ solve
\begin{align}\label{Pb}
\begin{cases}
\Delta P -P +\dfrac{b\xi^2}{4}P-i\nu(\sqrt{b} \,)P+ \left((-\Delta)^{-1}|P|^{2 \sigma +1} \right)|P|^{2 \sigma -1} P=0, \\
 P_{\xi}(0)=0, \quad P(0) \:\:\textrm{is  real}, \quad P(\xi)=0 ~~ \mbox{as} ~~ \xi \rightarrow \infty,
 \end{cases}
\end{align}
and also satisfy the {\it finite} Hamiltonian condition
\begin{align}\label{zero P}
\int_{\mathbb{R}^d}\left( |P_{\xi}|^2-\frac{1}{2(2 \sigma +1)} \left((-\Delta)^{-1}|P|^{2 \sigma +1} \right)|P|^{2 \sigma +1} +\sqrt{b}\Im\left( \xi P \bar{P}_{\xi} \right) +\frac{b}{4}\xi^2|P|^2 \right) d\xi=const.
\end{align}
Considering $a_\tau$ to be of the lower order than $a^2$ in \eqref{b} (similar to the NLS), we have $a \approx \sqrt b$ (which is also confirmed later), and therefore, the parameter $\nu$ in \eqref{Pb} is approximated for large times $\tau$ as 
\begin{align}\label{nu}
\nu(\sqrt{b}) \approx \frac{\nu^2_0}{\|R\|^2_{L^2}}e^{-\pi/\sqrt{b}}. 
\end{align}

The following proposition determines $b$ as a function of $\tau$ from the condition that the decomposition $w(\xi,\tau)=P(\xi,b(\tau))+W(\xi,\tau)$ is an asymptotic solution of \eqref{R b}.
\begin{proposition}
Collapsing solutions of the generalized Hartree equation in the critical case 
$p= \frac{4}{d}+1$, or equivalently $\sigma=2/d$, near a singularity have the asymptotic form 
\begin{align}\label{asymp sol}
u(x,t)\approx \frac{1}{L(t)}e^{i(\tau(t)-a(t)\frac{|x|^2}{4L^2(t)})}P\left(\frac{|x|}{L(t)},b(t)\right),
\end{align}
where
\begin{align}\label{reduced eqn1}
\tau_t=L^{-2},\qquad -LL_t=a, \qquad L^3L_{tt}=-b,
\end{align}
and $b=a^2+a_{\tau} \approx a^2$ obeys
\begin{align}\label{btau}
b_{\tau}=-\frac{2 \|R \|^2_{L^2}}{M} \, \nu(\sqrt{b})\approx -\frac{2\nu_0^2}{M}e^{-\pi/\sqrt{b}},
\end{align}
where $M=\frac{1}{4}\int_{\mathbb{R}^d}R^2\xi^2 d\xi$ is the momentum.
\end{proposition}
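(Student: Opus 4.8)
The plan is to follow the matched-asymptotics/solvability argument of \cite[Chapter 8.2]{SS1999}, adapting each step to the nonlocal nonlinearity. First I would substitute the ansatz $w(\xi,\tau)=P(\xi,b(\tau))+W(\xi,\tau)$ into \eqref{R b} and use that $P$ solves the profile equation \eqref{Pb}. Since $P$ carries the term $-i\nu(\sqrt b)P$ in \eqref{Pb} but \eqref{R b} does not, the $P$-contributions do not cancel completely, and after linearizing the nonlinearity in the small remainder $W$ one is left with an inhomogeneous linear equation
\begin{equation*}
iW_\tau + \mathcal{L}W = -i\,P_b\,b_\tau - i\,\nu(\sqrt b)\,P,
\end{equation*}
where $P_b=\partial P/\partial b$ and $\mathcal L$ is the linearization of $\Delta-1+\tfrac{b\xi^2}{4}+N$ about $P$. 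The forcing on the right is exactly the obstruction that must be annihilated by a solvability condition.

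Next I would split $W=W_1+iW_2$ and, using that $P$ is real to leading order, separate $\mathcal L$ into the two real operators $L_+$ and $L_-$. The linearization of the nonlocal term about a real profile is the one already recorded in \eqref{E: rho}--\eqref{E:g}: $L_+$ contains $2\sigma\big((-\Delta)^{-1}P^{2\sigma+1}\big)P^{2\sigma-1}\,\cdot\,+(2\sigma+1)\big((-\Delta)^{-1}(P^{2\sigma}\,\cdot)\big)P^{2\sigma}$, while $L_-$ only contains the multiplier $\big((-\Delta)^{-1}P^{2\sigma+1}\big)P^{2\sigma-1}$; both are self-adjoint because $(-\Delta)^{-1}$ is. The imaginary part of the equation then reads $W_{1\tau}+L_-W_2=-(P_b b_\tau+\nu P)$, and the gauge invariance of \eqref{Pb} gives the neutral mode $L_-P=0$. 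Two further identities drive the computation: differentiating \eqref{Pb} in $b$ yields $L_+P_b=-\tfrac14\xi^2P$, and differentiating the scaling family $R_\lambda=\lambda^{1/\sigma}R(\lambda\xi)$ of the ground state \eqref{R} gives $L_+\Lambda R=2R$ with $\Lambda R=\tfrac1\sigma R+\xi R_\xi$, so that $\psi:=\tfrac12\Lambda R$ solves $L_+\psi=R$.

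I would then impose the Fredholm solvability condition: projecting the imaginary-part equation onto $\ker L_-=\operatorname{span}\{P\}$ and discarding the slowly varying secular term $\langle W_{1\tau},P\rangle$ forces the forcing to be orthogonal to $P$, i.e.
\begin{equation*}
b_\tau\,\langle P_b,P\rangle=-\nu(\sqrt b)\,\|P\|_{L^2}^2.
\end{equation*}
To evaluate $\langle P_b,P\rangle$ I use self-adjointness together with the two identities above: $\langle P_b,P\rangle=\langle P_b,L_+\psi\rangle=\langle L_+P_b,\psi\rangle=-\tfrac14\langle\xi^2P,\psi\rangle$, and at leading order $P\approx R$, so an integration by parts of $\tfrac12(\tfrac1\sigma R+\xi R_\xi)$ against $\xi^2 R$, using the critical relation $\tfrac1\sigma=\tfrac d2$, collapses the bracket to $\tfrac12 M$ with $M=\tfrac14\int R^2\xi^2\,d\xi$. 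Together with $\|P\|_{L^2}^2\approx\|R\|_{L^2}^2$ this gives $b_\tau=-\tfrac{2\|R\|^2_{L^2}}{M}\nu(\sqrt b)$, and inserting the asymptotics \eqref{nu} produces the stated $b_\tau\approx-\tfrac{2\nu_0^2}{M}e^{-\pi/\sqrt b}$; the remaining relations in \eqref{reduced eqn1} are just the definitions of $\tau$, $a$ and $b$ recalled from Section \ref{S:profiles}.

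The main obstacle is making the solvability step legitimate. Because the critical profile decays only like $\xi^{-1/\sigma}$, the mode $P$ is not in $L^2$ and the ``mass'' $\|P\|_{L^2}^2$ and the pairing $\langle P_b,P\rangle$ are genuinely supported out to the matching radius $\xi\sim 1/a$; the $\nu$-term is precisely the flux of mass leaking from the inner core into this slowly decaying tail, so the orthogonality relation is really a balance law rather than a literal Fredholm condition on $L^2$. Controlling the boundary/flux contributions of the \emph{nonlocal} operators $L_\pm$ in this outer region, and justifying that the secular term $\langle W_{1\tau},P\rangle$ is negligible compared with the exponentially small right-hand side, are the delicate points; these are where the nonlocal structure forces extra care beyond the local NLS argument of \cite{SS1999}, although the Pohozaev-type identities of Lemma \ref{L: lemma sec4} and Proposition \ref{P: finite energy} supply the needed substitutes.
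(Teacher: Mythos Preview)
Your proposal is correct and follows exactly the route the paper takes: the paper's entire proof is the one-line remark that the argument is ``verbatim adapted from \cite[Proposition 8.5]{SS1999} as the only difference is in the nonlinear term, which plays no role in the analysis here.'' You have simply unpacked that reference, correctly identifying the linearized operators $L_\pm$ for the Hartree nonlinearity (as already set up in Lemma \ref{L: lemma sec4}), the generalized kernel relations $L_-P=0$, $L_+P_b=-\tfrac14\xi^2P$, $L_+\psi=R$, and the solvability computation that yields $\langle P_b,P\rangle=\tfrac12 M$ and hence \eqref{btau}. Your closing caveats about the non-$L^2$ tail and the flux interpretation of the orthogonality condition are apt and match the level of rigor in \cite{SS1999}; the paper, if anything, is less worried than you are, taking the view that since the nonlinearity does not enter the solvability step, no new difficulty arises.
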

The proof of this proposition is verbatim adapted from \cite[Proposition 8.5]{SS1999} as the only difference is in the nonlinear term, which plays no role in the analysis here.

We state the next two propositions about the {\it log-log} law and its range, omitting the proofs as the nonlinearity no longer affects them. 
\begin{proposition}
The leading order in the expansion for $a(\tau)$ as $\tau \rightarrow \infty$ is 
\begin{align}
a(\tau)\approx b^{1/2}\approx \frac{\pi}{\ln \tau}.
\end{align}
The corresponding scaling factor $L(t)$ has the asymptotic form
\begin{align}\label{L loglog}
L(t) \approx \left( \dfrac{2\pi(T-t)}{\ln\ln \frac{1}{T-t}}\right)^{1/2}.
\end{align}
In addition,
\begin{align}\label{a t tau}
\tau(t) \approx \frac{1}{2\pi}\ln \left( \frac{1}{T-t}\right)\ln\ln\left( \frac{1}{T-t}\right).
\end{align}
\end{proposition}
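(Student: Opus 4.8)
The plan is to take the autonomous ODE \eqref{btau} for $b(\tau)$ as the starting point, extract the asymptotics of $b$ (hence of $a \approx \sqrt{b}$) by separation of variables, and then feed the result back into the reduced system \eqref{reduced eqn1} to recover $L(t)$ and $\tau(t)$. Writing $C = \frac{2\nu_0^2}{M}$, I would separate \eqref{btau} as $e^{\pi/\sqrt{b}}\,db = -C\,d\tau$ and substitute $s = b^{-1/2}$, which turns the left side into $-2\int e^{\pi s}s^{-3}\,ds$. A Watson/Laplace-type estimate gives $\int^s e^{\pi s'}s'^{-3}\,ds' \sim e^{\pi s}/(\pi s^3)$ as $s \to \infty$, so integration yields the implicit relation $e^{\pi s} \approx \frac{\pi C}{2}\,s^3\,\tau$. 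Taking logarithms, $\pi s \approx \ln\tau + 3\ln s + \text{const}$, whose leading-order solution is $s \approx \pi^{-1}\ln\tau$; since $a \approx \sqrt{b} = 1/s$, this establishes $a(\tau) \approx \pi/\ln\tau$.

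I would emphasize here the one genuinely delicate point. A direct balance — differentiating the guess $b = \pi^2/(\ln\tau)^2$ and plugging into \eqref{btau} — does \emph{not} reproduce the equation, because the right-hand side $e^{-\pi/\sqrt{b}}$ is exponentially sensitive to $b$. The correct leading order emerges only after integrating and discarding the subdominant $3\ln s$ correction against $\ln\tau$; controlling these lower-order logarithmic terms is the main obstacle and the reason the statement is asymptotic rather than exact.

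For the scaling factor, recall from \eqref{a form} and \eqref{reduced eqn1} that $\frac{d\ln L}{d\tau} = -a \approx -\pi/\ln\tau$. Integrating and using the logarithmic-integral asymptotics $\int^\tau (\ln\tau')^{-1}\,d\tau' = \mathrm{li}(\tau) \sim \tau/\ln\tau$ gives $\ln L(\tau) \approx -\pi\tau/\ln\tau$, i.e. $L^2(\tau) \approx \exp(-2\pi\tau/\ln\tau)$. I would then convert back to physical time through $dt = L^2\,d\tau$, writing $T - t = \int_\tau^\infty L^2(\tau')\,d\tau'$. Because $\frac{d}{d\tau'}\ln L^2 = -2a$ varies slowly, a Laplace-type estimate localizes the integral at its lower endpoint and yields $T - t \approx L^2(\tau)/(2a(\tau)) \approx L^2(\tau)\,\ln\tau/(2\pi)$, hence $L^2(\tau) \approx 2\pi(T-t)/\ln\tau$.

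Finally I would solve the two expressions for $L^2$ self-consistently for $\tau$ in terms of $T-t$. To leading order $\ln L^2 \approx \ln(T-t)$, so comparing with $\ln L^2 \approx -2\pi\tau/\ln\tau$ gives $\ln\frac{1}{T-t} \approx 2\pi\tau/\ln\tau$; taking one more logarithm and keeping the dominant term yields $\ln\tau \approx \ln\ln\frac{1}{T-t}$. Substituting this into $L^2 \approx 2\pi(T-t)/\ln\tau$ produces \eqref{L loglog}, and substituting into $\ln\frac{1}{T-t} \approx 2\pi\tau/\ln\tau$ produces \eqref{a t tau}. As in \cite[Chapter 8.2]{SS1999}, the nonlinearity enters only through the constants $\nu_0$, $\|R\|_{L^2}$ and $M$ and plays no role in these scaling laws; the only work beyond the NLS case is checking that the several nested logarithmic corrections are consistently subleading, so that the leading-order inversions above are legitimate.
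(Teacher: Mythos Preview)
Your argument is correct. The paper itself omits the proof entirely, deferring to \cite[Chapter~8.2]{SS1999} on the grounds that the nonlinearity plays no role at this stage; your derivation is precisely the kind of argument that reference contains.

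There is one minor organizational difference worth noting. The standard route in \cite{SS1999} (and in an earlier draft of this paper) posits the ansatz $L(t)=(T-t)^{1/2}q(t)$ with $q$ slowly varying, then uses $d\tau/dt=L^{-2}$ to write $\tau\approx q^{-2}\ln\frac{1}{T-t}$, feeds this into $a\approx\pi/\ln\tau$, and finally determines $q$ self-consistently from $a=-LL_t$. You instead integrate $d\ln L/d\tau=-a$ directly to get $L(\tau)$, compute $T-t=\int_\tau^\infty L^2\,d\tau'$ by a Laplace estimate, and then invert. The two routes are equivalent; yours has the slight advantage of not requiring the ansatz up front, at the cost of one extra asymptotic integration. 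Your explicit remark that the naive balance fails because of the exponential sensitivity of $e^{-\pi/\sqrt{b}}$, and that one must integrate first, is exactly the right caution.
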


\begin{proposition}
The asymptotic form of the solution given in Proposition 3.5 extends in the range $0<r<r_{out}$, where
\begin{align}
r_{out} \approx 1/\sqrt{b} \sim \ln\ln \left(\frac{1}{T-t}\right).
\end{align}
\end{proposition}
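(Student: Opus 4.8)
The plan is to follow the matched-asymptotic / WKB argument of \cite[Chapter 8.2]{SS1999}, treating the profile equation \eqref{Pb} for $P(\xi,b)$ as a linear Schr\"odinger-type problem with a slowly varying effective potential and locating the turning point that separates the localized self-similar core from the oscillatory outer field. Starting from the decomposition $w(\xi,\tau)=P(\xi,b(\tau))+W(\xi,\tau)$ with $W\ll P$ introduced before \eqref{Pb}, I would first argue that in \eqref{Pb} the term $i\nu(\sqrt b)P$ is exponentially small by \eqref{nu}, and that the nonlocal nonlinear term $((-\Delta)^{-1}|P|^{2\sigma+1})|P|^{2\sigma-1}P$ is subdominant in the range of interest; consequently $P$ is governed at leading order by the linear operator appearing in $-\Delta P+(1-\tfrac{b\xi^2}{4})P\approx 0$, whose effective potential is $V_{\mathrm{eff}}(\xi)=1-\tfrac{b}{4}\xi^2$.

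The turning-point analysis then proceeds as follows. The potential $V_{\mathrm{eff}}$ is positive near the origin (the classically forbidden, exponentially-decaying regime of the ground-state-like profile) and changes sign at $\xi_{\mathrm{turn}}=2/\sqrt b$, beyond which the solution becomes oscillatory and matches the radiating $Q_2$-type branch of \eqref{E:Q1-Q2}. For $\sqrt b\,\xi\ll 1$ the profile reproduces the rescaled ground state $R$ and its slowly decaying tail $\mu\,\xi^{-1/\sigma-i/a}$ (parts (i)--(ii) of the convergence proposition), while the self-similar description ceases to be valid once $\xi$ reaches the turning-point scale. This fixes the outer radius of the self-similar region (the variable denoted $r$ in the statement) at $r_{\mathrm{out}}\approx \xi_{\mathrm{turn}}\sim 1/\sqrt b$. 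Finally I would insert the asymptotics already established: using $b\approx a^2$ and $a\approx \pi/\ln\tau$, together with $\tau\approx \tfrac{1}{2\pi}\ln(\tfrac1{T-t})\ln\ln(\tfrac1{T-t})$ from \eqref{a t tau}, one gets $1/\sqrt b\approx 1/a\approx \ln\tau/\pi\sim \ln\ln(\tfrac1{T-t})$, which is the claimed expression.

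The main obstacle is to justify quantitatively that the correction $W$ stays negligible exactly up to $\xi\sim 1/\sqrt b$ and that the self-similar ansatz genuinely breaks down at that scale, where the solution transitions into the freely propagating outer field governed by \eqref{DRgHartree_L}; this is the delicate matching step of the WKB analysis. The one place where the gHartree problem differs from the NLS case is the need to control the nonlocal term $(-\Delta)^{-1}|P|^{2\sigma+1}$ using the slow decay $P\sim\xi^{-1/\sigma}$ together with the Hardy--Littlewood--Sobolev inequality (as in Proposition \ref{P: finite energy}), thereby confirming that it remains lower order throughout $0<\xi\lesssim 1/\sqrt b$. Granting this bound, the remainder of the argument is, as the authors note, insensitive to the form of the nonlinearity and proceeds verbatim as in \cite{SS1999}.
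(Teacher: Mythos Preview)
Your proposal is correct and matches the paper's approach exactly: the authors explicitly omit the proof, stating that ``the nonlinearity no longer affects'' the argument and deferring to \cite[Chapter~8.2]{SS1999}, which is precisely the turning-point/WKB analysis of the effective potential $1-\tfrac{b}{4}\xi^2$ that you outline. Your additional remark about controlling the nonlocal term via Hardy--Littlewood--Sobolev and the decay $P\sim\xi^{-1/\sigma}$ is the only gHartree-specific check needed, and it is consistent with how the paper handles similar estimates in Proposition~\ref{P: finite energy}.
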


\subsection{Adiabatic regime}\label{S:Adiabatic}
From the asymptotic analysis in Section \ref{AA} and fitting analysis in Section \ref{Fitting}, the blow-up rate follows the {\it log-log} regime at the {\it very high} focusing, which is currently impossible to observe numerically. The fittings in Section \ref{Fitting} indicate that there may be other laws for the blow-up rate before reaching the {\it log-log} level. In the NLS equation, the solution reaches the {\it adiabatic} regime, which can be numerically observed, before finally settling into the {\it log-log} regime. In this section, we show that the gHartree equation also has the {\it adiabatic} regime. 

Recalling (\ref{reduced eqn1}) and (\ref{btau}), we have
\begin{align}\label{reduced eqn2}
L_{tt}=-L^{-3}{b}, \qquad b_{\tau}=-\nu(\sqrt{b})\quad\mbox{with} \quad \nu(\sqrt{b})=-c_{\nu}e^{-\pi/\sqrt{b}},  
\end{align}
where $c_{\nu}$ is a positive constant. The equations in (\ref{reduced eqn2}) are called the reduced equations in the $L^2$-critical NLS equation, see \cite[Chapter 17-18]{F2015}. The NLS analysis gives, for example, two adiabatic laws:
\begin{align}\label{Malkin law}
L(t) = \sqrt{2\sqrt{b}(T-t)} \qquad (\mathrm{Malkin \, Law}),
\end{align}
and 
\begin{align}\label{Fibich law}
\qquad L(t) \approx \sqrt{2\sqrt{b}(T-t)+C(t)(T-t)^2}, \quad C(t)=\dfrac{a^2-b}{L^2}=-\dfrac{a_{\tau}}{L^2}\quad (\mathrm{Fibich \, Law}).
\end{align}
We next show how well the numerical solution matches these two adiabatic laws. 
The parameter $b=a^2+a_{\tau}$ is obtained from calculating the value $a_{\tau}$ by the fourth order backward difference (higher or lower order of finite difference method can also be applied but we found that they do not make much difference). Suppose that the rate $L(t)$ is the blow-up rate from the computational simulation, and $F(t)$ is the predicted rate (Malkin law or Fibich law). We show how the relative error 
\begin{align}\label{relative error}
\mathcal{E}_r=\left| \frac{L(t)}{F(t)}-1 \right|
\end{align}
changes as the time $t\rightarrow T$. For comparison, we also show the relative error for the {\it log-log} law and the $\gamma$-law with $\gamma=1$, see Figure \ref{adiabatic error}. As in the NLS, we take the constant equals to $2\pi$ in the $\gamma$-law, i.e., 
$$
L(t)\approx \sqrt{\dfrac{T-t}{2\pi \ln \frac{1}{T-t}}}.
$$
To test the consistency, we report the relative error for the 2d NLS equation case in Figure \ref{adiabatic error NLS}, which is similar to the plot in \cite[Fig. 18.4]{F2015}, indicating that our numerical method is trustful. On the right subplot in Figure \ref{adiabatic error NLS} we show the numerical error (also for the NLS) in the 4d case.  
\begin{figure}[ht]
\begin{center}
\includegraphics[width=0.45\textwidth]{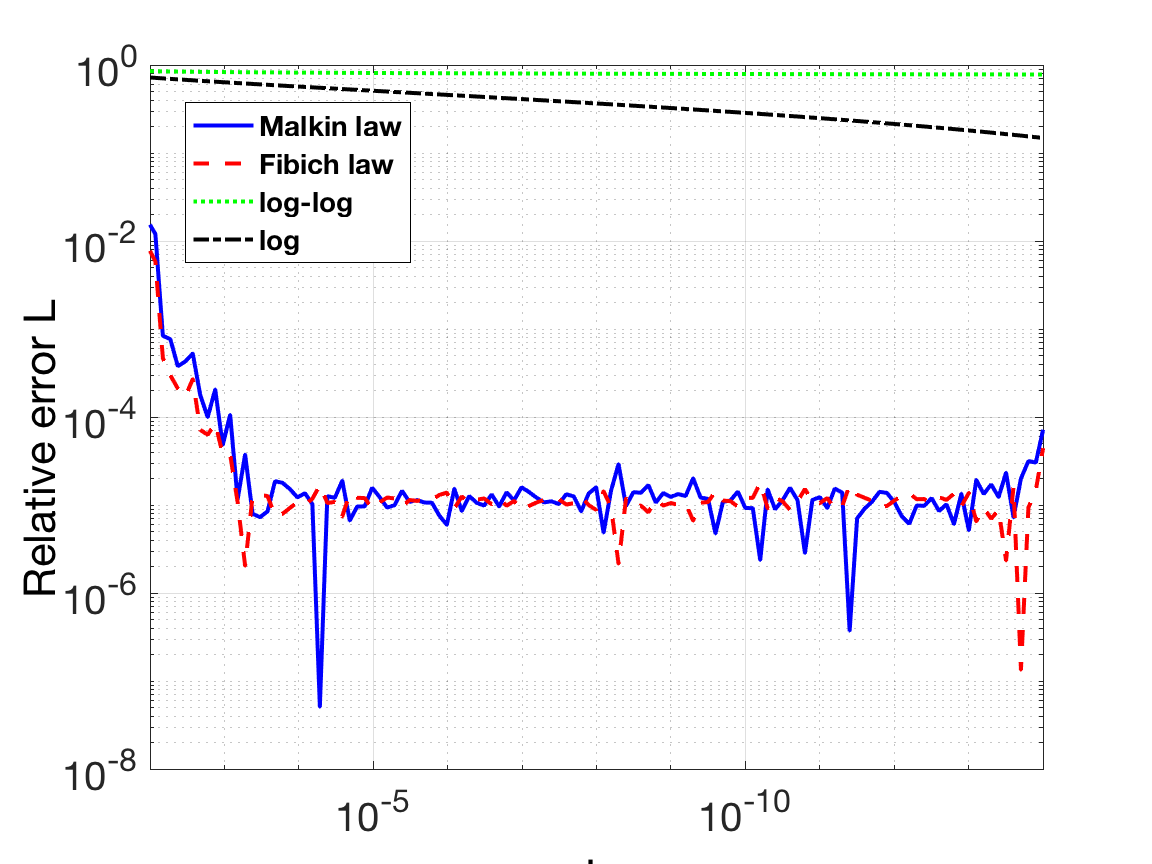}
\includegraphics[width=0.45\textwidth]{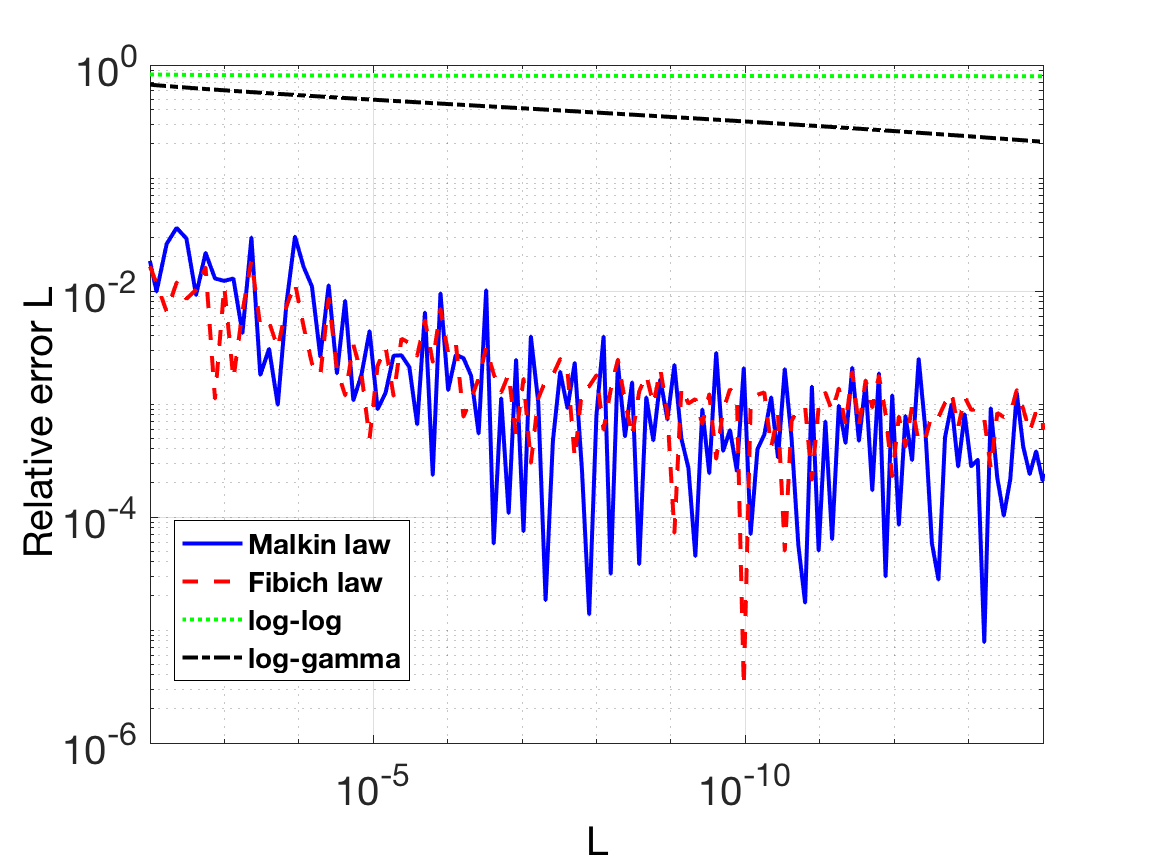}
\caption{ Consistency check: the relative error for two adiabatic laws compared to the {\it log-log} and $\gamma$-laws for the $L^2$-critical NLS equation in 2d and 4d. Left: 2d case with $u_0=2.77e^{-r^2}$, this plot is similar to the right plot in \cite[Fig. 18.4]{F2015}). Right: 4d case with $u_0=4e^{-r^2}$.} 
\label{adiabatic error NLS}
\end{center}
\end{figure}

\begin{figure}[ht]
\begin{center}
\includegraphics[width=0.49\textwidth]{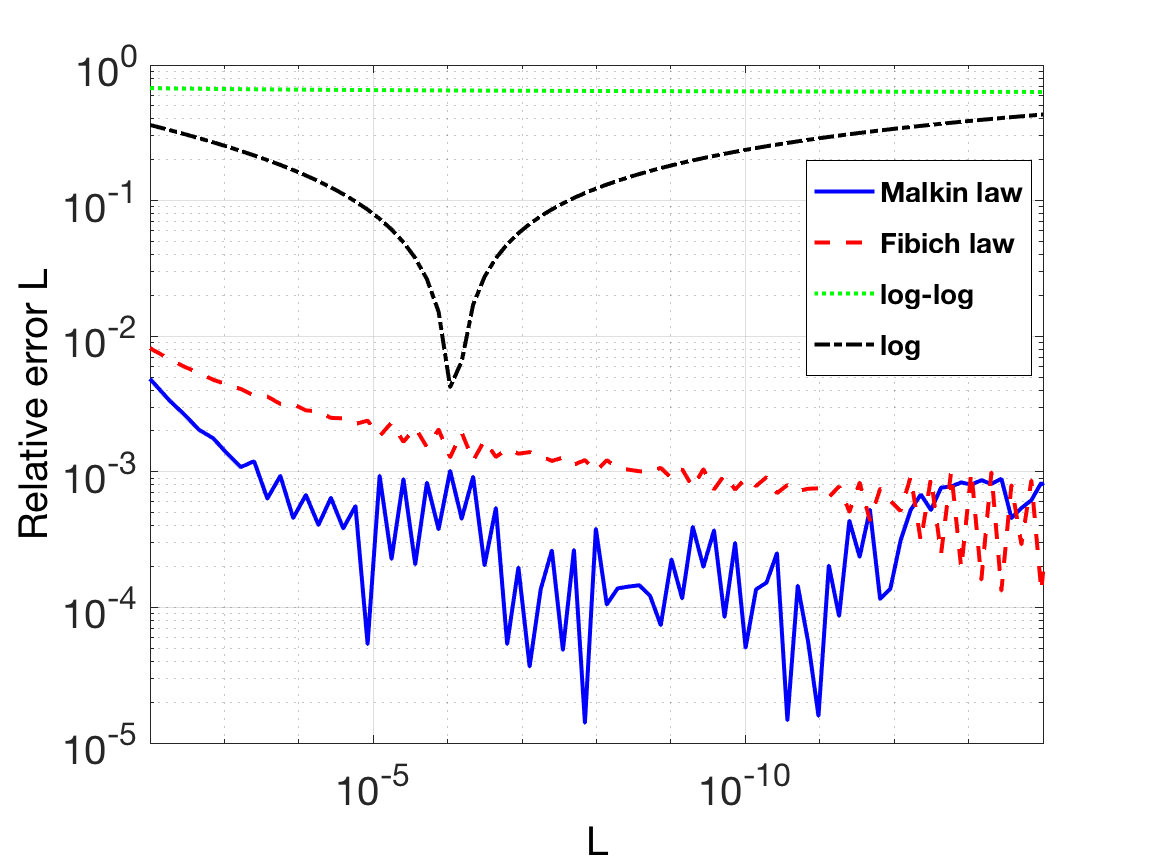}
\includegraphics[width=0.49\textwidth]{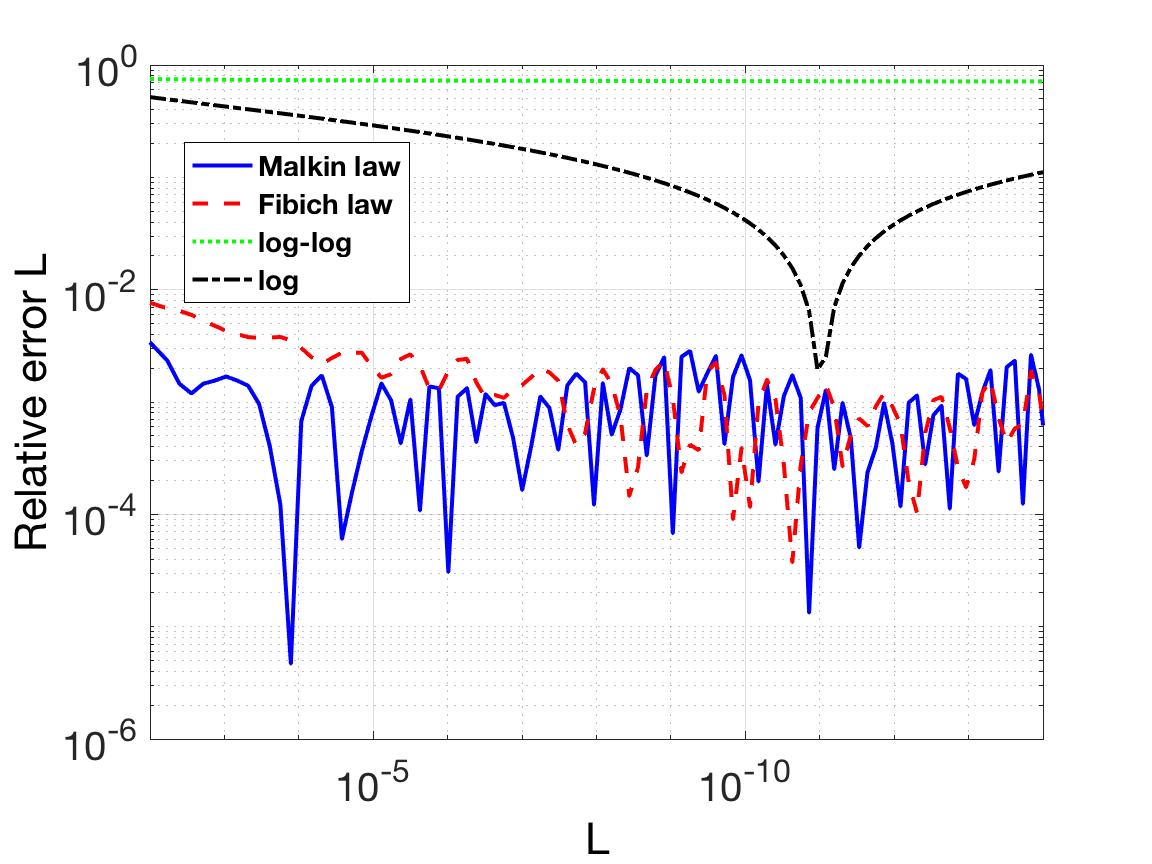}
\includegraphics[width=0.32\textwidth]{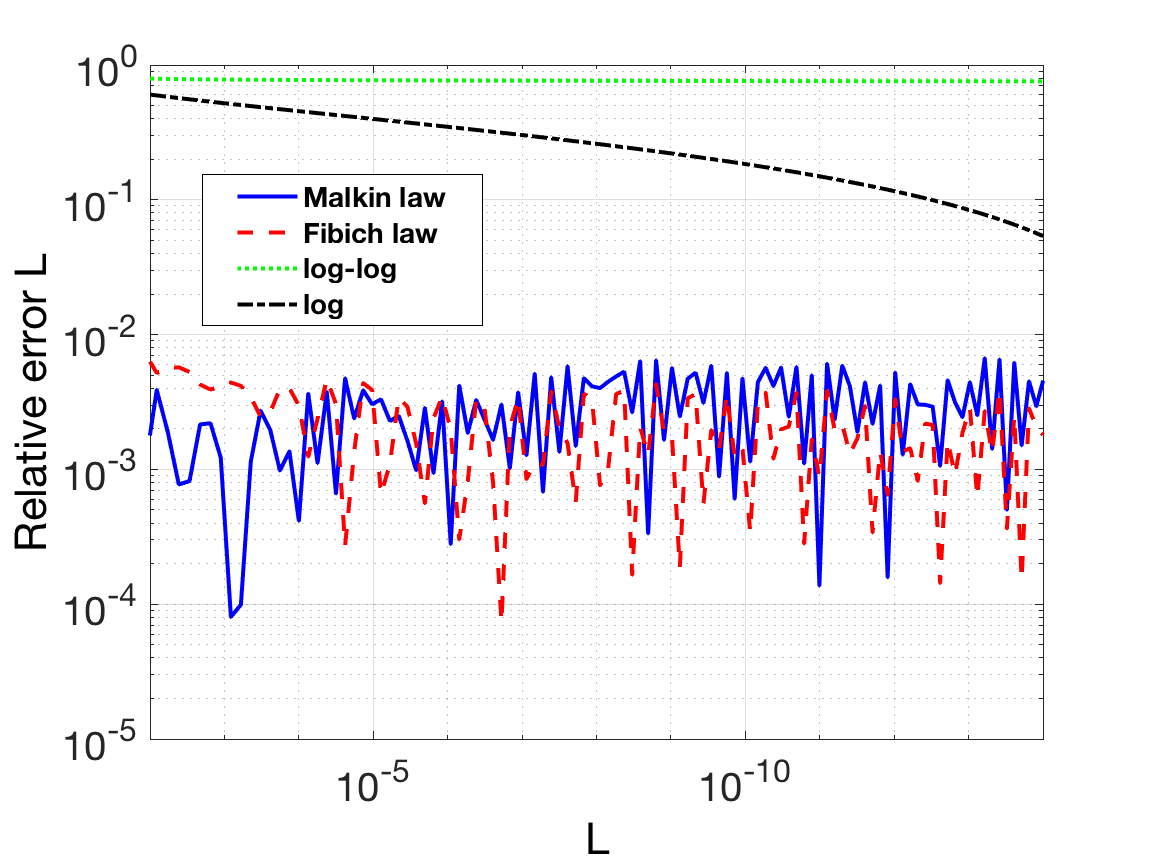}
\includegraphics[width=0.32\textwidth]{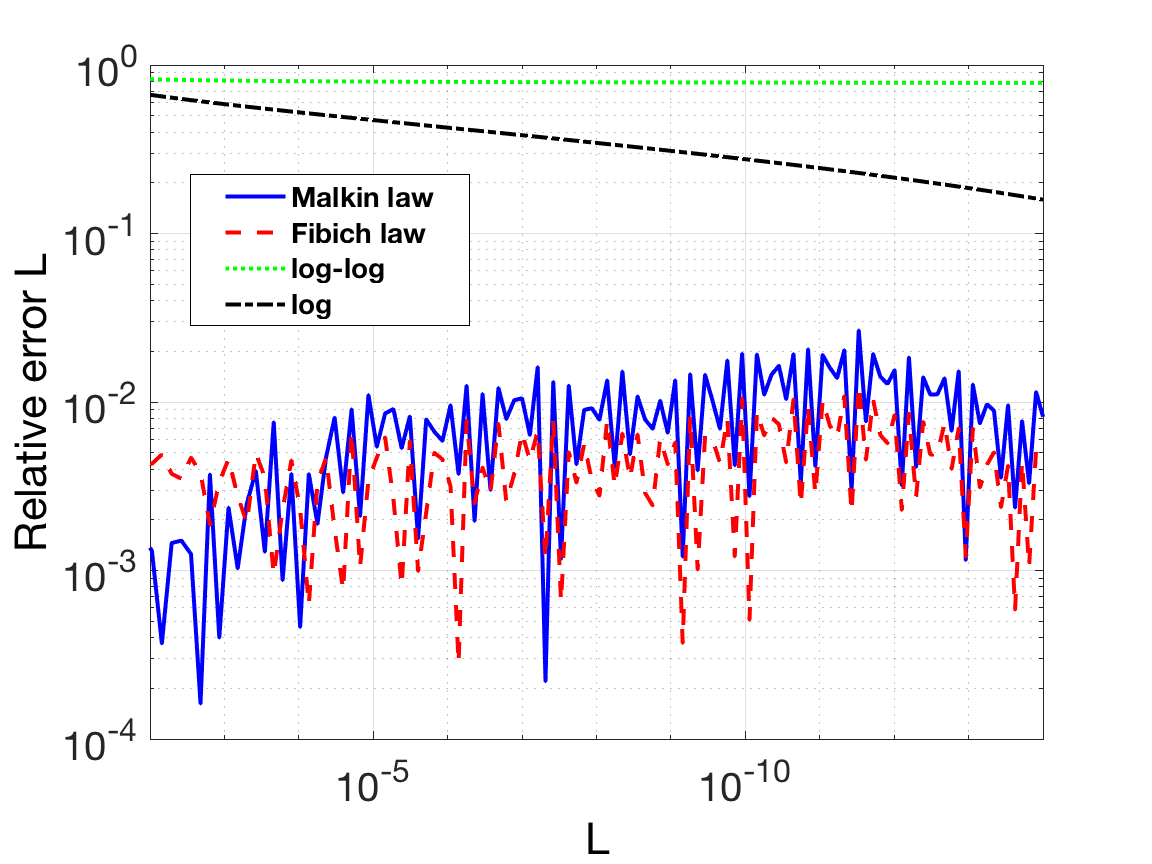}
\includegraphics[width=0.32\textwidth]{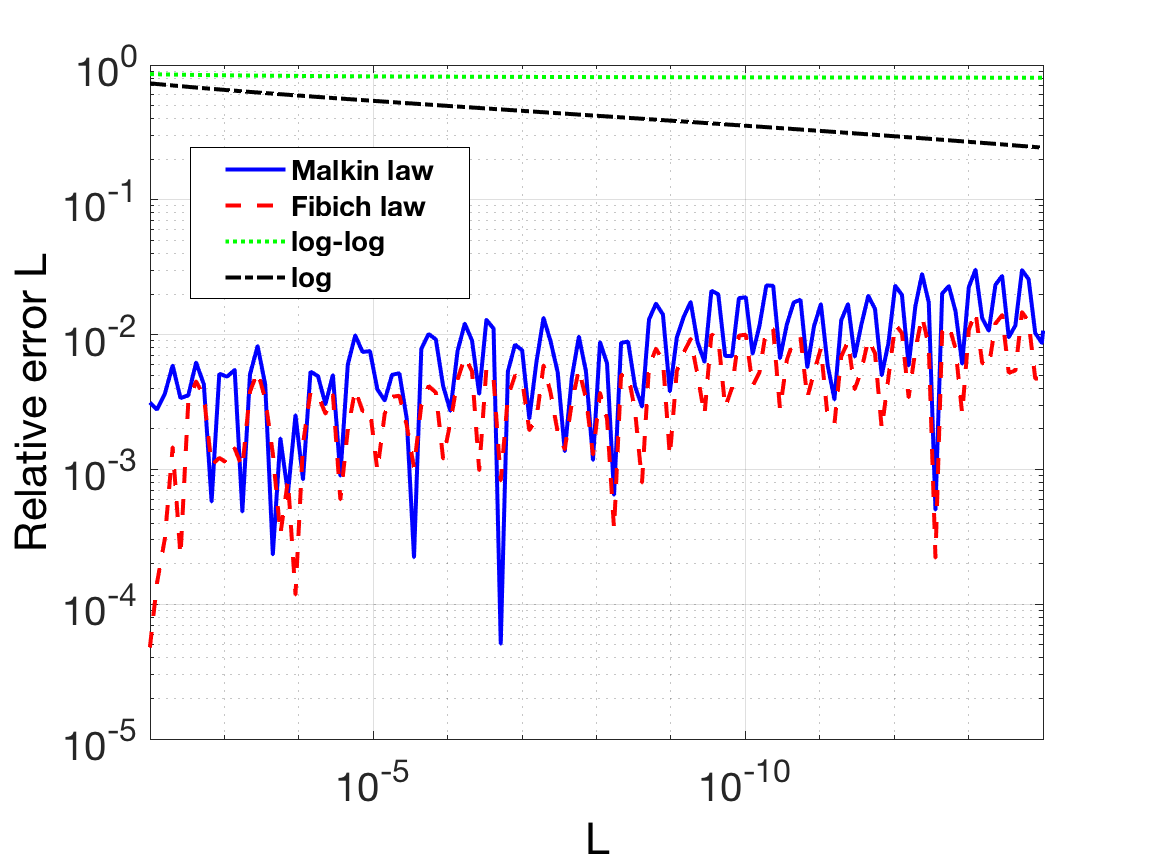}
\caption{ The relative error in gHartree case for different laws including adiabatic regimes, in dimensions $d=3,4,5,6,7$.} 
\label{adiabatic error}
\end{center}
\end{figure}

From Figure \ref{adiabatic error}, we can see that adiabatic Malkin and Fibich laws are both equally good (except for the case $d=3$, where Malkin law seems to be producing a slightly smaller error). Both Malkin law and Fibich law are better than the $\gamma$-law or the {\it log-log} law, this is due to the intermediate range of focusing (the {\it log-log} regime is yet to be reached at much higher focusing level). We only show the $\gamma$-law with $\gamma = 1$, 
since this option of $\gamma$ is the best among other values in the adiabatic regime.

In dimensions $d=3$ and $d=4$, we notice that there exist ranges of focusing regime that almost coincide in terms of the relative error, where the {\it log}-law (with $\gamma=1$) is as good as the two adiabatic laws (for 3d the range $L \sim 10^{-6}$ and for 4d the range $L \sim 10^{-11}$). This supports our calculations that the adiabatic Malkin law (also possibly Fibich law) have the rates with the leading order $\ln\frac{1}{s}$ (i.e., $\gamma=1$). 

\section{The $L^2$-supercritical case}\label{Section-Supercritical}

In this section, we consider the blow-up dynamics in the $L^2$-supercritical gHartree equation. Since the existence and {\it local uniqueness} theory of self-similar profile solutions was discussed in Section \ref{S:profiles}, we now introduce our numerical method for finding such blow-up profiles. Afterwards, we simulate the blow-up solutions for several $L^2$-supercritical gHartree equations and show the results of the convergence of the stable blow-up to the specific profiles and the rate.
 
\subsection{Numerical approach to compute profiles $Q$}
We start with recalling that admissible solutions to the profile equation \eqref{Q eqn} are the ones without the fast oscillating decay in $Q=\alpha Q_1+\beta Q_2$, where $Q_1 \approx |\xi|^{-\frac{i}{a}-\frac{1}{\sigma}}, ~ Q_2 \approx e^{-\frac{ia\xi^2}{2}}|\xi|^{-\frac{i}{a}-d+\frac{1}{\sigma}}$ as $|\xi| \to \infty$,
and thus, we are looking for the solutions with $\beta=0$. Excluding $Q_2$, we note that the solution $Q$ must be linearly dependent to $Q_1$ as $\xi \to \infty$, thus, computing the Wronskian  for $Q$ and $Q_1$ gives  $\left(\frac{1}{\sigma}+\frac{i}{a} \right) Q(\xi) + \xi \,Q_{\xi}(\xi)=0$ as $\xi \to \infty$. This gives the artificial boundary condition 
\begin{equation}\label{Q eqn bc2}
\left(\frac{1}{\sigma}+\frac{i}{a} \right) Q(K) + KQ_{\xi}(K)=0
\end{equation} 
by taking sufficiently large $K$. 

We next split $Q$ into the real and imaginary parts $Q=P+iW$, rewriting \eqref{Q eqn} and \eqref{Q eqn bc2} as
\begin{align}\label{Q compute}
\begin{cases}
\Delta P-P-a(\frac{W}{\sigma}+\xi W_{\xi})+((-\Delta)^{-1}(P^2+W^2)^{\sigma+\frac12})(P^2+W^2)^{\sigma-\frac12}P=0,\\
\Delta W-W+a(\frac{P}{\sigma}+\xi P_{\xi})+((-\Delta)^{-1}(P^2+W^2)^{\sigma+\frac12})(P^2+W^2)^{\sigma-\frac12}W=0,\\
P_{\xi}(0)=0,\\
W(0)=0,\\
W_{\xi}(0)=0,\\
\frac{1}{\sigma}P-\frac{1}{a}W+KP_{\xi}=0,\\
\frac{1}{a}P+\frac{1}{\sigma}W+KW_{\xi}=0.
\end{cases}
\end{align}
We solve the equation system \eqref{Q compute} in two ways. We first use the matlab solver \textit{bvp4c}. We set $-\Delta \varphi=(P^2+W^2)^{\sigma+0.5}$ to deal with the nonlocal term. Thus, our solver will deal with a system of six equations. An alternative way to work with this system is to rewrite it into a system of nonlinear algebraic equations. Then the matlab solver \textit{fsolve} can be applied (with the algorithm option \textit{``levenberg-marquardt"} to make sure it converges).

During the computation, these two methods generate almost the same profiles. The residue shows that \textit{fsolve} is more accurate if we use $N=257$ Chebyshev-collocation points in our computations. Furthermore, both methods need a suitable initial guess. As we have previously handled NLS (see \cite{YRZ2018}), we take the solution of the following NLS boundary value problem as our initial guess:
\begin{eqnarray}\label{Q compute NLS}
(NLS)_d \qquad \qquad 
\begin{cases}
\Delta P-P-a(\frac{W}{\sigma}+\xi W_{\xi})+(P^2+W^2)^{\sigma}P=0,\\
\Delta W-W+a(\frac{P}{\sigma}+\xi P_{\xi})+(P^2+W^2)^{\sigma}W=0,\\
P_{\xi}(0)=0,\\
W(0)=0,\\
W_{\xi}(0)=0,\\
\frac{1}{\sigma}P-\frac{1}{a}W+KP_{\xi}=0,\\
\frac{1}{a}P+\frac{1}{\sigma}W+KW_{\xi}=0.
\end{cases}
\end{eqnarray}
In \cite{YRZ2019} we obtained solutions of this NLS system \eqref{Q compute NLS}, hence, we can use them as our initial guess. 

Similar to the NLS case, there are multiple solutions to the system \eqref{Q compute}, and we are able to find some of them, 
though not all these solutions are profiles for {\it stable} blow-up. Most likely they serve as profiles for the unstable blow-up solutions, but we have not verified that. 
In order to find the appropriate admissible profiles, constraints can be put either on the parameter $a$ or on the value $Q(0)$. Here, we choose to put constraints on the parameter $a$, i.e., we find the value of $a$ such that $\alpha \leq a \leq \beta$ for prescribed constants $\alpha$ and $\beta$. 
In order to put these constraints into \eqref{Q compute}, we consider a mapping $f: \mathbb{R} \rightarrow [\alpha,\beta]$ and set $$a(s)=\alpha f(s)+\beta (1-f(s)),$$ 
where $s \in \mathbb{R}$ and $f(s) \in [0,1]$. Then, we solve the equation \eqref{Q compute} by substituting  $a(s)=\alpha f(s)+\beta (1-f(s))$ with an explicitly given function $f(s)$. For example, one can take $a=\alpha \sin^2s+\beta \cos^2s$. 
The constant $a$ is reconstructed after obtaining the value $s$.

We emphasize that while we are able to put the constraints into the equation \eqref{Q compute}, we still need a relatively suitable initial guess. This issue is similar to the NLS $L^2$-supercritical case: selecting initial guess to find the profiles with no oscialltions as $\xi \to \infty$ is extremely sensitive. For example, as discussed in \cite{BCR1999} the initial guess is sensitive to 4\% difference of the actual values of $a$ and $Q(0)$ (to give convergence to the corresponding multi-bump profile). We choose the corresponding multi-bump solutions from NLS equation from \cite{YRZ2019} as the initial guess, which is suitable in the gHartree setting. 

\subsection{Admissible profiles} 
Among all admissible solutions to \eqref{Q eqn} there is no uniqueness as it was shown in \cite{BCR1999}, \cite{KL1995}, \cite{YRZ2019}. These solutions generate branches of multi-bump profiles. We label the solution $Q_{1,0}$ the first solution in the branch $Q_{1,K}$ (this is the branch, which converges to the $L^2$-critical ground state solution $R$ as $s_c \to 0$), and we consider $Q_{1,0}$ as the potential profile for stable gHartree blow-up, see the blue curve in Figure \ref{Q3d3p}.
By using another initial guess for parameters $a$ and $Q(0)$ as described above, 
we obtain the solution $Q_{1,1}$, which is the first bifurcation from $Q_{1,0}$ (see the red dashed curve in Figure \ref{Q3d3p}). 

To better understand the dependence of solutions on parameters $a$ and $Q(0)$, we study the pseudo-phase plane, which was introduced in the NLS case by Kopell and Landman in \cite{KL1995} and adopted in Budd, Chen and Russell \cite{BCR1999}. We write
\begin{align}\label{Q phase}
Q\equiv C(\xi)\exp \left( i\int_0^{\xi} \psi \right), \qquad D(\xi)={C_{\xi}}/{C}\equiv\Re({Q_{\xi}}/{Q}).
\end{align}
In other words, $C$ is the amplitude of $Q$, $C(\xi) = |Q(\xi)|$, $D$ is its logarithmic derivative, and $\psi$ is the gradient of the phase. 
{In the coordinates $(C,D)$ we track the behavior of the graph as it decreases down to the origin when both $C$ and $D$ approach zero as $\xi \to \infty$. To see that recall from \eqref{E:Q1-Q2} that asymptotically
$$
Q(\xi) \sim \alpha \, \xi^{-\frac{1}{\sigma}} \exp \left(-\frac{i}{a} \, \log(\xi) \right) + \beta \, \xi^{-(d-\frac1{\sigma})} \exp\left( -\frac{i\, a \, \xi^2}2  + \frac{i}{a} \log(\xi)\right),  
$$
where the first term is slowly decaying and the second term decays faster with rapid oscillations. The solution $Q$ that varies slowly at infinity, would have no oscillations at the end of the curve (as $C \to 0$), since 
$$
C \sim \frac{\alpha}{\xi^{1/\sigma}} \quad \mbox{and} \quad D \sim -\frac1{\sigma \,\xi} \quad \mbox{as} \quad \xi \to \infty.
$$
Thus, such solutions will approach the origin in coordinates $(C,D)$ along the curve 
$D \sim -\frac1{\sigma \, \alpha^\sigma} \, C^{\sigma}$. In the case of $\sigma=1$, this will be a straight line with slope $-1/\alpha$, which we demonstrate in the paths shown in Figures \ref{Q3d3p} and \ref{Q4d3p} (right plot). In the case of $\sigma =2$, this will be a parabola $D \sim - \frac1{2\sigma^2}c^2$, which can be seen in Figures \ref{Q3d5p} and \ref{Q4d5p} (also subplots on the right).

If the solution $Q$ oscillates fast at infinity, then its graph in the coordinates $(C,D)$ will approach the origin in the oscillating manner, since 
$$
C \sim \frac{\alpha}{\xi^{1/\sigma}} \quad \mbox{and} \quad D \sim
- \frac{\beta \, a}{\alpha} \, \frac1{\xi^{d-2/\sigma-1}} \, \sin\left(\frac{a \, \xi^2}2 - \frac2{a} \log (\xi)\right).
$$
\begin{figure}[ht]
\begin{center}
\includegraphics[width=0.45\textwidth]{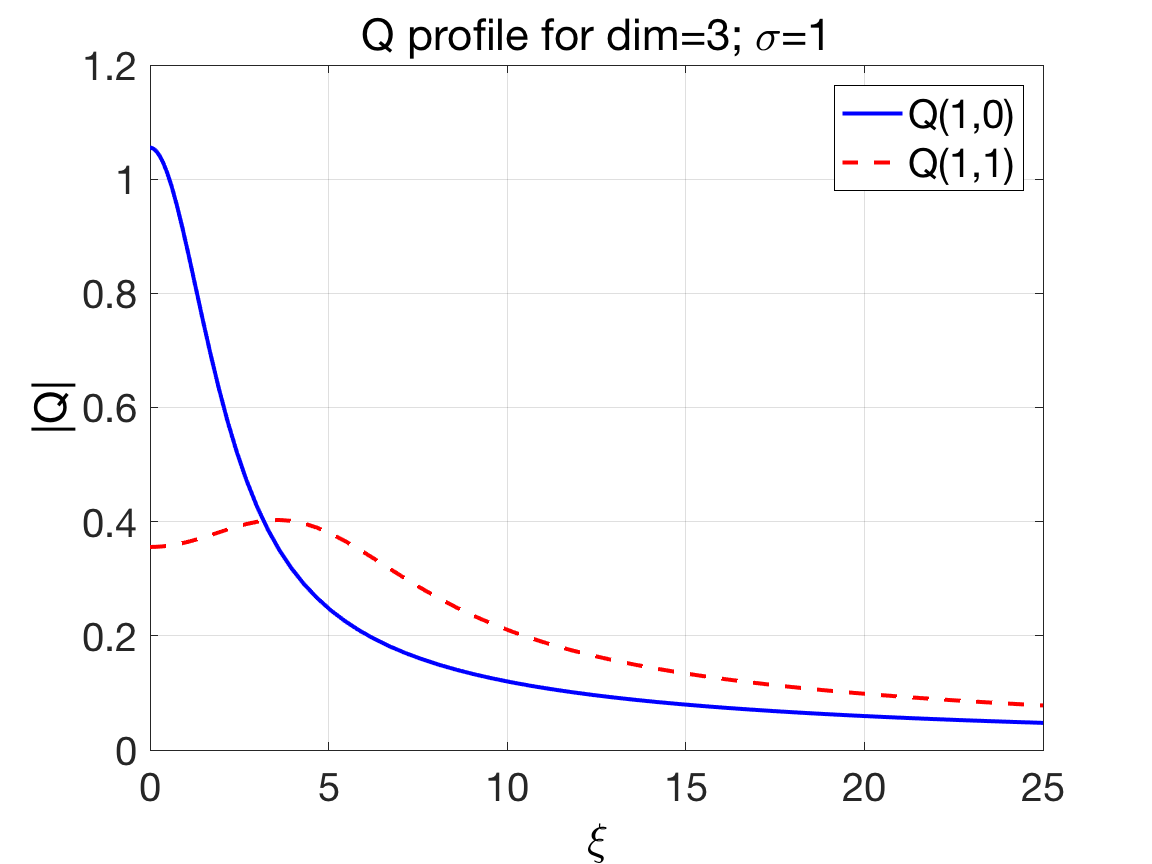}
\includegraphics[width=0.45\textwidth]{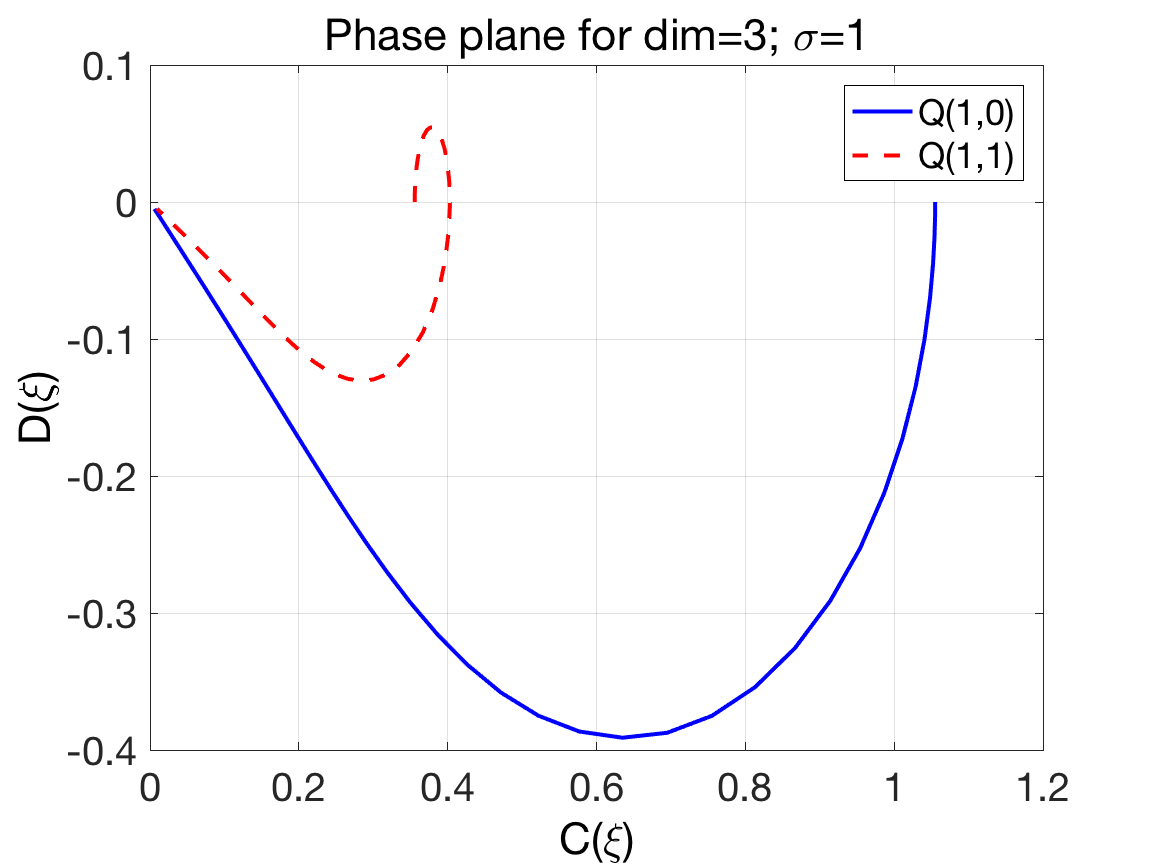}
\caption{$Q$ profiles for $d=3, \sigma=1$. Left: the monotone solution $Q_{1,0}$ (blue) and the first bifurcation solution $Q_{1,1}$ (red). Right: the phase plane $(C,D)$, here $D \sim - C$.}
\label{Q3d3p}
\end{center}
\end{figure}
\begin{figure}[ht]
\begin{center}
\includegraphics[width=0.45\textwidth]{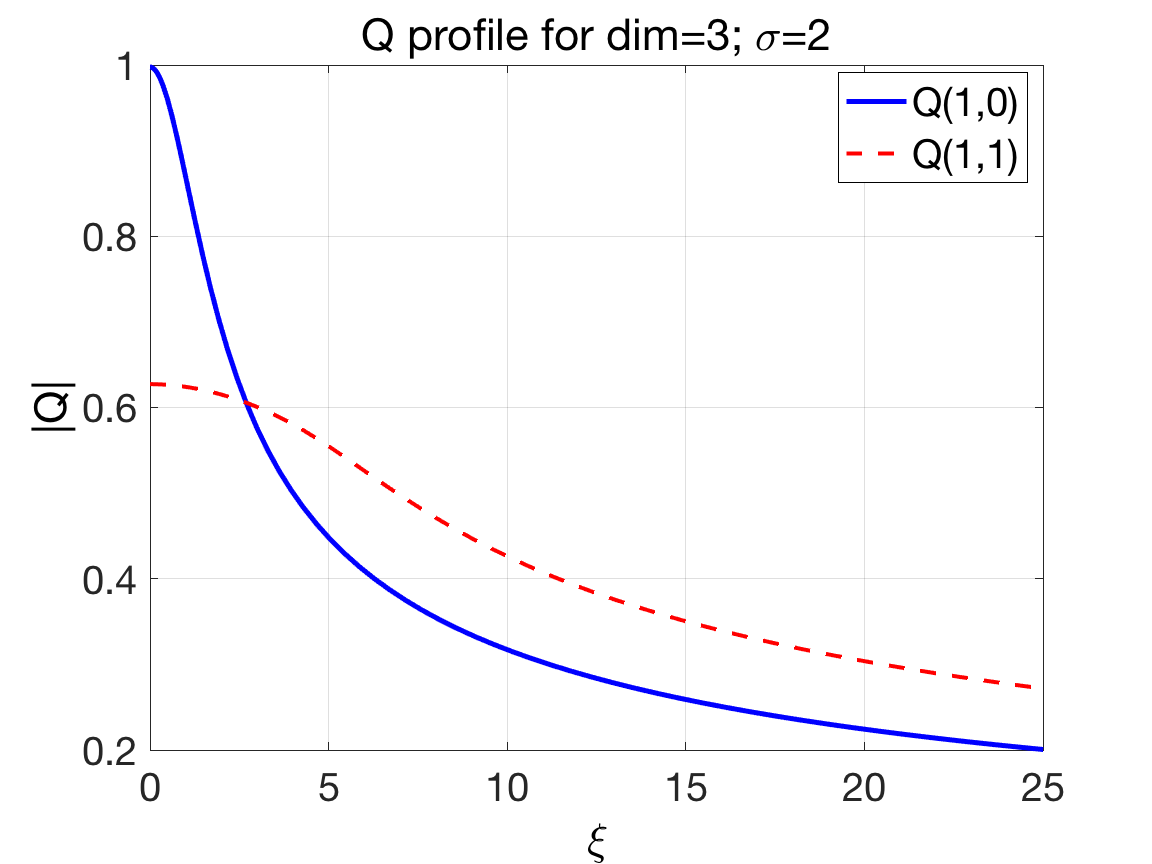}
\includegraphics[width=0.45\textwidth]{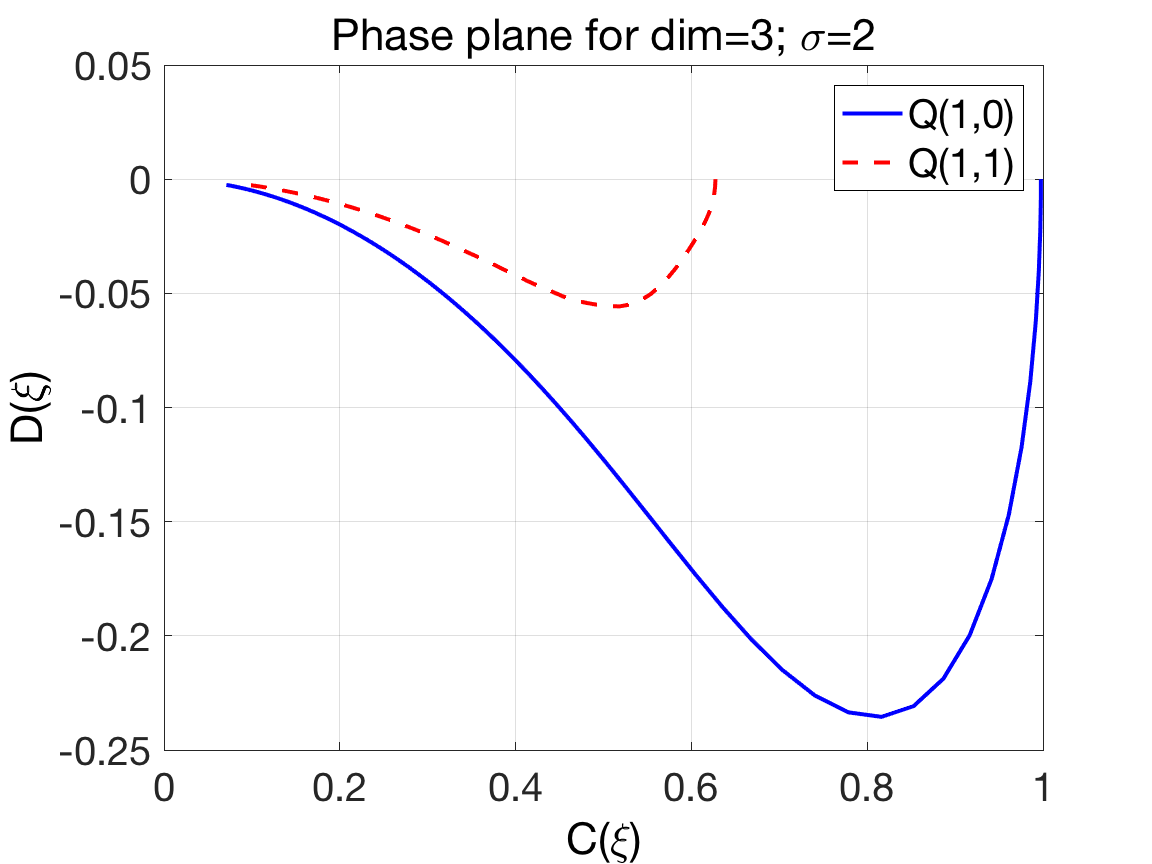}
\caption{ $Q$ profile for $d=3, \sigma=2$. Left: the monotone solution $Q_{1,0}$ (blue) and the first bifurcation solution $Q_{1,1}$ (red). 
Right: the phase plane $(C,D)$, here $D \sim - C^2$.}
\label{Q3d5p}
\end{center}
\end{figure}

Figures \ref{Q3d3p}, \ref{Q3d5p}, \ref{Q4d3p} and \ref{Q4d5p} (left subplots) are profiles of $|Q|$. 
We also show how the values of $a$ and $Q(0)$ continuously change with respect to the dimension (taking $d$ as a continuous parameter) in Figure \ref{Q_cubic} ($\sigma=1$) and Figure \ref{Q_quintic} ($\sigma=2$). Table \ref{Q-a} contains the values for $a$ and $Q(0)$ that we obtain in our simulations.

\begin{figure}
\begin{center}
\includegraphics[width=0.45\textwidth]{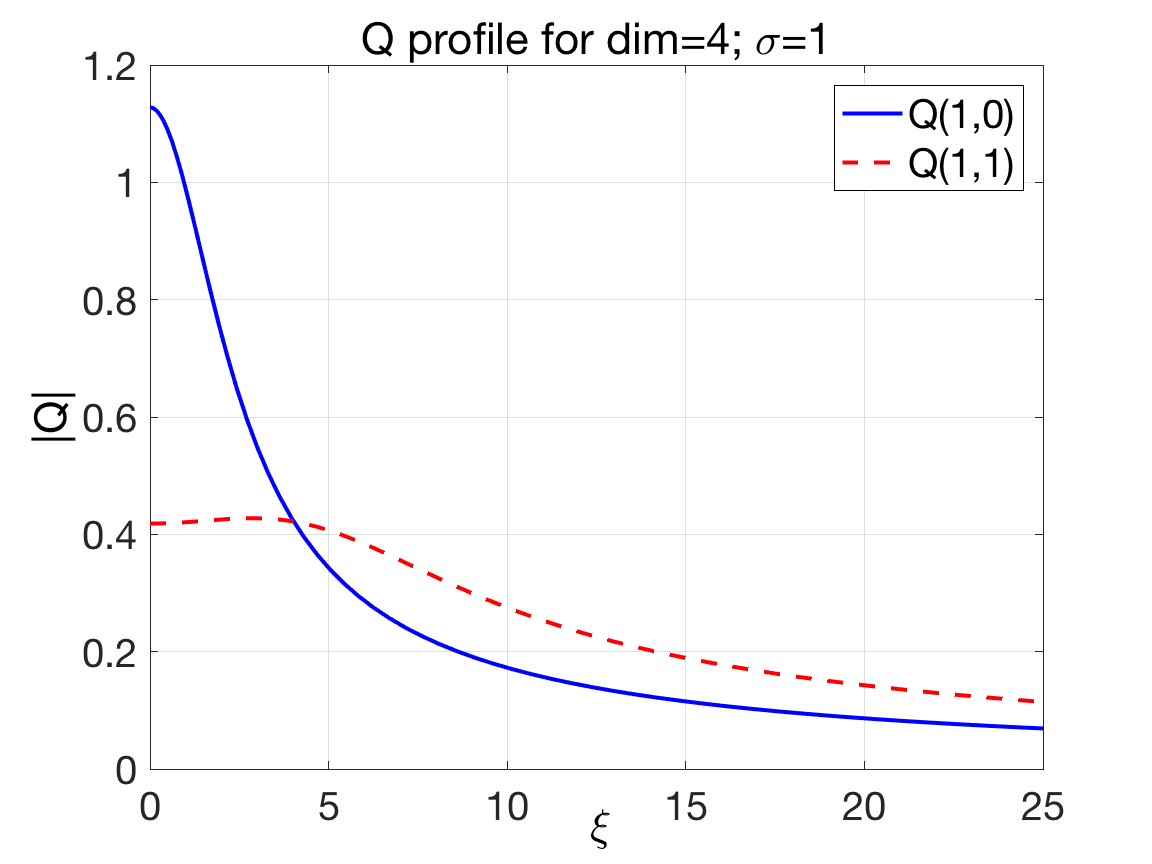}
\includegraphics[width=0.45\textwidth]{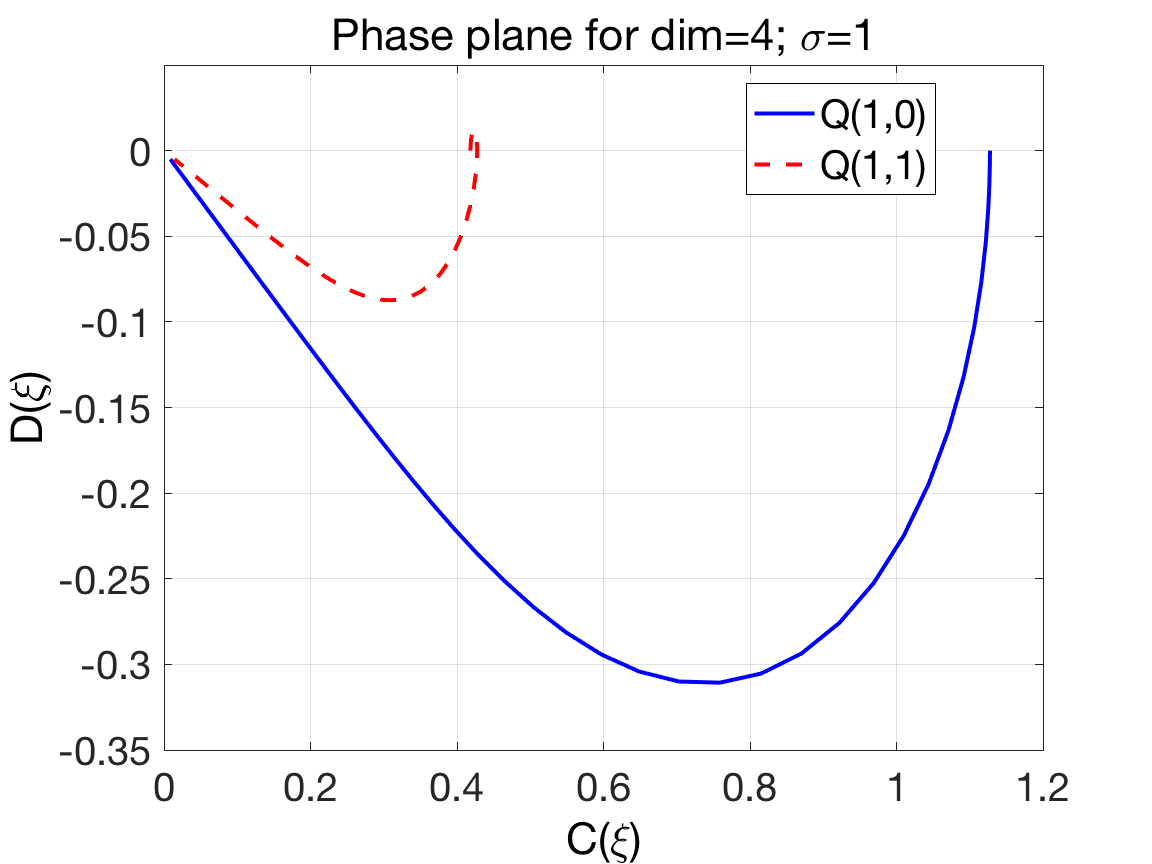}
\caption{ $Q$ profile for $d=4, \sigma=1$. Left: the monotone solution $Q_{1,0}$ (blue) and the first bifurcation solution $Q_{1,1}$ (red).
Right: the phase plane $(C,D)$, here $D \sim -C$.}
\label{Q4d3p}
\end{center}
\end{figure}
\begin{figure}
\begin{center}
\includegraphics[width=0.45\textwidth]{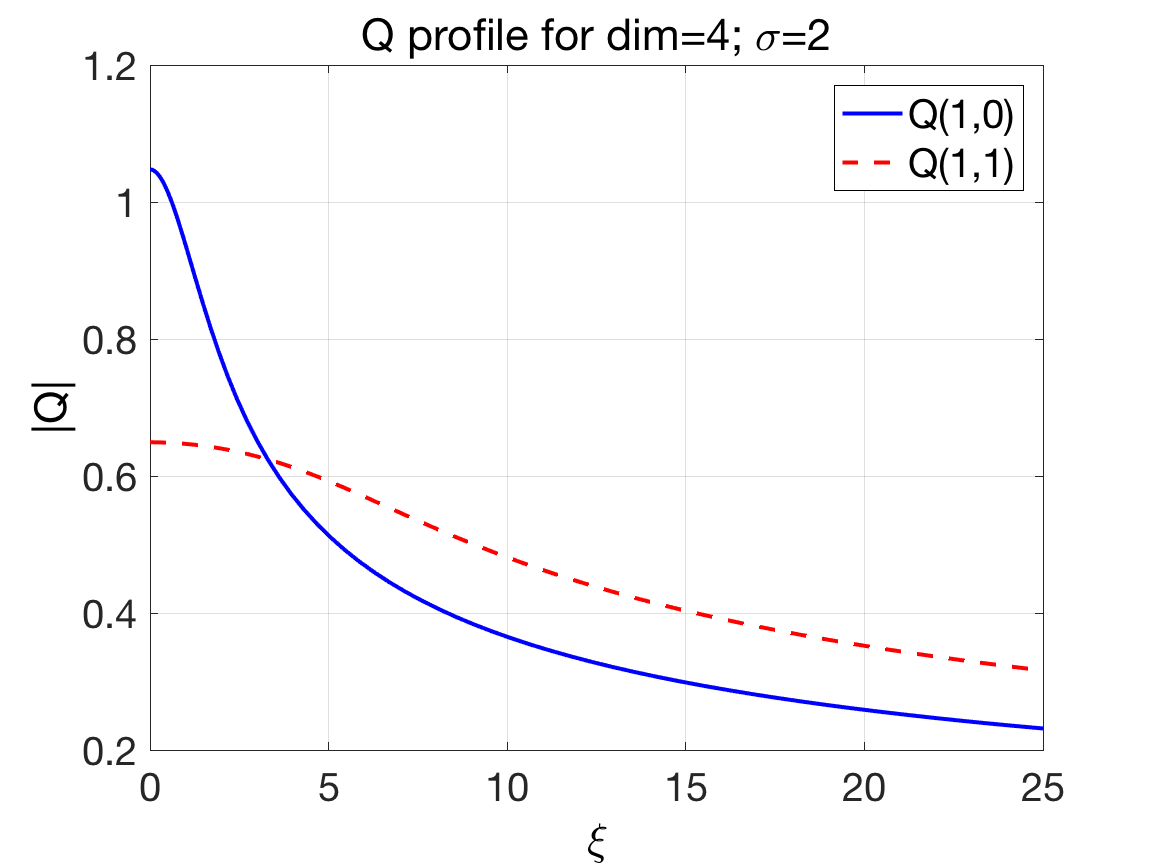}
\includegraphics[width=0.45\textwidth]{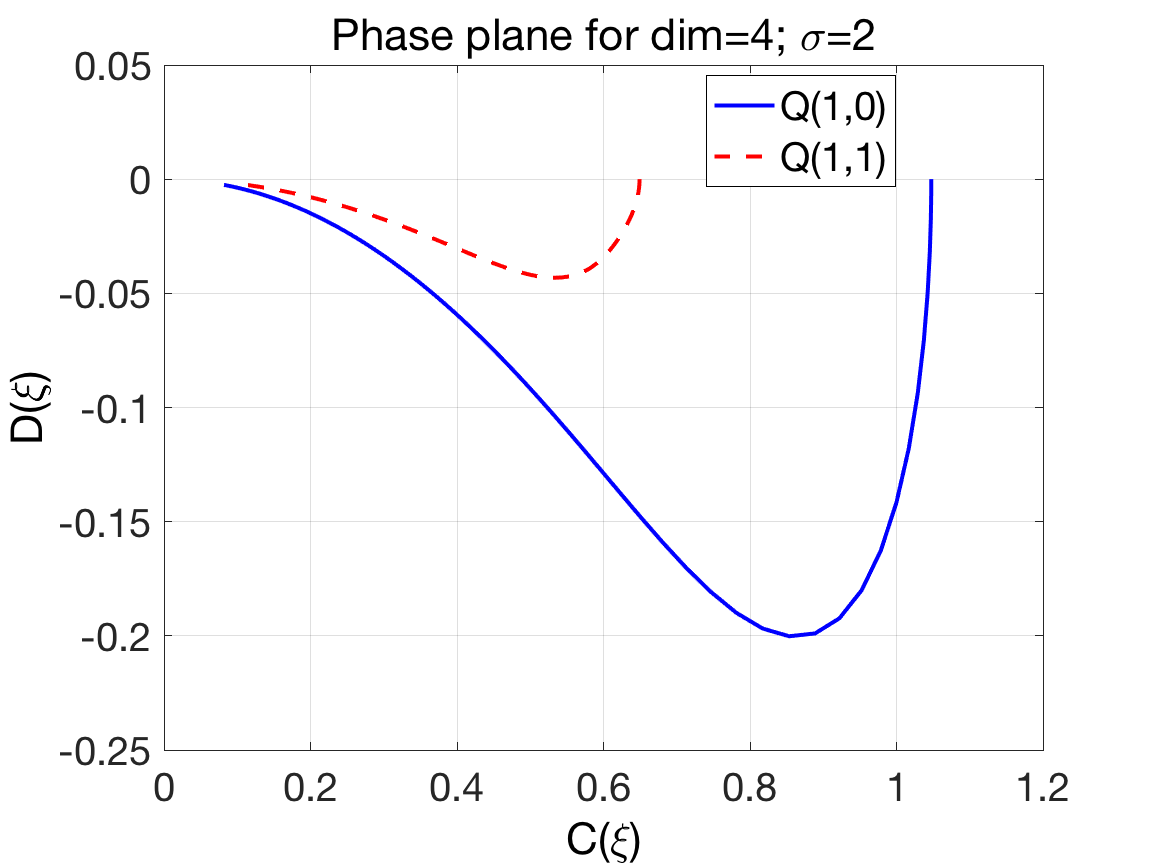}
\caption{ $Q$ profile for $d=4, \sigma=2$. Left: the monotone solution $Q_{1,0}$ (blue) and the first bifurcation solution $Q_{1,1}$ (red). 
Right: the phase plane $(C,D)$, here $D \sim -C^2$.}
\label{Q4d5p}
\end{center}
\end{figure}

{\small
\begin{table}
\begin{tabular}{|c|c|c|c|c|c|}
\hline
\multirow{2}*{$d$}& \multirow{2}*{$\sigma$}  & \multicolumn{2}{c|} {$Q(1,0)$}&\multicolumn{2}{c|} {$Q(1,1)$} \\
\cline{3-6} 
     &     & $a$ & $Q(0)$ &  $a$ & $Q(0)$ \\
\hline
 $3$ & $1$ & $0.60868$ & $1.05512$ &  $0.21180$ & $0.35569$ \\
\hline
 $3$ & $2$ & $1.40186$ & $0.99765$ &  $0.31725$ & $0.60782$ \\
\hline
 $4$ & $1$ & $0.71853$ & $1.12757$ &  $0.22045$ & $0.41841$ \\
\hline
 $4$ & $2$ & $1.57426$ & $1.04749$ &  $0.32613$ & $0.62799$ \\
\hline
\end{tabular}
\linebreak
\linebreak
\caption{The values for $a$ and $Q(0)$ for the monotone solution $Q(1,0)$ and first bifurcation solution $Q(1,1)$. }
\label{Q-a}
\end{table}
}
\begin{figure}
\begin{center}
\includegraphics[width=0.43\textwidth]{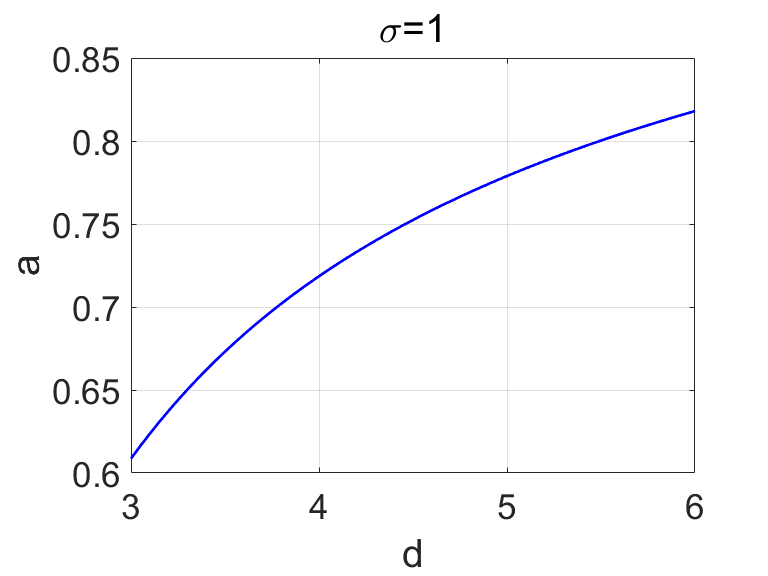}
\includegraphics[width=0.43\textwidth]{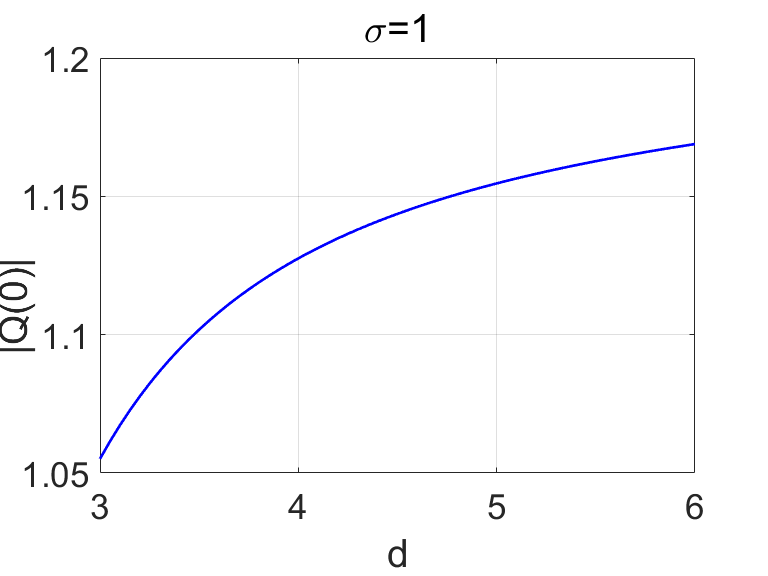}
\caption{ The change of $a$ and $Q(0)$ with respect to the dimension $d$ for $\sigma=1$.}
\label{Q_cubic}
\end{center}
\end{figure}
\begin{figure}
\begin{center}
\includegraphics[width=0.43\textwidth]{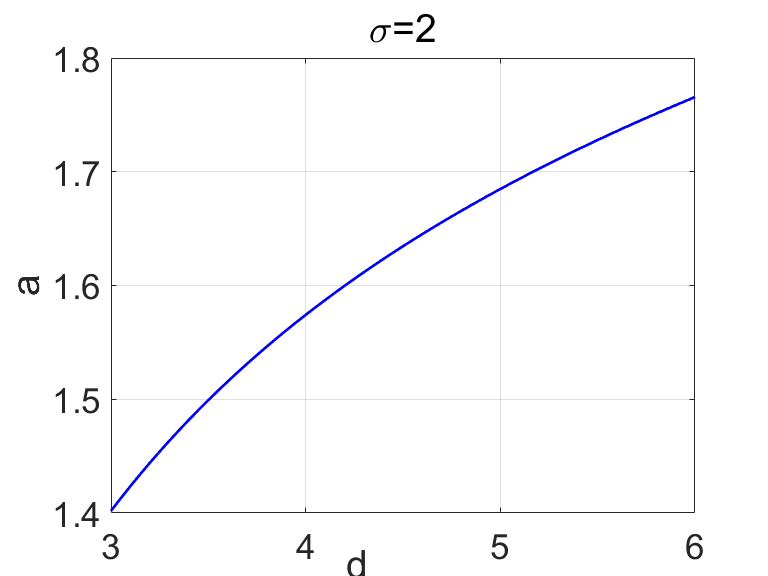}
\includegraphics[width=0.43\textwidth]{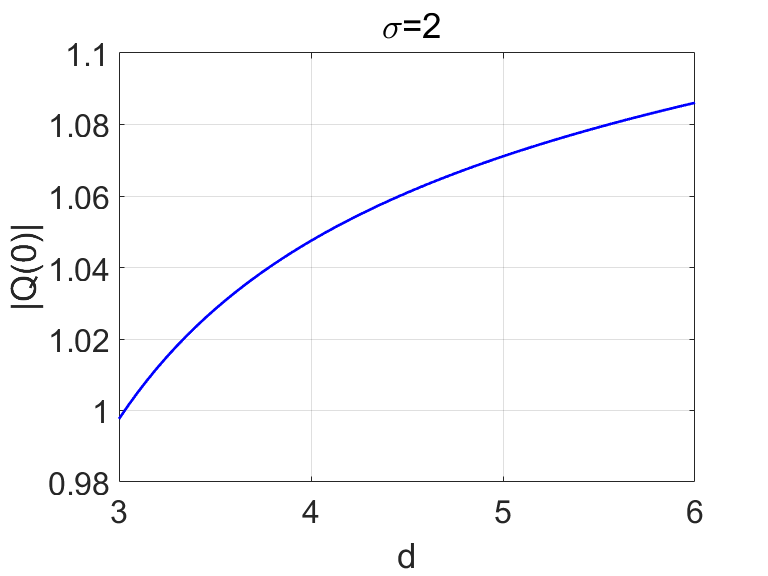}
\caption{ The change of $a$ and $Q(0)$ with respect to the dimension $d$ for $\sigma=2$.}
\label{Q_quintic}
\end{center}
\end{figure}

\subsection{Direct simulation of the blow-up dynamics}
We simulate the blow-up dynamics of the $L^2$-supercritical gHartree equation in the following cases:
\begin{itemize}
\item $3d$ $\sigma=1$ ($s_c=\frac{1}{2}$ - energy-subcritical);
\item $3d$ $\sigma=2$ ($s_c=1$ - energy-critical);
\item $4d$ $\sigma=1$ ($s_c=1$ - energy critical) and
\item $4d$ $\sigma=2$ ($s_c=\frac{3}{2}$ - energy-supercritical). 
\end{itemize}

We take the initial data $5e^{-r^2}$ for the nonlinearity $\sigma=1$, and $2.5e^{-r^2}$ for the nonlinearity $\sigma=2$. Such initial data lead to the negative energy for the case $d=3$ and positive energy for $d=4$.

The numerical results shown for the super-critical case are computed by the finite difference discretization described in Section \ref{Section-DR}. 
We terminate our simulation when $L(t)<10^{-24}$, though for clarity most of the results are presented only up to $L(t) \sim 10^{-20}$.

For the $L^2$-supercritical case ($s_c>0$), it is easy to follow the analysis for $a(\tau)$ in \cite{LePSS1988} (see also \cite{F2015}) to obtain the blow-up rate, since again, the nonlinear term plays no role in the asymptotic analysis. Thus, the blow-up rate is predicted to be
\begin{align}\label{gHartree blowup rate}
L_{\mathrm{pred}}(t) \approx (2a(T-t))^{\frac{1}{2}}.
\end{align}

Notice that if $Q(\xi)$ is the profile obtained when solving \eqref{Q eqn}, and $\tilde{Q}(\eta)$ is another profile with $\|\tilde{Q}\|_{L^{\infty}}= |v_0(0)|$ (e.g., we know that $Q(0)\approx 1.05$ for $\d=3$ and $\sigma=1$ from Table \ref{Q-a} but we set $\|\tilde{Q}\|_{L^{\infty}}=|v_0(0)|=1$ in our numerical simulation of the blow-up dynamics), then from \eqref{self-similar sol}, we have a family of $Q$ profiles
\begin{align}\label{Q rescale}
Q(\xi)= \left(\dfrac{Q(0)}{\tilde{Q}(0)} \right) \tilde{Q} \left( \xi \left( \dfrac{Q(0)}{\tilde{Q}(0)} \right)^{{\sigma}} \right).
\end{align} 
Consequently, the corresponding rescaled $\tilde{a}$ satisfies
\begin{align}\label{a rescale}
\tilde{a}=a\left[\dfrac{|v_0(0)|}{Q(0)} \right]^{2{\sigma}}.
\end{align} 
For simplicity, we still use $Q$ to represent the family of $Q$ profiles, adding ``up to scaling". 

In Figures \ref{Profi 3d5p} - \ref{4d5p data} we provide the following results from our simulations: blow-up profiles, blow-up rate $\ln(L)$ vs. $\ln(T-t)$, the value of $a(\tau)$ depending on time $\tau$, the distance between $Q$ and $v$ in time $\tau$, i.e., $$\| v(\tau) -Q \|_{L^{\infty}_{\xi}},$$ and the relative error between the numerical results and the predicted rate, i.e., $$\mathcal{E}_{rel} = \left| \left(\dfrac{L(t)}{\sqrt{2\tilde{a} (T-t)}}\right)^{\frac{1}{\sigma}}-1 \right|,$$
where $\tilde{a}=a(\tau_{\mathrm{end}})$ is the value of $a$ when we terminate our numerical simulation. In the numerical computation, the relative error $\mathcal{E}_{rel}$ is actually calculated by
$$\mathcal{E}_{rel} = \left| \exp\left(\dfrac{1}{2\sigma}(2\ln(L)-\ln(T-t)-\ln2 -\ln \tilde{a}) \right) -1 \right|$$
to make every term moderate (not to large or small), and thus, increase the accuracy.
\begin{figure}[ht]
\begin{center}
\includegraphics[width=0.32\textwidth]{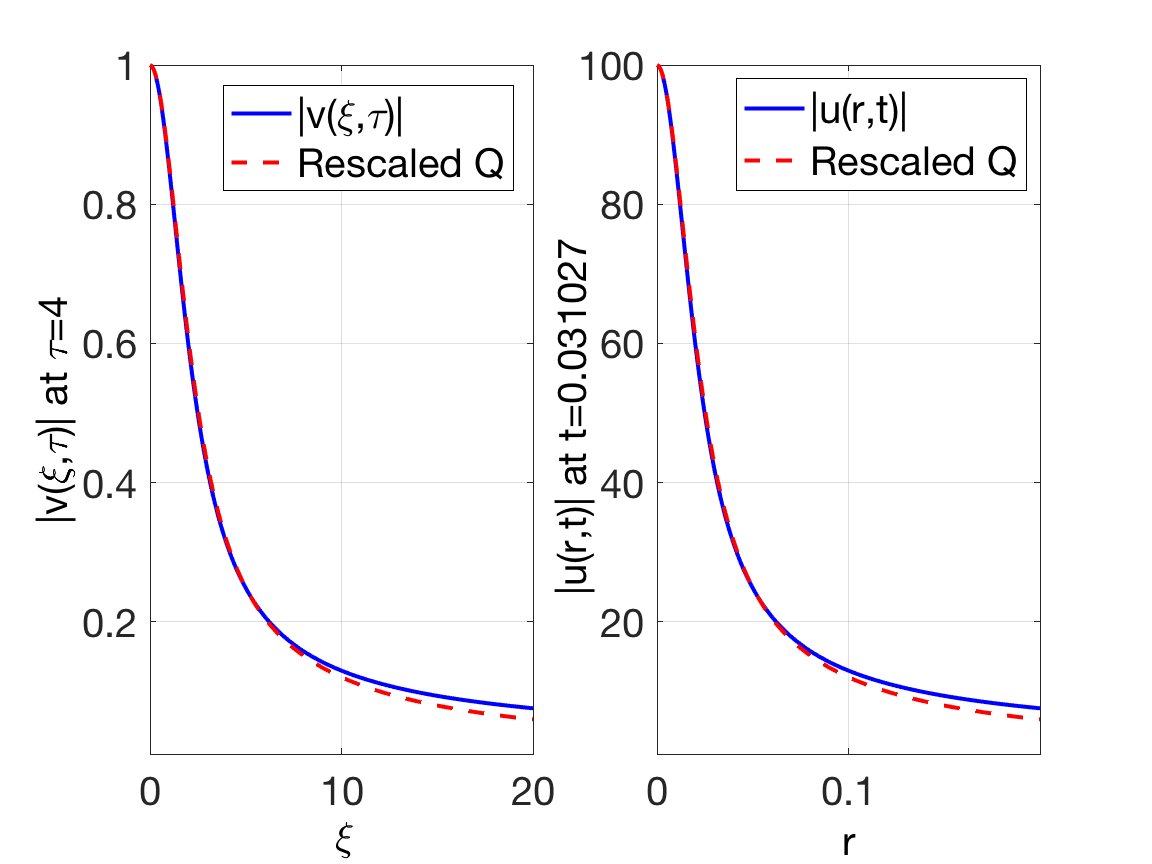}
\includegraphics[width=0.32\textwidth]{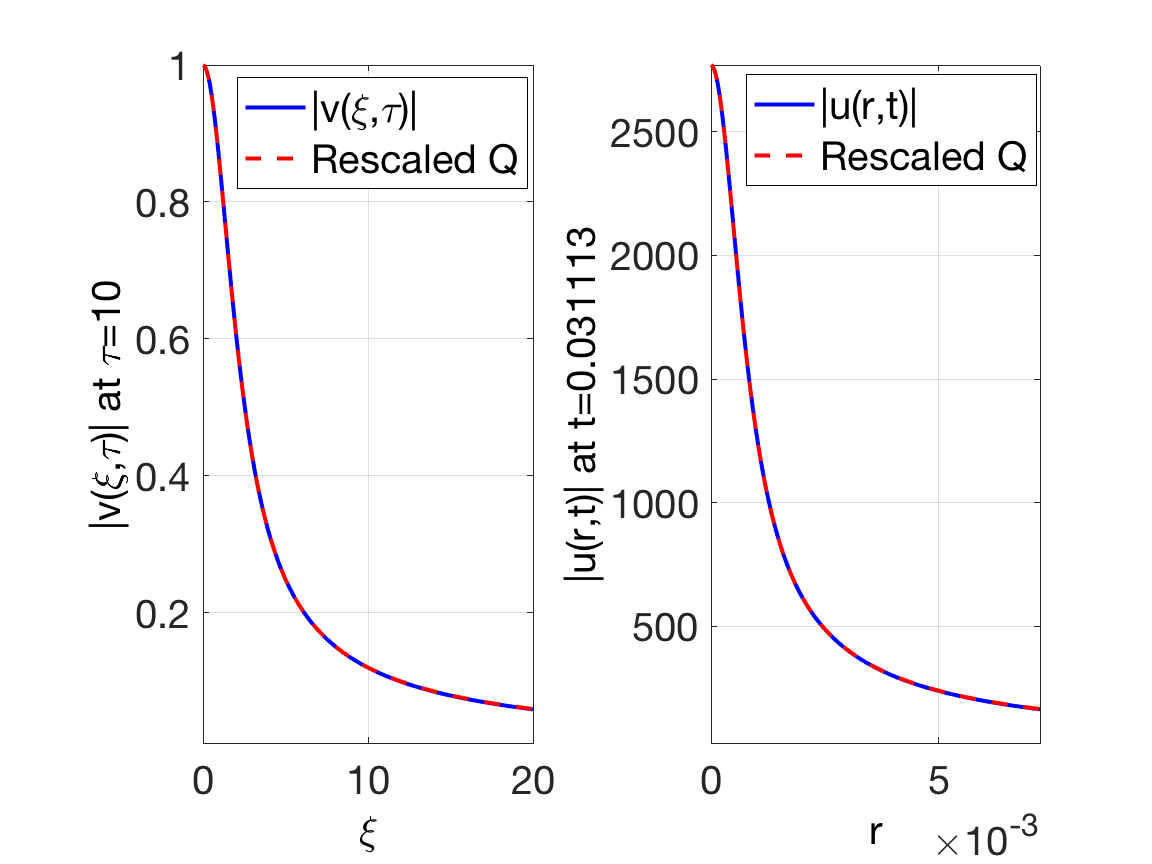}
\includegraphics[width=0.32\textwidth]{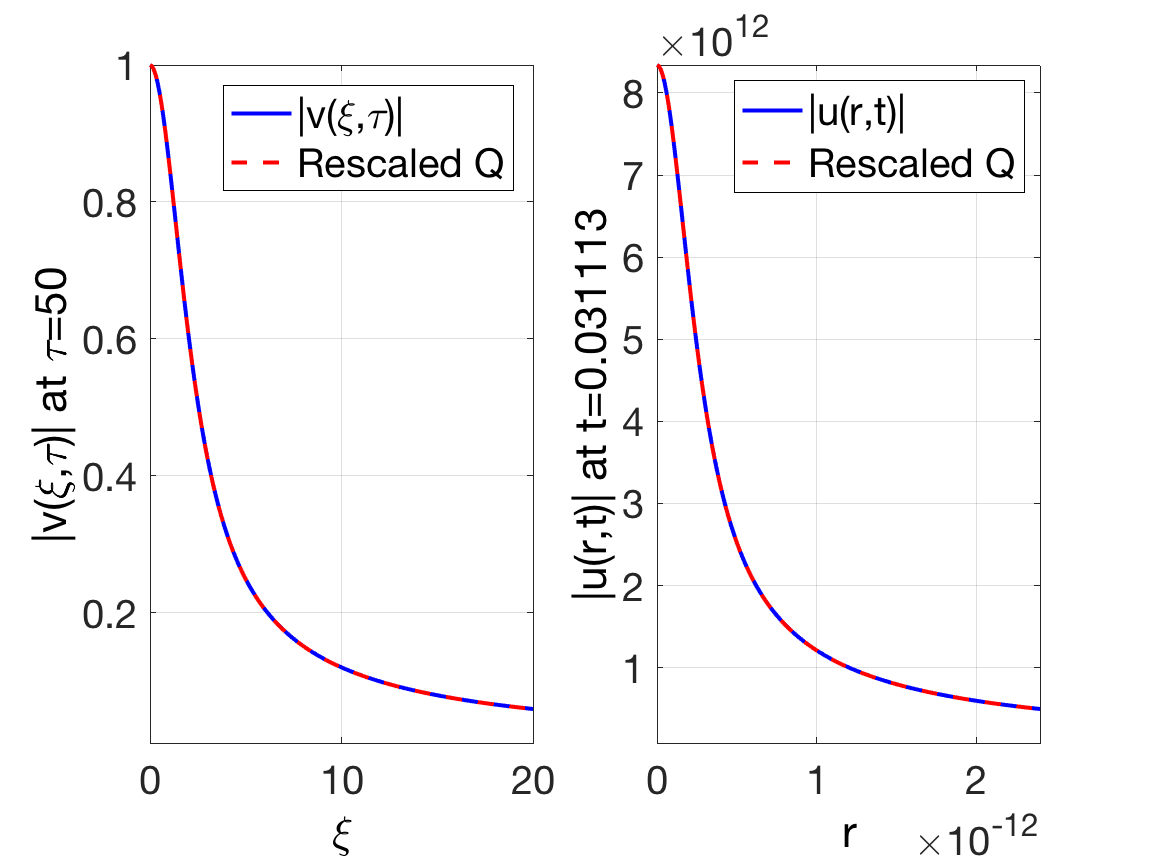}
\includegraphics[width=0.32\textwidth]{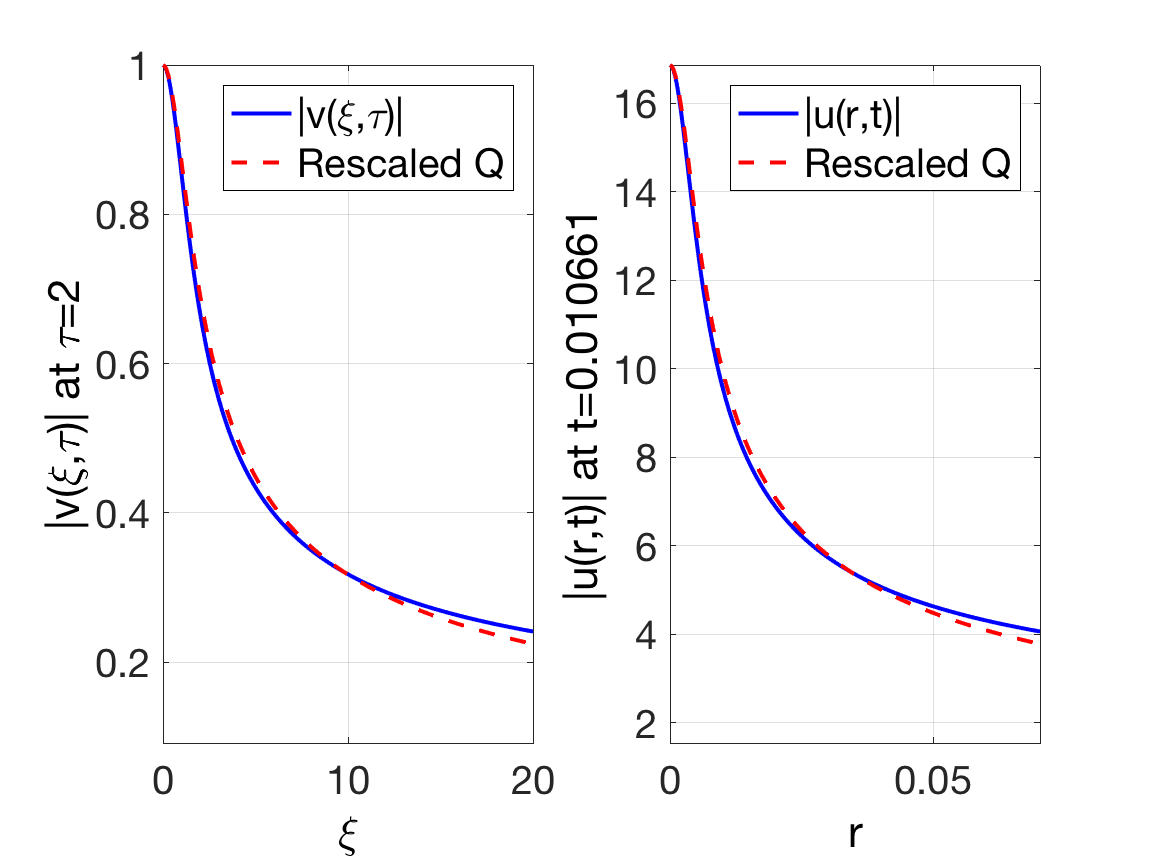}
\includegraphics[width=0.32\textwidth]{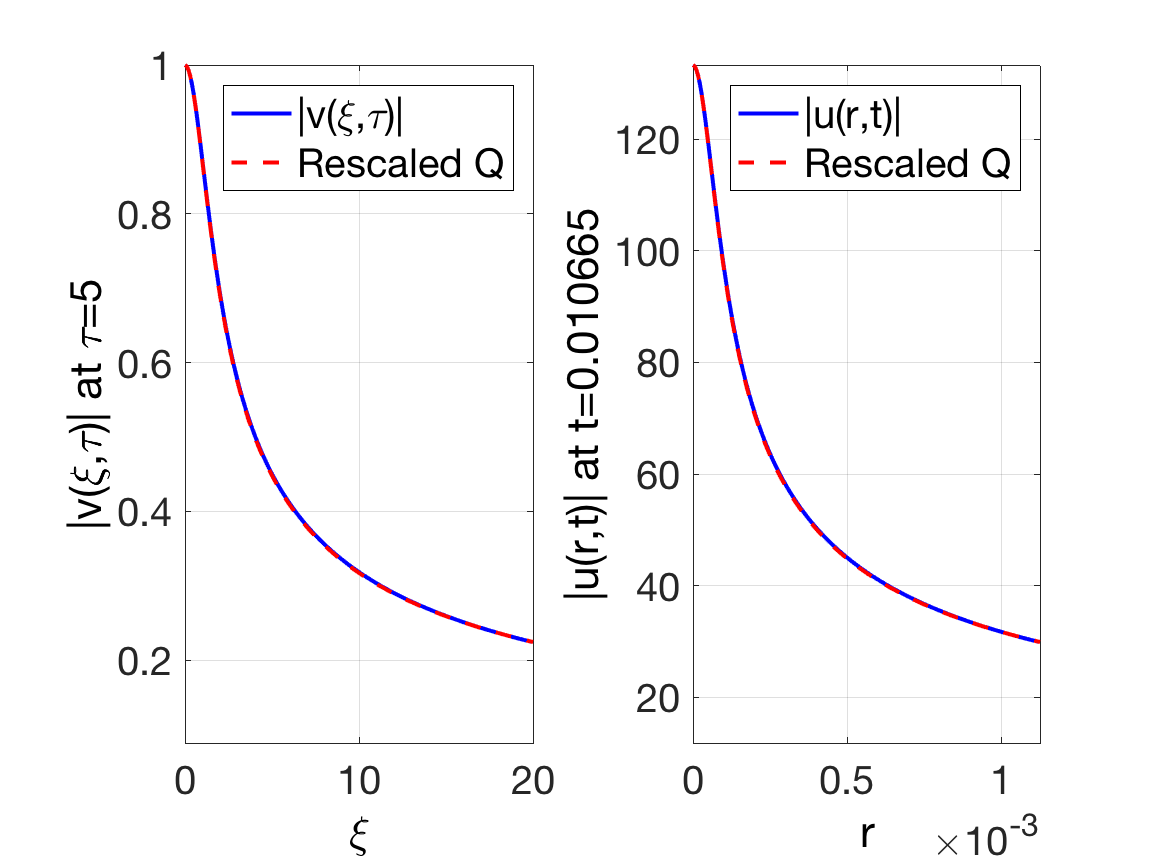}
\includegraphics[width=0.32\textwidth]{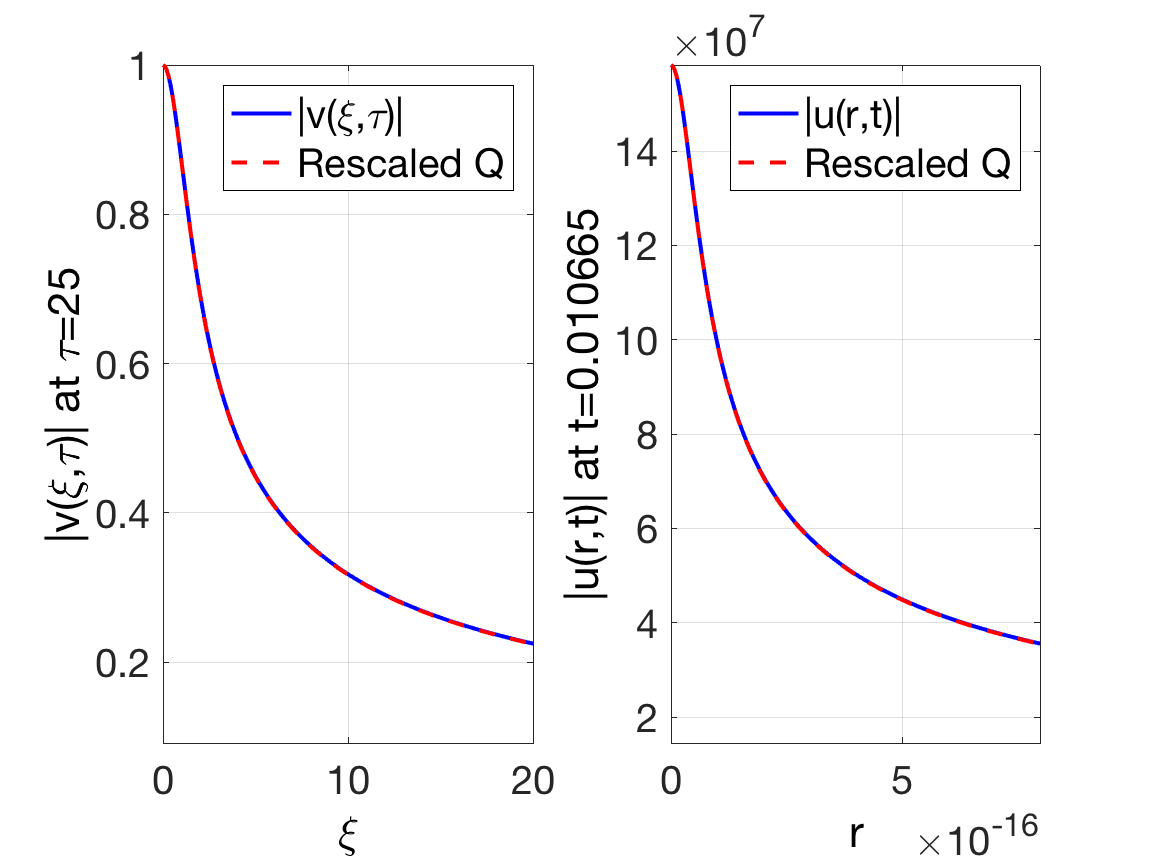}
\caption{ Blow-up profiles for the case $d=3$: $\sigma=1$(top) and $\sigma=2$(bottom) at different times.}
\label{Profi 3d5p}
\end{center}
\end{figure}

\begin{figure}[ht]
\begin{center}
\includegraphics[width=0.32\textwidth]{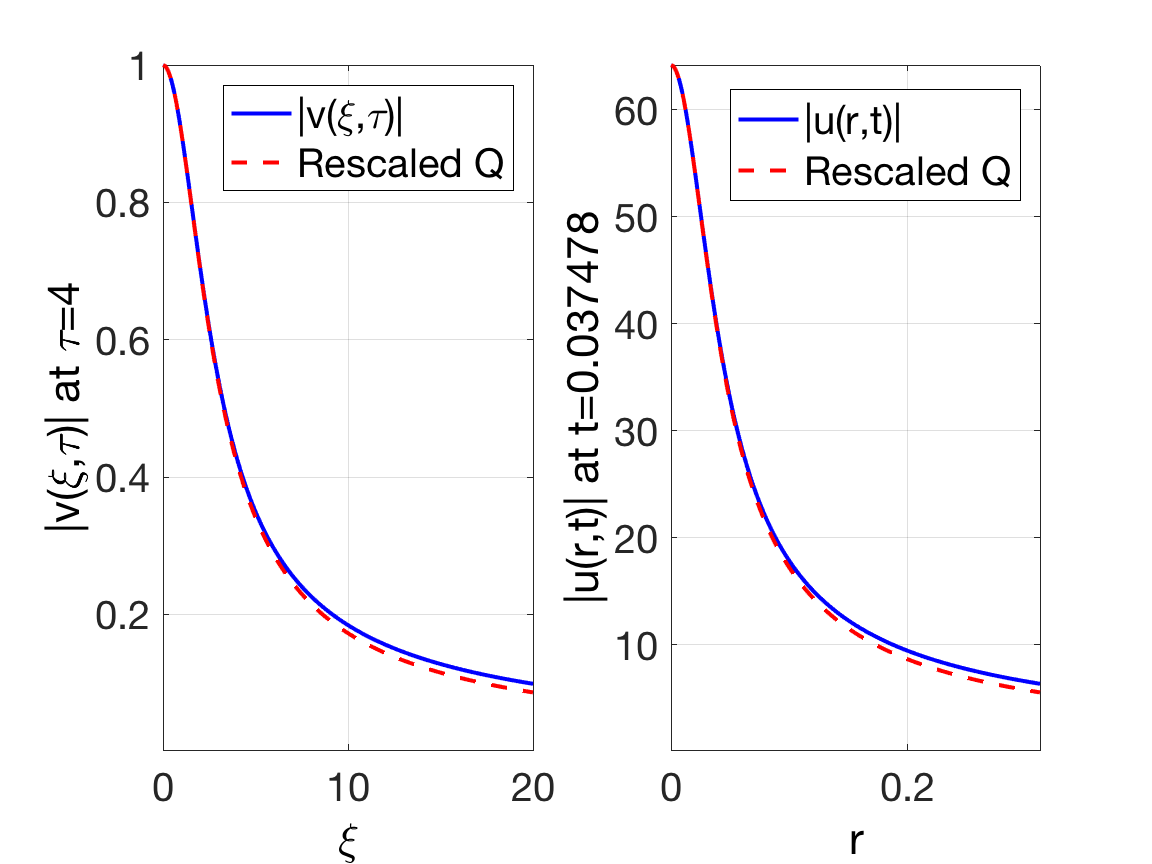}
\includegraphics[width=0.32\textwidth]{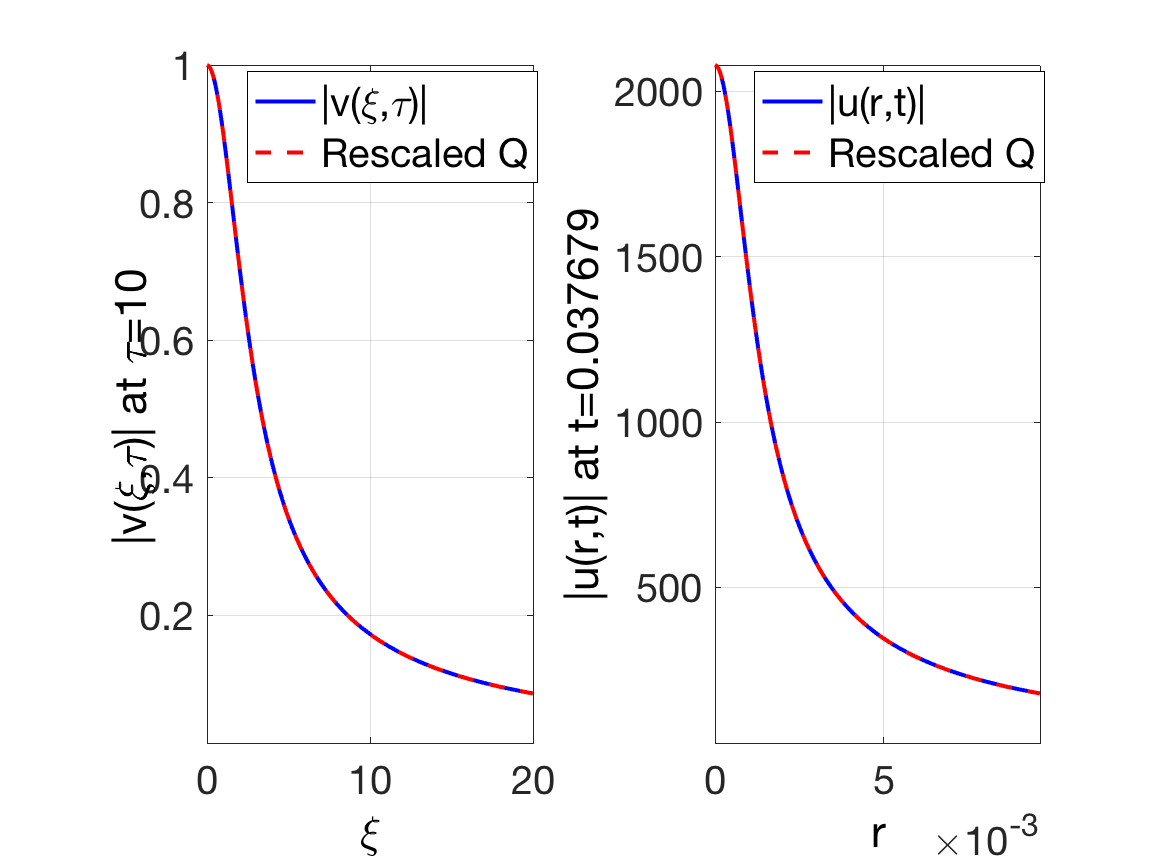}
\includegraphics[width=0.32\textwidth]{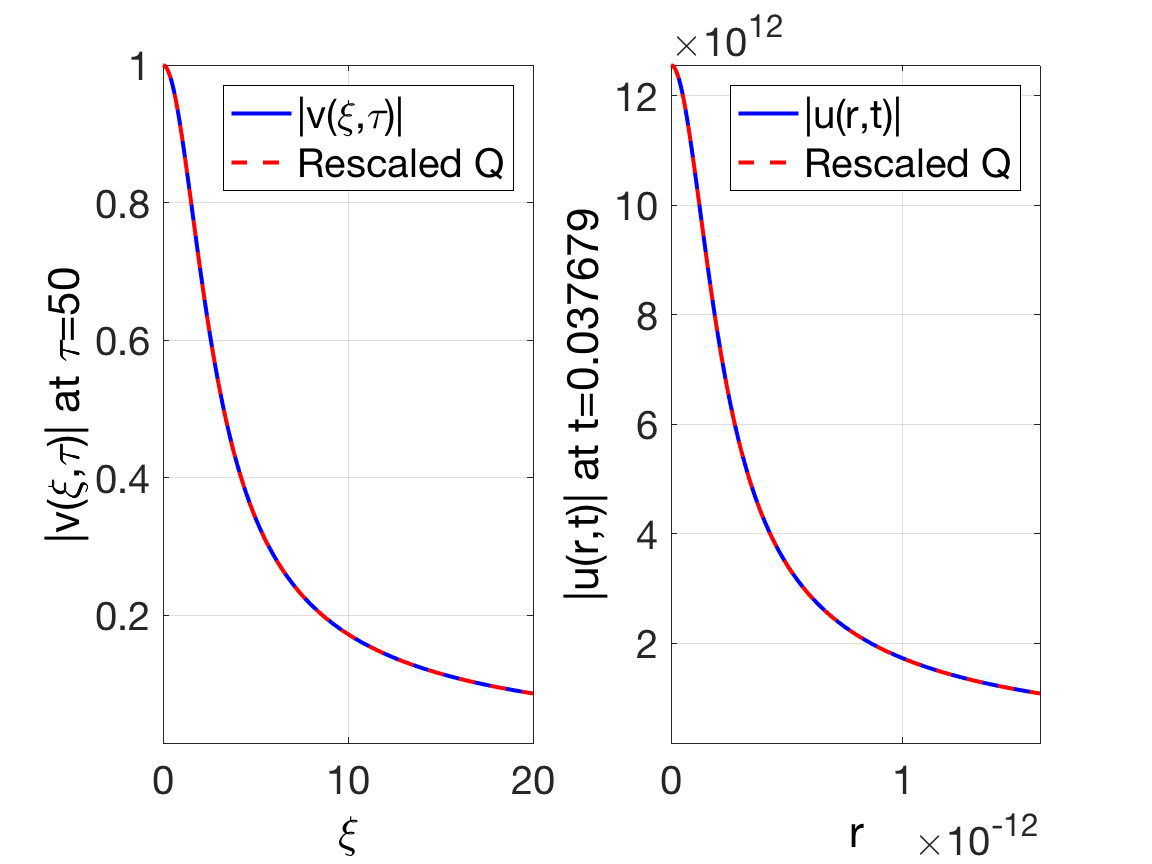}
\includegraphics[width=0.32\textwidth]{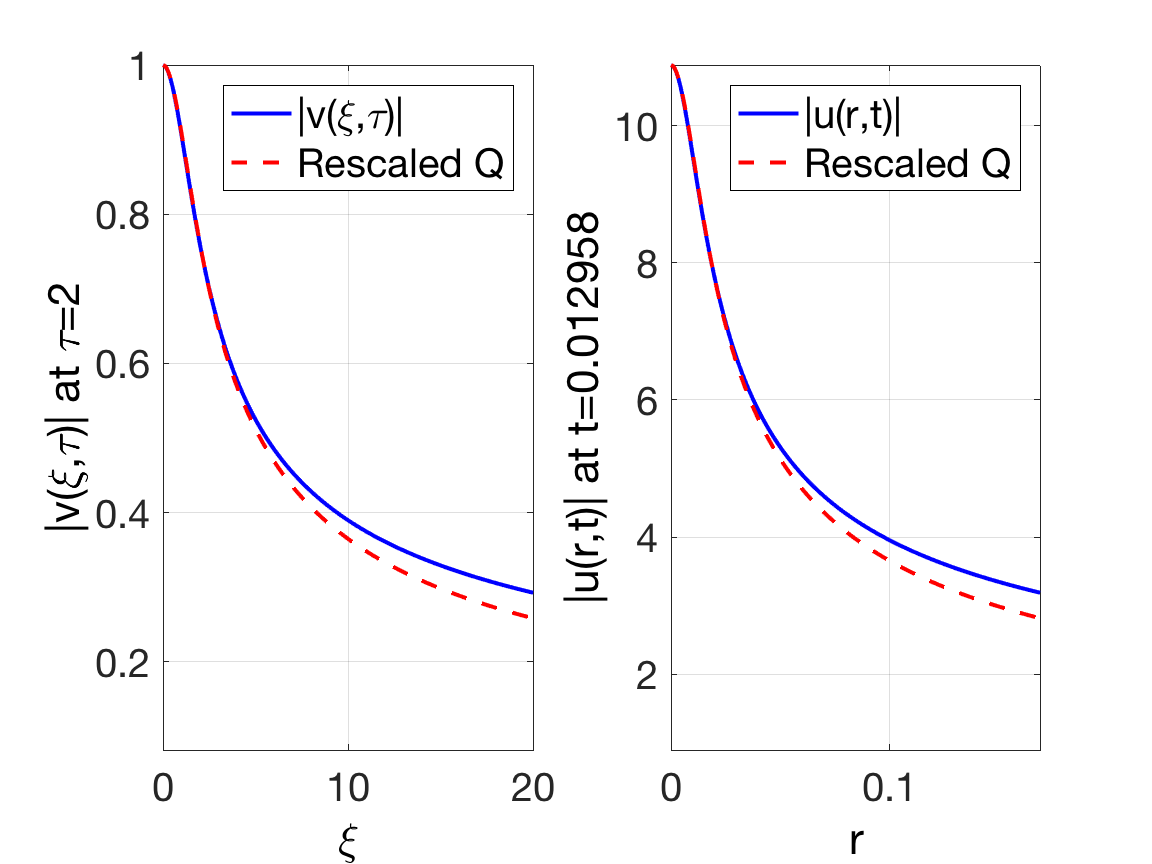}
\includegraphics[width=0.32\textwidth]{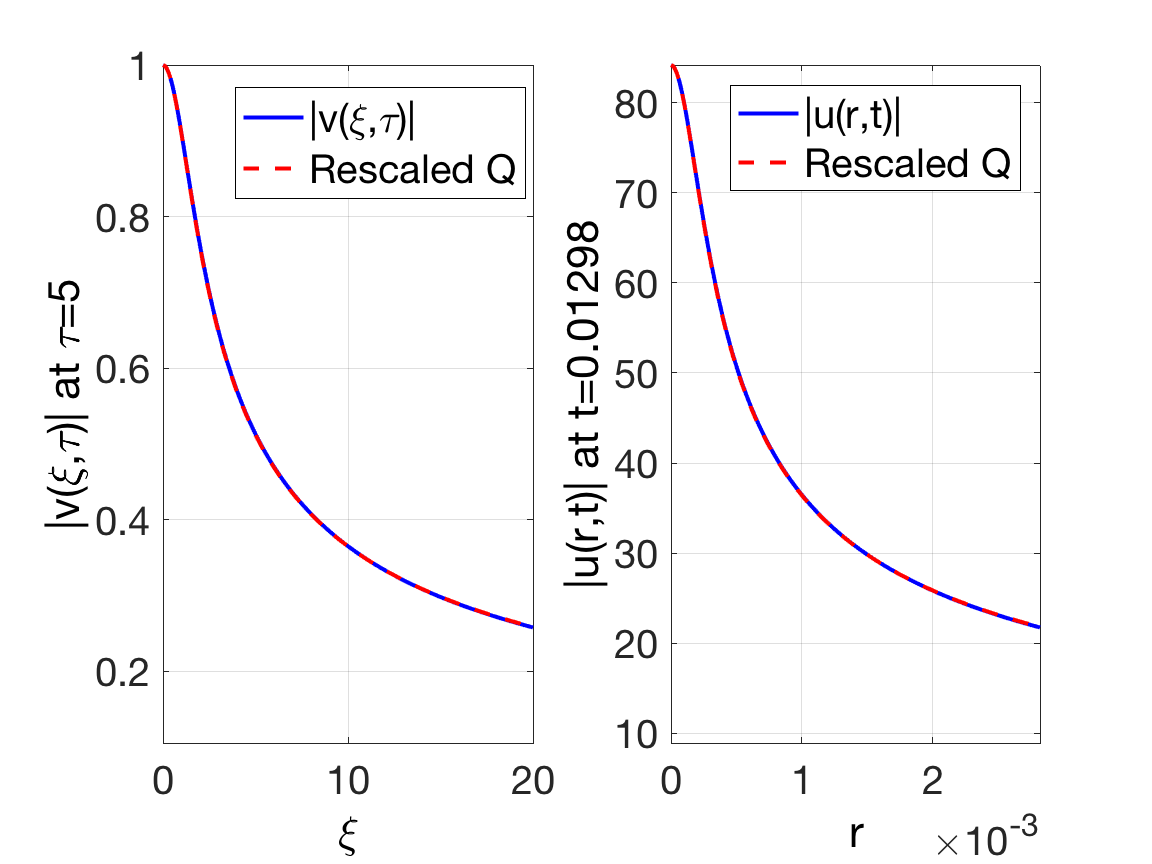}
\includegraphics[width=0.32\textwidth]{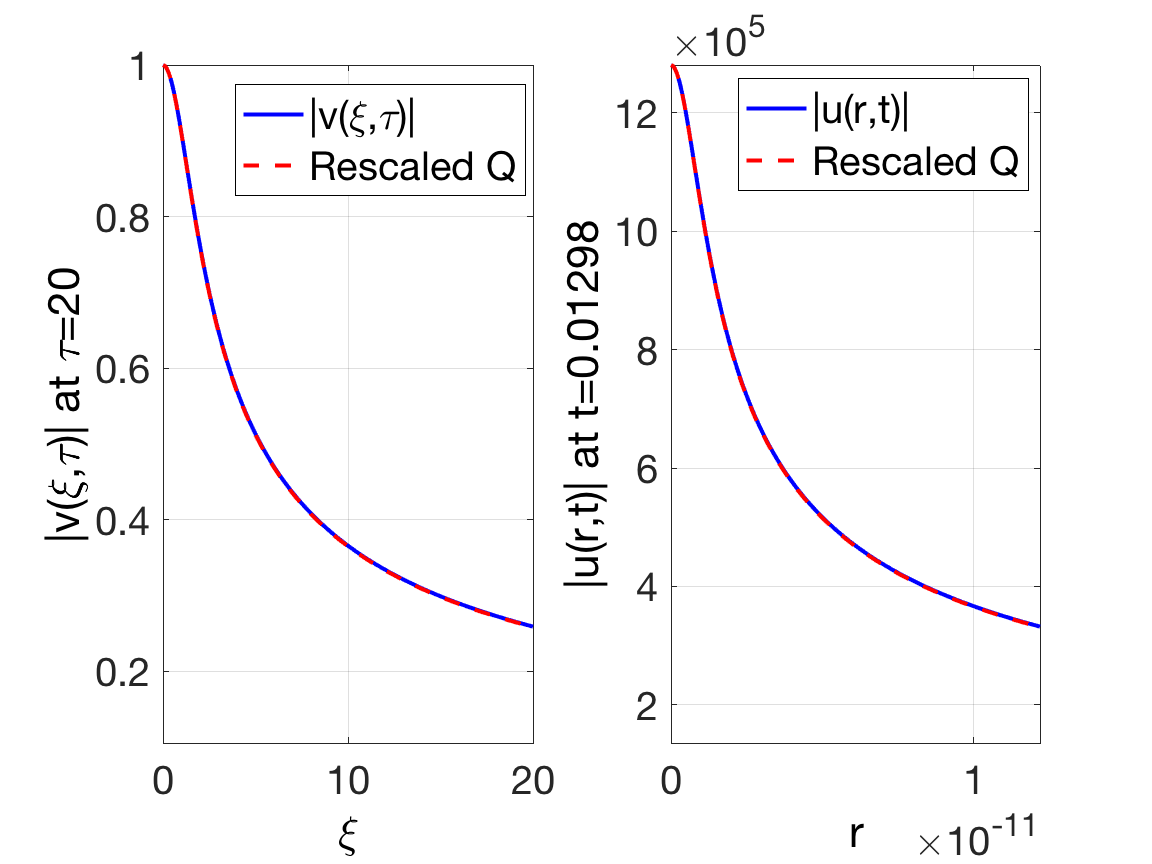}
\caption{ Blow-up profiles for the case $d=4$: $\sigma =1$(top) and $\sigma=2$(bottom) at different times.}
\label{Profi 4d5p}
\end{center}
\end{figure}

Figures \ref{Profi 3d5p} to \ref{Profi 4d5p} show the convergence of the blow-up solution to the profile $Q_{1,0}$, which we obtained in the previous subsection. 

Figures \ref{3d5p data} to 
Figure \ref{4d5p data} show: 
\begin{itemize}
\item[(1)] the slope of $\ln(L)$ vs. $\ln(T-t)$ is approximately $\frac{1}{2}$ (in all supercritical cases of gHartree that we considered);
\item[(2)] the parameter $a(\tau)$ goes to a constant as $\tau \rightarrow \infty$;
\item[(3)] the distance between the rescaled solution $v(\xi,\tau)$ and $Q(\xi)$ with respect to the time $\tau$ by the $L^{\infty}$ norm is small;
\item[(4)] the relative error between the value $|u(0,t)|$ and the predicted blow-up rate $L_{\mathrm{pred}}(t)$. 
\end{itemize}


One can observe that our numerical simulations match the predicted $Q_{1,0}$ blow-up profile really well and the square root rate for $L(t)$ also has a nearly perfect fitting; computationally-wise the matching is on the order of $10^{-3}$ and $10^{-5}$, respectively. This confirms the Conjecture \ref{C:2}.
\begin{figure}[ht]
\begin{center}
\includegraphics[width=0.41\textwidth]{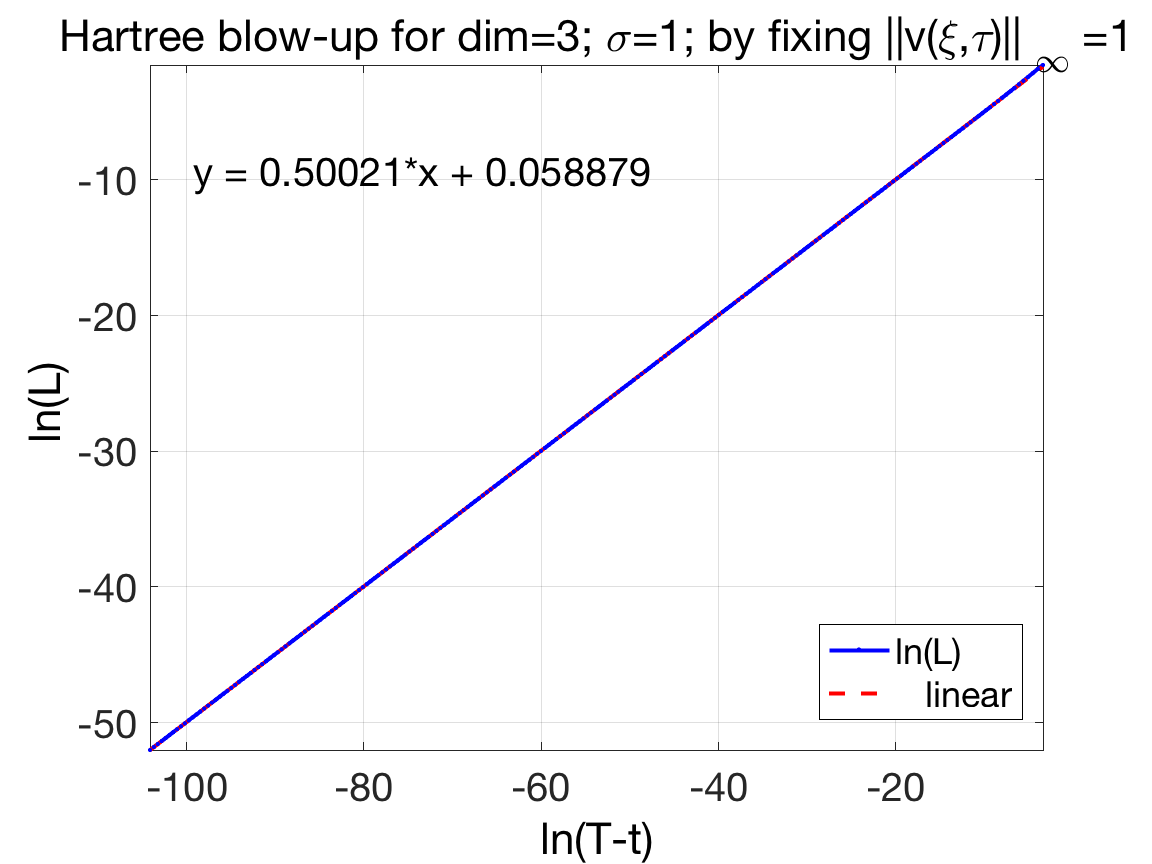}
\includegraphics[width=0.41\textwidth]{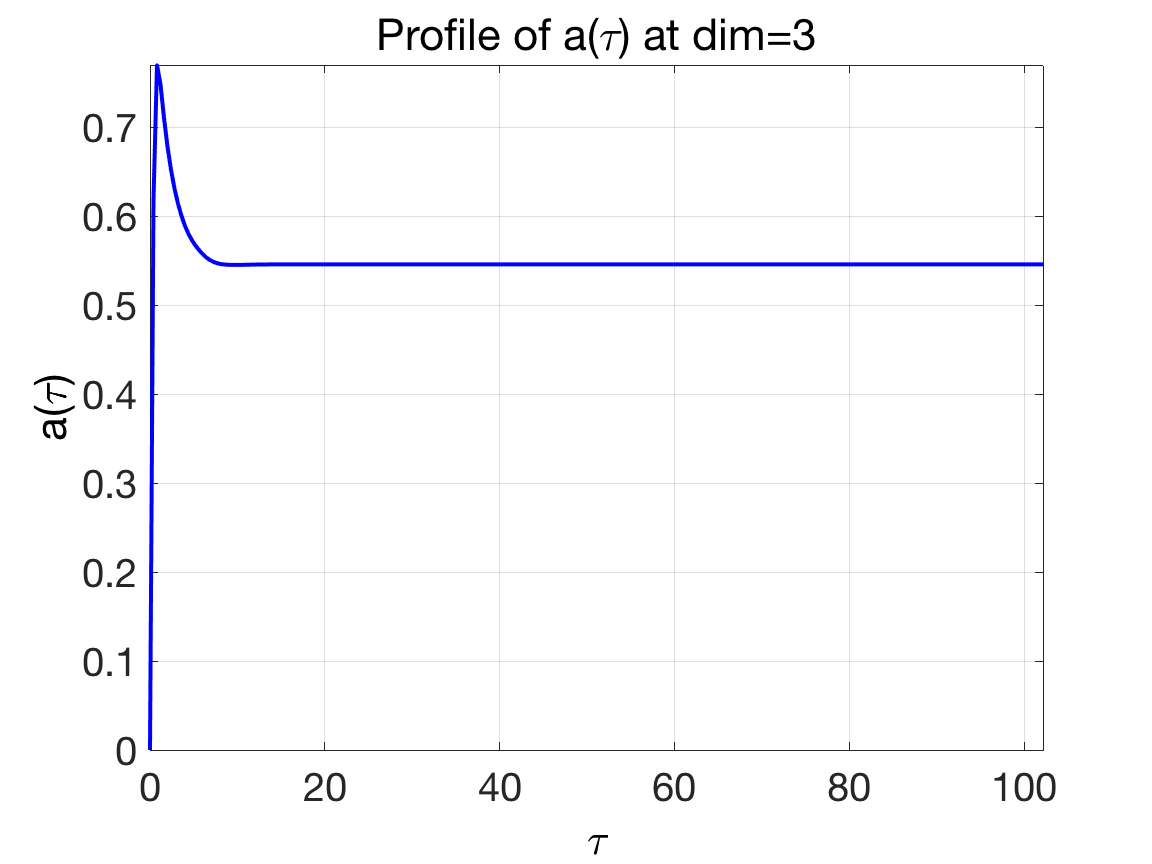}
\includegraphics[width=0.41\textwidth]{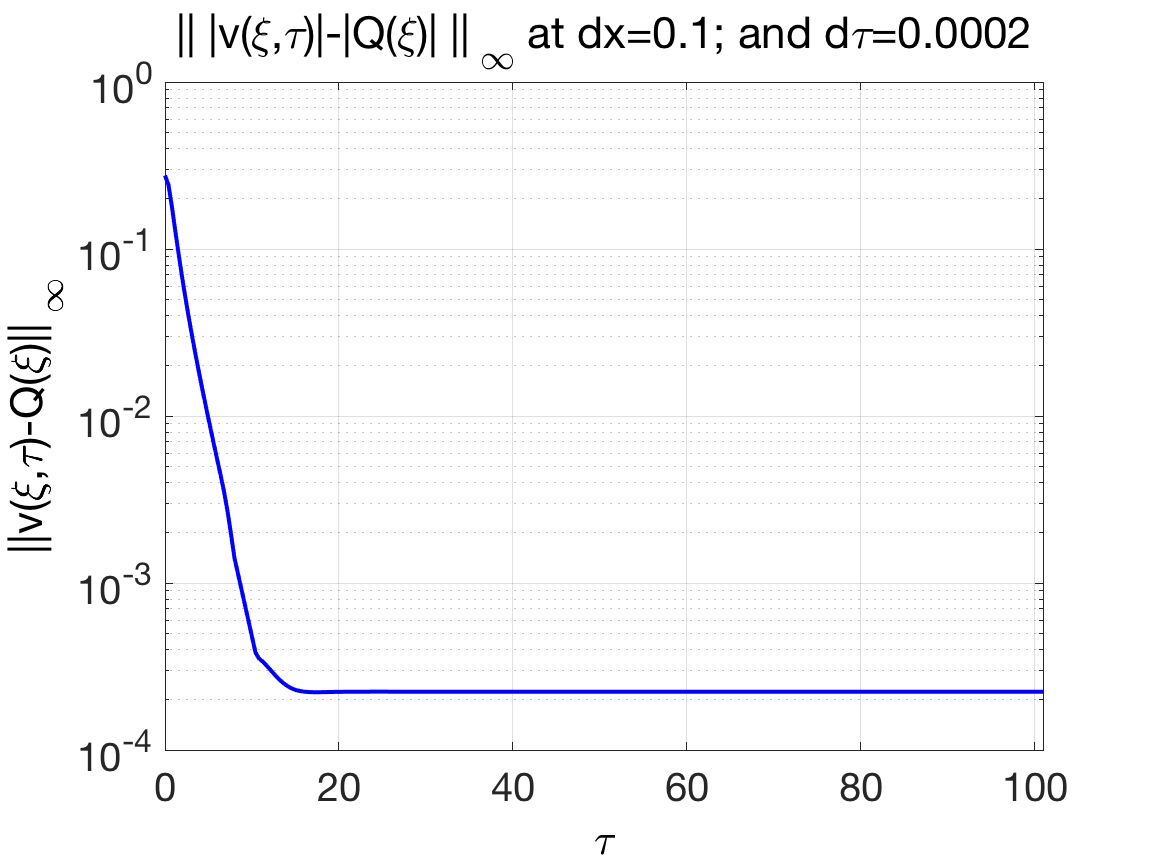}
\includegraphics[width=0.41\textwidth]{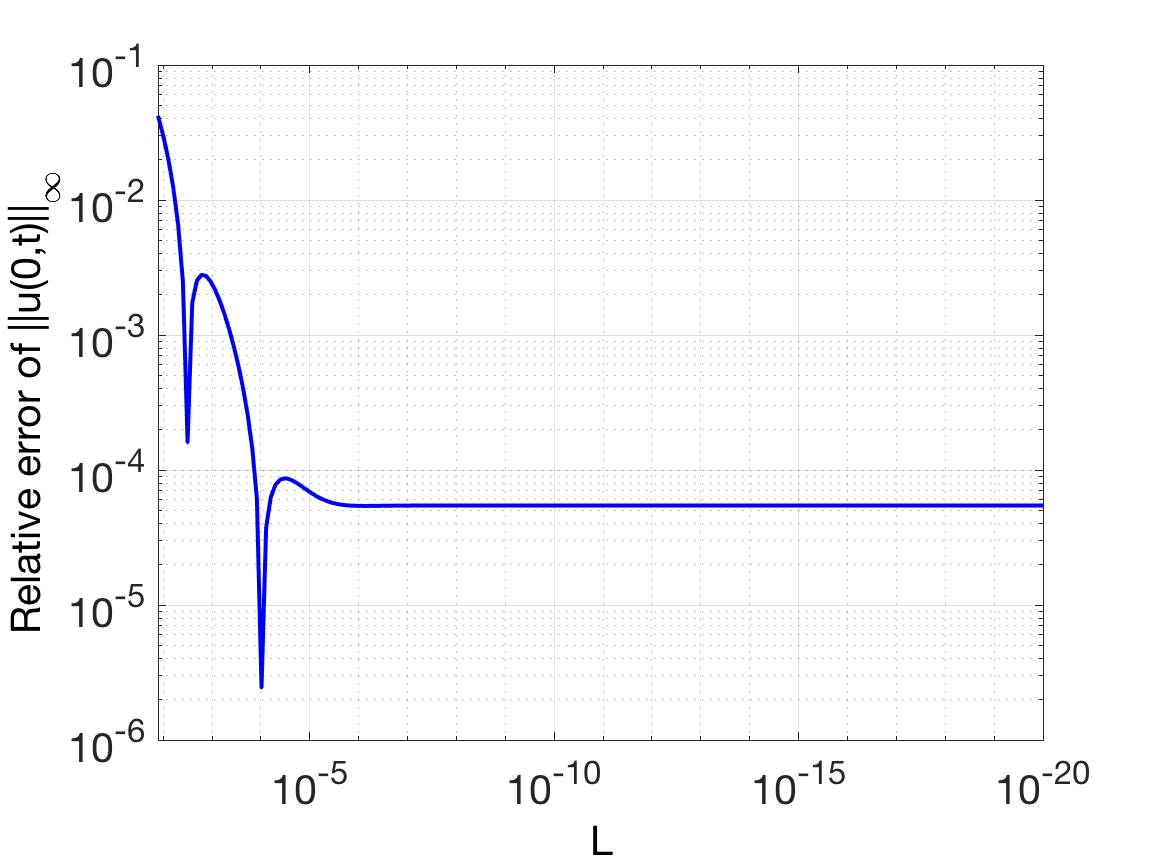}
\includegraphics[width=0.41\textwidth]{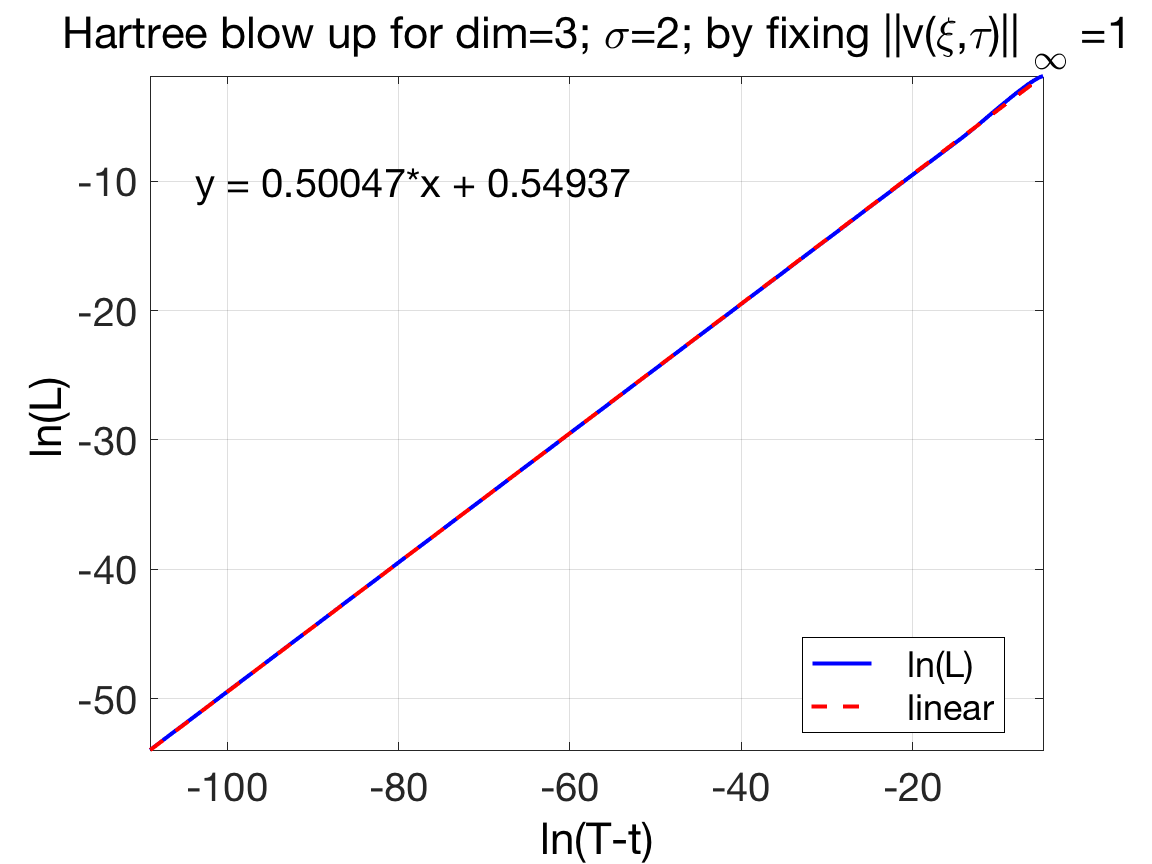}
\includegraphics[width=0.41\textwidth]{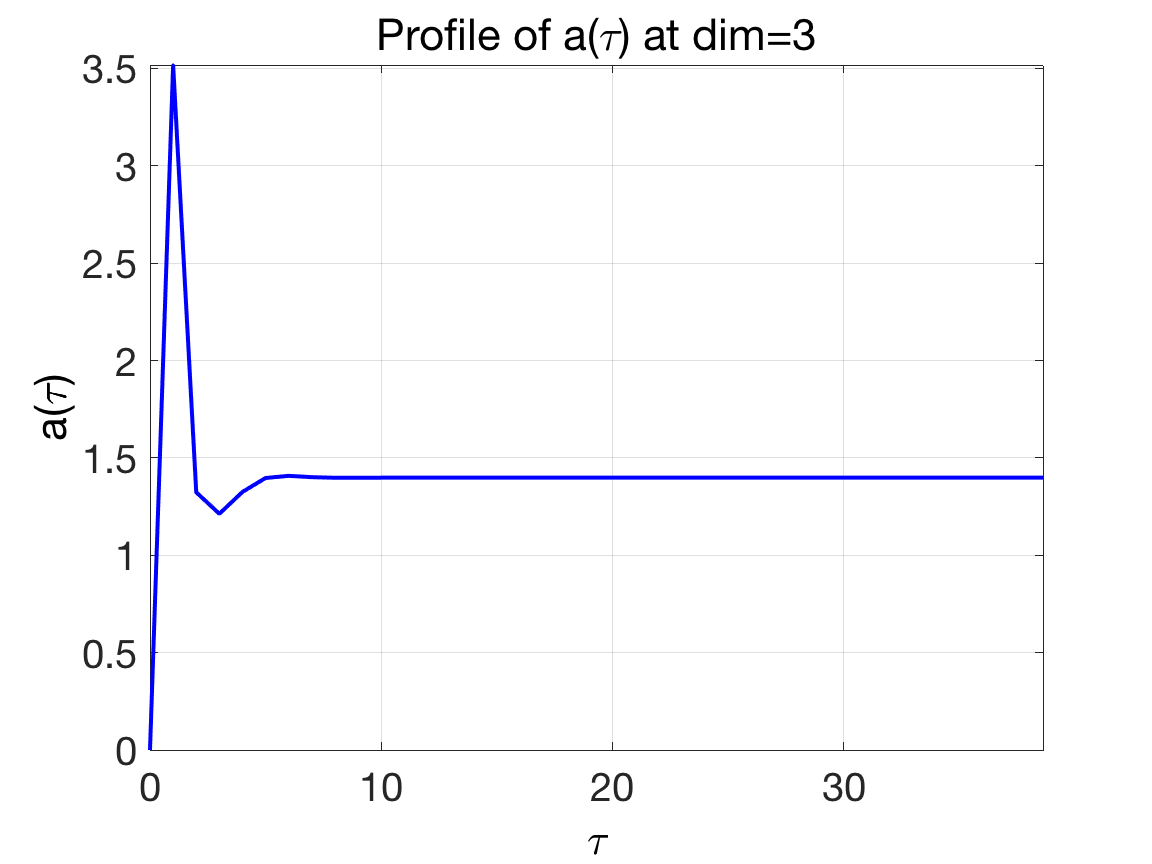}
\includegraphics[width=0.41\textwidth]{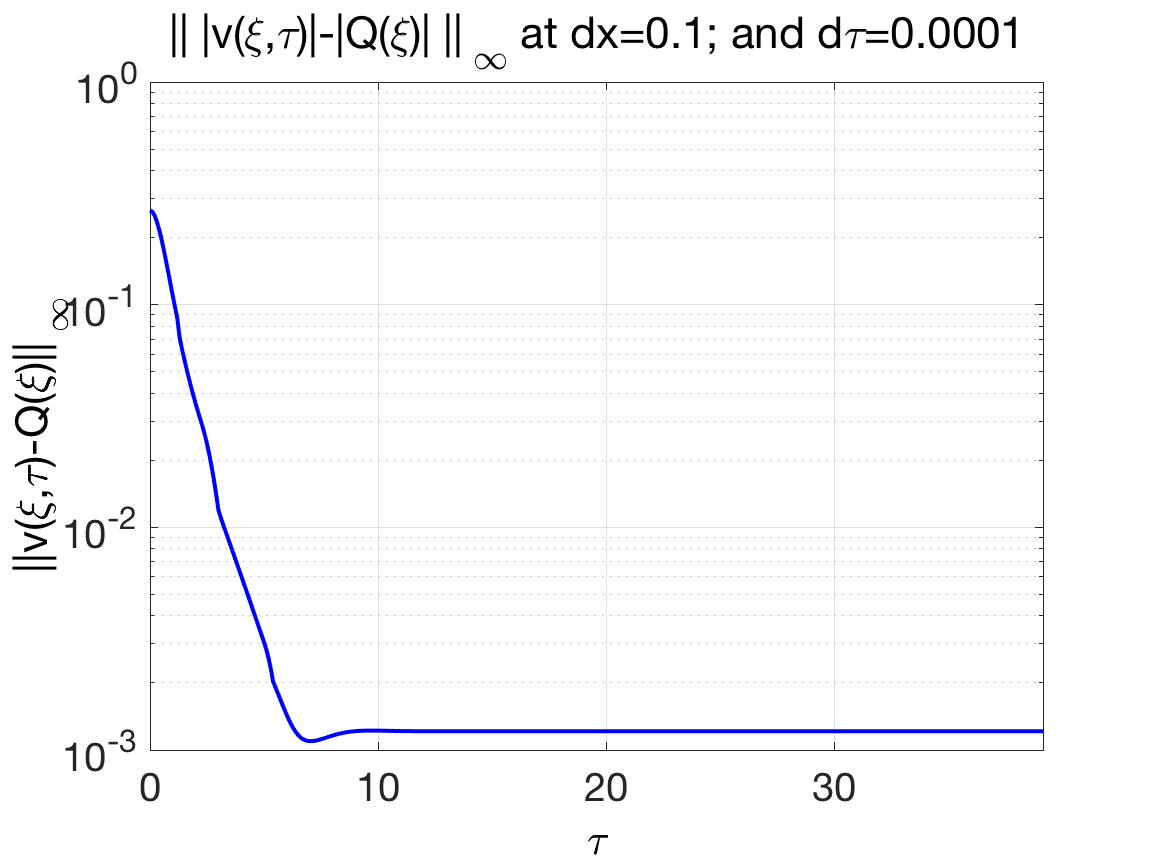}
\includegraphics[width=0.41\textwidth]{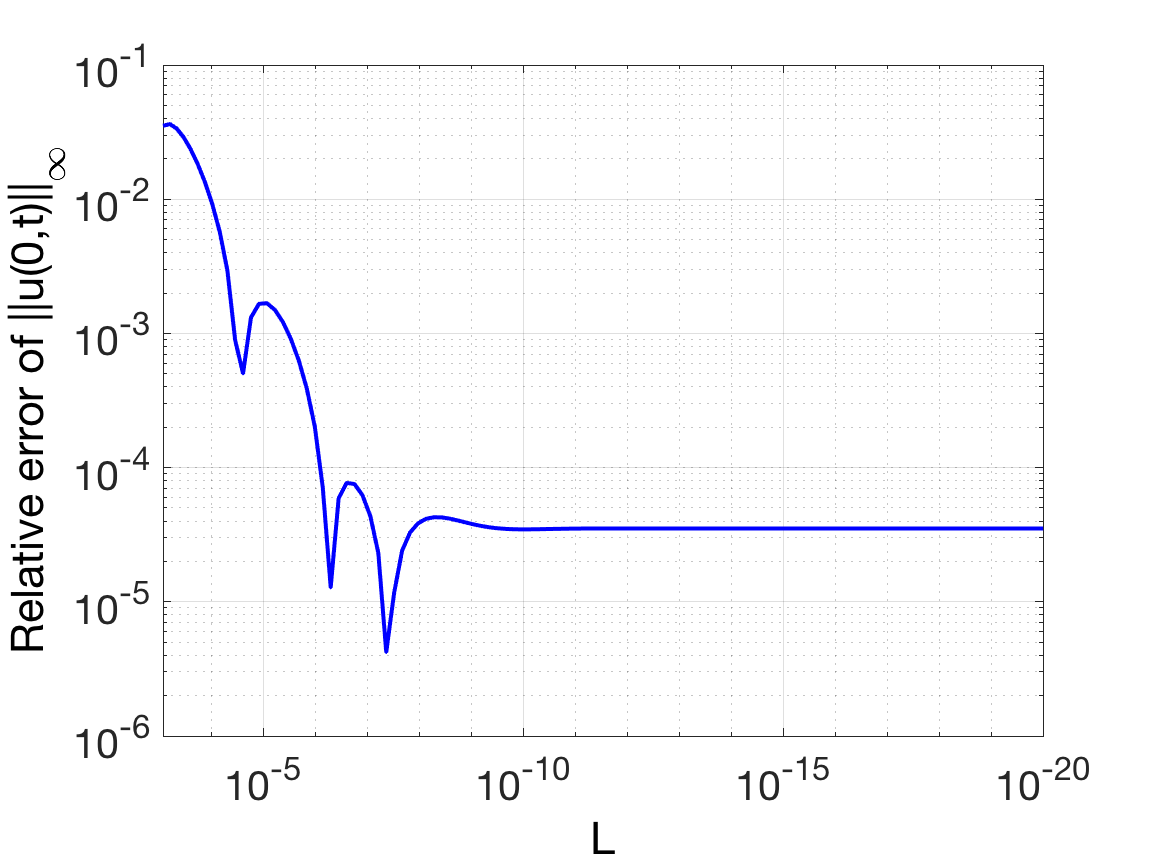}
\caption{ Blow-up data for the 3d cubic (top half) and 3d quintic(bottom half) cases: $\ln(T-t)$ vs. $\ln(L)$ (upper left), the quantity $a(\tau)$ (upper right), the distance between $Q$ and $v$ on time $\tau$ ($\| v(\tau) -Q \|_{L^{\infty}_{\xi}}$) (lower left), the relative error with respect to the predicted blow-up rate (lower right). }
\label{3d5p data}
\end{center}
\end{figure}

\begin{figure}[ht]
\begin{center}
\includegraphics[width=0.41\textwidth]{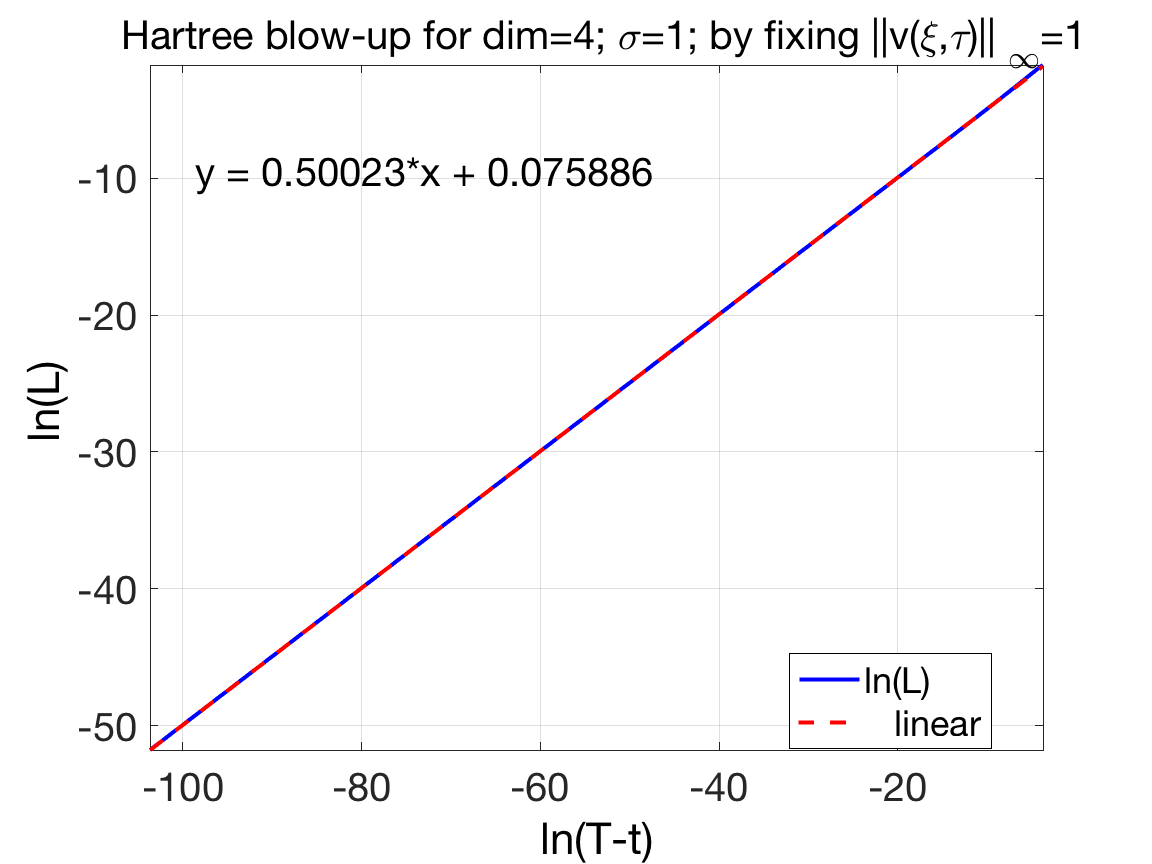}
\includegraphics[width=0.41\textwidth]{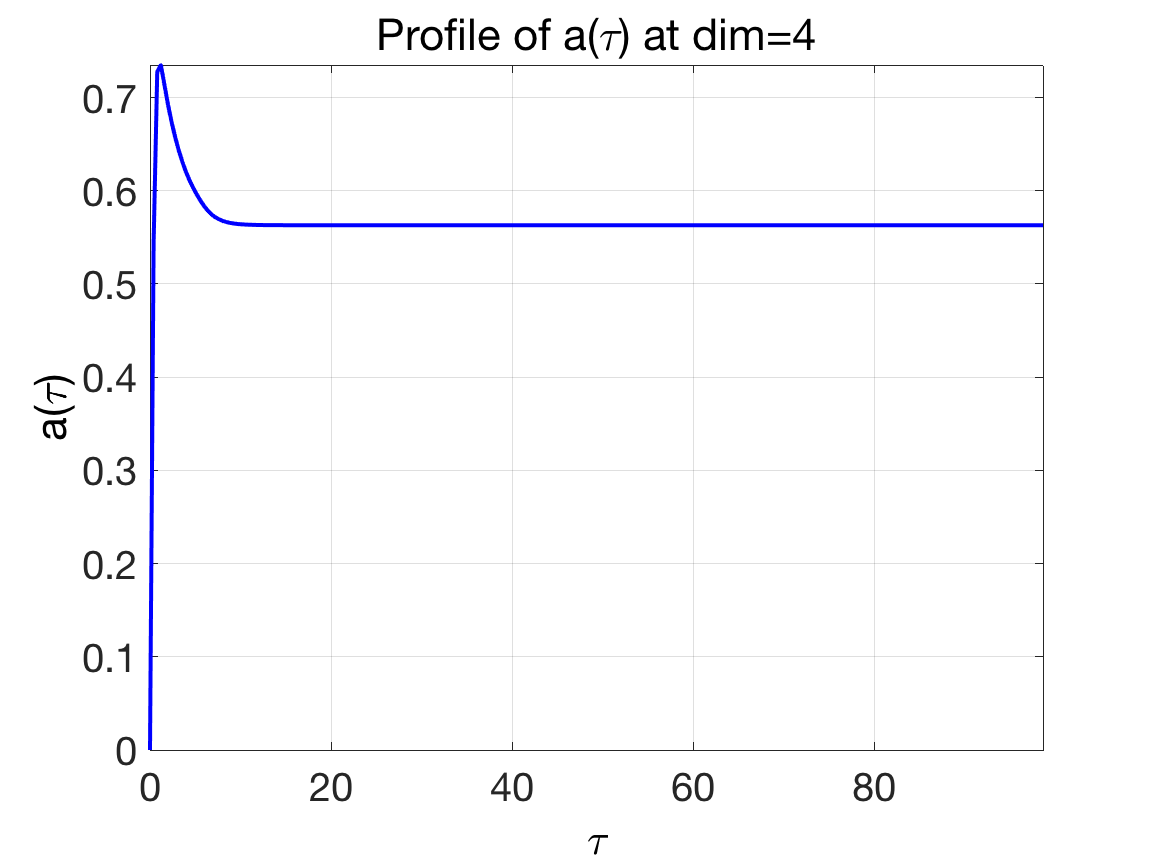}
\includegraphics[width=0.41\textwidth]{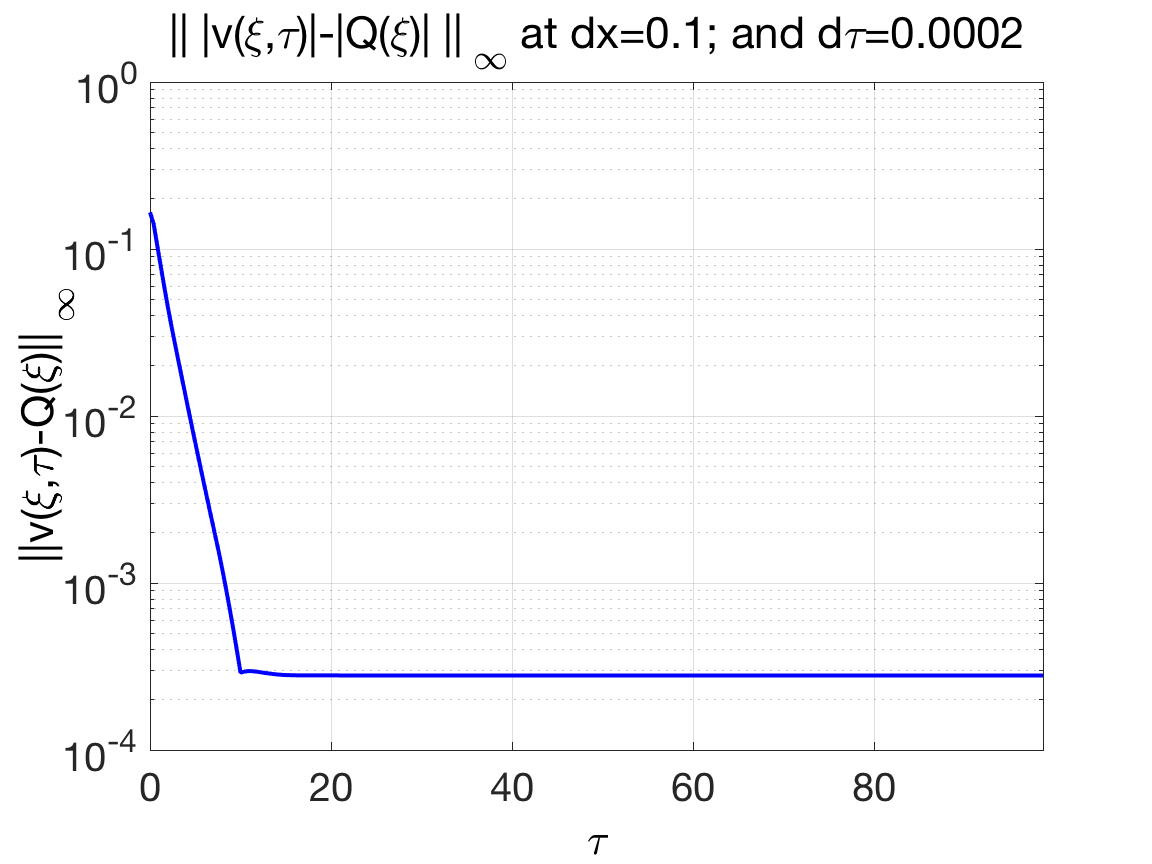}
\includegraphics[width=0.41\textwidth]{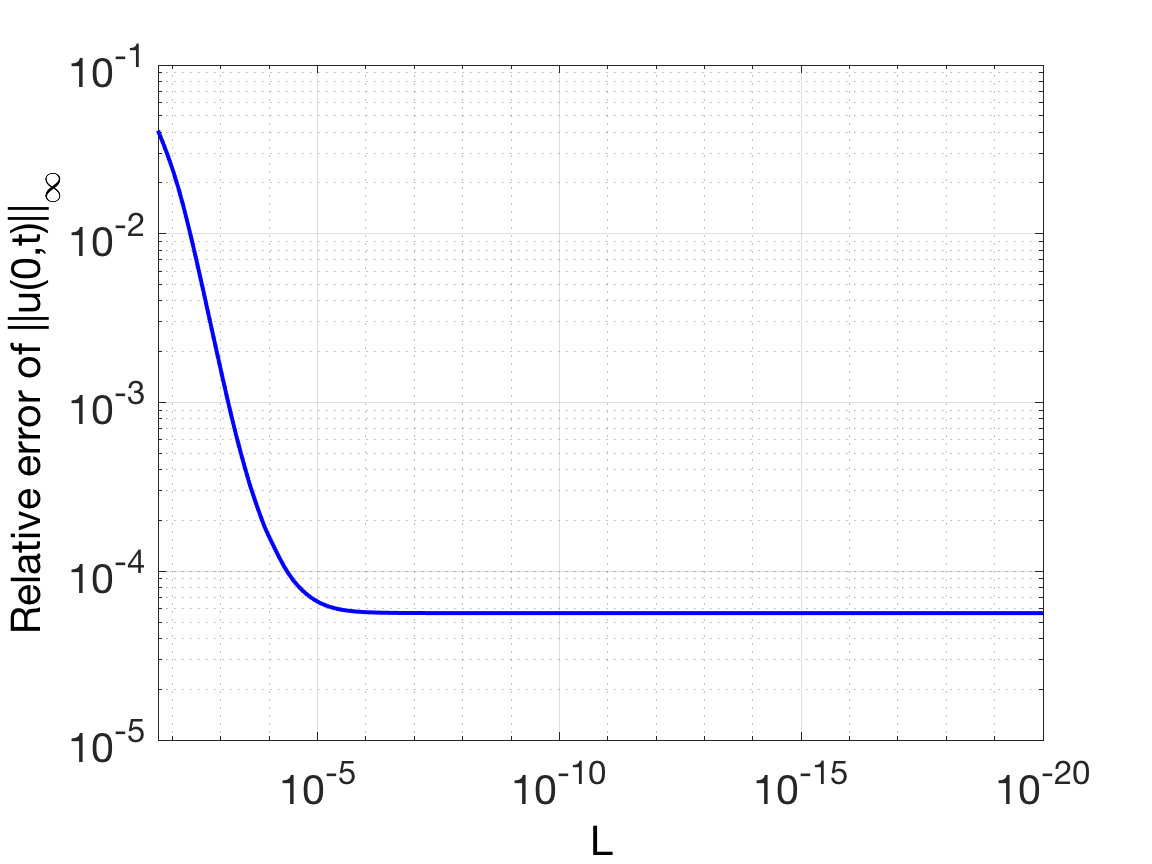}
\includegraphics[width=0.41\textwidth]{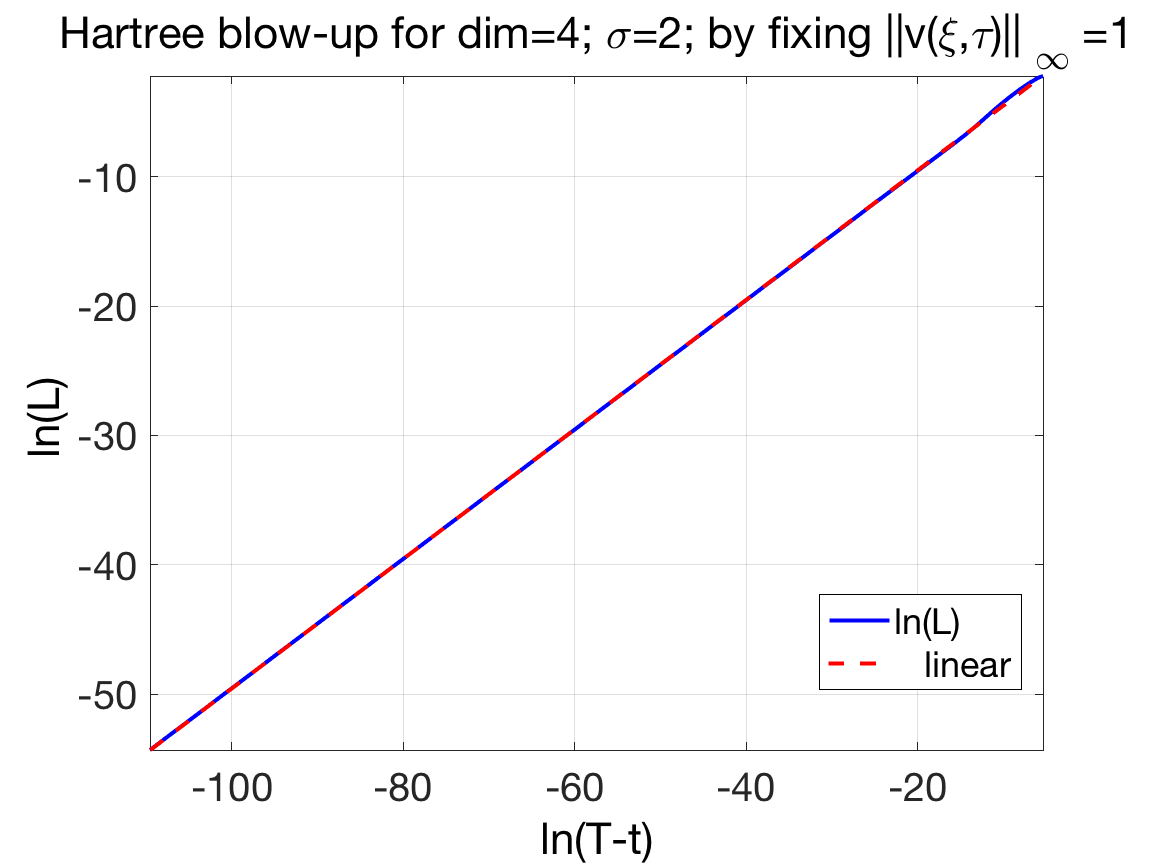}
\includegraphics[width=0.41\textwidth]{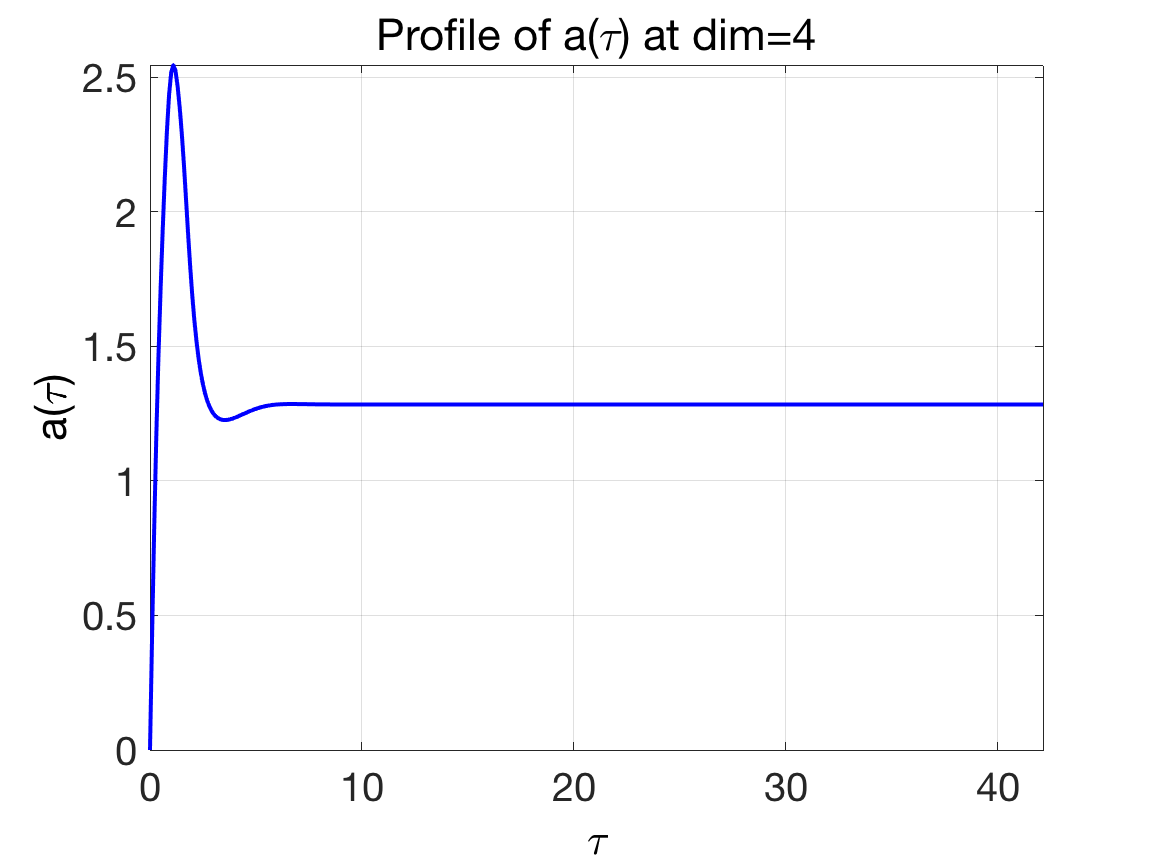}
\includegraphics[width=0.41\textwidth]{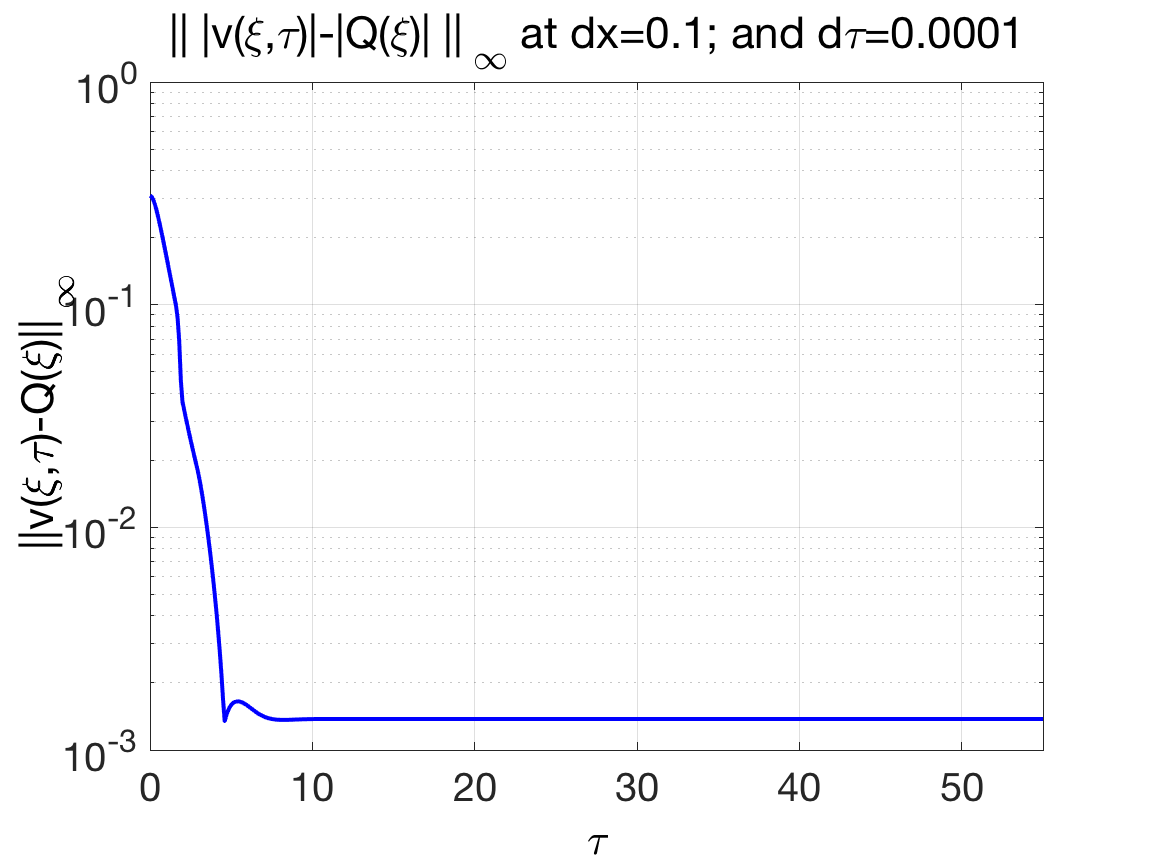}
\includegraphics[width=0.41\textwidth]{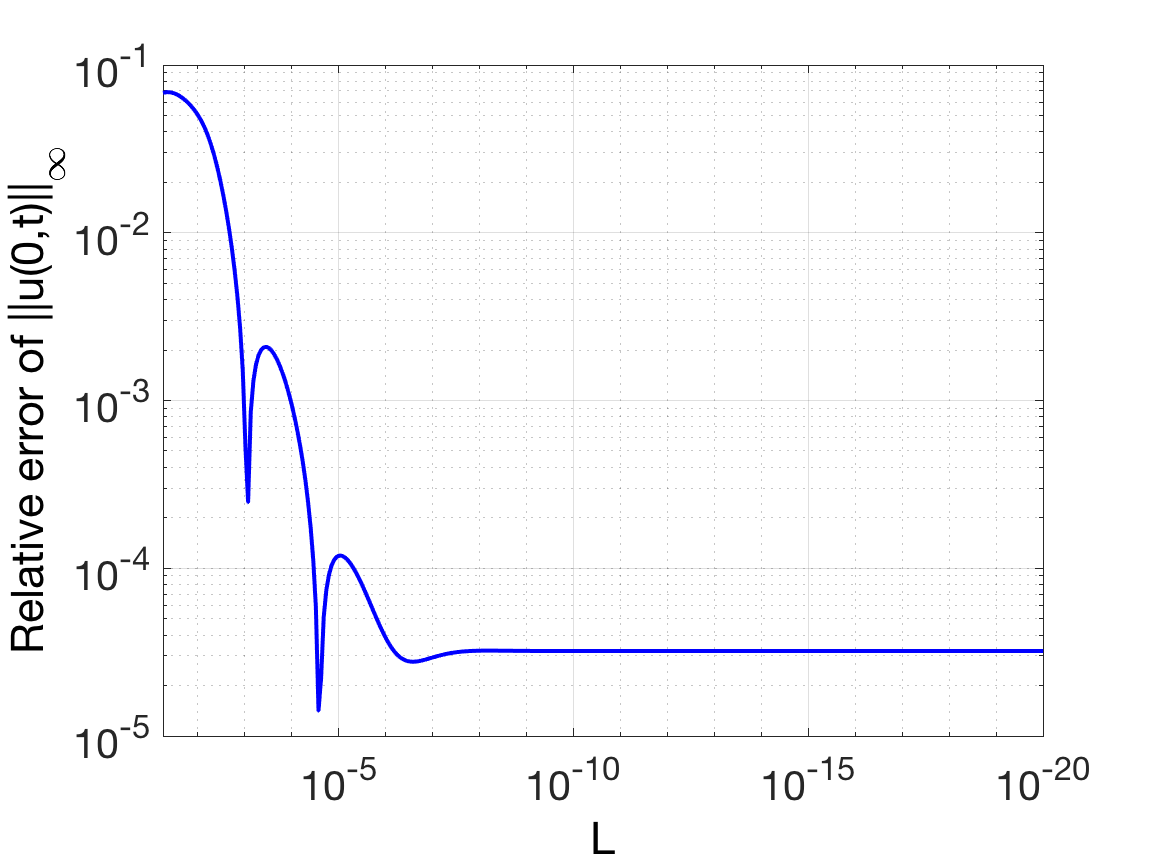}
\caption{ Blow-up data for the 4d cubic (top half) and 4d quintic (bottom half) case: $\ln(T-t)$ vs. $\ln(L)$ (upper left), the quantity $a(\tau)$ (upper right), the distance between $Q$ and $v$ on time $\tau$ ($\| v(\tau) -Q \|_{L^{\infty}_{\xi}}$) (lower left), the relative error with respect to the predicted blow-up rate (lower right).  }
\label{4d5p data}
\end{center}
\end{figure}

\newpage

\clearpage

\section{Conclusions}
This work is the {\it first} attempt to study stable blow-up solutions in the standard and generalized Hartree equations in both $L^2$-critical and $L^2$-supercritical cases, from asymptotical analysis approach and via numerical simulations. 

We are able to obtain rates and blow-up profiles in the cases considered, and observe that the stable blow-up dynamics in the nonlocal gHartree equation is very similar to the NLS case. Such modification of nonlinearity  does not affect the dynamics of the stable blow-up singularity formation. It would be interesting to investigate further this work rigorously as well as to understand whether there are modifications of nonlinearity or potentials (local or nonlocal, or a certain combination) such that the stable formation of singularity would change the dynamics in the singularity formation from the known NLS-type blow-up dynamics. 


\section{Appendix}

Here, we compute the ground state $Q$ via the renormalization method from \cite[Chapter 28]{F2015}, \cite{PS2004}. We rewrite the equation (\ref{GS}) as
\begin{align}
(-\Delta +1)Q= \mathcal{N}(Q),
\end{align}
where $\mathcal{N}(Q)=\left((-\Delta)^{-1}|Q|^{p} \right)|Q|^{p-2} Q$ is the nonlinear part. Multiplying the $Q$ and integrating on both sides, we have
\begin{align}
SL(Q):=\int_{\mathbb{R}^d} Q^2 = \int_{{\mathbb{R}^d}} Q (-\Delta +1)^{-1}\mathcal{N}(Q)=:SR(Q).
\end{align}

To prevent the fixed point iteration from going to $0$ or $\infty$, we multiply (7.2) by a constant $c_i$ in each iteration, i.e.,
$$SL(c_iQ^{(i)})=SR(c_iQ^{(i)}).$$
From above, we immediately have
\begin{align}
c_i=\left(\frac{SR(Q^{(i)})}{SL(Q^{(i)})}\right)^{\frac{1}{2p-2}}.
\end{align}

Now, we can apply the fixed point iteration as follows
\begin{align}
Q^{(i+1)}&=({-\Delta_N+I_N})^{-1} \mathcal{N}(c_iQ^{(i)})
= \left(\frac{SR(Q^{(i)})}{SL(Q^{(i)})}\right)^{\frac{2p-1}{2p-2}}({- \Delta_N+I_N})^{-1} \mathcal{N}(Q^{(i)})
\end{align}
until we reach the desired accuracy, say $\|Q^{(i+1)}-Q^{(i)}\|_{\infty}<10^{-12}$ in our calculation. Here $-\Delta_N$ is the discretized Laplacian operator of size $N+1$ described in Section 2, and the $I_N$ is the identity matrix of size $N+1$.

\begin{remark}\label{Remark: Q unique}
We tried different non-trivial initial guesses for $Q^{(0)}$ (including different $Q^{(0)}(0)$), the algorithm always converges to the same profile $Q^{(\infty)}$. It is due to the convergence property of this algorithm, see \cite{PS2004}. While it does not answer the uniqueness of the profile question, numerically it suggests the uniqueness of the ground state.
\end{remark}


\bibliography{Kai_bib_Hartree}
\bibliographystyle{abbrv}

\end{document}